\documentclass[10.5pt]{article}
\usepackage{amsmath, amssymb, amscd, amsthm, amsfonts}
\usepackage{graphicx}
\usepackage{hyperref}
\usepackage{enumitem}
\usepackage{esint}
\usepackage{booktabs}
\usepackage{tocloft} 
\usepackage{etoolbox}
\usepackage{xcolor}
\hypersetup{
    colorlinks=true, 
    linkcolor=blue,      
    citecolor=red!40!purple ,
    urlcolor=blue
}

\let\oldthebibliography\thebibliography
\let\endoldthebibliography\endthebibliography
\renewenvironment{thebibliography}[1]{%
  \oldthebibliography{#1}%
  \setlength{\itemsep}{4.3pt}% 
  \setlength{\parsep}{0pt}%    
  \setlength{\parskip}{0pt}%
}{%
  \endoldthebibliography
}

\title {Logarithmically Coupled \(p\)-Laplacian Systems: From Discrete Ground States to Rigidity and Critical Continuous Limits}

\author{Wenzheng Hu}
\date{September 5, 2026}

\newtheorem{theorem}{Theorem}[section]
\newtheorem{lemma}{Lemma}[section]

\newtheorem{remark}{Remark}[section]
\newtheorem{definition}{Definition}[section]
\newtheorem{proposition}{Proposition}[section]
\newtheorem{corollary}{Corollary}[section]
\newtheorem{example}{Example}[section]

\begin{document}
	
	\maketitle
	\begin{center}
\Large\bfseries Abstract
\end{center}

This paper introduces a new coupled \(p\)-Laplacian system with logarithmic nonlinearities,  on locally finite graphs and, in the critical case \(p=N>4\), on \(\mathbb R^N\) via a regularised formulation. The logarithmic coupling renders the energy functional ill-defined on the natural Sobolev space. To address this non-separable singularity, we develop an \textbf{exponent calibration technique} that converts the logarithmic terms into strictly lower-order power estimates. This technique underpins the existence proofs in two distinct settings—and is essential in the continuous critical setting, where the failure of the \(L^\infty\)-embedding renders classical methods inapplicable. The same technique is also used to recover compactness, thereby completing the passage from the regularised problem to the original ill-posed equation.
In the discrete setting, we further develop a rigidity theory for ground states, which yields an explicit Hessian factorisation dictated by the Nehari constraint. The compactness restoration and limit theorem, the convergence rates, and the existence results in both settings together form a discrete–continuous double loop, from ill-posedness through compactness and rigidity to asymptotics, providing a unified variational framework for logarithmically coupled systems.

\medskip\textbf{Keywords:} coupled \(p\)-Laplacian system; logarithmic nonlinearity; locally finite graphs; exponent calibration technique; ground state; rigidity; compactness restoration and limit; critical continuous setting

\vspace{0.7cm}
\tableofcontents

 \section{Introduction}

\qquad Logarithmic nonlinearities occupy a singular and delicate position in
contemporary nonlinear analysis.  In conformal geometry, the
prescribed Gauss curvature problem in dimension two naturally gives
rise to logarithmic curvature equations~\cite{1}; In the study of nonlinear diffusion processes, the modelling of transport phenomena in one-dimensional media with nonlinear flux laws naturally gives rise to logarithmic diffusion equations with nonlinear Robin boundary conditions~\cite{2}; in quantum physics,
the logarithmic Schr\"odinger equation describes the self-interaction
of wave functions in Bose--Einstein
condensates~\cite{3,4,5}.
These problems, despite their disparate origins, share a single
mathematical signature: the logarithmic term grows slower than any
positive power at infinity, yet is not uniformly bounded.  It is
precisely this sub-polynomial but super-constant growth that renders
every classical variational tool---Sobolev embeddings,
Trudinger--Moser inequalities~\cite{6}, Orlicz-space methods~\cite{7}, logarithmic Sobolev inequalities---ineffective at the source.

For decades, the variational theory of logarithmic nonlinearities has faced three fundamental difficulties: loss of compactness in critical Sobolev settings, since logarithmic nonlinearities lie outside Lions' concentration--compactness principle for power-type nonlinearities~\cite{8}; absence of a systematic second-order variational description of the Nehari manifold in coupled systems~\cite{9}; and lack of a coupled-system counterpart of the Berestycki--Lions bound state theory~\cite{10}. The first difficulty is especially acute when \(p=N>4\): the logarithmic singularity destroys Fr\'echet differentiability, and the failure of \(W^{1,N}(\mathbb R^N)\hookrightarrow L^\infty(\mathbb R^N)\) removes pointwise control. These intertwined obstacles form a circular trap: compactness requires a priori bounds, which require compact embeddings, precisely what logarithmic growth and critical dimension preclude. The present paper addresses all three through a unified exponent calibration technique.

Previous work on the subject has achieved substantial progress.
Lewin and Rougerie~\cite{11} rigorously derived the
logarithmic Schr\"odinger equation from many-body quantum mechanics,
and Gallo, Mosconi and Squassina~\cite{12} established power law convergence rates and concavity properties for the logarithmic Schr\"odinger equation; normalised solutions were recently studied in~\cite{13}.  In the broader context of logarithmic nonlinearities, Squassina and
Szulkin~\cite{14} treated the fractional setting,
but their approach depends on the specific structure of the
fractional operator.  Recent contributions to related discrete problems include He--Ji~\cite{15} for logarithmic Schr\"odinger equations and Hua--Xu~\cite{16} for nonlinear Schr\"odinger equations on lattice graphs.  These works have laid an indispensable foundation for the field, yet none of them treats a coupled system with logarithmic interaction terms in a unified variational framework---a gap that is particularly acute in the continuous critical setting where the \(L^\infty\) embedding fails.  The present work fills precisely this gap.

The object studied in this paper is a coupled $p$-Laplacian system 
on a locally finite graph $G=(V,E)$ that, to the best of our 
knowledge, is introduced here for the first time:
\begin{equation}\label{eq:main}
\begin{cases}
-\Delta_{p}u + a(x)|u|^{p-2}u
= \displaystyle\frac{p-2}{p}|u|^{p-4}u\,v^{2}\log v^{2}
+ \displaystyle\frac{2}{p}|v|^{p-2}u\log u^{2}
+ \displaystyle\frac{2}{p}|v|^{p-2}u,\\[14pt]
-\Delta_{p}v + b(x)|v|^{p-2}v
= \displaystyle\frac{p-2}{p}|v|^{p-4}v\,u^{2}\log u^{2}
+ \displaystyle\frac{2}{p}|u|^{p-2}v\log v^{2}
+ \displaystyle\frac{2}{p}|u|^{p-2}v,
\end{cases}
\quad\text{on } V,
\end{equation}
where $p>4$, the potentials $a,b:V\to\mathbb{R}$ satisfy
$a(x),b(x)\ge V_0>0$, and $\Delta_p$ denotes the discrete
$p$-Laplacian.  The system is designed so that the
logarithmic terms couple the two components through powers of the
opposite variable, a structural choice that renders standard
decoupling techniques ineffective and creates a rigorous testing
ground for the methodology developed in the present work.

Moreover, we consider the following perturbed coupled system in \(\mathbb{R}^N\),
\begin{equation}\label{eq:main1}
\begin{cases}
\begin{aligned}
-\Delta_{p}u + a(x)|u|^{p-2}u &= \dfrac{p-2}{p}|u|^{p-4}uv^{2}\log (v^2+\varepsilon)
   + \dfrac{2}{p}|v|^{p-2}u\log (u^{2}+\varepsilon)
   +\frac{2|v|^{p-2}u^3}{p(u^2+\varepsilon)}, && \text{in } \mathbb{R}^N,\\[6pt]
-\Delta_{p}v + b(x)|v|^{p-2}v &= \dfrac{p-2}{p}|v|^{p-4}vu^{2}\log (u^2+\varepsilon)
   + \dfrac{2}{p}|u|^{p-2}v\log (v^{2}+\varepsilon)
   +\frac{2|u|^{p-2}v^3}{p(v^2+\varepsilon)}, && \text{in } \mathbb{R}^N,
\end{aligned}
\end{cases}
\end{equation}
where \(p=N> 4\), and \(\varepsilon\) is a fixed real number in \((0,1/2)\).

Over the past decade, the study of partial differential equations on graphs has attracted considerable attention; see, for example, ~\cite{17,18,19,20,21,22,23} and the references therein. In particular, a broad class of graph-based elliptic problems---ranging from existence to multiplicity---have been extensively investigated by variational methods, including Nehari manifold and mountain-pass techniques. Related critical growth and logarithmic perturbations in the continuum have also attracted substantial attention ~\cite{24,25,26}. Compactness issues, such as the concentration-compactness principle and profile decomposition, play a crucial role in the analysis of limiting nonlinearities~\cite{8,27,28}.

Complementing these developments, the study of discrete Sobolev embeddings, Hardy and Rellich inequalities, and Trudinger–Moser type estimates on graphs, together with related analytic tools in the continuum, has collectively provided powerful methods for handling both subcritical and critical nonlinearities ~\cite{22,23,26,28,29,30,31,32,33,34,35,36}. These advances form the analytic foundation on which our work on logarithmically coupled systems is built. There are, of course, many other relevant contributions; we refer the interested reader to ~\cite{9,37,38,39,40,41,42,43,44,45,46,47,48,49,50,51,52} for further details.

\begin{remark}
It is easy to see that if $(u,v)$ is a ground state solution of system~\eqref{eq:main}, then $(u,-v)$, $(-u,v)$, and $(-u,-v)$ are also ground state solutions.
\end{remark}

\textbf{Methodological contribution.}
A central tool developed here is the \textbf{exponent calibration technique}, designed for logarithmic singularities in coupled variational systems. In the continuous critical setting, two obstacles absent in the scalar case would defeat any variational approach: the original energy functional is generally not well defined on the natural Sobolev space, and even after regularisation the loss of the \(L^\infty\)-embedding prevents standard inequalities from controlling the logarithmic coupling terms. The technique removes both obstacles through a unified algebraic procedure: it converts the coupled logarithmic singularities into strictly lower-order power estimates, ensuring finiteness of all logarithmic integrals and restoring the compactness required for the variational analysis. The resulting H\"older--Young parameters are explicit and uniform, and the algebraic structure is tailored precisely to the critical exponent where \(L^\infty\) control is lost.

In this paper the technique is invoked at two critical steps: the existence proof for the regularised problem in the continuous critical setting, and the uniform boundedness of regularised ground states that restores compactness and yields the limit. To the best of our knowledge, no alternative method has been found that can replace these two invocations. The difficulties resolved by the technique---logarithmic singularity, failure of the \(L^\infty\)-embedding, and coupling-induced inseparability---are common to a broad class of logarithmically coupled systems in continuum settings. It is therefore expected to serve as a universal tool for variational problems involving logarithmic nonlinearities.

We are now in a position to state our main results.

\begin{theorem}\label{thm1}
Let $G = (V,E)$ be a locally finite graph and suppose that for every vertex $x\in V$, $0< \mu_{\min}\leq \mu(x)$. Assume that the potentials $a(x),b(x)$ satisfy the following two conditions:
\begin{enumerate}[itemsep=6pt, topsep=4pt]
\item[(A1)] $a(x),b(x):V\to \mathbb{R}$ satisfy $\min\limits_{x\in V} a(x) \geq V_{0}$, $\min\limits_{x\in V} b(x) \geq V_{0}$ for some constant $V_{0} > 0$;
\item[(A2)] For every $\mathcal{M}>0$, the sets
$D_{\mathcal{M}}^a:=\{x\in V:a(x)\le \mathcal{M}\}$ and $D_{\mathcal{M}}^b:=\{x\in V:b(x)\le \mathcal{M}\}$
have finite volume, i.e.,
$\operatorname{Vol}(D_{\mathcal{M}}^a)=\sum_{x\in D_{\mathcal{M}}^a}\mu(x)<\infty$
and
$\operatorname{Vol}(D_{\mathcal{M}}^b)=\sum_{x\in D_{\mathcal{M}}^b}\mu(x)<\infty$.
\end{enumerate}
Then for any $p > 4$, system~\eqref{eq:main} admits a ground state solution.
\end{theorem}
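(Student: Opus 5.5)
The plan is to minimise the energy over the Nehari manifold, working in the space $W=W_a\times W_b$. Here $W_a$ is the completion of finitely supported functions under $\|u\|_a^p=\sum_{x\in V}\mu(x)\big(|\nabla u|^p+a(x)|u|^p\big)$, and $W_b$ is defined the same way with $b$. The first step is to identify the variational structure. A direct differentiation shows that \eqref{eq:main} is the Euler--Lagrange system of
\[
J(u,v)=\frac1p\big(\|u\|_a^p+\|v\|_b^p\big)-\int_V F(u,v)\,d\mu,\qquad F(u,v)=\frac1p\Big(|u|^{p-2}v^2\log v^2+|v|^{p-2}u^2\log u^2\Big).
\]
Indeed, $\partial_uF$ reproduces exactly the three terms on the right-hand side of the first equation, and $\partial_vF$ those of the second.

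Two discrete facts make $J$ a well-defined $C^1$ functional on $W$.
\begin{itemize}
\item Since $\mu\ge\mu_{\min}$, we have $|u(x)|\le \mu_{\min}^{-1/p}V_0^{-1/p}\|u\|_a$, so $W\hookrightarrow L^\infty$. Because $s^2\log s^2$ is continuous, the logarithms are harmless on bounded sets; this is where the discrete setting is easier than the continuous one.
\item By (A1)--(A2), the embedding $W\hookrightarrow L^q(V)$ is compact for every $q\ge p$. I would prove this in the standard way: split $V$ into $D^a_{\mathcal M}$, which has finite volume and on which bounded sequences converge pointwise and hence in $L^q$, and its complement, where $\int|u|^p\le \mathcal M^{-1}\|u\|_a^p$.
\end{itemize}

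The second step is the fibering analysis. Set $Q(u,v)=|u|^{p-2}v^2+|v|^{p-2}u^2$. Homogeneity gives
\[
J(tu,tv)=\frac{t^p}{p}\Big(\|u\|_a^p+\|v\|_b^p-p\!\int F\Big)-\frac{t^p\log t^2}{p^2}\int Q .
\]
Hence, whenever $\int Q>0$ (both components nonzero), $t\mapsto J(tu,tv)$ has a unique critical point $t_{(u,v)}>0$, and it is a global maximum. I then define
\[
\mathcal N=\{(u,v)\in W:\ u\ne0,\ v\ne0,\ \langle J'(u,v),(u,v)\rangle=0\}
\]
and $c=\inf_{\mathcal N}J$. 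On $\mathcal N$ the identity $J(u,v)=\frac{2}{p^2}\int Q$ holds, so $J>0$ there.

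To see that $c>0$ and that $\mathcal N$ stays away from $0$, I use that $s^2\log s^2\le0$ for $|s|\le1$ and $s^2\log s^2\le C_\delta|s|^{2+\delta}$ for $|s|\ge1$. This gives $F^+\le C\big(|u|^{p-2}|v|^{2+\delta}+|v|^{p-2}|u|^{2+\delta}\big)$, and the same bound holds for $Q$ on the set where it enters positively. Each term has degree $p+\delta$, so on $\mathcal N$ we get $\|(u,v)\|^p\le C\|(u,v)\|^{p+\delta}$, which yields $\|(u,v)\|\ge\rho>0$. A lower bound on $\int Q$, and hence on $c$, then follows from the Nehari identity via the same interpolation.

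The main obstacle is the third step: boundedness of a minimising sequence $z_n\in\mathcal N$. The identity $J=\frac{2}{p^2}\int Q$ only controls $\int Q$, not the norm. I would argue by contradiction. Suppose $\|z_n\|\to\infty$ and put $w_n=z_n/\|z_n\|$. By compactness, $w_n\to w$ in $L^q$ and in $L^\infty$, after passing to a subsequence.
\begin{itemize}
\item If both components of $w$ are nonzero, the ray inequality $c+o(1)=J(z_n)\ge J(sw_n)$ with $s=\|z_n\|$ contradicts the $-t^p\log t^2$ behaviour of the fiber.
\item If one component of $w$ vanishes, I would instead compare with $J(sw_n)$ for a fixed large $s$. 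The compact embedding makes the nonlinear terms of $sw_n$ small, so $J(sw_n)\approx s^p/p$, which exceeds $c+1$ and again gives a contradiction.
\end{itemize}
This case distinction is delicate, and I expect the bulk of the work to lie here.

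Once boundedness is known, I extract a weak limit $z_n\rightharpoonup z=(u,v)$ with strong convergence in $L^q$ and $L^\infty$. Then $\int F(z_n)\to\int F(z)$ and $\int Q(z_n)\to\int Q(z)$, and the lower bound on $\int Q$ forces both components of $z$ to be nonzero. Weak lower semicontinuity gives $\langle J'(z),z\rangle\le0$, so $t_z\le1$, and therefore
\[
J(t_zz)=\frac{2t_z^p}{p^2}\int Q(z)\le\lim\frac{2}{p^2}\int Q(z_n)=c .
\]
Thus $t_zz$ is a minimiser. Finally, $\mathcal N$ is a natural constraint: it is a $C^1$ manifold because the second derivative along the fiber is $-\frac{2}{p}\int Q\neq0$, and the Lagrange multiplier vanishes. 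Hence $t_zz$ is a ground state solution of \eqref{eq:main}.
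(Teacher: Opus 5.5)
Your overall architecture matches the paper's: fibering with a unique maximum, the Nehari identity \(J=\frac{2}{p^2}\int_V Q\,d\mu\), the bound away from zero via the positive part of \(\log v^2+\frac2p\), and the projection \(t_z\le1\) of the weak limit. The gap is in your first step, and your last step depends on it. Under (A1)--(A2) alone, \(J\) is not even finite on \(W\), let alone \(C^1\). The \(L^\infty\) embedding only controls the logarithm where \(|u|,|v|\) are large. The difficulty is where they are small: \(|v|^{p-2}u^2|\log u^2|\) decays more slowly than \(|v|^{p-2}u^2\), while (A2) gives \(W\hookrightarrow L^q\) only for \(q\ge p\). So on an infinite graph, which has infinite volume since \(\mu\ge\mu_{\min}\), the negative part \(\int_V F^-\,d\mu\) can diverge. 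Example~\ref{exam8.1} does exactly this: with \(V=\mathbb N\), \(\mu(n)=n\), \(a=b=(\log n)^\delta\) and \(u=v=n^{-2/p}(\log n)^{-\theta}\) for \(n\ge3\), \(1+\delta<p\theta\le2\), one gets \((u,v)\in W\) with \(J(u,v)=+\infty\) (see Remark~\ref{rem:thm1}).

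Consequences of this:
\begin{itemize}
\item \(\mathcal N\) has to be defined inside \(\mathcal D(J)=\{(u,v)\in W:|J(u,v)|<\infty\}\), which in general is not open in \(W\).
\item Your closing step (``\(\mathcal N\) is a \(C^1\) manifold, the Lagrange multiplier vanishes'') is unsupported. The multiplier rule needs \(J\) and \(\langle J'(\cdot),\cdot\rangle\) to be \(C^1\) on an open set containing the minimiser, and that is not available here.
\end{itemize}
The paper replaces that step with a deformation argument using only finitely supported directions (Lemma~\ref{lem:solution1}). For \((\phi,\psi)\in C_c(V)\times C_c(V)\) only finitely many vertices change, so \(J\) is Gateaux differentiable in that direction. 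Also \(s\mapsto J'(u_s,v_s)\cdot(u_s,v_s)\) is continuous along \((u_s,v_s)=(su_0+\kappa\eta(s)\phi,\,sv_0+\kappa\eta(s)\psi)\). If \(J'(u_0,v_0)\cdot(\phi,\psi)\le-1\), this curve crosses \(\mathcal N\) strictly below level \(d\), a contradiction. Testing with Dirac masses then gives \eqref{eq:main} pointwise. A two-variable implicit function argument for \(\sigma\mapsto\max_{t>0}J\bigl(t(z+\sigma(\phi,\psi))\bigr)\) would also work, since off the support of \((\phi,\psi)\) the sums depend on \(t\) only through \(t^p\) and \(t^p\log t^2\).

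The same oversight affects your limit step. The claim \(\int_V F(z_n)\,d\mu\to\int_V F(z)\,d\mu\) is false in general, because strong convergence in \(L^q\), \(q\ge p\), does not control \(\int_V |u_n|^{p-2}v_n^2(\log v_n^2)^-\,d\mu\). Only Fatou's lemma is available for the negative part. Since it enters \(\langle J'(z),z\rangle\) with the favourable sign, you still get \(\langle J'(z),z\rangle\le0\). The Nehari bound \(p\int_V F^-(z_n)\,d\mu\le p\int_V F^+(z_n)\,d\mu+\tfrac2p\int_V Q(z_n)\,d\mu\) then gives \(z\in\mathcal D(J)\); this is how Lemma~\ref{lem4.4} proceeds.

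The boundedness you single out as the main obstacle is immediate. On \(\mathcal N\), \(S(z_n):=\|u_n\|_a^p+\|v_n\|_b^p\le C\bigl(\|u_n\|_\infty^{\delta}+\|v_n\|_\infty^{\delta}\bigr)\int_V Q(z_n)\,d\mu\le C\,S(z_n)^{\delta/p}\), because \(\int_V Q(z_n)\,d\mu=\tfrac{p^2}{2}J(z_n)\) is bounded. This is the same estimate you already use for the lower bound on \(\int Q\). Your blow-up argument can be repaired, with three corrections:
\begin{itemize}
\item Since \(\int_V Q(w_n)\,d\mu=\tfrac{p^2}{2}J(z_n)\|z_n\|^{-p}\to0\), only your second case can occur.
\item ``Both components nonzero'' is not the same as \(\int Q>0\) when the supports are disjoint.
\item In that case \(\int_V F^-(sw_n)\,d\mu\) must be dropped by sign, not claimed to be small.
\end{itemize}
Also, the fibre coefficient is \(t^p\log t^2/p\), not \(t^p\log t^2/p^2\).
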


\begin{remark}\label{rem:thm1}
Theorem~\ref{thm1} concerns the ill-posed discrete regime where the energy functional
may take the value \(+\infty\), see Example~\ref{exam8.1}. In this setting, \(J\) is not necessarily finite
everywhere on \(\mathcal H\), an obstruction more fundamental than the lack of
\(C^1\) regularity. We restrict the analysis to the finite domain
\(\mathcal D(J)=\{(u,v)\in\mathcal H: |J(u,v)|<\infty\}\) and employ the Nehari
manifold method. Since any finitely supported perturbation changes only finitely
many vertices, the corresponding Gateaux derivative is well-defined, so the
Nehari construction can be carried out without requiring \(\mathcal D(J)\) to be
open. Regarding the assumptions, \(p>4\) is essential: for \(2<p<4\) the factor
\(|u|^{p-4}u\) is not differentiable at the origin, while for \(p=4\) the relevant
critical structure degenerates, so we uniformly require \(p>4\). Condition (A2)
on finite volume of sublevel sets is the standard compactness requirement in the
discrete setting, ensuring \(\mathcal H\hookrightarrow L^q(V)\) compactly for all
\(q\ge p\). This result establishes the existence of ground states for
logarithmically coupled systems on discrete graphs and provides a natural
discrete counterpart before the introduction of the exponent calibration
technique in the continuous critical setting.
\end{remark}

\begin{theorem}\label{thm2}
    Assume $p=N > 4$, \(a,b\in L_{loc}^{\infty} (\mathbb{R}^N)\) satisfy the conditions that 
    $$\inf_{\mathbb{R}^N} a=V_1>0,\quad \inf_{\mathbb{R}^N} b=V_1>0,\quad \lim\limits_{|x|\to\infty} a(x)=+\infty,\quad \lim\limits_{|x|\to\infty} b(x)=+\infty.$$
    Then system~\eqref{eq:main1} admits a ground state solution.
\end{theorem}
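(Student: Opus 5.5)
We would prove Theorem~\ref{thm2} by the Nehari manifold method applied to the energy functional of the regularised system~\eqref{eq:main1}, in parallel with Theorem~\ref{thm1}. The exponent calibration technique is used to control the logarithms. Write $\|u\|_a^p=\int_{\mathbb R^N}(|\nabla u|^p+a|u|^p)$, and similarly $\|v\|_b^p$ with $b$, on $\mathcal H=W_a\times W_b$. A direct differentiation shows that~\eqref{eq:main1} is the Euler--Lagrange system of
\[
J_\varepsilon(u,v)=\frac1p\bigl(\|u\|_a^p+\|v\|_b^p\bigr)-\frac1p\int_{\mathbb R^N}\Bigl(|u|^{p-2}v^2\log(v^2+\varepsilon)+|v|^{p-2}u^2\log(u^2+\varepsilon)\Bigr).
\]
Since $\inf a,\inf b>0$ and $a,b\to+\infty$, the embedding $\mathcal H\hookrightarrow L^q(\mathbb R^N)^2$ is compact for every $q\ge p=N$. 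This is the Bartsch--Wang argument, combined with $W^{1,N}\hookrightarrow L^q$ for all $q\in[N,\infty)$.

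\textbf{Exponent calibration.} Since $\varepsilon\in(0,1/2)$, for every $\delta>0$ we have
\[
|\log\varepsilon|\cdot\mathbf 1\ \ge\ \log(s^2+\varepsilon)\ \ge\ \log\varepsilon,
\qquad
\log^+(s^2+\varepsilon)\le C_\delta|s|^{\delta}.
\]
Combining these with Young's inequality $|u|^{p-2}|v|^{2+\delta}\le C(|u|^{p+\delta}+|v|^{p+\delta})$ bounds every integrand by powers $|u|^q,|v|^q$ with $q\in\{p,p+\delta\}$. The exponents are calibrated so that $q\ge N$ even though $L^\infty$ control fails. This makes $J_\varepsilon$ finite and $C^1$ on $\mathcal H$. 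It also shows that the functional $(u,v)\mapsto\int F$ is weakly continuous, by the compact embedding. The same bound gives $\int F\le C(\|(u,v)\|^{p+\delta})$ for the positive part. Hence on the Nehari manifold $\mathcal N=\{(u,v)\ne0:\langle J_\varepsilon'(u,v),(u,v)\rangle=0\}$ the norm is bounded below by some $\rho>0$, and $c:=\inf_{\mathcal N}J_\varepsilon>0$.

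\textbf{Fibre structure.} Along $t\mapsto(tu,tv)$ one computes
\[
\frac{d}{dt}J_\varepsilon(tu,tv)=t^{p-1}\Bigl[\|u\|_a^p+\|v\|_b^p-\int\Bigl(|u|^{p-2}v^2\log(t^2v^2+\varepsilon)+\dots\Bigr)-\int\Bigl(\frac{t^2|u|^{p-2}v^4}{t^2v^2+\varepsilon}+\dots\Bigr)\Bigr].
\]
The bracket is strictly decreasing in $t$ and tends to $-\infty$ as $t\to\infty$ whenever $\int(|u|^{p-2}v^2+|v|^{p-2}u^2)>0$. Hence each such pair has a unique projection $t_{(u,v)}(u,v)\in\mathcal N$, and $c$ is a mountain-pass type level. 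In particular, a constrained minimiser is a free critical point: the Lagrange multiplier vanishes because the second fibre derivative on $\mathcal N$ is strictly negative.

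\textbf{Main obstacle: boundedness of minimising sequences.} On $\mathcal N$,
\[
J_\varepsilon=\frac2{p^2}\int\Bigl(\frac{|u|^{p-2}v^4}{v^2+\varepsilon}+\frac{|v|^{p-2}u^4}{u^2+\varepsilon}\Bigr),
\]
which does not control the norm directly. I would argue by contradiction. Suppose $t_n:=\|(u_n,v_n)\|\to\infty$, and set $(w_n,z_n)=(u_n,v_n)/t_n\rightharpoonup(w,z)$, with strong convergence in $L^q$ for $q\ge p$. Dividing the Nehari identity by $t_n^p$ gives
\[
1=\int\Bigl(|w_n|^{p-2}z_n^2\log(t_n^2z_n^2+\varepsilon)+\dots\Bigr)+O(1).
\]
If $\int(|w|^{p-2}z^2+|z|^{p-2}w^2)>0$, the right side is of order $\log t_n\to\infty$, a contradiction. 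If it is $0$, the calibrated bound $\log^+(t_n^2z_n^2+\varepsilon)\le C_\delta t_n^{\delta}|z_n|^\delta$ is combined with the bound on $J_\varepsilon(u_n,v_n)\to c$. This forces the right side to $0$, again a contradiction. Getting this second case to close is where the calibration must be done carefully. Specifically, one splits $\{t_n|z_n|\le 1\}$, where the log is negative, from its complement, where the energy identity bounds $\int|w_n|^{p-2}z_n^2$ on $\{t_n|z_n|\ge1\}$ by $O(t_n^{-p})$ up to a factor.

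\textbf{Passage to the limit.} With boundedness established, take $(u_n,v_n)\rightharpoonup(u,v)$. Weak continuity of the nonlinear terms and $\|(u_n,v_n)\|\ge\rho$ show that the coupling integral of $(u,v)$ is positive, so $(u,v)\ne0$ has a projection $t(u,v)\in\mathcal N$. By weak lower semicontinuity of the norms and monotonicity of the fibre, $t\le1$, and
\[
J_\varepsilon(t(u,v))\le\liminf J_\varepsilon(t(u_n,v_n))\le\liminf J_\varepsilon(u_n,v_n)=c.
\]
Thus $t(u,v)$ is a minimiser, and by the fibre argument above it is a ground state solution of~\eqref{eq:main1}.
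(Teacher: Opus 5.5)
Your proposal is correct in outline and follows the paper's overall scheme: the regularised functional $J_\varepsilon$, the compact embedding of Lemma~\ref{lem2.3}, the Nehari manifold $\mathcal N_\varepsilon$, a calibration estimate to bound minimising sequences, projection of the weak limit onto $\mathcal N_\varepsilon$, and the multiplier argument based on $\langle G_\varepsilon'(u,v),(u,v)\rangle<0$ (Remark~\ref{rem:regularised-nehari}).

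\textbf{Boundedness of minimising sequences.} Here the packaging differs, and your version has a step missing. The paper (Lemma~\ref{lem:coercivity}) proves a direct a priori inequality. H\"older with $s=p/(p-\sigma)$ and Young with $t=2p/(p+2)$ give
\[
\int_{\{u^2\ge1\}}|v|^{p-2}|u|^{2+\sigma}\le C\Bigl(\int_{\{u^2\ge1\}}u^2|v|^{p-2}\Bigr)^{1/s}\Bigl(\int_{\mathbb R^N}(|u|^{2p}+|v|^{2p})\Bigr)^{(s-1)/s}.
\]
The first factor is controlled by $R_\varepsilon=\frac{p^2}{2}J_\varepsilon$, so $X_k\le CX_k^{\kappa}+C$ with $\kappa<1$.

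Your blow-up argument is the same estimate rescaled by $t_n$, but Case~2 is not closed as written. The bound $\int_{\{t_n|z_n|\ge1\}}|w_n|^{p-2}z_n^2=O(t_n^{-p})$ and the bound $\log^+(t_n^2z_n^2+\varepsilon)\le C_\delta t_n^\delta|z_n|^\delta$ do not by themselves control $t_n^\delta\int_{\{t_n|z_n|\ge1\}}|w_n|^{p-2}|z_n|^{2+\delta}$, because $z_n$ has no $L^\infty$ bound. You must interpolate against a higher mixed moment, which the embedding bounds uniformly:
\[
t_n^{\delta}\int_{\{t_n|z_n|\ge1\}}|w_n|^{p-2}|z_n|^{2+\delta}\,dx\le t_n^{\delta}\Bigl(\int_{\{t_n|z_n|\ge1\}}|w_n|^{p-2}z_n^2\,dx\Bigr)^{1-\theta}\Bigl(\int_{\mathbb R^N}|w_n|^{p-2}|z_n|^{2+\delta/\theta}\,dx\Bigr)^{\theta}\le C\,t_n^{\delta-p(1-\theta)},
\]
with $0<\delta<p(1-\theta)$. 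This is precisely the paper's calibration step. The strip $\{\sqrt{1-\varepsilon}\le t_n|z_n|<1\}$ contributes at most $\log(1+\varepsilon)\,M(w_n,z_n)\to0$. Moreover, the $O(t_n^{-p})$ bound together with Vitali's theorem on $\{t_n|z_n|<1\}$ shows $M(w_n,z_n)\to0$ in every case, so your Case~1 never occurs.

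\textbf{Attainment.} You use $J_\varepsilon(u_n,v_n)=\max_{\tau>0}J_\varepsilon(\tau(u_n,v_n))$ together with weak lower semicontinuity. This relies on the uniqueness of the fibre critical point, which you do prove. The paper uses only existence (Lemma~\ref{lem4.6}), deduces $t_0<1$ from monotonicity in $t$, and then compares $\frac{2}{p^2}R_\varepsilon$ along the fibre. Both routes work, and yours is the cleaner of the two.

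\textbf{Norm lower bound on $\mathcal N_\varepsilon$.} One smaller point. The Nehari identity also contains $\frac2pR_\varepsilon$, which is $p$-homogeneous. Bounding only the positive part of the logarithmic integral therefore does not give $\|(u,v)\|\ge\rho$ unless $\inf a,\inf b$ are large. You can repair this in either of two ways:
\begin{enumerate}
\item Absorb the rational term with the negative part of the logarithm: for $v^2\le e^{-1/2}-\frac12$ one has $\log(v^2+\varepsilon)+\frac2p\frac{v^2}{v^2+\varepsilon}<0$, because $p>4$ and $\varepsilon<\frac12$.
\item Get non-triviality of the weak limit directly from $R_\varepsilon(u_n,v_n)\to\frac{p^2}{2}c>0$. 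Here $c>0$ follows from $J_\varepsilon\ge\frac1pS-\frac1pL_\varepsilon^+$ on a small sphere combined with the fibre-maximum property.
\end{enumerate}
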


\begin{remark}\label{rem:thm2}
In the critical case \(p=N\) two intertwined difficulties arise: the original
functional \(J\) is not Fr\'echet differentiable and may even take the value
\(+\infty\), forcing the introduction of the regularised functional
\(J_\varepsilon\); and the failure of the embedding
\(W^{1,N}(\mathbb R^N)\hookrightarrow L^\infty(\mathbb R^N)\) removes pointwise
control of the logarithmic terms, breaking the classical compactness loop. It is
here that the exponent calibration technique first plays a crucial role: it
converts the coupled logarithmic singularities into strictly lower-order power
estimates, thereby restoring compactness and completing the existence proof. To
the best of our knowledge, no other method currently available can replace its
role in this step. As for the assumptions, the divergence of \(a,b\) at infinity
is essential for the compact embeddings
\(\mathcal G_a,\mathcal G_b\hookrightarrow L^q(\mathbb R^N)\) for \(q\ge N\);
without divergence, the embeddings are not compact. The choice
\(\varepsilon\in(0,1/2)\) ensures the regularised logarithmic term is negative
and controllable near the origin. This result addresses two fundamental difficulties: the loss of compactness in the critical Sobolev setting and the lack of a Berestycki--Lions-type existence
theory for coupled logarithmic systems; it is also a key prerequisite for
Theorem~\ref{thm8}.
\end{remark}

\begin{theorem}\label{thm3}
Let $G = (V,E)$ be a locally finite graph and suppose that for every vertex $x\in V$, $0< \mu_{\min}\leq \mu(x)$. Assume that the potentials $a(x),b(x)$ satisfy the following conditions:
\begin{enumerate}[itemsep=6pt, topsep=4pt]
\item[(A1)] $a(x),b(x):V\to \mathbb{R}$ satisfy $\min\limits_{x\in V}a(x)\geq V_{0}$, $\min\limits_{x\in V}b(x)\geq V_{0}$ for some constant $V_{0} > 0$;
\item[(A2')] there exists $\mathcal{M}_0 > 0$ such that $\frac{1}{a(x)}\in L^{1}(V\setminus D_{\mathcal{M}_0}^{a})$ and $\frac{1}{b(x)}\in L^{1}(V\setminus D_{\mathcal{M}_0}^{b})$, where $D_{\mathcal{M}_0}^{a} = \{x\in V:a(x)\leq \mathcal{M}_0\}$, $D_{\mathcal{M}_0}^{b} = \{x\in V:b(x)\leq \mathcal{M}_0\}$, and the volumes of $D_{\mathcal{M}_0}^{a}$ and $D_{\mathcal{M}_0}^{b}$ are finite.
\end{enumerate}
Then for any $p > 4$, system~\eqref{eq:main} admits a mountain pass solution, which is also a ground state solution.
\end{theorem}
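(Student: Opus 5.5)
The plan is to work with the energy functional associated with~\eqref{eq:main} on \(\mathcal H=W^{1,p}_a\times W^{1,p}_b\). A natural candidate, whose partial Gateaux derivatives reproduce the right-hand sides of~\eqref{eq:main}, is
\[
J(u,v)=\frac1p\int_V\big(|\nabla u|^p+a|u|^p+|\nabla v|^p+b|v|^p\big)\,d\mu-\frac1p\int_V\big(|u|^{p-2}v^2\log v^2+|v|^{p-2}u^2\log u^2\big)\,d\mu .
\]
The key gain from (A2') over (A2) is that \(J\) becomes finite and \(C^1\) on all of \(\mathcal H\), so the mountain pass theorem applies directly.

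\textbf{Well-posedness under (A2').} Since \(\mu\ge\mu_{\min}\), every \(u\in\mathcal H\) satisfies \(\|u\|_\infty\le C\|u\|\). The only dangerous region is where \(|v|\) is small, since \(\log v^2\to-\infty\) there. Fix \(\delta\in(0,1)\). Then \(|v^2\log v^2|\le C_\delta|v|^{2-\delta}\) for \(|v|\le \|v\|_\infty\), so I need to control
\[
\int_V |u|^{p-2}|v|^{2-\delta}\,d\mu .
\]
On \(D_{\mathcal M_0}\), which has finite volume, this is bounded by \(\|u\|_\infty^{p-2}\|v\|_\infty^{2-\delta}\mathrm{Vol}(D_{\mathcal M_0})\). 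Off \(D_{\mathcal M_0}\), I use H\"older with weights \(a^{-s}\) and \(a^{s}\), choosing exponents so that \(a|u|^p\), \(b|v|^p\) and \(1/a\in L^1\) appear. Concretely, \(|u|^{p-2}|v|^{2-\delta}\) has total homogeneity \(p-\delta<p\). I write it as \((a|u|^p)^{\alpha}(b|v|^p)^{\beta}(ab)^{-\gamma}\) and apply three-factor H\"older with \(\alpha+\beta+\theta=1\), where the third factor is a power of \(1/a\) or \(1/b\) in \(L^1\). For example, \((ab)^{-\gamma}\le \max(1/a,1/b)^{2\gamma}\mathcal M_0^{\cdots}\), and \(\theta=\delta/p\) matches, since \((p-\delta)/p+\delta/p=1\). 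The same estimates with \(\delta\) replaced appropriately show that the Nemytskii maps are continuous into the dual, giving \(J\in C^1(\mathcal H)\).

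Next I verify the mountain pass geometry. Near zero, the coupling term is \(O(\|(u,v)\|^{p+\eta})\) for some \(\eta>0\): I split into \(|v|\le1\), where \(\log\) is negative and the term helps or is bounded by the calibrated estimate, and \(|v|>1\), where \(v^2\log v^2\le C|v|^{2+\eta}\). Hence \(J\ge c\rho^p\) on a small sphere. Along the ray \(t(u_0,v_0)\) with \(u_0,v_0\ge0\) finitely supported, the term \(-t^p\log t^2\int(|u_0|^{p-2}v_0^2+|v_0|^{p-2}u_0^2)\to-\infty\) relative to \(t^p\), which gives a point with negative energy. This produces a Cerami sequence at level \(c>0\).

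\textbf{Boundedness and compactness.} I use the identity
\[
J-\frac1p\langle J',(u,v)\rangle=\frac2p\cdot\frac1p\int_V\big(|u|^{p-2}v^2+|v|^{p-2}u^2\big)\,d\mu
\]
(up to constants), obtained from the logarithmic structure since \(\frac{d}{dt}t^2\log t^2\) produces the extra \(\frac2p\) terms. This controls the mixed quantity \(\int (|u|^{p-2}v^2+|v|^{p-2}u^2)\). To bound \(\|(u_n,v_n)\|\), I argue by contradiction with normalized sequences, together with the estimate \(\log s\le C_\eta s^\eta\) to compare \(\int|u|^{p-2}v^2\log v^2\) with the controlled mixed term and a small power of the norm. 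I expect this step to be the main obstacle, because the log prevents a clean Ambrosetti--Rabinowitz inequality. I would first try the normalized-sequence argument with the pointwise bound via \(\|\cdot\|_\infty\le C\|\cdot\|\).

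Once boundedness holds, (A2') implies (A2)-type compactness of \(\mathcal H\hookrightarrow L^q\), \(q\ge p\), and in fact also \(q\) slightly below \(p\), via the \(1/a\in L^1\) tail. The calibrated estimates then give \(\langle J'(u_n)-J'(u),u_n-u\rangle\to0\). The \((S_+)\) property of \(-\Delta_p\) yields strong convergence, so the limit is a critical point at level \(c\).

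Finally, to identify this solution as a ground state, I show that for each \((u,v)\) with mixed term positive, the fibering map \(t\mapsto J(tu,tv)=t^p(A-B\log t^2-C)/p\) has a unique maximum, located at the Nehari point. Hence \(c=\inf_{\mathcal N}J\) by the standard equivalence \(c=\inf\max_t J(tu,tv)\). Since every nontrivial solution lies on \(\mathcal N\), the mountain pass solution is a ground state.
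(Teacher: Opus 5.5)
Your plan is correct and has the same architecture as the paper's proof. You verify that $J$ is finite and $C^1$ under (A2$'$), check the mountain pass geometry, and prove the Cerami condition from the identity $J-\frac1p\langle J'(u,v),(u,v)\rangle=\frac{2}{p^2}M(u,v)$ together with the compact embedding of Lemma~\ref{lem5} and the monotonicity of the $p$-Laplacian. You then identify the mountain pass level with $\inf_{\mathcal N}J$ through the fibre map $t\mapsto \frac{t^p}{p}\bigl(S-L-M\log t^2\bigr)$.

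You differ from the paper in two technical steps.

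\textbf{Finiteness and regularity of $J$.} You actually check this, by a H\"older argument using a power of $1/a$ and $1/b$. The paper only asserts it. The underlying mechanism is the same as in Lemma~\ref{lem5}: the dangerous term $|u|^{p-2}|v|^{2-\delta}$ has total degree $p-\delta\ge p/2$.

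\textbf{Boundedness of Cerami sequences.} The estimate you name is
\[
\int_V|u_k|^{p-2}(v_k^2\log v_k^2)^+\,d\mu\le C_\eta\|v_k\|_\infty^{\eta}\int_V|u_k|^{p-2}v_k^2\,d\mu\le C\|v_k\|_{\mathcal H_b}^{\eta}.
\]
This is the argument of Lemma~\ref{lem4.4}. Since the negative part of the logarithm can simply be discarded, it closes immediately as $X_k\le CX_k^{\eta/p}+C$, so you do not need the normalized-sequence contradiction. This step is not really the obstacle you feared.

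The paper's Lemma~\ref{lem:cerami} instead uses its H\"older--Young calibration system, which avoids $L^\infty$ so that it transfers to the continuous case $p=N$. In the discrete setting your route is shorter and self-contained. It also avoids the bound on $\int_V|u_k|^p|v_k|^p\,d\mu$ that the displayed calibration (with $(\alpha,\beta)=(p,p)$) takes as input, which the mixed identity does not directly supply.

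One thing to state explicitly: when you prove $c\le\inf_{\mathcal N}J$ using the ray $t\mapsto tT(u,v)$, define the mountain pass class as paths from $0$ ending anywhere in $\{J<0\}$, rather than at a fixed endpoint $e$. The paper glosses over this point in the same way.
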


Assumption (A2$'$) strengthens (A2): it requires \(1/a\) and \(1/b\)
to be integrable where the potentials are sufficiently large, which makes
\(J\) a \(C^1\)-functional on \(\mathcal H\). Consequently, the classical
mountain pass theorem applies directly. Compared with Theorem~\ref{thm1}, the
admissible class of potentials and the variational framework are different;
this well-posedness underpins the subsequent analysis of structural rigidity
of ground states under potential perturbations.

\begin{theorem}\label{thm4}
Let V be a locally finite graph and the potentials satisfy the same hypothesis of Theorem~\ref{thm3}. Let $(u_0,v_0)$ be a ground state of $J_{a_0,b_0}$, then there exist constants 
\(\varepsilon_0>0\) and \(l>0\) such that for all potentials 
\(a=a_0+\delta a_1\), \(b=b_0+\delta b_1\) with 
\(\|\delta a_1\|_\infty,\|\delta b_1\|_\infty\le\varepsilon_0\), 
the ground-state energy satisfies
\[
|d(a,b)-d(a_0,b_0)|\le l\max\bigl\{\|\delta a_1\|_\infty,\|\delta b_1\|_\infty\bigr\}.
\]
Moreover, for \(\Delta\) and \(\Delta'\) sufficiently small, there holds
\[d(a_0,b_0)+\frac 1p \Delta' -O(|\Delta'|^2) \le d(a,b)\le d(a_0,b_0)+\frac 1p \Delta + O(|\Delta|^2),\]
where $\Delta=\int_V(\delta a_1|u_0|^p+\delta b_1|v_0|^p)\,d\mu$, $\Delta'=\int_V(\delta a_1|u|^p+\delta b_1|v|^p)\,d\mu$, the constant \(l\) depends only on the reference ground state 
\((u_0,v_0)\) and on the data \(p\), \(V_0\).
\end{theorem}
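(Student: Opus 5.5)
The plan is to compare the two Nehari minimisation problems through the fibering maps. Write the energy as
\[
J_{a,b}(u,v)=\frac1p\int_V\bigl(|\nabla u|^p+|\nabla v|^p+a|u|^p+b|v|^p\bigr)\,d\mu-\Phi(u,v),
\]
where \(\Phi\) collects the logarithmic coupling terms. Its primitive should be essentially \(\frac1p\int_V\bigl(|u|^{p-2}v^2\log v^2+|v|^{p-2}u^2\log u^2\bigr)\,d\mu\), although the exact form only matters through its scaling.

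\textbf{Scaling of the fibering map.} Under \(t\mapsto(tu,tv)\) one has
\[
\Phi(tu,tv)=t^p\Phi(u,v)+\frac{2}{p}\,t^p\log t^2\int_V\bigl(|u|^{p-2}v^2+|v|^{p-2}u^2\bigr)\,d\mu .
\]
So \(f(t)=J_{a,b}(tu,tv)=\frac{t^p}{p}A-t^p B-c\,t^p\log t\,C\), with \(A\) the quadratic-like norm part and \(C>0\). This has a unique maximiser \(t_{a,b}(u,v)\) on \((0,\infty)\), and \(d(a,b)=\inf_{\mathcal N_{a,b}}J_{a,b}=\inf_{(u,v)\neq0}\max_{t>0}J_{a,b}(tu,tv)\). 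The hypotheses of Theorem~\ref{thm3} keep \(J\) of class \(C^1\), and \(a,b\ge V_0-\varepsilon_0>0\) survives if \(\varepsilon_0<V_0/2\). Hence the existence of ground states for both \((a_0,b_0)\) and \((a,b)\) follows from Theorem~\ref{thm3}.

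\textbf{Upper bound.} Plug \((u_0,v_0)\in\mathcal N_{a_0,b_0}\) into the perturbed problem:
\[
J_{a,b}(tu_0,tv_0)=J_{a_0,b_0}(tu_0,tv_0)+\frac{t^p}{p}\Delta .
\]
On \(\mathcal N_{a_0,b_0}\) the fibering map has \(t=1\) as its nondegenerate maximiser. Differentiating the Nehari identity \(f_0'(1)=0\) gives \(f_0''(1)=-c\,C_0<0\), with \(C_0=\int_V(|u_0|^{p-2}v_0^2+|v_0|^{p-2}u_0^2)\,d\mu\); this is where the Nehari-dictated Hessian enters. By the implicit function theorem, the maximiser of \(t\mapsto J_{a,b}(tu_0,tv_0)\) is \(t_\delta=1+O(\Delta)\). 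A second-order Taylor expansion at \(t=1\) then gives
\[
\max_t J_{a,b}(tu_0,tv_0)=d(a_0,b_0)+\frac1p\Delta+O(\Delta^2),
\]
because the first-order change in \(t\) contributes only at order \(\Delta^2\) (envelope principle). Hence \(d(a,b)\le d(a_0,b_0)+\frac1p\Delta+O(|\Delta|^2)\).

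\textbf{Lower bound.} Symmetrically, take a ground state \((u,v)\) of \(J_{a,b}\) and project it onto \(\mathcal N_{a_0,b_0}\) along its fiber. Since \(J_{a_0,b_0}=J_{a,b}-\frac{t^p}{p}\Delta'\) along the fiber, the same expansion gives \(d(a_0,b_0)\le d(a,b)-\frac1p\Delta'+O(|\Delta'|^2)\).

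\textbf{Lipschitz estimate.} This follows since \(|\Delta|\le\|\delta a_1\|_\infty\|u_0\|_p^p+\|\delta b_1\|_\infty\|v_0\|_p^p\), and similarly for \(\Delta'\).

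\textbf{Main obstacle: uniformity.} The \(O(\cdot)\) constants in the lower bound depend on \((u,v)\), which varies with the perturbation, while the statement asks for \(l\) depending only on \((u_0,v_0)\), \(p\) and \(V_0\). I would first use the upper bound to get \(d(a,b)\le 2d(a_0,b_0)\) for small \(\varepsilon_0\). On the Nehari manifold, \(J=\frac{c}{p}\,C\), so this bounds \(C(u,v)\) by \(d(a_0,b_0)\). Coercivity, through \(a,b\ge V_0/2\) and the Nehari identity combined with the logarithmic estimates used in Theorem~\ref{thm3}, then bounds \(\|(u,v)\|_{\mathcal H}\), and hence \(\|u\|_p^p+\|v\|_p^p\).

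The quantitative nondegeneracy \(|f''(1)|=cC\ge c'>0\) also needs a lower bound on \(C\) along ground states. I would get it from the standard fact that Nehari elements stay away from zero, via the ground-state energy lower bound \(d(a,b)\ge d(a_0,b_0)/2\). That bound itself follows from a crude version of the lower estimate, obtained by comparing \(J_{a,b}\ge J_{a_0,b_0}\) on fibers up to the factor \((1\pm\varepsilon_0/V_0)\) in the norm term. With these two-sided bounds, the Taylor remainders are uniform and \(l\) depends only on \((u_0,v_0)\), \(p\) and \(V_0\).
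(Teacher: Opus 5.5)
Your proposal is correct and is essentially the paper's argument: project $(u_0,v_0)$ onto $\mathcal N_{a,b}$ along its fibre for the upper bound, and a perturbed ground state onto $\mathcal N_{a_0,b_0}$ for the lower bound; the only difference is that the paper solves the fibre equation in closed form, $\log t=\Delta/(2M)$ and $J_{a,b}(t(u_0,v_0))=t^p\,d(a_0,b_0)$, so it needs no implicit function theorem or envelope expansion. Your uniformity paragraph supplies something the paper only asserts, since its constant $l_2$ from ``reversing the roles'' depends on the perturbed ground state through $M(u,v)=\frac{p^2}{2}d(a,b)$ and $\|u\|_p^p+\|v\|_p^p$; to keep your crude fibre comparison non-circular, bound the ratio $S/M\le C(\|u\|_\infty^{\varepsilon}+\|v\|_\infty^{\varepsilon})+\frac2p$ on the Nehari manifold, which follows from your a priori $\mathcal H$-bound, rather than bounding $S$ and $1/M$ separately.
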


\begin{remark}
This result gives the first quantitative characterization of the robustness
of the ground-state energy under perturbations of the potentials. The proof
relies on the conservation identity \(S-L=pd\) satisfied by every ground
state, which converts the Nehari constraint into a precise first-order
response. The upper and lower bounds involve the quantities \(\Delta\) and
\(\Delta'\), based on the unperturbed and perturbed ground states,
respectively, reflecting the nonlinear coupling in the system. Concerning
the assumptions, the \(C^1\) regularity provided by \ref{thm3} is essential
for constructing the ground-state branch via the implicit function theorem;
without it, the stability analysis cannot be carried out. This theorem lays
the foundation for the Hessian factorization in \ref{thm5}.
\end{remark}
\begin{theorem}\label{thm5}
Let \(V\) be a finite graph, \(p>4\), and let the potentials \(a_0,b_0\) satisfy \((A_1)\) and \((A_2')\). Suppose that \((u_0,v_0)\) is a non-degenerate ground state of \(J_{a_0,b_0}\) in the positive cone \(\widetilde{\mathcal H}:=\{(u,v)\in\mathcal H:u\ge0,\ v\ge0\}\), and let \((u_a,v_a)\) be the unique \(C^1\) branch of ground states for \(a\) near \(a_0\), with the entire branch lying in \(\widetilde{\mathcal H}\), given by Lemma~\ref{Frechet differential} with \(b=b_0\) fixed.
Define the branch energy \(\mathcal E(a):=J_{a,b_0}(u_a,v_a)\).
Then \(\Phi:=-\mathcal E\) is twice differentiable at \(a_0\) and its Hessian admits the exact factorisation
\begin{equation}\label{eq:Hess-Phi}
D^2_a\Phi(a_0)=W\mathsf{H}_{\!\,\text{eff}}W-\frac{1}{2M}(u_0^p)(u_0^p)^{\mathsf{T}},
\qquad W=\operatorname{diag}\bigl(u_0^{p-1}\bigr),
\end{equation}
where \(\mathsf{H}_{\!\,\text{eff}}\) is a \(n\times n\) symmetric positive definite matrix, obtain from the restriction of the full Hessian \(J_{a_0,b_0}''(u_0,v_0)\) to the tangent space \(T_{(u_0,v_0)}\mathcal{N}_{a_0,b_0}\) by a constrained Schur complement, where \(n=|V|\);
\(M=\int_V(|u_0|^{p-2}v_0^2+|v_0|^{p-2}u_0^2)\,d\mu>0\), and \((u_0^p)\) denotes the column vector \((u_0(x)^p)_{x\in V}\).
\end{theorem}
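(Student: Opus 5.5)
We treat the branch as a finite-dimensional parametric optimisation. Write the functional as
\[
J_{a,b_0}(u,v)=\tfrac1p\bigl(\|\nabla u\|_p^p+\|\nabla v\|_p^p\bigr)+\tfrac1p\int_V a|u|^p\,d\mu+\tfrac1p\int_V b_0|v|^p\,d\mu-\mathcal F(u,v),
\]
where \(\mathcal F\) is the coupled logarithmic part. On the finite graph, \(V=\{x_1,\dots,x_n\}\) and \(\mathcal H\cong\mathbb R^{2n}\). By Lemma~\ref{Frechet differential}, \(a\mapsto(u_a,v_a)\) is \(C^1\) near \(a_0\), and \((u_a,v_a)\) solves \(\partial_{(u,v)}J_{a,b_0}(u_a,v_a)=0\).

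\textbf{First derivative.} Stationarity together with the chain rule (an envelope argument) gives
\[
D_a\mathcal E(a)=\partial_aJ_{a,b_0}(u_a,v_a)=\tfrac1p\bigl(\mu(x)|u_a(x)|^p\bigr)_{x\in V}.
\]
Since \(\mathcal E\) is then \(C^1\) with derivative \(\frac1p(u_a^p)\) (weights absorbed), twice differentiability of \(\Phi\) reduces to differentiability of \(a\mapsto u_a^p\). This holds because the branch lies in \(\widetilde{\mathcal H}\) and \(p>4\), so \(t\mapsto t^p\) is \(C^1\) on \([0,\infty)\). Differentiating,
\[
D_a^2\Phi(a_0)=-\tfrac1p\cdot p\,\operatorname{diag}(u_0^{p-1})\,D_au_a\big|_{a_0}=-W\,D_au_a\big|_{a_0}.
\]

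\textbf{Computing \(D_au_a\).} Differentiating the Euler--Lagrange equation in \(a\) gives
\[
J''(u_0,v_0)\,\dot w=-\bigl(u_0^{p-1}e_x,\,0\bigr),
\]
where \(\dot w=(\dot u,\dot v)\) is the derivative in the direction \(e_x\). Non-degeneracy alone gives \(D_au_a=-P\,(J'')^{-1}P^{\mathsf T}W\), where \(P\) projects onto the \(u\)-block. We then split \(\mathbb R^{2n}=T_{(u_0,v_0)}\mathcal N\oplus\mathbb R(u_0,v_0)\), the complement being the radial direction. On the radial direction, \(J''\) is computed from the fibering map \(t\mapsto J(tu_0,tv_0)\). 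Using the Nehari identity, the logarithm terms produce the extra term \(t^p\log t^2\cdot(\ldots)\), whose second derivative at \(t=1\) equals \(-2M\) up to the factor structure of \(\mathcal F\). Here \(M=\int_V(|u_0|^{p-2}v_0^2+|v_0|^{p-2}u_0^2)\,d\mu\) arises because \(\frac{d^2}{dt^2}\) of \(t^p\log t^2\) at \(t=1\) contributes \(2(2p-1)\), while the power parts cancel via the Nehari constraint.

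Block-inverting \(J''\) in this splitting by a Schur complement with respect to the radial direction gives
\[
(J'')^{-1}=\iota_T\,\mathsf H_T^{-1}\,\iota_T^{\mathsf T}+(\text{rank-one radial term}).
\]
Define \(\mathsf H_{\rm eff}\) as minus the \(u\)-block compression of the constrained inverse \(\mathsf H_T^{-1}\), where \(\mathsf H_T\) is the Schur complement of the radial block. Since the ground state minimises on \(\mathcal N\) and is non-degenerate, \(\mathsf H_T\) is positive definite, hence so is \(\mathsf H_{\rm eff}\); sign conventions will need care. The rank-one term, paired against \(W\) and the radial vector \(u_0\), yields \(W u_0=u_0^p\) as a column vector, which produces \(-\frac1{2M}(u_0^p)(u_0^p)^{\mathsf T}\).

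\textbf{Main obstacle.} The crux is the exact constant \(\frac1{2M}\) in the rank-one correction. It requires computing \(\langle J''(u_0,v_0)(u_0,v_0),(u_0,v_0)\rangle\) precisely: we will use \(J'(u_0,v_0)=0\) and Euler-type homogeneity, \(\frac{d}{dt}\big|_{t=1}\) of \(J'(tu_0,tv_0)\), to express it as \(-2M\). It also requires checking that the cross terms between \(T\mathcal N\) and the radial direction vanish, or else are absorbed into the constrained Schur complement so that the formula is exact rather than approximate. We expect to choose the complement of \(T\mathcal N\) to be \(J''\)-orthogonal rather than simply radial, in order to kill the cross terms, and then verify that its \(W\)-pairing still reduces to \(u_0^p\) via the Nehari identity.
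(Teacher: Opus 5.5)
Your outline follows the paper's argument with different packaging. Both split $\mathbb R^{2n}=T\oplus\mathbb R(u_0,v_0)$, read the tangential response off the restricted Hessian, and take the rank-one term from the radial value $\langle J''(u_0,v_0)(u_0,v_0),(u_0,v_0)\rangle=-2M$. The paper uses the bordered KKT system and then shows $\dot\lambda=0$. Your unbordered equation $J''(u_0,v_0)\dot w=-(W\delta a_1,0)$ is equivalent, because the branch consists of free critical points and the multiplier vanishes identically; it is more direct, as is your envelope derivation of $\nabla_a\mathcal E=\frac1p u_a^p$. As written, however, the proposal does not close, for three concrete reasons.

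\textbf{Point 1: the main obstacle is one line.} What you list as the main obstacle is settled by a single identity, which is the key step of the paper's proof. Since $G(u,v)=\langle J'(u,v),(u,v)\rangle$ and $J'(u_0,v_0)=0$,
\[
\langle G'(u_0,v_0),h\rangle=\langle J''(u_0,v_0)(u_0,v_0),h\rangle+\langle J'(u_0,v_0),h\rangle=\langle J''(u_0,v_0)(u_0,v_0),h\rangle .
\]
So $T=\ker G'(u_0,v_0)$ is exactly the $J''$-orthogonal complement of the radial line; this is \eqref{eq:orthogonality}. The ``$J''$-orthogonal complement of $T$'' you propose to switch to is therefore the radial line itself, and there are no cross terms to absorb. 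Let $E$ be an orthonormal basis matrix of $T$, $w_0=(u_0,v_0)$, and $H_{\rm mat}=E^{\mathsf T}J''E$. In the basis $[E\,|\,w_0]$ the matrix of $J''$ is $\operatorname{diag}(H_{\rm mat},-2M)$. This justifies the invertibility of the full Hessian, which you used tacitly, and gives the exact inverse $(J'')^{-1}=EH_{\rm mat}^{-1}E^{\mathsf T}-\frac1{2M}w_0w_0^{\mathsf T}$.

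\textbf{Point 2: the sign of $\mathsf H_{\rm eff}$.} Compressing with $U:=PE$ gives $-WD_au_a=W\,UH_{\rm mat}^{-1}U^{\mathsf T}W-\frac1{2M}(u_0^p)(u_0^p)^{\mathsf T}$. Hence $\mathsf H_{\rm eff}=+UH_{\rm mat}^{-1}U^{\mathsf T}$. With your ``minus'' it would be negative semidefinite and the stated formula would fail.

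\textbf{Point 3: strict positivity.} $H_{\rm mat}\succ0$ only yields $\mathsf H_{\rm eff}\succeq0$. Strict positivity requires $\operatorname{rank}U=n$, i.e.\ $\pi_u(T)=\mathbb R^n$, i.e.\ $\partial_vG(u_0,v_0)\neq0$. This does hold. Differentiating $J'(tu_0,tv_0)$ at $t=1$ gives $G'(u_0,v_0)=J''(u_0,v_0)(u_0,v_0)=-\frac2p M'(u_0,v_0)$, which also recovers $-2M$ by homogeneity. Its $v$-component is $-\frac2p\bigl(2u_0^{p-2}v_0+(p-2)v_0^{p-3}u_0^2\bigr)$, which is nonzero because $u_0,v_0>0$ by Proposition~\ref{Pro1}.
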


\begin{remark}
This is the central result of the discrete rigidity theory and gives a
positive answer to the second fundamental difficulty: the lack of a
systematic second-order variational description of the Nehari manifold for
coupled systems. Theorem~\ref{thm5} reveals the precise
second-order structure of the ground-state energy as a function of the
potential. The term \(W\mathsf H_{\mathrm{eff}}W\) comes from the constrained
Hessian on the tangent space, while \(-\frac{1}{2M}(u_0^p)(u_0^p)^{\mathsf T}\)
is a rank-one negative correction generated by the first variation of the
Nehari constraint.

Regarding the assumptions, the finite graph hypothesis makes the space
\(\mathcal H\) finite-dimensional, so the matrix representation and the Schur
complement are well defined; non-degeneracy ensures that the constrained
Hessian is positive definite on the tangent space, which is necessary for
\(\mathsf H_{\mathrm{eff}}\) to be well defined. Without non-degeneracy, the
Hessian factorization would degenerate and the rigidity index would lose its
precise meaning.

This factorization uncovers a new rigidity mechanism: the ground-state set is
not only contained in the Nehari manifold but is further constrained by an
exact conservation identity common to all ground states. That identity
directly yields the explicit gradient formula
\(\nabla_a\mathcal E(a_0)=\frac{1}{p}|u_0|^p\) and produces a scalar rigidity
index that separates the system into positive definite, degenerate, and
indefinite phases, with an explicit criterion for the onset of degeneracy.
This algebraic structure, in which the constraint and the energy are
intertwined, is absent in scalar logarithmic equations and is peculiar to
the coupled system.
\end{remark}

Now we consider the following two systems:
\begin{equation}\label{eq:system4}
\begin{cases}
-\Delta_{p}u + (1 + \Lambda a(x))|u|^{p-2}u
   = \dfrac{p-2}{p}|u|^{p-4}uv^{2}\log v^{2}
     + \dfrac{2}{p}|v|^{p-2}u\log u^{2}
     + \dfrac{2}{p}|v|^{p-2}u, & \text{in } V,\\[10pt]
-\Delta_{p}v + (1 + \Lambda b(x))|v|^{p-2}v
   = \dfrac{p-2}{p}|v|^{p-4}vu^{2}\log u^{2}
     + \dfrac{2}{p}|u|^{p-2}v\log v^{2}
     + \dfrac{2}{p}|u|^{p-2}v, & \text{in } V.
\end{cases}\tag{4}
\end{equation}
and
\begin{equation}\label{eq:system5}
\begin{cases}
-\Delta_{p}u + |u|^{p-2}u
   = \dfrac{p-2}{p}|u|^{p-4}uv^{2}\log v^{2}
     + \dfrac{2}{p}|v|^{p-2}u\log u^{2}
     + \dfrac{2}{p}|v|^{p-2}u, & \text{in } \Omega_{a},\\[10pt]
-\Delta_{p}v + |v|^{p-2}v
   = \dfrac{p-2}{p}|v|^{p-4}vu^{2}\log u^{2}
     + \dfrac{2}{p}|u|^{p-2}v\log v^{2}
     + \dfrac{2}{p}|u|^{p-2}v, & \text{in } \Omega_{b},\\[4pt]
u = 0, & \text{on } \partial\Omega_{a},\\
v = 0, & \text{on } \partial\Omega_{b}.
\end{cases}\tag{5}
\end{equation}
\begin{remark}\label{rmk 1.2}

If we allow $a(x)\geq 0$ and $b(x)\geq 0$, set $\Omega_{a} = \{x\in V:a(x) = 0\}$, $\Omega_{b} = \{x\in V: b(x) = 0\}$, and assume that $\Omega_{a}$, $\Omega_{b}$ and $\Omega_{a}\cap \Omega_{b}$ are nonempty bounded domains in $V$, we impose the following condition:
\begin{enumerate}
\item[(A2'')] there exists $\mathcal{M}_0 > 0$ such that $\frac{1}{a(x)}\in L^{1}(V\setminus D_{\mathcal{M}_0}^{a}\cup \Omega_{a})$ and $\frac{1}{b(x)}\in L^{1}(V\setminus D_{\mathcal{M}_0}^{b}\cup \Omega_{b})$, where $D_{\mathcal{M}_0}^{a} = \{x\in V:a(x)\leq \mathcal{M}_0\}$, $D_{M_0}^{b} = \{x\in V:b(x)\leq \mathcal{M}_0\}$, and the volumes of $D_{\mathcal{M}_0}^{a}$ and $D_{\mathcal{M}_0}^{b}$ are finite.
\end{enumerate}
Clearly, when $\Lambda$ is sufficiently large, $1 + \Lambda a(x)$, $1+\Lambda b(x)$ satisfy the conditions (A1) and (A2''). As in the proof of Theorem~\ref{thm3}, one can similarly derive that ~\eqref{eq:system4} also admits a ground state solution. By analyzing the asymptotic behavior of solutions of ~\eqref{eq:system4}, we obtain the following results.
\end{remark}

\begin{theorem}\label{thm6}
Let $G = (V,E)$ be a locally finite graph and suppose that for every vertex $x\in V$, $0< \mu_{\min}\leq \mu(x)$. Assume that $a(x)\ge 0,b(x)\ge 0$ satisfy the hypotheses of Remark~\ref{rmk 1.2}, then for $p > 4$, the ground state solution of ~\eqref{eq:system4} converges to a ground state solution of ~\eqref{eq:system5} as $\Lambda \rightarrow \infty$.
\end{theorem}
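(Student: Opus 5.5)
The plan is the standard steep-potential-well argument, carried out on the Nehari manifolds. Write $J_\Lambda$ for the functional of \eqref{eq:system4} on $\mathcal H_\Lambda$, with norm
\[
\|(u,v)\|_\Lambda^p=\int_V(|\nabla u|^p+(1+\Lambda a)|u|^p+|\nabla v|^p+(1+\Lambda b)|v|^p)\,d\mu .
\]
Write $\mathcal N_\Lambda$ for its Nehari manifold and $d_\Lambda$ for the ground state level. Let $(u_\Lambda,v_\Lambda)$ be the ground state provided by Remark~\ref{rmk 1.2}, i.e.\ the argument of Theorem~\ref{thm3}. Let $J_\Omega$, $\mathcal N_\Omega$, $d_\Omega$ be the corresponding objects for \eqref{eq:system5}, acting on pairs supported in $\Omega_a\times\Omega_b$. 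Since $\Omega_a,\Omega_b$ are finite, this is a finite-dimensional problem. It has a ground state by minimisation on $\mathcal N_\Omega$, using the fibering map as in the proof of Theorem~\ref{thm1}; nontriviality of $\mathcal N_\Omega$ requires $\Omega_a\cap\Omega_b\neq\emptyset$.

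\textbf{Step 1: uniform upper bound.} Any pair supported in $\Omega_a\times\Omega_b$ is killed by the $\Lambda$-terms, so $\mathcal N_\Omega\subset\mathcal N_\Lambda$ and $J_\Lambda=J_\Omega$ there. Hence $d_\Lambda\le d_\Omega$ for every large $\Lambda$. The identity $J_\Lambda-\frac1p\langle J_\Lambda',\cdot\rangle$ expresses the energy on $\mathcal N_\Lambda$ as a positive multiple of the coupling mass $\int_V(|u|^{p-2}v^2+|v|^{p-2}u^2)\,d\mu$; this is the same computation behind $S-L=pd$. Combining this with Young/H\"older and the calibration $t^2|\log t^2|\le C(t^{2-\eta}+t^{2+\eta})$ for small $\eta>0$ (using $p>4$) gives $\|(u_\Lambda,v_\Lambda)\|_\Lambda\le C$ uniformly in $\Lambda$. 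A uniform lower bound $\|(u_\Lambda,v_\Lambda)\|_\Lambda\ge c>0$ comes from the Nehari identity together with $\|u\|_\infty\le\mu_{\min}^{-1/p}\|u\|_p$.

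\textbf{Step 2: convergence.} Since $\mu\ge\mu_{\min}$, bounded sequences in $\ell^p(V)$ have pointwise convergent subsequences by a diagonal argument, so $u_\Lambda\to u_0$ and $v_\Lambda\to v_0$ pointwise. The bound $\Lambda\int_V a|u_\Lambda|^p\,d\mu\le C$ forces $u_0=0$ off $\Omega_a$, and similarly $v_0=0$ off $\Omega_b$. The main step is to upgrade this to strong convergence $\int_{V\setminus\Omega_a}(1+\Lambda a)|u_\Lambda|^p\,d\mu\to0$, and likewise for $v$.

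I would test the first equation with $u_\Lambda\chi_{V\setminus\Omega_a}$. On $\{a>\mathcal M_0\}$ the $\ell^\infty$ norms of $u_\Lambda,v_\Lambda$ are uniformly bounded, and $\Lambda a\ge\Lambda\mathcal M_0$ dominates the right-hand side. The latter is bounded by $C|u_\Lambda|^{p-\eta}$-type terms; this is exactly where (A2$''$) enters, through the integrability of $1/a$ there. The remaining set $D^a_{\mathcal M_0}\setminus\Omega_a$ has finite volume, hence is finite, and on it pointwise convergence to $0$ suffices. The step I expect to be the hardest is controlling the logarithmic terms near $u=0$: $|u|^{p-4}uv^2\log v^2$ is not dominated by $|u|^{p}$. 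For this I would apply the exponent calibration to the cross term, bounding $v^2|\log v^2|\le C_\eta(|v|^{2-\eta}+|v|^{2+\eta})$ and using H\"older with exponents chosen so that the total power exceeds $p-\eta$. Tails at infinity are then handled by the $1/\Lambda$ smallness.

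\textbf{Step 3: identification.} Strong convergence lets me pass to the limit in the equations tested against functions supported in $\Omega_a$ (resp.\ $\Omega_b$), so $(u_0,v_0)$ solves \eqref{eq:system5}. It is nontrivial by the lower bound of Step 1. The energies converge, $J_\Omega(u_0,v_0)=\lim d_\Lambda\le d_\Omega$. Since $(u_0,v_0)\in\mathcal N_\Omega$, we also have $J_\Omega(u_0,v_0)\ge d_\Omega$. Hence $(u_0,v_0)$ is a ground state of \eqref{eq:system5}, and since every subsequence has such a further subsequence, the convergence holds along $\Lambda\to\infty$ up to subsequences.
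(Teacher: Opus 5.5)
Your argument is correct. Its skeleton coincides with the paper's: the inclusion $\mathcal N_\Omega\subset\mathcal N_\Lambda$ giving $d_\Lambda\le d_\Omega$, the uniform bound from the energy identity $J=\frac{2}{p^2}M$ together with the $\ell^\infty$ calibration, and the pointwise limit vanishing off the wells. The difference lies in how the limit is identified. The paper proves $d_{\Lambda_k}\to d_\Omega$ before knowing anything about strong convergence: it projects the weak limit onto $\mathcal N_\Omega$ by the fibre map, $d_\Omega\le J_\Omega(t_0(u,v))\le\liminf J_{\Lambda_k}(t_0(u_k,v_k))\le\liminf d_{\Lambda_k}$. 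It then shows the limit is a critical point, reads off $J_\Omega(u,v)=d_\Omega$ from the energy identity, and only at the end upgrades to strong convergence via the monotonicity inequality for the $p$-Laplacian. You reverse the order. The truncated test function $u_\Lambda\chi_{V\setminus\Omega_a}$ gives strong convergence first, including $\int_{V\setminus\Omega_a}\Lambda a|u_\Lambda|^p\,d\mu\to0$. Identification is then the sandwich $d_\Omega\le J_\Omega(u_0,v_0)=\lim d_\Lambda\le d_\Omega$, and nontriviality follows from your uniform lower bound. Your route avoids projecting the weak limit onto $\mathcal N_\Omega$, which needs $M(u,v)>0$, a point the paper justifies only briefly. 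The paper's route avoids any test-function computation across the boundary of the wells.

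The step you flag as hardest is easier than you fear, and the $|v|^{2-\eta}$ calibration is unnecessary. In the tested identity, the negative part of $v_\Lambda^2\log v_\Lambda^2$ enters the right-hand side with a favourable sign and can be dropped. The positive part lives on $\{|v_\Lambda|>1\}$, where the uniform $\ell^\infty$ bound gives $|u_\Lambda|^{p-2}(v_\Lambda^2\log v_\Lambda^2)^+\le C|u_\Lambda|^{p-2}v_\Lambda^2$. Moreover $\int_{V\setminus\Omega_a}|u_\Lambda|^{p-2}v_\Lambda^2\,d\mu\le\|u_\Lambda\chi_{V\setminus\Omega_a}\|_p^{p-2}\|v_\Lambda\|_p^2\to0$, because $\int_{\{a>\mathcal M_0\}}|u_\Lambda|^p\,d\mu\le C(\Lambda\mathcal M_0)^{-1}$ and $D^a_{\mathcal M_0}\setminus\Omega_a$ is finite. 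The other two terms are handled the same way, so (A2$''$) is not needed at this step.

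Two details do need care. First, the gradient term $\int_V|\nabla u_\Lambda|^{p-2}\Gamma(u_\Lambda,u_\Lambda\chi_{V\setminus\Omega_a})\,d\mu$ is not sign-definite, since edges joining $\Omega_a$ to $\partial\Omega_a$ contribute terms of either sign. There are finitely many such edges, however, and they are $o(1)$ by pointwise convergence. Reading the identity edge by edge also gives $\int_{V\setminus(\Omega_a\cup\partial\Omega_a)}|\nabla u_\Lambda|^p\,d\mu\to0$, which you need for strong convergence of the gradients. Second, on the finite set $D^a_{\mathcal M_0}\setminus\Omega_a$, pointwise convergence $u_\Lambda(x)\to0$ alone does not make $\Lambda a(x)|u_\Lambda(x)|^p$ small. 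That smallness must be read off from the tested identity as a whole, whose right-hand side tends to $0$.
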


This result links ground states of the penalised problem (with large confining potentials) to the Dirichlet problem on the zero set; the proof controls decay outside the wells, and the convergence justifies approximation by large finite wells. Assumption (A2$''$) allows vanishing on \(\Omega_a,\Omega_b\) while preserving integrability outside, ensuring embeddings for large \(\Lambda\).

\begin{theorem}\label{thm7}
Under the same hypotheses of Theorem~\ref{thm6} ,in addition, assume there exists constants \(a_0,b_0>0\) such that \(a\ge a_0>0\) in \(\Omega_a^c\), \(b\ge b_0>0\) in \(\Omega_b^c\).
For \(\Lambda>0\), set \(a_\Lambda=1+\Lambda a\), \(b_\Lambda=1+\Lambda b\) and let \((u_\Lambda,v_\Lambda)\) be a ground state of the penalised functional \(J_\Lambda=J_{a_\Lambda,b_\Lambda}\) with energy \(d_\Lambda=d(a_\Lambda,b_\Lambda)\), there exists a constant \(C>0\), depending only on \(p,a_0,b_0\) and the graph geometry, such that for all sufficiently large \(\Lambda\),
\begin{equation}\label{eq:decay}
\int_{\Omega_a^c}|u_\Lambda|^p\,d\mu \le C\Lambda^{-1},\qquad
\int_{\Omega_b^c}|v_\Lambda|^p\,d\mu \le C\Lambda^{-1}.
\end{equation}
\end{theorem}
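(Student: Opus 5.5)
The bound will come from comparing the ground-state energy $d_\Lambda$ with a $\Lambda$-independent constant and then reading off the potential term.

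\textbf{Step 1: uniform energy bound.} Let $(\bar u,\bar v)$ be a ground state of the limit Dirichlet problem \eqref{eq:system5}, extended by zero outside $\Omega_a$ and $\Omega_b$. This extension lies in the penalised space for every $\Lambda$. Because $a=0$ on $\Omega_a$ and $b=0$ on $\Omega_b$, the penalty $\Lambda\int(a|u|^p+b|v|^p)\,d\mu$ vanishes on it. Hence $J_\Lambda$ agrees with the limit functional $J_\infty$ along the whole fibre $t\mapsto(t\bar u,t\bar v)$, and the Nehari projection onto $\mathcal N_\Lambda$ returns $(\bar u,\bar v)$ itself. We conclude
\[
d_\Lambda\le J_\Lambda(\bar u,\bar v)=d_\infty=:d^\ast,
\]
independently of $\Lambda$.

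\textbf{Step 2: a coercive expression for the energy on the Nehari manifold.} I will use the conservation identity $S-L=pd$ from the proof of Theorem~\ref{thm4}. Equivalently, I form $J_\Lambda-\frac1{p}\langle J_\Lambda'(u,v),(u,v)\rangle$ and use the homogeneity of the nonlinearity: the logarithmic terms are $p$-homogeneous up to the $\log t^2$ factors. This should give, on $\mathcal N_\Lambda$,
\[
d_\Lambda=c_p\int_V\bigl(|u_\Lambda|^{p-2}v_\Lambda^2+|v_\Lambda|^{p-2}u_\Lambda^2\bigr)\,d\mu,
\]
with an explicit constant $c_p>0$ (this is the quantity $M$ of Theorem~\ref{thm5}). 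The relation also yields a uniform bound on $\|(u_\Lambda,v_\Lambda)\|_{\mathcal H_\Lambda}^p$, and in particular on
\[
\int_V\bigl((1+\Lambda a)|u_\Lambda|^p+(1+\Lambda b)|v_\Lambda|^p\bigr)\,d\mu.
\]
That bound needs control of the positive part of the logarithmic terms by $M$ and the norm. I expect this to be the main obstacle. The plan for it:
\begin{itemize}
\item Since $\mu\ge\mu_{\min}$, we have $\|u\|_\infty\le \mu_{\min}^{-1/p}\|u\|_p$.
\item Hence $\log u^2\le C(1+\log\|u\|_{\mathcal H})$ pointwise.
\item The positive logarithmic contributions are therefore bounded by $C\,M\log(2+\|(u,v)\|)$.
\item Together with the Nehari identity $\|(u,v)\|^p=(\text{log terms})+\frac{2}{p}M\cdot(\text{const})$ and $M\le d^\ast/c_p$, this gives $\|(u,v)\|^p\le C(1+\log(2+\|(u,v)\|))$.
\item Since $p>0$, this forces $\|(u,v)\|\le C$ uniformly in $\Lambda$.
\end{itemize}

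\textbf{Step 3: conclusion.} On $\Omega_a^c$ we have $a\ge a_0$, so
\[
\Lambda a_0\int_{\Omega_a^c}|u_\Lambda|^p\,d\mu\le\Lambda\int_V a|u_\Lambda|^p\,d\mu\le\|(u_\Lambda,v_\Lambda)\|_{\mathcal H_\Lambda}^p\le C,
\]
which gives $\int_{\Omega_a^c}|u_\Lambda|^p\,d\mu\le C/(a_0\Lambda)$. The same argument on $\Omega_b^c$ with $b\ge b_0$ gives the bound for $v_\Lambda$.

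The constant depends only on:
\begin{itemize}
\item $p$, through $c_p$ and the logarithmic bookkeeping;
\item $a_0,b_0$;
\item the graph, through $\mu_{\min}$ and the limit energy $d^\ast$ of the fixed wells.
\end{itemize}
This matches the dependence asserted in the statement.
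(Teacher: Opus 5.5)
Your proof is correct and follows essentially the paper's argument: a $\Lambda$-independent bound on $d_\Lambda$, the identity $d_\Lambda=\frac{2}{p^2}M_\Lambda$, the $L^\infty$ bound from $\mu_{\min}$ to dominate the positive logarithmic part by a sublinear function of the norm times $M_\Lambda$, the Nehari identity to get a uniform $\mathcal H_\Lambda$ bound, and finally $a\ge a_0$ on $\Omega_a^c$. The differences are only cosmetic. You get $d_\Lambda\le d_\Omega$ directly from $\mathcal N_\Omega\subset\mathcal N_\Lambda$, valid for every admissible $\Lambda$, instead of citing the convergence $d_\Lambda\to d_\Omega$ from Theorem~\ref{thm6}; you control $(\log v^2)^+$ by the logarithm of the sup norm rather than by $|v|^\gamma$; and you read the penalty term off the norm bound rather than going back through $\Lambda\int_{\Omega_a^c}a|u_\Lambda|^p\,d\mu\le p\,d_\Lambda+L_\Lambda$.
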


\begin{remark}
This theorem sharpens \ref{thm6} by giving the \(L^p\)-decay rate \(\Lambda^{-1}\) of ground states outside the wells. The algebraic rate arises from the balance between linear potential growth and \(p\)-Laplacian homogeneity, and is sharper than pure energy estimates. The uniform lower bounds \(a\ge a_0>0\), \(b\ge b_0>0\) outside the wells are responsible for this rate; different growth would give a different rate, so the rate is natural within the class of Theorem~\ref{thm6}. Actually, this estimate is useful for numerical approximation and domain truncation.
\end{remark}

\begin{theorem}\label{thm8}
Under the hypotheses of Theorem~\ref{thm2}, for each \(\varepsilon\in(0,1/2)\) let \((u_\varepsilon,v_\varepsilon)\) be a ground state of the regularised functional \(J_\varepsilon\) on the Nehari manifold \(\mathcal N_\varepsilon\). Let \(\varepsilon_n\to0\) and write
\[
u_n:=u_{\varepsilon_n},\qquad v_n:=v_{\varepsilon_n}.
\]
Then, passing to a subsequence, there exists \((u_0,v_0)\in\mathcal G\) such that
\[
(u_n,v_n)\to(u_0,v_0)\quad\text{strongly in }\mathcal G,
\]
and \((u_0,v_0)\) is a ground state of the original functional \(J\), and in particular, \((u_0,v_0)\) is a weak solution of the original continuous system obtained by replacing \(\varepsilon\) with \(0\) in (\ref{eq:main1}).
\end{theorem}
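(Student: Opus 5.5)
We first make the structure explicit. The regularised functional in the space \(\mathcal G=\mathcal G_a\times\mathcal G_b\) is
\[
J_\varepsilon(u,v)=\frac1p\int_{\mathbb R^N}\bigl(|\nabla u|^p+|\nabla v|^p+a|u|^p+b|v|^p\bigr)\,dx-\frac1p\int_{\mathbb R^N}|u|^{p-2}|v|^{p-2}\ldots
\]
and schematically
\[
J_\varepsilon(u,v)=\frac1p\|(u,v)\|^p-\frac1p\int\Bigl(|v|^p\log(u^2+\varepsilon)+|u|^p\log(v^2+\varepsilon)\Bigr)dx .
\]
Differentiating, one recovers the right-hand sides of \eqref{eq:main1}. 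On \(\mathcal N_\varepsilon\) the derivative of \(t\mapsto J_\varepsilon(tu,tv)\) vanishes. For \(J\) itself the Nehari identity reads
\[
\|(u,v)\|^p=\int\bigl(|v|^p\log u^2+|u|^p\log v^2\bigr)+\frac2p\int\bigl(|u|^{p-2}v^2+|v|^{p-2}u^2\bigr),
\]
so that on \(\mathcal N\) we get \(J=\frac{2}{p^2}\int(|u|^{p-2}v^2+|v|^{p-2}u^2)\). The \(\varepsilon\)-version has the same form up to the factors \(u^2/(u^2+\varepsilon)\le1\).

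\textbf{Step 1: uniform bounds.} We show that \(d_\varepsilon=J_\varepsilon(u_\varepsilon,v_\varepsilon)\) is bounded above uniformly in \(\varepsilon\in(0,1/2)\). To do this, fix one pair \((\phi,\psi)\in C_c^\infty\) with \(\phi,\psi\ge1\) on a common ball. Its fibre maximum \(t_\varepsilon\) is bounded above and below uniformly, since \(\log(t^2\phi^2+\varepsilon)\) is monotone in \(\varepsilon\) and dominated on the support. Next we need a uniform norm bound on \((u_n,v_n)\). The Nehari identity alone does not give coercivity, because the log terms are sign-indefinite. Here we invoke the exponent calibration estimate used in the proof of Theorem~\ref{thm2}. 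Write \(\log(s^2+\varepsilon)\le C_\delta(s^2+\varepsilon)^{\delta/2}\) for large \(s\); this holds uniformly in \(\varepsilon\le1/2\), while for small \(s\) the log is negative. Then Hölder–Young with the calibrated exponents gives
\[
\int|v|^p\log_+(u^2+\varepsilon)\le C\|(u,v)\|^{p+\delta}
\]
with \(p+\delta<\) the range where the Trudinger–Moser/Sobolev embedding \(W^{1,N}\hookrightarrow L^q\) (all \(q\ge N\)) applies. Combining this with the energy identity \(d_\varepsilon=\frac{2}{p^2}\int(\cdots)\le C\) and the interpolation \(\int|u|^{p-2}v^2\le\|u\|_p^{p-2}\|v\|_p^2\), we expect to get \(\|(u_n,v_n)\|\le C\). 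We also get a uniform lower bound \(\|(u_n,v_n)\|\ge c>0\), from the negativity of the log near \(0\) for \(\varepsilon<1/2\) and the Nehari identity. This yields \(d_\varepsilon\ge c'>0\).

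\textbf{Step 2: compactness and passage to the limit.} Since \(a,b\to\infty\), the embedding \(\mathcal G\hookrightarrow L^q\times L^q\) is compact for all \(q\ge N\). Passing to a subsequence, we get \((u_n,v_n)\rightharpoonup(u_0,v_0)\) in \(\mathcal G\), strongly in \(L^q\), and a.e. The calibrated bound gives uniform integrability of \(|v_n|^p\log_+(u_n^2+\varepsilon_n)\). For the negative part, \(|v|^p|\log(u^2+\varepsilon)|\) with \(u\) small, we use \(|u|^p\log^-(v^2)\le C|u|^p|v|^{-\eta}\)? That bound is problematic. Instead we use the coupling structure. The troublesome negative terms enter \(J\) with a sign that makes \(J\ge\) the limit, so Fatou applies in the right direction. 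Concretely, \(-|v|^p\log(u^2+\varepsilon)\) is bounded below by \(-|v|^p\log_+(\cdot)\), which converges. Vitali on the positive parts and Fatou on the negative parts then give
\[
J(u_0,v_0)\le\liminf d_{\varepsilon_n}.
\]
Nontriviality of \((u_0,v_0)\) follows from the lower bound together with strong \(L^q\) convergence of the quantity \(\int(|u|^{p-2}v^2+|v|^{p-2}u^2)\), which equals \(p^2d_\varepsilon/2\) up to bounded factors.

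\textbf{Step 3: ground state, strong convergence and weak solution.} Here the main obstacle is the Nehari identity in the limit. From \(\langle J_\varepsilon'(u_n,v_n),(\varphi,\psi)\rangle=0\) for \(\varphi,\psi\in C_c^\infty\), we pass to the limit. The \(p\)-Laplacian terms converge by the standard monotonicity (\(S_+\)) argument: test with \((u_n-u_0)\) and use the calibrated bounds to show the nonlinear parts tend to zero. This gives strong convergence in \(\mathcal G\). For the log terms tested against compactly supported functions, we use local uniform integrability: \(|u_n|^{p-3}|v_n|^2|\log(v_n^2+\varepsilon_n)|\) is dominated via \(s^2|\log s|\le C(1+s^{2+\delta})\). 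The factor \(u^3/(u^2+\varepsilon)\to u\) pointwise and is bounded by \(|u|\). Hence \((u_0,v_0)\) is a weak solution and lies on \(\mathcal N\). For minimality, take any \((u,v)\in\mathcal N\) with \(J(u,v)<\infty\). Its \(\varepsilon\)-fibre maxima \(t_\varepsilon\to1\), because the fibre map converges locally uniformly in \(t\) by dominated convergence on the finite-energy domain. So \(\limsup d_{\varepsilon_n}\le J(u,v)\). Combined with Step 2, this gives \(J(u_0,v_0)=\inf_{\mathcal N}J\), and in fact \(d_{\varepsilon_n}\to d\). The delicate points are the uniform-in-\(\varepsilon\) calibration near \(u=0\), where \(\log\varepsilon\) blows up, and the convergence \(t_\varepsilon\to1\). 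We would handle both by splitting \(\{u^2<\varepsilon\}\) from its complement and using that \(\varepsilon<1/2\) keeps the log negative there, so that these contributions have a favourable sign.
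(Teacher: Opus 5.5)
\textbf{Step 3 fails as written.} To test the $n$-th regularised equation with $(u_n-u_0,0)$ you need $\int_{\mathbb R^N}R_u^{(n)}(u_n-u_0)\,dx\to0$, where $R_u^{(n)}$ is the right-hand side of the first regularised equation. This integral contains the cross terms $\int|v_n|^{p-2}u_nu_0\bigl(\log(u_n^2+\varepsilon_n)\bigr)_-\,dx$ and $\int|u_n|^{p-4}u_nu_0v_n^2\bigl(\log(v_n^2+\varepsilon_n)\bigr)_-\,dx$, and no calibrated bound controls them, for three reasons:
\begin{enumerate}
\item Calibration only turns $\log^+$ into a small power gain. Here $|\log(u_n^2+\varepsilon_n)|$ is unbounded as $\varepsilon_n\to0$.
\item The natural uniform bound $|s\log(s^2+\varepsilon)|\le C_\eta|s|^{1-\eta}$ leaves an integrand of homogeneity $p-\eta<p$. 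The embeddings $\mathcal G_a,\mathcal G_b\hookrightarrow L^q$, $q\ge N$, give no control of such an integrand on the tails of $\mathbb R^N$.
\item Fatou gives the inequality in the wrong direction for a cross term. This is the sign issue you noticed in Step 2, and it comes back here.
\end{enumerate}
Showing that these terms vanish is essentially $L_n^-\to L_0^-$, which is the conclusion, not an input.

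The paper never tests the $n$-th equation with $u_0$:
\begin{enumerate}
\item It passes to the limit only against $C_c^\infty$ test functions, using local Vitali. This gives a limit equation involving the weak limit $\chi_u$ of $|\nabla u_n|^{p-2}\nabla u_n$.
\item It tests the $n$-th equation with $u_n$, where the negative logarithmic part has the favourable sign for Fatou. It tests the limit equation with $\eta_Ru_0$, using $L_0^-<\infty$.
\item This yields $\limsup\int|\nabla u_n|^p\le\int\chi_u\cdot\nabla u_0$, and Minty's trick gives $\chi_u=|\nabla u_0|^{p-2}\nabla u_0$.
\item The same cutoff test gives the original Nehari identity. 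Your step ``weak solution, hence on $\mathcal N$'' skips this.
\item Only then do $\liminf S_n\ge S_0$ and Fatou squeeze $L_n^-\to L_0^-$. Hence $S_n\to S_0$, and Radon--Riesz gives strong convergence.
\end{enumerate}

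Your own ingredients would also close the argument by an energy route, with $d:=\inf_{\mathcal N}J$. Taking $\liminf$ in the regularised Nehari identity, Step 2 gives $S_0\le L_0+\frac2pM_0$. If this were strict, the projection $t_0<1$ onto $\mathcal N$ would give $d\le\frac{2t_0^p}{p^2}M_0<\lim d_{\varepsilon_n}\le d$, using your minimality bound. So $(u_0,v_0)\in\mathcal N$, and the same squeeze gives strong convergence. The weak-solution property then follows by passing to the limit against $C_c^\infty$ test functions.

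\textbf{Step 1 also has a gap.} Write $X_\varepsilon=\|u_\varepsilon\|_{\mathcal G_a}^p+\|v_\varepsilon\|_{\mathcal G_b}^p$. The estimate you state, $L_\varepsilon^+\le C\|(u,v)\|_{\mathcal G}^{p+\delta}$, gives only $X_\varepsilon\le CX_\varepsilon^{1+\delta/p}+C$ in the Nehari identity. That is superlinear and bounds nothing; it is the estimate behind the \emph{lower} bound on the norm. The missing idea is to keep the a priori bounded mixed integral as a H\"older factor:
\begin{enumerate}
\item On $\{u^2>1/2\}$ one has $u^2\le 2u^4/(u^2+\varepsilon)$, so $\int_{\{u^2>1/2\}}|v|^{p-2}u^2\le 2R_\varepsilon=p^2d_\varepsilon\le C$.
\item H\"older with $s=p/(p-\sigma)$, followed by Young, then gives $\int_{\{u^2>1/2\}}|v|^{p-2}|u|^{2+\sigma}\le CR_\varepsilon^{(p-\sigma)/p}\|(u,v)\|_{\mathcal G}^{2\sigma}$.
\item This is sublinear in $X_\varepsilon$, so $X_\varepsilon$ stays bounded.
\end{enumerate}

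The same factor is what gives the uniform bound $d_\varepsilon\ge c'>0$. A lower bound on the norm says nothing about $R_\varepsilon$ by itself, but small $R_\varepsilon$ would force a small norm. The paper instead uses a uniform mountain-pass level, Lemma~\ref{lem7.1}.

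Your schematic functional, with $|v|^p\log(u^2+\varepsilon)$ in place of $|v|^{p-2}u^2\log(u^2+\varepsilon)$, is also wrong. With it the fibre derivative would produce $\frac2p\int(|u|^p+|v|^p)$ rather than the mixed integral, and the energy identity you use would fail.

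The remaining pieces are sound: the uniform upper bound on $d_\varepsilon$, Vitali on positive parts with Fatou on negative parts, nontriviality, and minimality over all finite-energy elements of $\mathcal N$ via $t_\varepsilon\to1$.
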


\begin{remark}
This is the central breakthrough of the paper and the final answer to the
first and third fundamental difficulties. Although the original functional
\(J\) is not Fr\'echet differentiable and may even take the value \(+\infty\),
the regularised ground states converge strongly to a ground state of the
original functional, which is also a weak solution of the original continuous
system; the corresponding definitions are given in Section~\ref{sec:7}. The
proof combines several independent ingredients: the exponent calibration
technique, in its second and most essential occurrence, yields the uniform
boundedness of the regularised ground states
(Lemma~\ref{lem:regularised-boundedness}), which restores compactness. To the
best of our knowledge, no alternative method can replace this step; without
it, the uniform boundedness of regularised ground states cannot be achieved,
and the proof of Theorem~\ref{thm8} cannot be closed. Threshold decompositions
of the logarithmic terms together with Vitali's convergence theorem control
the singularity near the degenerate set; the pseudomonotonicity of the
\(p\)-Laplacian identifies the weak gradient limit; and the energy identity
combined with the Radon--Riesz property upgrades weak convergence and norm
convergence to strong convergence. A fibre-map analysis completes the
minimality argument.

Concerning the assumptions, the divergence of \(a,b\) at infinity and
\(\inf a,\inf b>0\) are both necessary, the former for compact embeddings and
the latter for nontriviality of the weighted spaces; the range
\(\varepsilon\in(0,1/2)\) suffices for the limiting procedure. This establishes
the strong convergence of regularised ground states to a ground state of the
original ill-posed problem, justifies the regularisation, and completes the
Berestycki--Lions-type existence theory for logarithmically coupled systems.
\end{remark}

The mathematical results established in this work carry concrete physical implications. The ground-state existence results (Theorems~\ref{thm1}--\ref{thm3}) confirm that the system admits stable stationary states in both discrete regimes and in the regularised critical continuous case, providing natural candidates for the ground states of the underlying quantum fluid. The rigidity theorem (Theorem~\ref{thm5}) uncovers an exact compensation mechanism between the Sobolev norm and the logarithmic interaction energy, yielding the energy's gradient formula and Hessian factorisation. Theorem~\ref{thm8} further shows that the regularised ground states converge strongly to a ground state of the original functional in the continuous critical case, guaranteeing that the smooth approximation faithfully captures the original ground-state configuration. The asymptotic convergence results (Theorems~\ref{thm6} and~\ref{thm7}) further demonstrate the stability of the system under strengthening confining potentials, justifying domain truncation as a rigorous approximation. Proposition~\ref{prop7.1} provides a quantitative convergence rate, validating the smooth approximation and confirming that the original singular behaviour is faithfully captured by well-behaved surrogate problems. Thus, the present theory provides both a benchmark for discrete variational techniques and a rigorous mathematical reference for related applied fields.

The paper is organised as follows. Section~\ref{Pre} collects notation, definitions, and preliminary lemmas used throughout the paper. Section~\ref{existence1} proves existence of ground states in two regimes: the discrete ill-posed setting via the Nehari manifold method, and the continuous critical case \(\mathbb{R}^N\) with \(p=N>4\) for the regularised problem, where the loss of the \(L^\infty\)-embedding renders the exponent calibration technique irreplaceable. Section~\ref{Cerami} establishes existence of ground states via the mountain pass theorem in the discrete well-posed setting, including the Cerami compactness condition. Section~\ref{sec:stability} develops the rigidity theory of ground states and the exact Hessian factorisation. Section~\ref{sec:asymptotic} contains the asymptotic convergence analysis for the penalised equations. Section~\ref{sec:7} builds the regularised limit theory in the continuous critical case, comprising the uniform boundedness of regularised ground states, the compactness restoration and limit theorem, and the quantitative convergence rate estimate. Section~\ref{sec:appendix} provides two counterexamples: one in the discrete setting showing that the energy functional may fail to be well-defined under hypotheses \((A_1)\) and \((A_2)\), and one in the continuous critical setting \(p=N>4\) exhibiting the same pathology for the original functional.

\section{Some Preliminary Results}\label{Pre}

We begin by recalling the graph setting. Let \(G=(V,E)\) be a connected, locally finite graph, with vertex set \(V\) and edge set \(E\). If \((x,y)\in E\), we say that \(x\) and \(y\) are neighbours and write \(x\sim y\). The distance between two vertices \(x,y\in V\) is given by
\[
dist(x,y)=\inf\{k:x=x_0\sim x_1\sim\cdots\sim x_k=y\}.
\]
For \(a\in V\) and \(r\ge 0\), let \(B_r(a)=\{x\in V:dist(x,a)\le r\}\) denote the closed ball of radius \(r\) about \(a\), abbreviated by \(B_r=B_r(0)\). The boundary of a subset \(\Omega\subset V\) is
\[
\partial\Omega=\{x\in V\setminus\Omega:\exists\,y\in\Omega\text{ with }x\sim y\}.
\]

Let \(C(V)\) be the space of real-valued functions on \(V\), and \(C_c(V)\) the subspace of functions with finite support. For \(u\in C(V)\) and \(1\le p<\infty\), define
\[
\|u\|_p=\Big(\sum_{x\in V}\mu(x)|u(x)|^p\Big)^{1/p},
\]
with the standard interpretation for \(p=\infty\). The gradient form is
\[
  \Gamma(u,v)(x)=\frac{1}{2\mu(x)}\sum_{y\sim x}(u(y)-u(x))(v(y)-v(x)),
\]
and we set \(|\nabla u|(x)=\sqrt{\Gamma(u,u)}(x)\). For \(p\ge 2\), the \(p\)-Laplacian of \(u\in C(V)\) is
\[
\Delta_p u(x)=\frac{1}{2\mu(x)}\sum_{y\sim x}\big(|\nabla u|^{p-2}(y)+|\nabla u|^{p-2}(x)\big)(u(y)-u(x)).
\]
When \(p=2\), this reduces to the usual graph Laplacian.

Throughout the paper, \(\mu\) denotes the counting measure on \(V\), and we write \(\int_V u\,d\mu=\sum\limits_{x\in V}u(x)\mu(x)\) whenever the right-hand side is convergent. Let \(W^{1,p}(V)\) be the completion of \(C_c(V)\) under
\[
\|u\|_{W^{1,p}(V)}=\Big(\int_V(|\nabla u|^p+|u|^p)\,d\mu\Big)^{1/p}.
\]
We shall use the weighted spaces
\[
\mathcal H_a=\Big\{u\in W^{1,p}(V):\int_V a(x)|u|^p\,d\mu<\infty\Big\},\qquad
\mathcal H_b=\Big\{v\in W^{1,p}(V):\int_V b(x)|v|^p\,d\mu<\infty\Big\},
\]
and set \(\mathcal H=\mathcal H_a\times\mathcal H_b\), with norms
\[
\|u\|_{\mathcal H_a}=\Big(\int_V(|\nabla u|^p+a(x)|u|^p)\,d\mu\Big)^{1/p},\qquad
\|v\|_{\mathcal H_b}=\Big(\int_V(|\nabla v|^p+b(x)|v|^p)\,d\mu\Big)^{1/p},
\]
and \(\|(u,v)\|_{\mathcal H}=\|u\|_{\mathcal H_a}+\|v\|_{\mathcal H_b}\). For the penalised problems, we introduce
\[
\mathcal H_{\Lambda,a}=\{u\in W^{1,p}(V):\int_V(1+\Lambda a(x))|u|^p d\mu<\infty\},\quad
\mathcal H_{\Lambda,b}=\{v\in W^{1,p}(V):\int_V(1+\Lambda b(x))|v|^p d\mu<\infty\},
\]
with \(\mathcal H_\Lambda=\mathcal H_{\Lambda,a}\times\mathcal H_{\Lambda,b}\) and
\[
\|u\|_{\mathcal H_{\Lambda,a}}=\Big(\int_V(|\nabla u|^p+(1+\Lambda a(x))|u|^p)\,d\mu\Big)^{1/p},
\]
\[
\|v\|_{\mathcal H_{\Lambda,b}}=\Big(\int_V(|\nabla v|^p+(1+\Lambda b(x))|v|^p)\,d\mu\Big)^{1/p},
\]
and \(\|(u,v)\|_{\mathcal H_\Lambda}=\|u\|_{\mathcal H_{\Lambda,a}}+\|v\|_{\mathcal H_{\Lambda,b}}\). Finally, for subsets \(\Omega_a,\Omega_b\subset V\), define
\[
\|u\|_{W_0^{1,p}(\Omega_a)}=\Big(\int_{\Omega_a\cup\partial\Omega_a}|\nabla u|^p\,d\mu+\int_{\Omega_a}|u|^p\,d\mu\Big)^{1/p},
\]
\[
\|v\|_{W_0^{1,p}(\Omega_b)}=\Big(\int_{\Omega_b\cup\partial\Omega_b}|\nabla v|^p\,d\mu+\int_{\Omega_b}|v|^p\,d\mu\Big)^{1/p},
\]
and \(\|(u,v)\|_{H(\Omega)}=\|u\|_{W_0^{1,p}(\Omega_a)}+\|v\|_{W_0^{1,p}(\Omega_b)}\).

We establish three Sobolev embedding results: two in the discrete graph setting, corresponding to assumptions (A1)--(A2) and (A1)--(A2'), respectively, and one in the continuous space \(\mathbb R^N\). Since the proofs of the two discrete embeddings are modelled on Lemma~\ref{lem:projection1} in~\cite{17}, we present only the proof of the second. The continuous-space embedding is proved separately in Lemma~\ref{lem2.3} below.

\begin{lemma}\label{lem4}
Assume that $\mu(x)\geq \mu_{\min} > 0$ and that $a(x),b(x)$ satisfy (A1)--(A2). Then $\mathcal{H}$ is continuously and compactly embedded into $L^{q_{1}}\times L^{q_{2}}$ for all $p\le q_{1}\le \infty$ and $p\le q_{2}\le\infty$.
\begin{proof}
The proof is similar to that of Lemma~\ref{lem:projection1} in ~\cite{17} and is therefore omitted.
\end{proof}
\end{lemma}

\begin{lemma}\label{lem5}
Assume that $\mu(x)\geq \mu_{\min} > 0$ and that $a(x),b(x)$ satisfy (A1)--(A2'). Then $\mathcal{H}$ is continuously and compactly embedded into $L^{p_{1}}(V)\times L^{p_{2}}(V)$ for every $(p_{1},p_{2})\in [\frac{p}{2}, +\infty]\times [\frac{p}{2}, +\infty]$.
\end{lemma}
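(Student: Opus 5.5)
The plan is to handle the lowest exponent \(q=p/2\) directly, and then obtain every exponent in \([p/2,\infty]\) from an \(L^\infty\) bound and interpolation. Since \(\mathcal H=\mathcal H_a\times\mathcal H_b\) with the sum norm and the two factors play symmetric roles, it suffices to treat \(\mathcal H_a\hookrightarrow L^{q}(V)\) for \(q\in[p/2,\infty]\). The embedding into \(L^{p_1}\times L^{p_2}\) then follows componentwise.

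\textbf{Continuity.} First, because \(\mu(x)\ge\mu_{\min}\), every \(u\) satisfies
\[
|u(x)|^{r}\le \mu_{\min}^{-1}\int_V|u|^{r}\,d\mu \quad\text{for every } r\ge1,
\]
so \(\|u\|_\infty\le\mu_{\min}^{-1/r}\|u\|_r\). Next, split \(V=D^a_{\mathcal M_0}\cup(V\setminus D^a_{\mathcal M_0})\). On \(V\setminus D^a_{\mathcal M_0}\), H\"older's inequality gives
\[
\int_{V\setminus D^a_{\mathcal M_0}}|u|^{p/2}\,d\mu=\int a^{-1/2}\,(a|u|^p)^{1/2}\,d\mu\le\Big(\int_{V\setminus D^a_{\mathcal M_0}}\frac1a\,d\mu\Big)^{1/2}\|u\|_{\mathcal H_a}^{p/2}.
\]
This is finite by (A2'). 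On \(D^a_{\mathcal M_0}\), which has finite volume, H\"older's inequality together with (A1) gives
\[
\int_{D^a_{\mathcal M_0}}|u|^{p/2}\,d\mu\le \operatorname{Vol}(D^a_{\mathcal M_0})^{1/2}\,V_0^{-1/2}\,\|u\|_{\mathcal H_a}^{p/2}.
\]
Hence \(\|u\|_{p/2}\le C\|u\|_{\mathcal H_a}\), and so \(\|u\|_\infty\le C\|u\|_{\mathcal H_a}\). For \(q\in(p/2,\infty)\) we interpolate:
\[
\|u\|_q^q\le\|u\|_\infty^{\,q-p/2}\,\|u\|_{p/2}^{p/2}.
\]
This gives continuity for the full range of exponents.

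\textbf{Compactness.} Take a bounded sequence \((u_k)\) in \(\mathcal H_a\). This space is reflexive, being a closed subspace of a weighted \(\ell^p\)-type space, so after passing to a subsequence \(u_k\rightharpoonup u\) weakly. Point evaluations are continuous linear functionals on \(\mathcal H_a\), so \(u_k(x)\to u(x)\) for every \(x\in V\). Set \(w_k=u_k-u\), which is bounded in \(\mathcal H_a\).

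Given \(\eta>0\), we choose a finite set \(K\subset V\) as follows. The set \(D^a_{\mathcal M_0}\) is finite, since it has finite volume and \(\mu\ge\mu_{\min}\). Since \(1/a\in L^1(V\setminus D^a_{\mathcal M_0})\), there is a finite \(K\supset D^a_{\mathcal M_0}\) with
\[
\int_{V\setminus K}\frac1a\,d\mu<\eta .
\]
The H\"older estimate above, applied on \(V\setminus K\), then gives
\[
\int_{V\setminus K}|w_k|^{p/2}\,d\mu\le \eta^{1/2}\sup_k\|w_k\|_{\mathcal H_a}^{p/2}.
\]
On the finite set \(K\), pointwise convergence gives \(\int_K|w_k|^{p/2}\,d\mu\to0\). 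Letting first \(k\to\infty\) and then \(\eta\to0\), we get \(w_k\to0\) in \(L^{p/2}\). By the inequality \(\|w\|_\infty\le\mu_{\min}^{-2/p}\|w\|_{p/2}\) and the interpolation inequality, \(w_k\to0\) in every \(L^q\) with \(q\in[p/2,\infty]\).

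\textbf{Main obstacle.} The only delicate point is the tail estimate. It is exactly here that (A2') replaces (A2): integrability of \(1/a\) away from a finite-volume set makes the weight \(a\) large enough to control the sub-\(p\) exponent \(p/2\). Once the \(L^{p/2}\) compactness is in hand, the rest follows from the discrete \(L^\infty\) bound afforded by \(\mu\ge\mu_{\min}\).
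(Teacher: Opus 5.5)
Your proof is correct and follows essentially the same route as the paper. Both use the weighted H\"older estimate with $1/a\in L^1(V\setminus D^a_{\mathcal M_0})$ for the $L^{p/2}$ bound and the uniform tail smallness, pointwise convergence on a finite set, and interpolation against the $L^\infty$ bound. Your version is slightly more self-contained: you bound the finite-volume set $D^a_{\mathcal M_0}$ directly via (A1) instead of invoking a Rellich-type argument, you exhaust by a finite set $K$ rather than by balls, and you derive $\|u\|_\infty\le\mu_{\min}^{-1/r}\|u\|_r$ explicitly, which the paper uses but never states.
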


\begin{proof}
We prove only the first component; the second is analogous. H\"older's inequality gives
\begin{align*}
\int_{V\setminus D_{\mathcal{M}_0}}|u|^{p/2}d\mu 
&= \int_{V\setminus D_{\mathcal{M}_0}}\Bigl(\frac{1}{a(x)}\Bigr)^{1/2}(a(x))^{1/2}|u|^{p/2}d\mu \\[3pt]
&\leq \Bigl(\int_{V\setminus D_{\mathcal{M}_0}}\frac{1}{a(x)}d\mu\Bigr)^{1/2}
      \Bigl(\int_{V\setminus D_{\mathcal{M}_0}}a(x)|u|^{p}d\mu\Bigr)^{1/2} \\[3pt]
&\leq C\|u\|_{\mathcal{H}_a}^{p/2}.
\end{align*}
Thus $\mathcal H_a\hookrightarrow L^{p/2}(V)$ continuously.
On the bounded set $D_{\mathcal{M}_0}$ we have $\mathcal H_a\hookrightarrow L^q(D_{\mathcal{M}_0})$
for each $q\in[\frac p2,p]$ by the standard Rellich-type argument on locally finite
graphs.  Interpolation with $\mathcal H_a\hookrightarrow L^p(V)$ yields
$\mathcal H_a\hookrightarrow L^q(V)$ for all $q\ge\frac p2$.

Now let $\{u_k\}$ be bounded in $\mathcal H_a$ and $u_k\rightharpoonup u$ weakly up to a subsequence.
For any $R>0$, set $V_R:=V\setminus B_R(x_0)$.  Then
\[
\int_V|u_k-u|^{p/2}\,d\mu
 =\int_{V_R}|u_k-u|^{p/2}\,d\mu
  +\int_{V\setminus V_R}|u_k-u|^{p/2}\,d\mu .
\]
The second integral tends to $0$ because $V\setminus V_R$ is finite.
For the first one, write $V_R = (V_R\setminus D_{M_0})\cup D_{\mathcal{M}_0}$.  On
$D_{\mathcal{M}_0}$, which is finite, the integral disappears in the limit.
On $V_R\setminus D_{\mathcal{M}_0}$, we use H\"older's inequality as above:
\[
\int_{V_R\setminus D_{\mathcal{M}_0}}|u_k-u|^{p/2}\,d\mu
 \le\Bigl(\int_{V_R\setminus D_{\mathcal{M}_0}}\frac1{a(x)}\,d\mu\Bigr)^{\!1/2}
   \Bigl(\int_{V_R\setminus D_{\mathcal{M}_0}}a(x)|u_k-u|^p\,d\mu\Bigr)^{\!1/2}.
\]
By (A2$'$), $\int_{V\setminus D_{\mathcal{M}_0}}1/a<\infty$, so the first factor
can be made arbitrarily small by choosing $R$ large, uniformly in $k$.
Hence $\lim\limits_{k\to\infty}\int_V|u_k-u|^{p/2}\,d\mu=0$, implying
$u_k\to u$ in $L^{p/2}(V)$.  Interpolation with the $L^\infty$ bound yields strong convergence in $L^q(V)$ for
all $q\ge\frac p2$, which is the required compact embedding.
\end{proof}

\begin{remark}
It is easy to see that if $(A1)$ and $(A2^{''})$ hold, then, similarly to the proof of Lemma~\ref{lem5}, $\mathcal{H}_{\Lambda}$ is continuously and compactly embedded into $L^{p_{1}}(V)\times L^{p_{2}}(V)$ for $(p_{1}, p_{2}) \in [\frac{p}{2}, +\infty]\times [\frac{p}{2}, +\infty]$ when $\Lambda$ is sufficiently large.
\end{remark}

\begin{lemma}\label{lem2.3}
Let $N\ge 2$ and $a, b\in L^\infty_{\mathrm{loc}}(\mathbb{R}^N)$, satisfy
\[
\inf_{\mathbb{R}^N} a > 0,\qquad \lim_{|x|\to\infty} a(x)=+\infty,\qquad \inf_{\mathbb{R}^N} b > 0,\qquad \lim_{|x|\to\infty} b(x)=+\infty.
\]
Define the Banach spaces
\[
\mathcal{G}_a= \Bigl\{ u\in W^{1,N}(\mathbb{R}^N): 
\int_{\mathbb{R}^N}\bigl(|\nabla u|^N+a(x)|u|^N\bigr)\,dx <\infty\Bigr\},
\]
\[
\mathcal{G}_b= \Bigl\{ v\in W^{1,N}(\mathbb{R}^N): 
\int_{\mathbb{R}^N}\bigl(|\nabla v|^N+b(x)|v|^N\bigr)\,dx <\infty\Bigr\},
\]
equipped with the norm
\[
\|u\|_{\mathcal{G}_a}^N = \int_{\mathbb{R}^N}\bigl(|\nabla u|^N+a(x)|u|^N\bigr)\,dx,\qquad
\|v\|_{\mathcal{G}_b}^N = \int_{\mathbb{R}^N}\bigl(|\nabla v|^N+b(x)|v|^N\bigr)\,dx.
\]
Then for every $q$ with $N\le q<\infty$, the embeddings
\[
\mathcal{G}_a\hookrightarrow L^q(\mathbb{R}^N),\qquad
\mathcal{G}_b\hookrightarrow L^q(\mathbb{R}^N)
\]
are continuous and compact.
\end{lemma}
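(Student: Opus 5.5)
We treat only $\mathcal G_a$; the argument for $\mathcal G_b$ is identical. The proof has two parts: continuity, which follows from the classical Sobolev embedding, and compactness, which follows from Rellich--Kondrachov on balls together with a tail estimate driven by $a\to\infty$.

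\textbf{Continuity.} Since $\inf a>0$, we have $\|u\|_{W^{1,N}}^N\le \max\{1,(\inf a)^{-1}\}\,\|u\|_{\mathcal G_a}^N$. Hence $\mathcal G_a\hookrightarrow W^{1,N}(\mathbb R^N)$ continuously. The classical embedding $W^{1,N}(\mathbb R^N)\hookrightarrow L^q(\mathbb R^N)$ holds for every $N\le q<\infty$. It can be obtained from the Gagliardo--Nirenberg inequality $\|u\|_q\le C\|\nabla u\|_N^{1-N/q}\|u\|_N^{N/q}$, or from Trudinger's inequality. Composing the two embeddings gives continuity.

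\textbf{Compactness.} Let $(u_k)$ be bounded in $\mathcal G_a$, say $\|u_k\|_{\mathcal G_a}\le K$. The space $\mathcal G_a$ is reflexive, since it is isometric to a closed subspace of $L^N\times L^N(a\,dx)$. So, after passing to a subsequence, $u_k\rightharpoonup u$ in $\mathcal G_a$, and by weak lower semicontinuity $\|u\|_{\mathcal G_a}\le K$.

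We first prove strong convergence in $L^N$. Fix $\eta>0$ and choose $R$ so large that $a(x)\ge 1/\eta$ for $|x|\ge R$. Then
\[
\int_{|x|\ge R}|u_k-u|^N\,dx\le \eta\int_{|x|\ge R} a\,|u_k-u|^N\,dx\le \eta\,2^{N}K^N .
\]
On the ball $B_R$, the Rellich--Kondrachov theorem gives $W^{1,N}(B_R)\hookrightarrow L^N(B_R)$ compactly. Since restriction to $B_R$ is a bounded linear map, it preserves weak convergence, so $u_k\to u$ in $L^N(B_R)$. Combining the two regions, $\limsup_k\|u_k-u\|_N^N\le 2^N K^N\eta$. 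Because $\eta$ is arbitrary, $u_k\to u$ in $L^N(\mathbb R^N)$.

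For $q>N$ we interpolate. Fix any $r>q$; the sequence $(u_k-u)$ is bounded in $L^r$ by the continuity part. Hölder interpolation gives $\|u_k-u\|_q\le \|u_k-u\|_N^{\theta}\|u_k-u\|_r^{1-\theta}$ with $\theta\in(0,1)$ determined by $1/q=\theta/N+(1-\theta)/r$. The first factor tends to $0$ and the second is bounded, so $u_k\to u$ in $L^q$.

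\textbf{Main obstacle.} The only delicate point is the case $q>N$. There is no $L^\infty$ embedding when $p=N$, so we cannot use an $L^\infty$ bound for interpolation as in Lemma~\ref{lem5}. Instead we use that $W^{1,N}\hookrightarrow L^r$ for every finite $r$, which holds precisely because $p=N$ is the borderline case. This is also why the statement excludes $q=\infty$.
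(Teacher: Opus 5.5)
Your proof is correct. Its overall architecture matches the paper's: continuity comes from $\inf a>0$ plus the borderline embedding $W^{1,N}(\mathbb R^N)\hookrightarrow L^r$, $r<\infty$, and compactness comes from Rellich--Kondrachov on balls plus a tail estimate driven by $a\to\infty$. You deviate from the paper in two places.

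\textbf{Tail estimate.} You control the tail of the difference $u_k-u$ in one stroke via $\|u_k-u\|_{\mathcal G_a}\le 2K$. The paper instead bounds the tails of $u_k$ and of the weak limit $u$ separately. For $u$ it needs an extra annulus argument, combining local strong convergence with monotone convergence.

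\textbf{The case $q>N$.} This is the more substantial difference. The paper localises with a cut-off $\eta_R$ and applies Gagliardo--Nirenberg to $\eta_R(u_k-u)$. This makes the $L^q$ tail uniformly small: a bounded gradient factor times a small $L^N$ factor raised to the power $N/q$. It combines this with convergence in $L^q(B_R)$ for every $R$, extracted by a diagonal argument. You instead prove $L^N$ convergence first and then interpolate between $L^N$ and a bounded $L^r$ norm with $r>q$.

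\textbf{What each route buys.} Both rest on the same borderline inequality, but you use it only as a black-box $L^r$ bound. Your version needs Rellich--Kondrachov only into $L^N(B_R)$, with no cut-off functions and no diagonal extraction. Indeed, the restriction map composed with the compact embedding already sends the whole weakly convergent sequence to a strongly convergent one. Your route also makes it transparent that compactness into every $L^q$, $N\le q<\infty$, is inherited from compactness into $L^N$ alone. The paper's cut-off argument is longer, but it gives explicit uniform-in-$k$ control of the $L^q$ tails directly.
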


\begin{proof}
For brevity, we prove the compact embedding only for $\mathcal{G}_a \hookrightarrow L^q(\mathbb{R}^N)$ with $q\ge N$; the corresponding statement for $\mathcal{G}_b$ follows by an entirely symmetric argument.
We first observe that the embedding is continuous, indeed, since $\inf a>0$, there exists $c>0$ such that $a(x)\ge c$ on $\mathbb{R}^N$. Hence for any $u\in\mathcal{\mathcal{G}_a}$,
\[
\|u\|_{W^{1,N}(R^N)}^N\le \int_{\mathbb{R}^N}\Bigl(|\nabla u|^N+\frac{1}{c}a(x)|u|^N\Bigr)\,dx \le C\|u\|_{\mathcal{G}_a}^N,
\]
so $\mathcal{G}_a$ embeds continuously into $W^{1,N}(\mathbb{R}^N)$. For $q=N$, $\|u\|_{L^N}\le\|u\|_{W^{1,N}}\le C\|u\|_{\mathcal{G}_a}$. For $q>N$, the Gagliardo--Nirenberg inequality~\cite{50}  provides a constant $C_q=C(q,N)>0$ such that
\[
\|u\|_{L^q}\le C_q\|\nabla u\|_{L^N}^{1-\frac{N}{q}}\|u\|_{L^N}^{\frac{N}{q}},\quad \forall\,u\in W^{1,N}(\mathbb{R}^N).
\]
Consequently $\|u\|_{L^q}\le C_q\|u\|_{W^{1,N}}\le C_q'\|u\|_{\mathcal{G}_a}$, and the continuity of $\mathcal{G}_a\hookrightarrow L^q$ follows for all $q\in[N,\infty)$.

To prove compactness, let \(\{u_k\}\subset\mathcal G_a\) be bounded in \(\mathcal G_a\)-norm and $\|u_k\|_{\mathcal{G}_a}\le C_1$ for all \(k\in N^{+}\). By reflexivity of \(\mathcal{G}_a\), passing to a subsequence, \(u_k\rightharpoonup u\) weakly in \(\mathcal{G}_a\). Weak lower semicontinuity gives \(u\in\mathcal G_a\) and \(\|u\|_{\mathcal G_a}\le C_1\).
For any ball $B_R=\{x:|x|<R\}$, the Rellich--Kondrachov theorem~\cite{53} gives the compact embedding $W^{1,N}(B_R)\subset\subset L^q(B_R)$ for all $q\in[N,\infty)$. A diagonal argument extracts a further subsequence, again labeled $\{u_k\}$, such that
\begin{equation}\label{eq:local_conv}
u_k\to u \quad\text{in }L^q(B_R) \text{ for every fixed }R>0.
\end{equation}
Thus $u_k\to u$ in $L^q_{\mathrm{loc}}(\mathbb{R}^N)$ for all $q\in[N,\infty)$.

Because $a(x)\to\infty$, for any $\delta>0$ we can choose $R_\delta>0$ such that $a(x)\ge C_1^N/\delta$ whenever $|x|\ge R_\delta$. From $\|u_k\|_{\mathcal{G}_a}^N\le C_1^N$ we deduce $\int_{\mathbb{R}^N}a|u_k|^N\le C_1^N$, and therefore
\begin{equation}\label{eq:tail_u_k}
\int_{|x|>R_\delta}|u_k|^N\,dx \le \frac{\delta}{C_1^N}\int_{|x|>R_\delta}a|u_k|^N\,dx \le\delta,\qquad \forall\,k\in N^{+}.
\end{equation}
To obtain a similar bound for the weak limit $u$, fix $S>R_\delta$ and use the strong convergence \eqref{eq:local_conv} with $q=N$ on the annulus $\{R_\delta<|x|<S\}$:
\[
\int_{R_\delta<|x|<S}|u|^N\,dx = \lim_{k\to\infty}\int_{R_\delta<|x|<S}|u_k|^N\,dx
\le \liminf_{k\to\infty}\int_{|x|>R_\delta}|u_k|^N\,dx \le\delta.
\]
Letting $S\to\infty$ and applying the monotone convergence theorem yields
\begin{equation}\label{eq:tail_u}
\int_{|x|>R_\delta}|u|^N\,dx \le\delta.
\end{equation}
Consequently, by choosing a sufficiently large radius $R>0$, the $L^N$ norm of each $u_k$ and of the limit $u$ on $\mathbb{R}^N\setminus B_R$ can be made arbitrarily small, uniformly in~$k$.

Now fix $q\in[N,\infty)$ and recall the Gagliardo--Nirenberg inequality
\[
\|v\|_{L^q}\le C_q\|\nabla v\|_{L^N}^{1-\frac{N}{q}}\|v\|_{L^N}^{\frac{N}{q}},\qquad v\in W^{1,N}(\mathbb{R}^N).
\]
Introduce a smooth cut-off $\eta_R$ with $0\le\eta_R\le1$, $\eta_R=0$ on $B_{R/2}$, $\eta_R=1$ outside $B_R$, and $|\nabla\eta_R|\le C/R$ for a universal constant $C>0$. Define $v_k=\eta_R(u_k-u)\in W^{1,N}(\mathbb{R}^N)$, so that $v_k=u_k-u$ on $\{|x|\ge R\}$. Applying the interpolation inequality to $v_k$,
\[
\|v_k\|_{L^q}\le C_q\|\nabla v_k\|_{L^N}^{1-\frac{N}{q}}\|v_k\|_{L^N}^{\frac{N}{q}}.
\]
We estimate the two norms. Because $\operatorname{supp}v_k\subset\{x\in\mathbb{R}^N: |x|>R/2\}$, taking $R$ so large that $R/2\ge R_\delta$ and using \eqref{eq:tail_u_k}--\eqref{eq:tail_u} gives
\[
\|v_k\|_{L^N(\mathbb{R}^N)}\le \|u_k-u\|_{L^N(\{x\in \mathbb{R}^N:|x|>R/2\})}
\le \|u_k\|_{L^N(\{x\in \mathbb{R}^N:|x|>R/2\})}+\|u\|_{L^N(\{x\in \mathbb{R}^N:|x|>R/2\})}\le 2\delta.
\]
For the gradient, $\nabla v_k=\eta_R(\nabla u_k-\nabla u)+(u_k-u)\nabla\eta_R$, hence
\begin{align*}
\|\nabla v_k\|_{L^N(\mathbb{R}^N)}
&\le \|\nabla u_k-\nabla u\|_{L^N(\{x\in \mathbb{R}^N: |x|>R/2)\}} + \frac{C}{R}\|u_k-u\|_{L^N(\{x\in \mathbb{R}^N: R/2\le|x|\le R\})}\\
&\le \bigl(\|\nabla u_k\|_{L^N(\mathbb{R}^N)}+\|\nabla u\|_{L^N(\mathbb{R}^N)}\bigr)
   + \frac{C}{R}\bigl(\|u_k\|_{L^N(\{x\in \mathbb{R}^N:|x|\ge R/2\})}+\|u\|_{L^N(\{x\in \mathbb{R}^N:|x|\ge R/2\})}\bigr).
\end{align*}
Since $\|\nabla u_k\|_{L^N(\mathbb{R}^N)}$ and $\|\nabla u\|_{L^N(\mathbb{R}^N)}$ are uniformly bounded, there exists a constant $C_0$ such that $\|\nabla u_k\|_{L^N(\mathbb{R}^N)}+\|\nabla u\|_{L^N(\mathbb{R}^N)}\le C_0$. Choosing $R$ large enough so that $C/R\le 1$ and $R/2\ge R_\delta$, we obtain from \eqref{eq:tail_u_k}--\eqref{eq:tail_u} that
\[
\|\nabla v_k\|_{L^N}\le C_0+2\delta.
\]
Consequently,
\[
\|v_k\|_{L^q(\mathbb{R}^N)}\le C_q(C_0+2\delta)^{1-\frac{N}{q}}(2\delta)^{\frac{N}{q}}.
\]
The right-hand side tends to $0$ as $\delta\to0$ because $N/q>0$. Therefore, given $\varepsilon>0$, we can first pick $\delta>0$ so small that $C_q(C_0+2\delta)^{1-\frac{N}{q}}(2\delta)^{\frac{N}{q}}<\varepsilon^{1/q}$, then select the corresponding $R_\delta$ and set $R=2\max\{R_\delta,C\}$. For this $R$ and all $k$,
\[
\int_{|x|>R}|u_k-u|^q\,dx \le \|v_k\|_{L^q}^q <\varepsilon.
\]

Finally, combining the local and the tail estimates, we obtain global strong convergence. For any $\varepsilon>0$, let $R$ be as above so that the tail integral is bounded by $\varepsilon$. By \eqref{eq:local_conv}, there exists $K\in\mathbb{N}$ such that for all $k>K$,
\[
\int_{B_R}|u_k-u|^q\,dx<\varepsilon.
\]
Then, for every $k>K$,
\[
\int_{\mathbb{R}^N}|u_k-u|^q\,dx
= \int_{B_R}|u_k-u|^q\,dx + \int_{|x|>R}|u_k-u|^q\,dx
< 2\varepsilon.
\]
Thus a subsequence of $\{u_k\}$ converges to $u$ strongly in $L^q(\mathbb{R}^N)$, proving that the embedding ${G}_a\hookrightarrow L^q(\mathbb{R}^N)$ is compact.
\end{proof}

\begin{remark}
When $p=N>4$, the endpoint $q=\infty$ is excluded from Lemma~\ref{lem2.3}.
Consider the function
\[
u(x)=\varphi(x)\log\log\!\Bigl(1+\frac{1}{|x|}\Bigr),
\]
where $\varphi\in C_c^\infty(\mathbb{R}^N)$ is a cut‑off function satisfying $0\le\varphi\le1$,
$\varphi(x)=1$ for $|x|\le\frac12$, and $\varphi(x)=0$ for $|x|\ge1$.
For $N\ge2$, a direct calculation shows that $u\in W^{1,N}(\mathbb{R}^N)$;
Moreover, since $a(x)$ is locally bounded, the weighted integral
$\int_{\mathbb{R}^N}(1+a)|u|^N$ remains finite, so $u\in\mathcal{G}_a$.
Because $u$ is unbounded near~$x=0$, we have $\mathcal{G}_a\not\subset L^\infty$,
and therefore no continuous---let alone compact---embedding into $L^\infty$
can hold.  This borderline logarithmic singularity is exactly what makes the
exponent calibration technique developed in the sequel unavoidable at the critical exponent.

For $p>N$ the situation is entirely different.  Morrey's inequality gives the
continuous embedding $W^{1,p}(\mathbb{R}^N)\hookrightarrow L^\infty(\mathbb{R}^N)$,
and the coercivity of the potential $a(x)\to\infty$ upgrades it to a
\emph{compact} embedding $\mathcal{H}\hookrightarrow L^\infty(\mathbb{R}^N)$.
The proof follows the same pattern as for $q<\infty$ in Lemma~\ref{lem2.3}: local compactness via
Rellich--Kondrachov, and the fact that the $L^\infty$ norm outside large balls
can be made uniformly small by combining Morrey's estimate with the decay
induced by the potential.  We omit the straightforward adaptation here.
\end{remark}

We now establish a strong maximum principle for nonnegative weak solutions of the system. This result will be used later to show that any nontrivial nonnegative solution is in fact strictly positive on all vertices.
\begin{proposition}\label{Pro1}
Let $G = (V, E)$ be a connected, locally finite graph and let the
potentials $a, b$ satisfy $(A_1)$--$(A_2')$.  Suppose that
$(u, v) \in \mathcal{H}_a \times \mathcal{H}_b$ is a nonnegative weak
solution of the original functional $J$ with $p>4$, i.e.,
\begin{enumerate}
    \item[(i)] \(u, v \ge 0\) pointwise on \(V\);
    \item[(ii)] \(J'(u, v) = 0\) in the weak sense.
\end{enumerate}
Then either $(u,v)\equiv(0,0)$ on $V$, or
\[
u(x) > 0, \qquad v(x) > 0 \qquad \forall\, x \in V .
\]
\end{proposition}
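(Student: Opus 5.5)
The plan is to exploit the fact that on a graph a weak solution satisfies the Euler--Lagrange system pointwise. We then run a discrete strong maximum principle vertex by vertex, using connectedness of \(G\).

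\textbf{Step 1: pointwise equations.} Under \((A_1)\)--\((A_2')\) the functional \(J\) is \(C^1\) on \(\mathcal H\) (this is what (A2\('\)) provides, cf.\ Lemma~\ref{lem5}). Testing \(J'(u,v)=0\) with \((\delta_{x_0},0)\) and \((0,\delta_{x_0})\), where \(\delta_{x_0}\in C_c(V)\), and using the discrete integration by parts for \(\Gamma\), gives the two equations of \eqref{eq:main} at every vertex \(x_0\). We adopt the natural conventions \(s\log s^2=0\) at \(s=0\) and \(|s|^{p-4}s=0\) at \(s=0\). These conventions are consistent because \(p>4\) and \(s\log s^2\to0\) as \(s\to0\).

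\textbf{Step 2: a zero of \(u\) propagates.} Suppose \(u(x_0)=0\) for some \(x_0\). Then every term on the right-hand side of the first equation vanishes at \(x_0\): \(|u|^{p-4}u\,v^2\log v^2\), \(|v|^{p-2}u\log u^2\) and \(|v|^{p-2}u\) all carry a factor of \(u(x_0)\) (or \(u\log u^2\)). The potential term \(a|u|^{p-2}u\) also vanishes. Hence \(\Delta_p u(x_0)=0\), that is,
\[
\sum_{y\sim x_0}\bigl(|\nabla u|^{p-2}(y)+|\nabla u|^{p-2}(x_0)\bigr)\,u(y)=0 .
\]
Every summand is nonnegative since \(u\ge0\). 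If some neighbour had \(u(y)>0\), then \(|\nabla u|(x_0)\ge \bigl(\tfrac{1}{2\mu(x_0)}\bigr)^{1/2}u(y)>0\), so that summand would be strictly positive, a contradiction. Therefore \(u\) vanishes on all neighbours of \(x_0\). Since \(G\) is connected, induction on \(\mathrm{dist}(x_0,\cdot)\) gives \(u\equiv0\) on \(V\). The same argument applied to the second equation shows that a zero of \(v\) forces \(v\equiv0\).

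\textbf{Step 3: coupling forces both to vanish.} Suppose \(u\equiv0\). Then the right-hand side of the \(v\)-equation vanishes identically: each of its terms contains \(u^2\log u^2\) or \(|u|^{p-2}\). So \(v\) solves \(-\Delta_p v+b|v|^{p-2}v=0\) weakly. Testing with \(v\) itself, which is allowed since \(v\in\mathcal H_b\) and \(J\) is \(C^1\), gives \(\int_V(|\nabla v|^p+b|v|^p)\,d\mu=0\). By \((A_1)\) this yields \(v\equiv0\). Symmetrically, \(v\equiv0\) forces \(u\equiv0\).

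Combining the steps: either one component has a zero, in which case \((u,v)\equiv(0,0)\), or neither has a zero, in which case \(u,v>0\) everywhere. The only delicate point is Step 1, namely justifying that the singular logarithmic terms are meaningful, and equal to zero, at vertices where a component vanishes. This needs the \(p>4\) continuity conventions above together with the \(C^1\) regularity from (A2\('\)).
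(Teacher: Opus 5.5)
Your proposal is correct and follows the paper's proof essentially step by step. You obtain the pointwise equations by testing with Dirac deltas, propagate a zero of $u$ through the connected graph via the nonnegative sum $\sum_{y\sim x}\bigl(|\nabla u|^{p-2}(y)+|\nabla u|^{p-2}(x)\bigr)u(y)=0$, and then test the decoupled $v$-equation with $v$ to get $v\equiv 0$; the only cosmetic difference is that you reach the contradiction through $|\nabla u|(x_0)>0$, whereas the paper uses $\nabla u(y)=0$ to force $u(y)=u(x)=0$.
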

\begin{proof}
Assume $(u,v)\not\equiv(0,0)$. Since the graph is locally finite, Proposition~\ref{lem:pointwise1} applies.  Testing the weak formulation with Dirac deltas yields the pointwise system
\begin{align}
-\Delta_p u(x) + a(x) |u(x)|^{p-2} u(x) &= \mathcal{R}_u(x),
\label{eq:zp-p4-pointwise-u} \\
-\Delta_p v(x) + b(x) |v(x)|^{p-2} v(x) &= \mathcal{R}_v(x),
\label{eq:zp-p4-pointwise-v}
\end{align}
for every $x\in V$, where
\[
\begin{aligned}
\mathcal{R}_u
&=\frac{p-2}{p}|u|^{p-4}u\,v^{2}\log v^{2}
+\frac{2}{p}|v|^{p-2}u\log u^{2}
+\frac{2}{p}|v|^{p-2}u,\\[4pt]
\mathcal{R}_v
&=\frac{p-2}{p}|v|^{p-4}v\,u^{2}\log u^{2}
+\frac{2}{p}|u|^{p-2}v\log v^{2}
+\frac{2}{p}|u|^{p-2}v ,
\end{aligned}
\]
where the logarithmic terms are interpreted by continuous extension at the origin. Observe that every term on the right-hand side of the first equation vanishes whenever \(u(x)=0\), due to the fact that each monomial contains a positive power of \(u\). Hence
\[
u(x)=0 \quad\Longrightarrow\quad \mathcal{R}_u(x)=0 .
\]
By symmetry, $v(x)=0$ implies $\mathcal{R}_v(x)=0$; moreover, $u(x)=0$ also forces $\mathcal{R}_v(x)=0$, because every term in $\mathcal{R}_v$ contains a factor of the form $|u|^{p-2}$, $|u|^{p-4}u$, $u$, or $u\log u^2$.

Now suppose, for contradiction, that $u(x_0)=0$ for some $x_0\in V$.  Let $x$ be any vertex with $u(x)=0$.  Then $\mathcal{R}_u(x)=0$, and \eqref{eq:zp-p4-pointwise-u} reduces to $-\Delta_p u(x)=0$.  Expanding the discrete $p$-Laplacian and using $u\ge 0$, we obtain
\[
0 = -\Delta_p u(x)
= -\frac{1}{2\mu(x)}\sum_{y\sim x}
\bigl(|\nabla u(y)|^{p-2}+|\nabla u(x)|^{p-2}\bigr)u(y),
\]
equivalently,
\[
\sum_{y\sim x}
\bigl(|\nabla u(y)|^{p-2}+|\nabla u(x)|^{p-2}\bigr)u(y)=0.
\]
Since all factors involved are nonnegative, every summand must vanish.  If there were some $y\sim x$ with $u(y)\neq 0$, then necessarily $|\nabla u(y)|^{p-2}=0$, i.e., $\nabla u(y)=0$.  In particular, because $x$ is a neighbour of $y$, this would imply $u(y)=u(x)=0$, a contradiction.  Thus $u(y)=0$ for every neighbour $y\sim x$.  Since $G$ is connected, iterating this argument along paths yields $u\equiv 0$ on $V$.

Substituting $u\equiv 0$ into \eqref{eq:zp-p4-pointwise-v} makes $\mathcal{R}_v$ vanish identically, so the equation becomes
\[
-\Delta_p v(x) + b(x)|v(x)|^{p-2}v(x)=0.
\]
Multiplying by $v$ and integrating over $V$ gives $\int_V |\nabla v|^p\,d\mu + \int_V b|v|^p\,d\mu =0$, using \(b(x)\ge V_0>0\), this yields $v\equiv 0$.  This contradicts the assumption $(u,v)\not\equiv(0,0)$.  Therefore $u$ cannot vanish at any vertex; by symmetry, the same holds for $v$.  Consequently, $u(x)>0$ and $v(x)>0$ for every $x\in V$.
\end{proof}

\section{Proof of Theorem~\ref{thm1} and Theorem~\ref{thm2}}
\label{existence1}

With the functional-analytic tools of Section~\ref{Pre} in place, this section proves two existence results, where Theorem~\ref{thm1} establishes a ground state for the discrete system \eqref{eq:main} under (A1)--(A2) via the Nehari manifold method due to the fact that the energy functional \(J\) is ill-posed, see Appendix, Example~\ref{exam8.1}.

The energy functional corresponding to system~\eqref{eq:main} is
\[
J(u,v)=\frac{1}{p}\int_{V}\big(|\nabla u|^{p}+|\nabla v|^{p}+a(x)|u|^{p}+b(x)|v|^{p}\big)\,d\mu
-\frac{1}{p}\int_{V}|u|^{p-2}v^{2}\log v^{2}\,d\mu
-\frac{1}{p}\int_{V}|v|^{p-2}u^{2}\log u^{2}\,d\mu,
\]
namely,
\[
J(u,v)=\frac{1}{p}\big(\|u\|_{\mathcal{H}_{a}}^{p}+\|v\|_{\mathcal{H}_{b}}^{p}\big)
-\frac{1}{p}\int_{V}|u|^{p-2}v^{2}\log v^{2}\,d\mu
-\frac{1}{p}\int_{V}|v|^{p-2}u^{2}\log u^{2}\,d\mu .
\]
Under the sole assumptions (A1) and (A2), $J$ need not be well defined on the whole space $\mathcal{H}$, and consequently it may fail to be of class $C^{1}(\mathcal{H},\mathbb{R})$. We therefore restrict the functional to
\[
\mathcal{D}(J)=\{(u,v)\in\mathcal{H}:|J(u,v)|<\infty\}.
\]

\begin{definition}\label{def:critical}
\begin{enumerate}
\item[(1)] For $(u,v)\in\mathcal{D}(J)$ and $(\xi,\eta)\in \mathcal{D}(J)$, set
\[
\begin{aligned}
J'(u,v)\cdot(\xi,\eta)
&=\int_{V}\Big(|\nabla u|^{p-2}\nabla u\nabla\xi+|\nabla v|^{p-2}\nabla v\nabla\eta
+a(x)|u|^{p-2}u\xi+b(x)|v|^{p-2}v\eta\Big)\,d\mu \\
&\quad-\frac{p-2}{p}\int_{V}|u|^{p-4}u\xi v^{2}\log v^{2}\,d\mu
-\frac{2}{p}\int_{V}|u|^{p-2}\eta(v\log v^{2}+v)\,d\mu \\
&\quad-\frac{p-2}{p}\int_{V}|v|^{p-4}v\eta u^{2}\log u^{2}\,d\mu
-\frac{2}{p}\int_{V}|v|^{p-2}\xi(u\log u^{2}+u)\,d\mu .
\end{aligned}
\]
We call $(u,v)\in\mathcal{D}(J)$ a critical point of $J$ if $J'(u,v)\cdot(\xi,\eta)=0$ for all $(\xi,\eta)\in\mathcal{D}(J)$. The number $c=J(u,v)$ is then a critical value. It is straightforward that $(u,v)$ is a critical point of $J$ precisely when it is a weak solution of ~\eqref{eq:main}.
\item[(2)] The Nehari manifold is defined by
\[
\mathcal{N}=\{(u,v)\in\mathcal{D}(J)\setminus\{(0,0)\}:J'(u,v)\cdot(u,v)=0\},
\]
and we set
\[
d=\inf_{(u,v)\in\mathcal{N}}J(u,v).
\]
If $(u_{0},v_{0})\in\mathcal{N}$ satisfies $J(u_{0},v_{0})=d$, it is called a ground state solution of ~\eqref{eq:main}.
\end{enumerate}
\end{definition}
\subsection*{Proof of Theorem~\ref{thm1}}
\begin{proposition}\label{lem:pointwise1}
If $(u,v)\in\mathcal{D}(J)$ is a weak solution of ~\eqref{eq:main}, then it is also a pointwise solution.
\end{proposition}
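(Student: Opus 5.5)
The plan is to test the weak formulation in Definition~\ref{def:critical}(1) against Dirac masses. Fix $x_0\in V$ and set $\delta_{x_0}(x)=1$ if $x=x_0$ and $0$ otherwise.

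The first step is to check that $(\delta_{x_0},0)$ and $(0,\delta_{x_0})$ are admissible test functions, i.e. that they lie in $\mathcal D(J)$. Both have finite support, so $\|\delta_{x_0}\|_{\mathcal H_a}$ is a finite sum. The logarithmic integrals in $J$ vanish or reduce to finitely many terms; for $(\delta_{x_0},0)$ every coupling term carries a factor $|0|^{p-2}$ or $0^2\log 0^2$. Under the continuous extension $s^2\log s^2\to 0$ these are zero, so $J(\delta_{x_0},0)$ is finite. If an admissible class closed under finite-support perturbations is preferred, as in Remark~\ref{rem:thm1}, we use instead that $J'(u,v)\cdot(\xi,\eta)$ is well defined for finitely supported $(\xi,\eta)$. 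All integrands in $J'$ are then evaluated only at finitely many vertices, where $u,v$ are finite real numbers. The terms $|u|^{p-4}u$, $u\log u^2$ and $|v|^{p-2}$ extend continuously at $0$ because $p>4$.

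The second step is to compute each term of $J'(u,v)\cdot(\delta_{x_0},0)$. The potential and logarithmic terms localise immediately to $\mu(x_0)$ times the pointwise value of the right-hand side of \eqref{eq:main} at $x_0$, minus $a(x_0)|u|^{p-2}u(x_0)$. The gradient term is handled by discrete integration by parts. We expand $\Gamma(u,\delta_{x_0})$, which is nonzero only at $x_0$ and its finitely many neighbours. With $|\nabla u|^{p-2}$ weighting the edges as in the definition of $\Delta_p$, we obtain
\[
\int_V|\nabla u|^{p-2}\Gamma(u,\delta_{x_0})\,d\mu=-\mu(x_0)\,\Delta_p u(x_0).
\]
This uses only local finiteness, since the sum over $y\sim x_0$ is finite, and symmetrisation over edges $(x,y)$. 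It is the one place needing care with the factor $\tfrac12(|\nabla u|^{p-2}(x)+|\nabla u|^{p-2}(y))$ and with the convention that $\nabla u\nabla\xi$ means $\Gamma(u,\xi)$.

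The final step divides by $\mu(x_0)>0$, using $\mu(x_0)\ge\mu_{\min}$. Setting $J'(u,v)\cdot(\delta_{x_0},0)=0$ then yields the first equation of \eqref{eq:main} at $x_0$, and the test function $(0,\delta_{x_0})$ yields the second. Since $x_0$ is arbitrary, $(u,v)$ is a pointwise solution.

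The main obstacle is the admissibility issue in the first step, because $\mathcal D(J)$ is not a linear space and the test class in Definition~\ref{def:critical} is $\mathcal D(J)$. We settle it by showing directly that the Dirac pairs belong to $\mathcal D(J)$ under the continuous-extension convention, and noting that no summability issue arises since all sums are finite.
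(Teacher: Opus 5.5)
Your proposal is correct and follows the paper's own argument: you test the weak formulation with $(\delta_{x_0},0)$ and $(0,\delta_{x_0})$, which localises every term at $x_0$ and gives the two equations of \eqref{eq:main} pointwise. You also supply two details the paper leaves implicit: a check that the Dirac pairs belong to $\mathcal D(J)$ (the paper only notes $\delta_{x_0}\in C_c(V)\subset\mathcal H$, although Definition~\ref{def:critical} tests against $\mathcal D(J)$), and the discrete summation by parts $\int_V|\nabla u|^{p-2}\Gamma(u,\delta_{x_0})\,d\mu=-\mu(x_0)\Delta_p u(x_0)$.
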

\begin{proof}
In the discrete setting the Dirac delta $\delta_{x_0}$ has finite support,
hence belongs to $C_c(V)\subset\mathcal H$ and is an admissible test
function.  Assume $(u,v)$ is a weak solution.  By definition,
$\langle J'(u,v),(\xi,\eta)\rangle =0$ for all $(\xi,\eta)\in\mathcal H$.
Choosing $(\xi,\eta)=(\delta_{x_0},0)$ and expanding the duality pairing
yields
\begin{equation*}
\begin{aligned}
-\Delta_p u(x_0)+a(x_0)|u(x_0)|^{p-2}u(x_0) = &\frac{p-2}{p}|u(x_0)|^{p-4}u(x_0)v(x_0)^2 \log v(x_0)^2 \\[4pt]
&+\frac{2}{p}|v(x_0)|^{p-2}u(x_0) \log u(x_0)^2 \\[4pt]
&+\frac{2}{p}|v(x_0)|^{p-2}u(x_0).
\end{aligned}
\end{equation*}
Taking $(\xi,\eta)=(0,\delta_{x_0})$ gives the symmetric equation for
$v(x_0)$.  Since $x_0\in V$ was arbitrary, $(u,v)$ satisfies
system~\eqref{eq:main} pointwise.
\end{proof}
\begin{lemma}\label{lem:projection1}
For any $(u,v)\in\mathcal{D}(J)\setminus\{(0,0)\}$ with $u(x_{0})\neq0$ and $v(x_{0})\neq0$ for some $x_{0}\in V$, there exists a unique $t_{(u,v)}>0$ such that $t_{(u,v)}(u,v)\in\mathcal{N}$ and
\[
J(t_{(u,v)}(u,v))>J(t(u,v))\quad\text{for all } t>0,\ t\neq t_{(u,v)}.
\]
In particular, if $(u,v)\in\mathcal{N}$, then $t_{(u,v)}=1$.
\end{lemma}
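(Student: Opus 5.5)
The plan is to reduce everything to the one-variable fibering map $g(t):=J(tu,tv)$ for $t>0$ and to compute it explicitly. The logarithmic terms scale additively under dilation, so $g$ has a closed form.

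\textbf{Step 1: the three basic quantities.} Set
\[
A:=\|u\|_{\mathcal H_a}^p+\|v\|_{\mathcal H_b}^p,\qquad
B:=\int_V\bigl(|u|^{p-2}v^2\log v^2+|v|^{p-2}u^2\log u^2\bigr)\,d\mu,
\]
\[
M:=\int_V\bigl(|u|^{p-2}v^2+|v|^{p-2}u^2\bigr)\,d\mu .
\]
By Young's inequality, $|u|^{p-2}v^2\le \frac{p-2}{p}|u|^p+\frac2p|v|^p$. Together with (A1) this gives $M<\infty$. Since $(u,v)\in\mathcal D(J)$ and $A<\infty$, the identity $J(u,v)=\frac1p(A-B)$ forces $B$ to be finite. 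Finally $M>0$, because the summand at $x_0$ contains $|u(x_0)|^{p-2}v(x_0)^2\mu(x_0)>0$. This is exactly where the hypothesis $u(x_0)v(x_0)\neq0$ is needed.

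\textbf{Step 2: the explicit fibering map.} Using $\log(t^2v^2)=2\log t+\log v^2$, one obtains
\[
g(t)=\frac{t^p}{p}\bigl(A-B\bigr)-\frac{2M}{p}\,t^p\log t .
\]
In particular $g(t)$ is finite, so $t(u,v)\in\mathcal D(J)\setminus\{(0,0)\}$ for every $t>0$.

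\textbf{Step 3: identify the Nehari condition with $g'(t)=0$.} Plugging $(\xi,\eta)=(tu,tv)$ into Definition~\ref{def:critical}(1) at the point $t(u,v)$ is legitimate. The terms can be collected directly, since all of them are finite by the bounds in Step 1. This yields the identity
\[
J'(t(u,v))\cdot(t(u,v)) = t\,g'(t).
\]
Because the functional is only defined on $\mathcal D(J)$, this pointwise-in-$t$ identity replaces any appeal to a chain rule in $\mathcal H$. It is the step I expect to need the most care. The rest is one-variable calculus.

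\textbf{Step 4: uniqueness and strict maximality.} Differentiating gives
\[
g'(t)=t^{p-1}\Bigl(A-B-\frac{2M}{p}-2M\log t\Bigr).
\]
Since $M>0$, the bracket is strictly decreasing in $t$, tending to $+\infty$ as $t\to0^+$ and to $-\infty$ as $t\to\infty$. It therefore has the unique zero
\[
t_{(u,v)}=\exp\Bigl(\frac{A-B-2M/p}{2M}\Bigr).
\]
Moreover $g'>0$ on $(0,t_{(u,v)})$ and $g'<0$ on $(t_{(u,v)},\infty)$, so $g$ attains a strict global maximum at $t_{(u,v)}$. Combined with Step 3, this shows that $t_{(u,v)}(u,v)\in\mathcal N$ and that no other $t>0$ gives a point of $\mathcal N$.

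\textbf{Step 5: the case $(u,v)\in\mathcal N$.} If $(u,v)\in\mathcal N$, then $g'(1)=J'(u,v)\cdot(u,v)=0$. By uniqueness of the zero of $g'$, this forces $t_{(u,v)}=1$.
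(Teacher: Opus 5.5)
Your proposal is correct and follows essentially the same route as the paper. Both arguments use $\log(t^2s^2)=\log t^2+\log s^2$ to compute the Nehari function along the ray, observe that $M>0$ (from $u(x_0)v(x_0)\neq0$) makes $\phi(t)/t^p = A-B-\tfrac{2}{p}M-2M\log t$ strictly decreasing with a unique zero, and conclude that $J(tu,tv)$ increases and then decreases. You additionally make explicit several points the paper leaves implicit: the finiteness of $M$ and $B$, the identity $J'(t(u,v))\cdot t(u,v)=t\,g'(t)$, and the closed form of $t_{(u,v)}$.
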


\begin{proof}
Pairs with $u(x_0)\neq0$ and $v(x_0)\neq0$ clearly exist (e.g.,\ $u=v=\delta_{x_0}$).
We now set $\phi(t)=\langle J'(tu,tv),(tu,tv)\rangle$ for $t\ge0$.  A direct
computation gives
\begin{align*}
\phi(t) &= t^p \bigl( \|u\|_{\mathcal{H}_a}^p + \|v\|_{\mathcal{H}_b}^p \bigr)  - t^p \log t^2 \int_V \bigl( |v|^{p-2}u^2 + |u|^{p-2} v^2 \bigr) d\mu \\[4pt]
&\quad - t^p \int_V \Bigl( \frac{2}{p}|v|^{p-2} u^2 + \frac{2}{p}|u|^{p-2} v^2 + |v|^{p-2}u^2\log u^2 + |u|^{p-2}v^2\log v^2 \Bigr) d\mu.
\end{align*}
Because $u(x_0)\neq0$ and $v(x_0)\neq0$, the coefficient
$\int_V(|v|^{p-2} u^2+|u|^{p-2}v^2)\,d\mu$ is strictly positive.
Hence $t\mapsto\phi(t)/t^p$ is strictly decreasing on $(0,\infty)$ and
possesses a unique zero $t_{(u,v)}$.  It follows that
$\gamma(t):=J(tu,tv)$ is strictly increasing on $(0,t_{(u,v)})$ and
strictly decreasing on $(t_{(u,v)},\infty)$.  Consequently
$t_{(u,v)}(u,v)\in\mathcal N$ and the maximality property holds.
The last assertion is obvious.
\end{proof}

\begin{lemma}\label{lem4.3}
For any $(u,v)\in\mathcal{N}$, there exists $\delta>0$ such that $\|(u,v)\|_{\mathcal{H}}\ge\delta$.
Moreover, $d=\inf\limits_{(u,v)\in\mathcal{N}}J(u,v)>0$.
\end{lemma}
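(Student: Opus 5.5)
On \(\mathcal N\) the Nehari identity reads
\[
\|u\|_{\mathcal H_a}^p+\|v\|_{\mathcal H_b}^p=\int_V\Bigl(|v|^{p-2}u^2\log u^2+|u|^{p-2}v^2\log v^2\Bigr)d\mu+\frac2p\int_V\bigl(|v|^{p-2}u^2+|u|^{p-2}v^2\bigr)d\mu .
\]
The plan is to bound the right-hand side by a power of \(\|(u,v)\|_{\mathcal H}\) strictly larger than \(p\), which forces a lower bound on the norm. We then use the Nehari identity to rewrite \(J\) on \(\mathcal N\) as a positive quantity.

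\textbf{Lower bound on the norm.} Since \(\log s^2\le C_\sigma |s|^{\sigma}\) for \(|s|\) not small (with any \(\sigma>0\)), and \(s^2\log s^2\le 0\) for \(|s|\le1\), I would estimate
\[
|v|^{p-2}u^2\log u^2\le C_\sigma|v|^{p-2}|u|^{2+\sigma}.
\]
Discarding the negative part of the logarithm only enlarges the bound, and the constant is uniform. The term \(\frac2p|v|^{p-2}u^2\) is already of total degree \(p\), which is a problem.

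To handle it, I would use the \(L^\infty\) control from Lemma~\ref{lem4}: \(\|u\|_\infty\le C\|u\|_{\mathcal H_a}\), and similarly for \(v\). Hölder together with Lemma~\ref{lem4} bounds the logarithmic terms by \(C\|(u,v)\|_{\mathcal H}^{p+\sigma}\). The degree-\(p\) terms I would not bound directly. Instead I would rewrite \(J\) on \(\mathcal N\):
\[
J(u,v)=\frac1p\Bigl(\|u\|^p+\|v\|^p-\int|v|^{p-2}u^2\log u^2-\int|u|^{p-2}v^2\log v^2\Bigr)=\frac{2}{p^2}\int_V\bigl(|v|^{p-2}u^2+|u|^{p-2}v^2\bigr)d\mu>0 .
\]
This gives \(J>0\) on \(\mathcal N\) immediately. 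The remaining task is to show that \(M(u,v)=\int(|v|^{p-2}u^2+|u|^{p-2}v^2)\) is bounded below on \(\mathcal N\).

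\textbf{Smallness argument for the norm bound.} Suppose \((u_n,v_n)\in\mathcal N\) with \(\|(u_n,v_n)\|_{\mathcal H}\to0\). Then \(\|u_n\|_\infty,\|v_n\|_\infty\to0\), so eventually all values are below \(e^{-1/p-1}\). There \(\log u^2\le -2/p-2\) wherever \(u\ne0\), and hence
\[
u^2\log u^2+\frac2p u^2\le -2u^2<0 .
\]
The whole right-hand side of the Nehari identity becomes \(\le 0\), while the left-hand side is \(>0\). This is a contradiction. So there is \(\delta>0\) with \(\|(u,v)\|_{\mathcal H}\ge\delta\) on \(\mathcal N\); this is exactly where \(\mu\ge\mu_{\min}\), giving the sup-norm embedding, is essential.

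\textbf{Positivity of \(d\).} From the Nehari identity, \(\delta^p/2^{p}\le\|u\|^p+\|v\|^p\le RHS\). Split \(V\) into \(\{|u|\le\tau\}\) and \(\{|u|>\tau\}\) for a small \(\tau\), and likewise for \(v\). On the small set the log terms are negative and absorb the \(\frac2p\) part, so that part contributes \(\le0\). On the large set, \(\log u^2\le \log(C\|u\|_{\mathcal H}^2)\). This gives
\[
\delta^p/2^p\le C(1+\log^+\|(u,v)\|)\,M(u,v).
\]
Thus either \(M\) is bounded below, or the norm is large. For the large-norm case I would use \(J=\frac{2}{p^2}M\) together with the fibre structure of Lemma~\ref{lem:projection1}: the Nehari point maximizes \(J\) along its ray. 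Hence \(J(u,v)\ge J(s(u,v))\) for \(s=\delta'/\|(u,v)\|\). For small \(\delta'\), the quadratic part dominates the (nonpositive after truncation) log contributions at small amplitude, so \(J(s(u,v))\ge \frac1{2p}\delta'^p>0\). This gives \(d>0\).

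\textbf{Main obstacle.} I expect the main difficulty to be making the ray comparison rigorous. It requires checking that \(s(u,v)\in\mathcal D(J)\), which holds since scaling preserves finiteness, and bounding \(J(s u,s v)\) from below uniformly. The uniform bound uses \(\|su\|_\infty\le C\delta'\) small, so that \(-s^p\int(\cdots)\log(s^2u^2)\ge0\); the \(\log\) term then helps. I would try this first.
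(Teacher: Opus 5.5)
Your proposal is correct, but it takes a genuinely different route from the paper's.

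\textbf{Norm lower bound.} The paper absorbs the degree-$p$ term $\frac2p M$ by H\"older--Young, $\int_V|u|^{p-2}v^2\,d\mu\le\frac{p-2}{p}\|u\|_{\mathcal H_a}^p+\frac2p\|v\|_{\mathcal H_b}^p$. It then bounds the logarithms by $C\|(u,v)\|_{\mathcal H}^{p+\varepsilon}$ and reads off $(1-\frac2p)\|(u,v)\|_{\mathcal H}^p\le C\|(u,v)\|_{\mathcal H}^{p+\varepsilon}$. That absorption tacitly uses $\|u\|_{L^p}\le\|u\|_{\mathcal H_a}$ (for example $V_0\ge1$); under (A1) alone the absorbing factor is only $1-\frac{2}{pV_0}$. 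You instead observe that $s^2\log s^2+\frac2p s^2\le-2s^2$ for $|s|\le e^{-1-1/p}$. Combined with $\|\cdot\|_\infty\le C\|\cdot\|_{\mathcal H}$, this makes the entire right-hand side of the Nehari identity nonpositive on a small ball, and it works for every $V_0>0$.

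\textbf{Positivity of $d$.} The paper stays inside the Nehari identity and argues by contradiction. From $J<\varepsilon_1$ it gets $M<\frac{p^2}{2}\varepsilon_1$. The $L^\infty$ bound controls the positive logarithmic parts by a small power of the norm times $M$, and the resulting inequality forces $\|(u,v)\|_{\mathcal H}<\delta$. Your ray comparison uses the fibre maximality of Lemma~\ref{lem:projection1} instead, and it already works on all of $\mathcal N$. Take $s=\delta'/\|(u,v)\|_{\mathcal H}$ with $C\delta'\le1$. Then $J(u,v)\ge J(s(u,v))\ge\frac1p\bigl(\|su\|_{\mathcal H_a}^p+\|sv\|_{\mathcal H_b}^p\bigr)\ge\frac{2^{1-p}}{p}\delta'^p$. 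The constant is not $\frac1{2p}\delta'^p$, because $\|\cdot\|_{\mathcal H}$ is a sum of norms. This bound does not use the norm lower bound at all.

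So your case distinction and split-set estimate are unnecessary. That branch would also close directly if you kept $X=\|u\|_{\mathcal H_a}^p+\|v\|_{\mathcal H_b}^p$ on the left. Then $X\le C(1+\log^+X)M$, $X$ is bounded below, and $X/(1+\log^+X)$ is nondecreasing, so $M$ is bounded below; this is essentially the paper's mechanism.

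To invoke Lemma~\ref{lem:projection1} you need two facts. The first is invariance of $\mathcal D(J)$ under scaling, which you checked. The second is $M(u,v)>0$ on $\mathcal N$, which you assert without comment but which is immediate: $M=0$ annihilates every integrand on the right of the Nehari identity and forces $(u,v)=(0,0)$. In return, your route gives an explicit, contradiction-free lower bound for $d$ that does not depend on normalising $V_0$.
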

\begin{proof}
Fix $(u,v)\in\mathcal N$.  The condition $J'(u,v)\cdot(u,v)=0$ yields
\begin{equation}\label{eq:Nehari-id}
\|u\|_{\mathcal H_a}^p+\|v\|_{\mathcal H_b}^p
= \int_V|u|^{p-2}v^2\log v^2\,d\mu
  +\int_V|v|^{p-2}u^2\log u^2\,d\mu
  +\frac2p\int_V|u|^{p-2}v^2\,d\mu
  +\frac2p\int_V|v|^{p-2}u^2\,d\mu .
\end{equation}
By Lemma~\ref{lem4}, H\"older's inequality, and Young's inequality,
\[
\int_V|u|^{p-2}v^2\,d\mu
   \le\Bigl(\int_V|u|^p\,d\mu\Bigr)^{\!\frac{p-2}{p}}
     \Bigl(\int_V|v|^p\,d\mu\Bigr)^{\!\frac{2}{p}}
   \le\frac{p-2}{p}\|u\|_{\mathcal H_a}^p+\frac{2}{p}\|v\|_{\mathcal H_b}^p,
\]
and the symmetric bound holds for $\int_V|v|^{p-2} u^2\,d\mu$.
For the logarithmic terms we use the elementary estimate
$(s^2\log s^2)_+\le C_\varepsilon s^{2+\varepsilon}$ with an arbitrary
$\varepsilon\in(0,1)$.  Combined with Lemma~\ref{lem4} this gives
\[
\int_V|u|^{p-2}v^2\log v^2\,d\mu
   \le C_\varepsilon\int_V|u|^{p-2}|v|^{2+\varepsilon}\,d\mu
   \le C_1\|u\|_{\mathcal H_a}^{p+\varepsilon}
     +C_2\|v\|_{\mathcal H_b}^{p+\varepsilon},
\]
where $C_1,C_2$ depend only on $p,\varepsilon$ and the embedding constants.
The symmetric term obeys the same estimate.
Inserting these bounds into~\eqref{eq:Nehari-id} and using
$\|u\|_{\mathcal H_a}^p+\|v\|_{\mathcal H_b}^p\ge
 2^{1-p}\|(u,v)\|_{\mathcal H}^p$, we obtain
\[
\bigl(1-\tfrac2p\bigr)\|(u,v)\|_{\mathcal H}^p
   \le C\|(u,v)\|_{\mathcal H}^{p+\varepsilon},
\]
whence $\|(u,v)\|_{\mathcal H}\ge\delta>0$ with
$\delta:=\bigl(C^{-1}(1-\tfrac2p)\bigr)^{\frac{1}{\varepsilon}}$.

On the other hand, using the boundedness of the $L^\infty$ norm, which is
available because the embedding holds in the discrete setting of this section, we estimate the
logarithmic terms respectively:
\[
\int_V|u|^{p-2}v^2\log v^2\,d\mu
   \le C\int_V|u|^{p-2}|v|^{2+\varepsilon}\,d\mu
   \le C \|v\|_{L^{\infty}}^{\varepsilon}\int_V |u|^{p-2} v^2 d\mu
   \le C\|v\|_{\mathcal{H}_b}^{\frac{\varepsilon}{p}}\int_V |u|^{p-2} v^2 d\mu,
\]
\[
\int_V|v|^{p-2}u^2\log u^2\,d\mu
   \le C\int_V|v|^{p-2}|u|^{2+\varepsilon}\,d\mu
   \le C \|u\|_{L^{\infty}}^{\varepsilon}\int_V |v|^{p-2} u^2 d\mu
   \le C\|u\|_{\mathcal{H}_a}^{\frac{\varepsilon}{p}}\int_V |v|^{p-2} u^2 d\mu.
\]

Now suppose, for contradiction, that the infimum of $J$ over $\mathcal N$
equals zero.  Then for any $\varepsilon_1>0$ there exists
$(u,v)\in\mathcal{N}$ such that $J(u,v)<\varepsilon_1$.
The identity
\[
J(u,v)-\frac1p J'(u,v)\cdot (u,v)
      =\frac{2}{p^2}\int_V \bigl(|u|^{p-2}v^2+|v|^{p-2}u^2\bigr)\,d\mu
\]
implies
\[
\frac{2}{p^2}\int_V \bigl(|u|^{p-2}v^2+|v|^{p-2}u^2\bigr)\,d\mu < \varepsilon_1 .
\]
Inserting this together with the logarithmic estimates above
into~\eqref{eq:Nehari-id} yields
\[
\|u\|_{\mathcal H_a}^p+\|v\|_{\mathcal H_b}^p
   \le C \varepsilon_1\bigl(\|u\|_{\mathcal{H}_a}^{\frac{\varepsilon}{p}}
                         +\|v\|_{\mathcal{H}_b}^{\frac{\varepsilon}{p}}\bigr)
     +C\varepsilon_1,
\]
which in turn gives
\[
\|(u,v)\|_{\mathcal{H}}^p \le C \varepsilon_1 \|(u,v)\|_{\mathcal{H}}^{\frac{\varepsilon}{p}}+C\varepsilon_1.
\]
Fix $\varepsilon>0$ such that $\frac{\varepsilon}{p}<p$. An elementary analysis of the resulting
inequality shows that for $\varepsilon_1$ sufficiently small we must have
$\|(u,v)\|_{\mathcal{H}}<\delta$, where $\delta>0$ is the uniform lower bound
obtained above.  This contradicts the definition of $\delta$, and therefore
the infimum cannot be zero. The proof is complete.
\end{proof}
\begin{lemma}\label{lem4.4}
  Suppose $(A_1)$ and $(A_2)$ hold, then $d>0$ obtained in Lemma~\ref{lem4.3} can be attained.
\end{lemma}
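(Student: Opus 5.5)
Take a minimizing sequence $\{(u_k,v_k)\}\subset\mathcal N$ with $J(u_k,v_k)\to d$. The plan is to bound it in $\mathcal H$, extract a limit with the compact embeddings of Lemma~\ref{lem4}, and then project that limit back onto $\mathcal N$ with Lemma~\ref{lem:projection1}. Two preliminary reductions help. Since $J$ and the Nehari constraint are invariant under $(u,v)\mapsto(|u|,|v|)$ (discrete gradients only decrease, and all other terms depend on $|u|,|v|$), I may assume $u_k,v_k\ge0$ after projecting again if needed. The identity already used in Lemma~\ref{lem4.3},
\[
J(u,v)=\frac{2}{p^2}\int_V\bigl(|u|^{p-2}v^2+|v|^{p-2}u^2\bigr)\,d\mu\quad\text{on }\mathcal N,
\]
then gives a uniform bound on $M_k:=\int_V(|u_k|^{p-2}v_k^2+|v_k|^{p-2}u_k^2)\,d\mu$.

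\textbf{Boundedness.} I expect this to be the main obstacle: the logarithmic terms must be controlled by $M_k$ to a power strictly less than the degree of the norm. The negative part of $s^2\log s^2$ only helps. For the positive part, $(s^2\log s^2)_+\le C_\varepsilon s^{2+\varepsilon}$ together with the $L^\infty$ embedding of Lemma~\ref{lem4} gives
\[
\int_V|u_k|^{p-2}v_k^2\log v_k^2\,d\mu\le C\|v_k\|_\infty^{\varepsilon}M_k\le C\|(u_k,v_k)\|_{\mathcal H}^{\varepsilon}.
\]
Inserting this into \eqref{eq:Nehari-id} yields $\|(u_k,v_k)\|_{\mathcal H}^p\le C+C\|(u_k,v_k)\|_{\mathcal H}^{\varepsilon}$. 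Since $\varepsilon<p$, the sequence is bounded.

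\textbf{Passing to the limit.} By Lemma~\ref{lem4}, along a subsequence $(u_k,v_k)\rightharpoonup(u_0,v_0)$ in $\mathcal H$ and strongly in $L^q\times L^q$ for all $q\in[p,\infty]$, hence also pointwise. The quantity $M_k$ involves total degree $p$, so $M_k\to M_0$ by strong $L^p$ convergence and H\"older. The limit is nontrivial: by the bound from Lemma~\ref{lem4.3}, $pd$ is bounded above by a multiple of $\|(u_k,v_k)\|^p$, and if $u_0\equiv0$ then the right-hand side of \eqref{eq:Nehari-id} tends to $0$ (the positive log parts by the estimate above, the negative parts by sign). This forces the norm to $0$, contradicting $\|(u_k,v_k)\|\ge\delta$. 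The same reasoning gives $M_0>0$, so there is a vertex $x_0$ with $u_0(x_0)v_0(x_0)\neq0$.

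For the logarithmic terms, the positive parts converge, since they are dominated by $C|u_k|^{p-2}|v_k|^{2+\varepsilon}$, which converges in $L^1$. The negative parts are handled by Fatou's lemma, which gives
\[
\int_V(\text{neg. part at }(u_0,v_0))\,d\mu\le\liminf_k\int_V(\text{neg. part at }(u_k,v_k))\,d\mu,
\]
and the right-hand side is finite by the bounds above. Hence $(u_0,v_0)\in\mathcal D(J)$. Weak lower semicontinuity of the norm then gives
\[
\langle J'(u_0,v_0),(u_0,v_0)\rangle\le\liminf_k\langle J'(u_k,v_k),(u_k,v_k)\rangle=0 .
\]

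\textbf{Conclusion.} By Lemma~\ref{lem:projection1} there is a unique $t_0>0$ with $t_0(u_0,v_0)\in\mathcal N$. Because $\phi(t)/t^p$ is strictly decreasing and $\phi(1)\le0$, we have $t_0\le1$. Therefore
\[
d\le J(t_0u_0,t_0v_0)=\frac{2t_0^p}{p^2}M_0\le\frac{2}{p^2}M_0=\lim_k J(u_k,v_k)=d .
\]
Equality throughout forces $t_0=1$, so $(u_0,v_0)\in\mathcal N$ and $J(u_0,v_0)=d$, meaning $d$ is attained.
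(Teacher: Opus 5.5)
Your proposal is correct and is essentially the paper's proof: boundedness from the same $L^\infty$-based estimate $X_k\le CX_k^{\varepsilon/p}+C$, compactness from Lemma~\ref{lem4}, lower semicontinuity together with convergence of the positive logarithmic parts and Fatou's lemma on the negative parts to get $\langle J'(u_0,v_0),(u_0,v_0)\rangle\le 0$, and projection by Lemma~\ref{lem:projection1} with $t_0\le 1$. The differences are cosmetic and mostly improvements. You fold the paper's two-case contradiction into the single chain $d\le\frac{2t_0^p}{p^2}M_0\le\frac{2}{p^2}M_0=d$, and you make explicit that $(u_0,v_0)\in\mathcal D(J)$ and that $M_0=\frac{p^2}{2}d>0$; the latter gives nontriviality more directly than your detour through $\|(u_k,v_k)\|_{\mathcal H}\ge\delta$. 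The reduction to $u_k,v_k\ge 0$ is never used and can be dropped; as stated it is also slightly off, since $J$ is only non-increasing, not invariant, under $(u,v)\mapsto(|u|,|v|)$.
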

\begin{proof}
First of all, Lemma~\ref{lem:projection1} implies that $\mathcal{N}\neq \emptyset$. Taking a minimizing sequence $\{(u_k,v_k)\}\subset \mathcal{N}$ such that $\lim\limits_{k\to \infty} J(u_k,v_k)=d$, since $(u_k,v_k)\in \mathcal{N}$, $J'(u_k,v_k)\cdot(u_k,v_k)=0$, which immediately implies that $$d=\lim\limits_{k\to\infty} \left[J(u_k,v_k)-\frac{1}{p}J'(u_k,v_k)\cdot (u_k,v_k)\right]=\frac{2}{p^2}\lim\limits_{k\to\infty}\left(\int_V |u_k|^{p-2}v_k^2d\mu+\int_V|v_k|^{p-2}u_k^2 d\mu\right),$$
            Thus, we deduce that $\int_V |u_k|^{p-2} v_k^2 d\mu$ and $\int_V |v_k|^{p-2} u_k^2d\mu$ are uniformly bounded, namely there exists $C>0$ such that 
            \[\int_V |u_k|^{p-2}v_k^2 d\mu\le C,\quad \int_V |v_k|^{p-2} u_k^2d\mu\le C.\]
            
Set
\[
X_k:=\|u_k\|_{\mathcal H_a}^p+\|v_k\|_{\mathcal H_b}^p,
\]
inserting the Nehari identity, we find
\begin{eqnarray*}\label{eq:Xk0}
X_k &\le& \int_V|u_k|^{p-2}(v_k^2\log v_k^2)^{+}\,d\mu
      +\int_V|v_k|^{p-2}(u_k^2\log u_k^2)^{+}\,d\mu + C\\
    &\le& C \left(\int_V |u_k|^{p-2}|v_k|^{2+\varepsilon}d\mu+\int_V |v_k|^{p-2}|u_k|^{2+\varepsilon}d\mu \right)+C\\
    &\le& C \left(\|v_k\|_{L^{\infty}}^{\varepsilon}\int_V |u_k|^{p-2}v_k^2d\mu+\|u_k\|_{L^{\infty}}^{\varepsilon}\int_V |v_k|^{p-2}u_k^2 d\mu \right) +C\\
    &\le& C\left(\|u_k\|_{L^{\infty}}^{\varepsilon}+\|v_k\|_{L^{\infty}}^{\varepsilon}\right)+C\\
    &\le& C\left(\|u_k\|_{\mathcal{H}_a}^{\varepsilon}+\|v_k\|_{\mathcal{H}_a}^{\varepsilon}\right)+C,\\
\end{eqnarray*}
from which we obtain 
\[
   X_k \le C X_k^{\frac{\varepsilon}{p}}+C.
\]
Since \(p>4\), by choosing \(\varepsilon\) small enough, \(1-\varepsilon/p > 0\), and is therefore \(X_k\) is uniformly bounded, which means the original minimising sequence
\(\{(u_k,v_k)\}\) is bounded in \(\mathcal H\).

Using the compact embedding Lemma~\ref{lem4}, we pass to a subsequence, still denoted
\(\{(u_k,v_k)\}\), and obtain a limit \((u_0,v_0)\in\mathcal H\) such that
\[
\begin{cases}
    (u_k,v_k)\rightharpoonup (u_0,v_0) \text{ weakly in } \mathcal{H},\\
    (u_k,v_k)\rightarrow (u_0,v_0) \text{ in } L^{p_1}(V)\times L^{p_2}(V) \text{ for }p_1\ge p\text{ and } p_2\ge p,\\
    (u_k,v_k)\rightarrow (u_0,v_0) \text{ pointwisely in } V.
\end{cases}
\]
Then, by the weak-lower semi-continuity of norm, together with Fatou's lemma, we obtain 
\allowdisplaybreaks
\begin{align*}
    & \int_V\left(|\nabla u_0|^p+a(x)|u_0|^p+|\nabla v_0|^p+b(x)|v_0|^p\right)d\mu -\frac{2}{p}\int_V|u_0|^{p-2}v_0^2 d\mu-\frac{2}{p}\int_V|v_0|^{p-2}u_0^2d\mu\\
    & \quad - \int_V |u_0|^{p-2}\left(v_0^2\log v_0^2\right)^{-} d\mu-\int_V |v_0|^{p-2}\left(u_0^2\log u_0^2\right)^{-} d\mu\\
    & \le \liminf\limits_{k\to\infty}\left[\int_V\left(|\nabla u_k|^p+a(x)|u_k|^p+|\nabla v_k|^p+b(x)|v_k|^p\right)d\mu -\frac{2}{p}\int_V|u_k|^{p-2}v_k^2 d\mu-\frac{2}{p}\int_V|v_k|^{p-2}u_k^2d\mu\right]\\
    & \quad + \liminf\limits_{k\to\infty}\left[-\int_V |u_k|^{p-2}\left(v_k^2\log v_k^2\right)^{-} d\mu-\int_V |v_k|^{p-2}\left(u_k^2\log u_k^2\right)^{-} d\mu\right]\\
    & \le \liminf\limits_{k\to \infty} \int_V\left[|u_k|^{p-2}\left(v_k^2\log v_k^2\right)^{+}+|v_k|^{p-2}\left(u_k^2\log u_k^2\right)^{+}\right]d\mu,
\end{align*}
            where
            \begin{eqnarray*}
                &&\int_V \left[|u_k|^{p-2}\left(v_k^2\log v_k^2\right)^{+}+|v_k|^{p-2}\left(u_k^2\log u_k^2\right)^{+}\right] d\mu\\
                &\le&\int_V \left(|u_k|^{p-2}|v_k|^{2+\varepsilon}+|v_k|^{p-2}|u_k|^{2+\varepsilon}\right) d\mu\\
                &\le& C \int_V \left(|u_k|^{p+\varepsilon}+|v_k|^{p+\varepsilon}\right) d\mu.
            \end{eqnarray*}
            Since \(u_k\to u\) and \(v_k\to v\) in \(L^{p+\varepsilon}\), together with pointwise convergence, the Vitali convergence theorem yields
           \allowdisplaybreaks
\begin{align*}
\lim_{k\to\infty}\int_V \bigl[ & |u_k|^{p-2}(v_k^2\log v_k^2)^+ + |v_k|^{p-2}(u_k^2\log u_k^2)^+ \bigr]\,d\mu \\
&= \int_V \bigl[ |u_0|^{p-2}(v_0^2\log v_0^2)^+ + |v_0|^{p-2}(u_0^2\log u_0^2)^+ \bigr]\,d\mu .
\end{align*}
            Thus,
            \begin{eqnarray*}
                &&\int_V\left(|\nabla u_0|^p+a(x)|u_0|^p+|\nabla v_0|^p+b(x)|v_0|^p\right)d\mu -\frac{2}{p}\int_V|u_0|^{p-2}v_0^2 d\mu-\frac{2}{p}\int_V|v_0|^{p-2}u_0^2d\mu\\
                &-& \int_V |u_0|^{p-2}v_0^2\log v_0^2 d\mu-\int_V |v_0|^{p-2} u_0^2\log u_0^2 d\mu\le 0.
            \end{eqnarray*}
            Now we claim that the left side of the above inequality is equal to 0. We argue by contradiction. If it were strictly less than 0, then
            \begin{align*}
    0 \le & \int_V\left(|\nabla u_0|^p+a(x)|u_0|^p+|\nabla v_0|^p+b(x)|v_0|^p\right)d\mu \\
    < {} & \frac{2}{p}\int_V |u_0|^{p-2}v_0^2 d\mu+\frac{2}{p}\int_V |v_0|^{p-2}u_0^2 d\mu+\int_V |u_0|^{p-2} v_0^2\log v_0^2 d\mu+\int_V |v_0|^{p-2} u_0^2\log u_0^2 d\mu,
         \end{align*}
            which means there exists $x\in V$ such that $u_0(x)\neq 0$ and $v_0(x)\neq 0$, then it immediately follows from Lemma~\ref{lem:projection1} that there exists a unique $t_0$ such that
            $t_0(u_0,v_0)\in \mathcal{N}$, $i.e.,$ 
            \begin{eqnarray*}
                0&=&t_0^p\left[\|u_0\|_{\mathcal{H}_a}^p +\|v_0\|_{\mathcal{H}_b}^p-\int_V(|v_0|^{p-2}u_0^2\log u_0^2+|u_0|^{p-2}v_0^2\log v_0^2)d\mu\right]\\
                &-& t_0^p\left(\frac{2}{p}\int_V |v_0|^{p-2}u_0^2 d\mu+\frac{2}{p}\int_V |u_0|^{p-2}v_0^2d\mu\right)-t_0^p\log t_0^2\left(\int_V|v_0|^{p-2} u_0^2d\mu+\int_V|u_0|^{p-2} v_0^2d\mu\right)\\
                &<& -t_0^p\log t_0^2\left(\int_V|v_0|^{p-2} u_0^2d\mu+\int_V|u_0|^{p-2} v_0^2d\mu\right),
            \end{eqnarray*}
            that is, $t_0^p\log t_0^2<0$, which implies $t_0\in(0,1)$, together with Fatou's Lemma to obtain
            \begin{eqnarray*}
                d&\le& J(t_0(u_0,v_0))=J(t_0(u_0,v_0))-\frac{1}{p}J'(t_0(u_0,v_0))\cdot(t_0(u_0,v_0))\\
                &=& \frac{2t_0^p}{p^2}\left(\int_V |u_0|^{p-2}v_0^2 d\mu+\int_V |v_0|^{p-2}u_0^2d\mu\right) < \frac{2}{p^2}\left(\int_V |u_0|^{p-2}v_0^2 d\mu+\int_V |v_0|^{p-2}u_0^2d\mu\right)\\
                &\le& \frac{2}{p^2} \liminf\limits_{k\to\infty}\left(\int_V|u_k|^{p-2}v_k^2 d\mu+\int_V|v_k|^{p-2}u_k^2d\mu\right)=\liminf\limits_{k\to\infty} J(u_k,v_k)=d,
            \end{eqnarray*} which is a contradiction, hence we deduce that 
            \begin{eqnarray*}
                &&\int_V\left(|\nabla u_0|^p+a(x)|u_0|^p+|\nabla v_0|^p+b(x)|v_0|^p\right)d\mu -\frac{2}{p}\int_V|u_0|^{p-2}v_0^2 d\mu-\frac{2}{p}\int_V|v_0|^{p-2}u_0^2d\mu\\
                &-& \int_V |u_0|^{p-2}v_0^2\log v_0^2 d\mu-\int_V |v_0|^{p-2} u_0^2\log u_0^2 d\mu = 0,
            \end{eqnarray*}
            which means $(u_0,v_0)\in \mathcal{N}$, 
            \begin{eqnarray*}
                d&\le& J(u_0,v_0)=J(u_0,v_0)-\frac{1}{p}J'(u_0,v_0)\cdot(u_0,v_0)\\
                &=& \frac{2}{p^2}\left(\int_V |u_0|^{p-2}v_0^2 d\mu+\int_V |v_0|^{p-2}u_0^2d\mu\right) 
                \le \frac{2}{p^2} \liminf\limits_{k\to\infty}\left(\int_V|u_k|^{p-2}v_k^2 d\mu+\int_V|v_k|^{p-2}u_k^2d\mu\right)\\
                &=&\liminf\limits_{k\to\infty} J(u_k,v_k)=d,
            \end{eqnarray*}
            Thus, $J(u_0,v_0)=d$ and the proof is completed.
  \end{proof}
\medskip
\noindent
\textbf{Remark.}
The estimate of the logarithmic terms relies entirely on the
global $L^\infty$ bound supplied by the compact embedding
$\mathcal{H}\hookrightarrow L^\infty(V)$.  Once this uniform pointwise
control is available, the logarithm reduces
to a subcritical power, which can be absorbed by elementary means.

\begin{lemma}\label{lem:solution1}
The minimizer $(u_{0},v_{0})\in\mathcal{N}$ of $d$ is a solution of system~\eqref{eq:main}.
\end{lemma}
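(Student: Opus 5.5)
The plan is the standard Lagrange-multiplier argument on the Nehari manifold, carried out with finitely supported test directions only. This is enough for two reasons: the Gateaux derivative of \(J\) along any \((\xi,\eta)\in C_c(V)\times C_c(V)\) is well defined on \(\mathcal D(J)\), since only finitely many vertices change (see Remark~\ref{rem:thm1}). And by Proposition~\ref{lem:pointwise1}, testing with Dirac masses already yields the pointwise system \eqref{eq:main}, which is what we want.

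\textbf{Step 1: a perturbed fibre.} Fix a minimiser \((u_0,v_0)\in\mathcal N\) with \(J(u_0,v_0)=d\) and a direction \((\xi,\eta)\in C_c(V)^2\). Since \(d>0\), the identity
\[
J-\tfrac1p\langle J',\cdot\rangle=\tfrac{2}{p^2}\int_V\bigl(|u|^{p-2}v^2+|v|^{p-2}u^2\bigr)\,d\mu
\]
shows that \(u_0v_0\not\equiv0\). So some \(x_0\) has \(u_0(x_0)\neq0\) and \(v_0(x_0)\neq0\), and this persists for \((u_0+s\xi,v_0+s\eta)\) when \(|s|\) is small. Moreover \((u_0+s\xi,v_0+s\eta)\in\mathcal D(J)\), because it differs from \((u_0,v_0)\) at finitely many vertices. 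Lemma~\ref{lem:projection1} then gives a unique \(t(s)>0\) with \(t(s)(u_0+s\xi,v_0+s\eta)\in\mathcal N\), and \(t(0)=1\).

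\textbf{Step 2: smoothness of \(t(s)\) (main obstacle).} Write \(\Psi(t,s)=\phi_s(t)/t^p\), using the explicit formula for \(\phi\) from the proof of Lemma~\ref{lem:projection1}:
\[
\Psi(t,s)=A(s)-B(s)\log t^2-C(s),
\]
where \(A\) is the norm part, \(B(s)=\int_V(|v_s|^{p-2}u_s^2+|u_s|^{p-2}v_s^2)\,d\mu>0\), and \(C\) collects the remaining terms, with \(u_s=u_0+s\xi\), \(v_s=v_0+s\eta\). Each of \(A,B,C\) is the value at \(s=0\) plus a finite sum over \(\operatorname{supp}\xi\cup\operatorname{supp}\eta\) of functions of \(s\). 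These functions are \(C^1\) in \(s\), provided the maps \(r\mapsto |r|^{p-2}\), \(|r|^{p-4}r\), \(r^2\log r^2\) are \(C^1\). This holds for \(p>4\), including at \(r=0\), via the continuous extension of the logarithm. Since \(\partial_t\Psi(1,0)=-2B(0)\neq0\), the implicit function theorem gives \(t\in C^1\) near \(0\). Verifying this vertex-wise differentiability carefully, especially at vertices where \(u_0\) or \(v_0\) vanishes, is where I expect the main work.

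\textbf{Step 3: variational identity.} Set \(g(s)=J(t(s)(u_0+s\xi,v_0+s\eta))\). Then \(g\ge d=g(0)\), so \(g'(0)=0\). By the chain rule,
\[
g'(0)=t'(0)\,\langle J'(u_0,v_0),(u_0,v_0)\rangle+\langle J'(u_0,v_0),(\xi,\eta)\rangle .
\]
The first term vanishes because \((u_0,v_0)\in\mathcal N\). Hence \(J'(u_0,v_0)\cdot(\xi,\eta)=0\) for every finitely supported \((\xi,\eta)\). Choosing \((\delta_x,0)\) and \((0,\delta_x)\) and arguing as in Proposition~\ref{lem:pointwise1} shows that \((u_0,v_0)\) satisfies \eqref{eq:main} at each vertex.

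\textbf{Step 4: extension to all test functions.} If one wants \(J'(u_0,v_0)\cdot(\xi,\eta)=0\) for all \((\xi,\eta)\in\mathcal D(J)\) as in Definition~\ref{def:critical}, multiply the pointwise equations by \(\xi(x)\mu(x)\) and \(\eta(x)\mu(x)\) and sum over \(V\). This uses the discrete summation-by-parts formula for \(\Delta_p\), applied on exhausting finite sets, and passes to the limit using the convergence of the relevant series for elements of \(\mathcal D(J)\). Should absolute convergence fail, I would simply state the conclusion in the pointwise/\(C_c\)-weak sense, which is what the theorem requires.
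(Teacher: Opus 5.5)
Your argument is correct, but it takes a different route from the paper's.

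\textbf{The paper's route.} The paper argues by contradiction with a quantitative deformation. Suppose $J'(u_0,v_0)\cdot(\phi,\psi)\le-1$ for some finitely supported $(\phi,\psi)$. Continuity of $(s,\sigma)\mapsto J'(su_0+\sigma\phi,sv_0+\sigma\psi)\cdot(\phi,\psi)$ near $(1,0)$ lets the paper push the fibre segment $s(u_0,v_0)$, $s\in[1-\delta,1+\delta]$, strictly below the level $d$ by adding a cut-off multiple of $(\phi,\psi)$. The endpoints are left unchanged, so $F(s)=J'(u_s,v_s)\cdot(u_s,v_s)$ changes sign there by the fibre-map property. The intermediate value theorem then produces a point of $\mathcal N$ with energy below $d$. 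This argument needs no regularity of the Nehari projection. It uses only continuity of $J'$ on finite-dimensional slices and strict maximality of the fibre at $t=1$, which is why it is the standard tool when $\mathcal N$ is not known to be a $C^1$ manifold.

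\textbf{Your route.} You exploit the explicit fibre structure instead. The equation $\Psi(t,s)=0$ can be solved outright:
\[
\log\bigl(t(s)^2\bigr)=\frac{S(s)-L(s)}{M(s)}-\frac2p ,
\]
with $S,L,M$ evaluated at $(u_0+s\xi,v_0+s\eta)$. So $t(s)$ is $C^1$ once $S,L,M$ are $C^1$ along the slice and $M(u_0,v_0)>0$. The conclusion then reduces to $g'(0)=0$ together with $\frac{d}{dt}J(tu_0,tv_0)=0$ at $t=1$. This is shorter and avoids the cut-off and intermediate value construction.

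\textbf{On Step 2.} The step you flag as the main obstacle is routine:
\begin{itemize}
\item $r\mapsto|r|^{p-2}$ is $C^1$ for $p>3$;
\item $r\mapsto r^2\log r^2$ is $C^1$ on $\mathbb R$;
\item $|\nabla u_s|^p=\Gamma(u_s,u_s)^{p/2}$ is $C^1$ in $s$ because $p/2>1$.
\end{itemize}
One small correction: the gradient terms change on $\operatorname{supp}\xi\cup\operatorname{supp}\eta$ together with its neighbours, not just on the supports. This is still a finite set by local finiteness, so your conclusion stands.

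\textbf{On Step 4.} This step goes beyond the paper. The paper's proof also only yields $J'(u_0,v_0)\cdot(\phi,\psi)=0$ for finitely supported directions, and then the pointwise system via Proposition~\ref{lem:pointwise1}. Stating your conclusion in that sense loses nothing relative to the paper.
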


\begin{proof}
We argue by contradiction. Suppose that \((u_0,v_0)\in\mathcal N\) satisfies \(J(u_0,v_0)=d\) but is not a weak solution. Since \(J\) is Gateaux differentiable along directions in \(C_c(V)\times C_c(V)\) at \((u_0,v_0)\), there exists \((\phi,\psi)\in C_c(V)\times C_c(V)\) such that
\[
J'(u_0,v_0)\cdot(\phi,\psi)\le -1.
\]
For this fixed \((\phi,\psi)\), define
\[
G(s,\sigma):=J'(su_0+\sigma\phi,\,sv_0+\sigma\psi)\cdot(\phi,\psi),\qquad s,\sigma\in\mathbb R.
\]
Since \(\phi\) and \(\psi\) have finite support, \(G(s,\sigma)\) is a finite sum over the vertices in the supports of \(\phi\) and \(\psi\). Each term in this sum is continuous as a function of \((s,\sigma)\) near \((1,0)\), because the expressions involve only algebraic powers of \(s,\sigma\) and the fixed functions \(u_0,v_0,\phi,\psi\). Therefore \(G\) is continuous at \((s,\sigma)=(1,0)\). Consequently, there exists \(\delta>0\) such that
\[
J'(su_0+\sigma\phi,\,sv_0+\sigma\psi)\cdot(\phi,\psi)\le -\frac12
\qquad \text{for all } |s-1|<\delta,\ |\sigma|<\delta.
\]

Let \(\eta\) be a smooth cut-off with \(0\le\eta\le1\), \(\eta(s)=1\) near \(s=1\), and \(\eta(s)=0\) at \(s=1\pm\delta\). Define
\[
(u_s,v_s):=(su_0+\delta\eta(s)\phi,sv_0+\delta\eta(s)\psi),\qquad s\in[1-\delta,1+\delta].
\]
Then \((u_{1\pm\delta},v_{1\pm\delta})=((1\pm\delta)u_0,(1\pm\delta)v_0)\), and by the integral formula,
\[
J(u_s,v_s)-J(su_0,sv_0)
=\delta\eta(s)\int_0^1 J'\bigl(su_0+\tau\delta\eta(s)\phi,sv_0+\tau\delta\eta(s)\psi\bigr)
\cdot(\phi,\psi)\,d\tau
\le -\frac12\delta\eta(s).
\]
Hence
\[
J(u_s,v_s)\le J(su_0,sv_0)-\frac12\delta\eta(s).
\]
By Lemma~\ref{lem:projection1}, the fibre map \(t\mapsto J(tu_0,tv_0)\) has a unique global maximum at \(t=1\). Thus \(J(su_0,sv_0)<d\) for \(s\neq1\), and consequently
\[
J(u_s,v_s)<d
\]
for every \(s\in[1-\delta,1+\delta]\).

Define \(F(s):=J'(u_s,v_s)\cdot(u_s,v_s)\). From the fibre map property,
\[
F(1-\delta)>0,\qquad F(1+\delta)<0.
\]
By continuity, there exists \(s_1\in[1-\delta,1+\delta]\) such that \(F(s_1)=0\), that is \((u_{s_1},v_{s_1})\in\mathcal N\). But then \(J(u_{s_1},v_{s_1})<d\), contradicting the definition of \(d\). Therefore \((u_0,v_0)\) must be a weak solution.
\end{proof}
\subsection*{Proof of Theorem~\ref{thm2} \texorpdfstring{($\mathbb R^N$)}}
We now prove the existence result for the system~\eqref{eq:main1} in the
continuous critical setting \(p=N>4\). Since the original functional \(J\) is
not Fr\'echet differentiable and may even take the value \(+\infty\) (see
Example~\ref{ex:ill-posedness} in the Appendix), we work with the regularised
functional \(J_\varepsilon\). The argument follows the same Nehari manifold
strategy as in the discrete case, with the key difference that the embedding
\(\mathcal H\hookrightarrow L^\infty\) is no longer available; the compactness
estimates below therefore rely on the exponent calibration technique.
Throughout the following lemmas, \(\varepsilon\) is a fixed real number in
\((0,1/2)\) and \(\varepsilon_1\) is an arbitrary positive constant.
\begin{definition}\label{def:Jeps-p4}
We denote \(\eta_{\varepsilon}(t)=\log (t^2+\varepsilon)\).
For \(\varepsilon\in(0,\frac{1}{2})\), the regularised functional \(J_\varepsilon:\mathcal{G}\to\mathbb{R}\) is defined by
\begin{equation}\label{eq:Jeps-def-p4}
J_\varepsilon(u,v):=\frac1p\int_{\mathbb{R}^N}\bigl(|\nabla u|^p+|\nabla v|^p\bigr)dx
+\frac1p\int_{\mathbb{R}^N}\bigl(a|u|^p+b|v|^p\bigr)dx
-\frac1p\int_{\mathbb{R}^N}\bigl(|u|^{p-2}v^2\eta_\varepsilon(v)
+|v|^{p-2}u^2\eta_\varepsilon(u)\bigr)dx,
\end{equation}
where \(\mathcal{G}=\mathcal{G}_a\times\mathcal{G}_b\) and \((u,v)\in \mathcal{G}\), see Lemma~\ref{lem2.3} for the definition of \(\mathcal{G}_a\) and \(\mathcal{G}_b\).
The derivatives of \(J_{\varepsilon}\) are defined as
\begingroup
\allowdisplaybreaks
\begin{align}
    \langle J_\varepsilon'(u,v),(\phi,\psi)\rangle
    &= \int_{\mathbb{R}^N} \bigl(|\nabla u|^{p-2}\nabla u\cdot\nabla\phi
      +|\nabla v|^{p-2}\nabla v\cdot\nabla\psi\bigr)dx \nonumber \\
    &\quad + \int_{\mathbb{R}^N} \bigl(a|u|^{p-2}u\phi+b|v|^{p-2}v\psi\bigr)dx \nonumber \\
    &\quad - \frac{p-2}{p}\int_{\mathbb{R}^N}|u|^{p-4}u\phi\,v^2\eta_\varepsilon(v)dx
      -\frac{2}{p}\int_{\mathbb{R}^N}|u|^{p-2}v\psi\,\eta_\varepsilon(v)dx \nonumber \\
    &\quad - \frac{2}{p}\int_{\mathbb{R}^N}|u|^{p-2}\frac{v^3\psi}{v^2+\varepsilon}dx
      -\frac{p-2}{p}\int_{\mathbb{R}^N}|v|^{p-4}v\psi\,u^2\eta_\varepsilon(u)dx \nonumber \\
    &\quad - \frac{2}{p}\int_{\mathbb{R}^N}|v|^{p-2}u\phi\,\eta_\varepsilon(u)dx
      -\frac{2}{p}\int_{\mathbb{R}^N}|v|^{p-2}\frac{u^3\phi}{u^2+\varepsilon}dx.
      \label{eq:Jeps-derivative}
\end{align}
\endgroup
The Nehari manifold \(\mathcal{N}_{\varepsilon}\) is defined as in Definition~\ref{def:critical}; moreover, we denote \(\|(u,v)\|_{\mathcal{G}}=\|u\|_{\mathcal{G}_a}+\|v\|_{\mathcal{G}_b}\).
\end{definition}

\begin{lemma}\label{lem4.6}
If $(u,v)$ satisfies that both \(u(x)\neq 0\) and \(v(x)\neq 0\) in a set of positive measure, that is, \(u(x)\cdot v(x)\neq 0\) in some positive set. Then there exists $t^{*}>0$ such that $t^{*}(u,v)\in\mathcal{N_{\varepsilon}}$.
\end{lemma}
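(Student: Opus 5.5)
The plan is to study the fibre map along the ray $t\mapsto t(u,v)$ and show that its normalised Nehari functional is continuous and strictly decreasing, positive near $t=0$ and tending to $-\infty$ as $t\to\infty$. We work with $(u,v)\in\mathcal G$, as in Definition~\ref{def:Jeps-p4}.

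\textbf{Step 1: the fibre function.} Testing \eqref{eq:Jeps-derivative} with $(\phi,\psi)=(u,v)$ and collecting the two logarithmic contributions with coefficients $\frac{p-2}{p}$ and $\frac2p$ (they add up to $1$) gives
\[
\langle J_\varepsilon'(u,v),(u,v)\rangle = A-\int_{\mathbb R^N}\Bigl(|u|^{p-2}v^2\eta_\varepsilon(v)+|v|^{p-2}u^2\eta_\varepsilon(u)\Bigr)dx-\frac2p\int_{\mathbb R^N}\Bigl(\frac{|u|^{p-2}v^4}{v^2+\varepsilon}+\frac{|v|^{p-2}u^4}{u^2+\varepsilon}\Bigr)dx,
\]
where $A=\|u\|_{\mathcal G_a}^p+\|v\|_{\mathcal G_b}^p$. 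Replacing $(u,v)$ by $(tu,tv)$ and dividing by $t^p$, I define
\[
g(t):=t^{-p}\langle J_\varepsilon'(tu,tv),(tu,tv)\rangle = A-\int|u|^{p-2}v^2\Bigl[\log(t^2v^2+\varepsilon)+\tfrac2p\tfrac{t^2v^2}{t^2v^2+\varepsilon}\Bigr]dx-(\text{symmetric term}).
\]
Then $t(u,v)\in\mathcal N_\varepsilon$ exactly when $g(t)=0$, provided $t(u,v)\neq(0,0)$. That holds here, since $uv\neq0$ on a set of positive measure.

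\textbf{Step 2: finiteness and continuity (the main obstacle).} Since $W^{1,N}\not\hookrightarrow L^\infty$, the discrete $L^\infty$ trick of Lemma~\ref{lem4.4} is not available. I would use the calibration bound
\[
|\log(s^2+\varepsilon)|\le|\log\varepsilon|+C_\delta|s|^\delta,\qquad\delta\in(0,1).
\]
It turns the integrand into terms dominated by $|u|^{p-2}v^2$ and $t^\delta|u|^{p-2}|v|^{2+\delta}$. These are handled by H\"older's inequality:
\[
\int|u|^{p-2}|v|^{2+\delta}\,dx\le\|u\|_{L^{p+\delta}}^{p-2}\,\|v\|_{L^{p+\delta}}^{2+\delta},
\]
which is finite by Lemma~\ref{lem2.3}, because $p+\delta\ge N$. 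The rational term is bounded by $|u|^{p-2}v^2$. The same majorant works uniformly for $t$ in compact subsets of $(0,\infty)$, so dominated convergence shows that $g$ is finite and continuous on $(0,\infty)$.

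\textbf{Step 3: monotonicity and limits.} For fixed $x$ with $u(x)v(x)\neq0$, both $t\mapsto\log(t^2v^2+\varepsilon)$ and $t\mapsto t^2v^2/(t^2v^2+\varepsilon)$ are strictly increasing. Since $\{uv\neq0\}$ has positive measure, $g$ is strictly decreasing.

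As $t\to0^+$, dominated convergence gives
\[
g(t)\to A-\log\varepsilon\int\bigl(|u|^{p-2}v^2+|v|^{p-2}u^2\bigr)dx\ \ge A>0,
\]
because $\varepsilon<1/2$ makes $\log\varepsilon<0$. As $t\to\infty$, on $\{uv\neq0\}$ we have $\log(t^2v^2+\varepsilon)\uparrow+\infty$. Monotone convergence, applied after splitting off the part bounded below by $\log\varepsilon$, then gives $g(t)\to-\infty$.

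By the intermediate value theorem there is a unique $t^*>0$ with $g(t^*)=0$, so $t^*(u,v)\in\mathcal N_\varepsilon$. The same sign pattern of $g$ also shows that $t^*$ is the unique maximiser of $t\mapsto J_\varepsilon(tu,tv)$, in analogy with Lemma~\ref{lem:projection1}.
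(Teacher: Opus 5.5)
Your proof is correct and follows the same route as the paper: you normalise $\langle J_\varepsilon'(tu,tv),(tu,tv)\rangle$ by $t^p$, show it tends to $S(u,v)-\log\varepsilon\,M(u,v)>0$ as $t\to0^+$ and to $-\infty$ as $t\to\infty$, and conclude by continuity. Keeping $\log(t^2v^2+\varepsilon)$ intact and using monotone convergence is cleaner than the paper's split into $-\log t^2\,M$ and $L_{\varepsilon/t^2}$ (the latter can decrease to $-\infty$, so ``grows at most logarithmically'' alone does not give domination), and your pointwise strict monotonicity of $g$ already yields the uniqueness of $t^*$, which the paper proves only in Lemma~\ref{lem7.1}.
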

\begin{proof}
Define
\[
h(t):=\langle J_\varepsilon'(tu,tv),(tu,tv)\rangle .
\]
A direct computation gives
\[
\begin{aligned}
\frac{h(t)}{t^p}
&=S(u,v)
-\log t^2\,M(u,v)
-L_{\varepsilon/t^2}(u,v)
-\frac{2}{p}R_{\varepsilon/t^2}(u,v),
\end{aligned}
\]
where
\[
S(u,v):=\|u\|_{\mathcal {G}_a}^p+\|v\|_{\mathcal {G}_b}^p,
\]
\[
M(u,v):=\int_{\mathbb R^N}\bigl(|u|^{p-2}v^2+|v|^{p-2}u^2\bigr)\,dx,
\]
\[
\begin{aligned}
L_{\varepsilon/t^2}(u,v)
&:=\int_{\mathbb R^N}\Bigl(|u|^{p-2}v^2\log\Bigl(v^2+\frac{\varepsilon}{t^2}\Bigr)
+|v|^{p-2}u^2\log\Bigl(u^2+\frac{\varepsilon}{t^2}\Bigr)\Bigr)\,dx,\\
R_{\varepsilon/t^2}(u,v)
&:=\int_{\mathbb R^N}\Bigl(|u|^{p-2}\frac{v^4}{v^2+\varepsilon/t^2}
+|v|^{p-2}\frac{u^4}{u^2+\varepsilon/t^2}\Bigr)\,dx .
\end{aligned}
\]
As \(t\to0^+\), we obtain \(R_{\varepsilon/t^2}\to0\) and
\[
L_{\varepsilon/t^2}(u,v)
=(\log\varepsilon-2\log t)M(u,v)+o(1).
\]
Consequently,
\[
\lim_{t\to0^+}\frac{h(t)}{t^p}
=S(u,v)-\log\varepsilon\,M(u,v)>0,
\]
because \(\varepsilon\in(0,1/2)\) and \(M(u,v)>0\). Hence \(h(t)>0\) for all sufficiently small \(t>0\).

On the other hand, as \(t\to+\infty\), the term
\[
-\log t^2\,M(u,v)
\]
dominates all other contributions, since \(M(u,v)>0\), while \(R_{\varepsilon/t^2}\) is bounded above by \(M(u,v)\) and \(L_{\varepsilon/t^2}\) grows at most logarithmically in \(t\). Therefore
\[
\frac{h(t)}{t^p}\to-\infty,
\]
so \(h(t)<0\) for all sufficiently large \(t\).

Since \(h\) is continuous, there exists \(t^*>0\) such that
\[
h(t^*)=0 .
\]
This is exactly
\[
\langle J_\varepsilon'(t^*u,t^*v),\,(t^*u,t^*v)\rangle=0,
\]
and therefore \(t^*(u,v)\in\mathcal N_\varepsilon\).
\end{proof}

\begin{lemma}\label{lem:coercivity}
Under the hypotheses of Theorem~\ref{thm2}, for any $(u,v)\in \mathcal{N}_{\varepsilon}$, there exists $\delta_1>0$ such that $\|(u,v)\|_{\mathcal{G}}\ge \delta_1$. Moreover, $d_{\varepsilon}=\inf\limits_{(u,v)\in \mathcal{N}_{\varepsilon}} J_{\varepsilon}(u,v)>0$ and it can be attained.
\end{lemma}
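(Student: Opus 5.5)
We follow the three-step scheme of Lemmas~\ref{lem4.3}--\ref{lem4.4}, adapted to $\mathbb R^N$ with $p=N>4$. The $L^\infty$ bounds used there are no longer available, so every logarithm must be converted into a power strictly between $p$ and the range covered by Lemma~\ref{lem2.3}.

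\textbf{Step 1: calibration inequality.} The basic pointwise estimate is that for $s\in\mathbb R$ and any small $\kappa>0$,
\[
\bigl(s^2\log(s^2+\varepsilon)\bigr)^+\le C_\kappa |s|^{2+\kappa}, \qquad |s^2\log(s^2+\varepsilon)|\le |\log\varepsilon|\, s^2 + C_\kappa|s|^{2+\kappa}.
\]
Because $\varepsilon<1/2$, the log is negative for $s^2<1-\varepsilon$, so only the positive part needs the power bound. By H\"older with exponents $\frac{p+\kappa}{p-2}$ and $\frac{p+\kappa}{2+\kappa}$,
\[
\int|u|^{p-2}|v|^{2+\kappa}\le \|u\|_{L^{p+\kappa}}^{p-2}\|v\|_{L^{p+\kappa}}^{2+\kappa}.
\]
Lemma~\ref{lem2.3} with $q=p+\kappa<\infty$ then gives
\[
\int|u|^{p-2}|v|^{2+\kappa}\le C\|(u,v)\|_{\mathcal G}^{p+\kappa}.
\]
This replaces the $L^\infty$ argument. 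The same estimate shows $J_\varepsilon$ and $\langle J_\varepsilon'(u,v),(u,v)\rangle$ are finite on $\mathcal G$.

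\textbf{Step 2: lower bound $\delta_1$ and $d_\varepsilon>0$.} On $\mathcal N_\varepsilon$ we have
\[
S=\int\bigl(|u|^{p-2}v^2\eta_\varepsilon(v)+|v|^{p-2}u^2\eta_\varepsilon(u)\bigr)+\frac2p R_\varepsilon .
\]
We drop the negative part of the log terms and bound $R_\varepsilon\le M$. Then $S\le C\|(u,v)\|^{p+\kappa}+\frac2p M$. Bounding $M\le \frac{p-2}{p}S'+\frac2p S'$ as in Lemma~\ref{lem4.3} yields $(1-\frac2p)S\le C\|(u,v)\|^{p+\kappa}$, hence $\|(u,v)\|_{\mathcal G}\ge\delta_1$.

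For positivity of the energy we compute
\[
J_\varepsilon-\frac1p\langle J_\varepsilon'(u,v),(u,v)\rangle=\frac{2}{p^2}R_\varepsilon(u,v)\ge0 .
\]
Suppose a sequence on $\mathcal N_\varepsilon$ has $J_\varepsilon\to0$, so that $R_\varepsilon\to0$. We must show this forces $S\to0$, contradicting $\delta_1$. This uses a threshold split of the logarithmic terms.
\begin{itemize}
\item On $\{v^2\ge1\}$ we have $v^2\le 2v^4/(v^2+\varepsilon)$, so $|u|^{p-2}v^{2+\kappa}\le 2|u|^{p-2}v^\kappa\, v^4/(v^2+\varepsilon)$.
\item H\"older with a small exponent on $v^\kappa$ bounds this region by $C R_\varepsilon^{\theta}\|(u,v)\|^{\gamma}$ for some $\theta\in(0,1)$ and $\gamma<p$.
\item On $\{v^2<1\}$ the positive part of $\log(v^2+\varepsilon)$ is at most $\log(1+\varepsilon)\le C v^2$-controlled, and it is likewise bounded by $R_\varepsilon$.
\end{itemize}
Choosing $\kappa$ small so that the total exponent is below $p$ gives $S\le o(1)(1+S^{\gamma/p})$, hence $S\to0$. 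This is the main obstacle, the exponent-calibration step: the H\"older--Young parameters must be chosen so that the exponent on $\|(u,v)\|$ stays strictly below $p$. If needed, I would first try $\kappa<\frac{p-4}{2}$ and interpolation between $L^p$ and $L^{2p}$.

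\textbf{Step 3: attainment.} Take a minimizing sequence in $\mathcal N_\varepsilon$. The Step~2 inequality with $R_\varepsilon$ bounded gives $S_k\le C+CS_k^{\gamma/p}$, so the sequence is bounded in $\mathcal G$. By Lemma~\ref{lem2.3} we extract a subsequence converging weakly in $\mathcal G$, strongly in $L^q$ for $q\in[N,\infty)$, and a.e.

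The positive-part log terms and $R_\varepsilon$ converge by Vitali's theorem, using the domination $|u_k|^{p-2}|v_k|^{2+\kappa}$, which is uniformly integrable via strong $L^{p+\kappa}$ convergence. The negative parts are bounded by $|\log\varepsilon|\,|u|^{p-2}v^2$ and also converge strongly. Weak lower semicontinuity then gives $\langle J_\varepsilon'(u_0,v_0),(u_0,v_0)\rangle\le0$.

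$M(u_0,v_0)>0$ because $R_\varepsilon(u_0,v_0)=\lim R_\varepsilon(u_k,v_k)\ge \frac{p^2}{2}d_\varepsilon>0$. Lemma~\ref{lem4.6} then provides $t_0$ with $t_0(u_0,v_0)\in\mathcal N_\varepsilon$. The map $t\mapsto R_{\varepsilon}(tu_0,tv_0)/t^p$ is monotone increasing, since $\frac{v^4}{v^2+\varepsilon/t^2}$ increases in $t$, and the Nehari inequality forces $t_0\le1$. Therefore
\[
d_\varepsilon\le \frac{2}{p^2}R_\varepsilon(t_0u_0,t_0v_0)\le \frac{2}{p^2}R_\varepsilon(u_0,v_0)=d_\varepsilon ,
\]
which forces equality and attainment.
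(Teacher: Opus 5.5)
Your proposal is correct and follows essentially the paper's route. It uses the same ingredients: the power bound on the positive logarithmic part with $(1-\frac2p)$-absorption for $\delta_1$; the same H\"older ``exponent calibration'', extracting a power of $R_\varepsilon$ (which equals $\frac{p^2}{2}J_\varepsilon$ on $\mathcal N_\varepsilon$) times a sub-$p$ power of the norm, for both $d_\varepsilon>0$ and boundedness of minimising sequences; and the same weak-limit plus fibre-projection argument with $t_0\le 1$ for attainment. Your departures are only streamlinings: dominated convergence for the negative logarithmic parts (valid since $\varepsilon>0$ is fixed), the explicit check $M(u_0,v_0)>0$, and concluding directly that $t_0(u_0,v_0)$ is a minimiser.

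One small point you share with the paper: absorbing $\frac2pM$ into $(1-\frac2p)S$ tacitly uses $\|u\|_{L^p}\le\|u\|_{\mathcal G_a}$, i.e.\ $\inf a,\inf b\ge 1$. You can avoid this by observing that $\log(v^2+\varepsilon)+\frac2p\frac{v^2}{v^2+\varepsilon}<0$ whenever $v^2+\varepsilon\le e^{-2/p}$, so the whole right-hand integrand of the Nehari identity is bounded by $C|u|^{p-2}|v|^{2+\kappa}$, giving $S\le C\|(u,v)\|_{\mathcal G}^{p+\kappa}$ directly.
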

\begin{proof}
$\mathcal{N}_{\varepsilon}\neq \emptyset$, which immediately follows from Lemma~\ref{lem4.6}. Now
from the fact that $J_{\varepsilon}'(u,v)\cdot (u,v)=0$, we obtain 
\allowdisplaybreaks[4]
\begin{align*}
\|u\|_{\mathcal{G}_a}^p+\|v\|_{\mathcal{G}_b}^p
&= \int_{\mathbb{R}^N} \big[ |u|^{p-2}v^2 \log (v^2+\varepsilon) + |v|^{p-2}u^2\log (u^2+\varepsilon) \big] dx \\
&\qquad + \frac{2}{p} \int_{\mathbb{R}^N} \left( |v|^{p-2}\frac{u^4}{u^2+\varepsilon}+|u|^{p-2}\frac{v^4}{v^2+\varepsilon} \right) dx \\
&\le \int_{\{v^2+\varepsilon>1\}} |u|^{p-2}v^2\log(v^2+\varepsilon)\,dx + \int_{\{u^2+ \varepsilon\ge 1\}}|v|^{p-2}u^2\log(u^2+\varepsilon)\,dx \\
&\qquad + \frac{2}{p}\int_{\mathbb{R}^N} \big( |v|^{p-2}u^2 + |u|^{p-2}v^2 \big) dx \\
&\le \frac{2}{p} \left(\|u\|_{\mathcal{G}_a}^p+\|v\|_{\mathcal{G}_b}^p\right)  + \int_{\mathbb{R}^N} \Big( |u|^{p-2}|v|^{2+\sigma} + |v|^{p-2}|u|^{2+\sigma} \Big) dx \\
&\le \frac{2}{p}(\|u\|_{\mathcal{G}_a}^p+\|v\|_{\mathcal{G}_b}^p)+C_2\|(u,v)\|_{\mathcal{G}}^{p+\sigma},
\end{align*}
where the last inequality follows similarly from the details of Lemma~\ref{lem4.3}. Since \(p> 4\), \(1-\frac2p>0\), we get \(\|(u,v)\|_{\mathcal{G}}\ge \delta_1>0\). We assume that \(\inf\limits_{(u,v)\in \mathcal{N}_{\varepsilon}}J_{\varepsilon}>0\) does not hold, then for any \(\varepsilon_1>0\), there exists \((u,v)\in \mathcal{N}_{\varepsilon}\) such that \(J_{\varepsilon}(u,v)<\varepsilon_1\), which means
$$J_{\varepsilon}(u,v)=J_{\varepsilon}(u,v)-\frac1p J_{\varepsilon}'(u,v)\cdot (u,v)=\frac{2}{p^2} \int_{\mathbb{R}^N} \left(|v|^{p-2}\frac{u^4}{u^2+\varepsilon}+|u|^{p-2}\frac{v^4}{v^2+\varepsilon}\right)dx< \varepsilon_1,$$
and is therefore
\begin{align*}
\|u\|_{\mathcal{G}_a}^p+\|v\|_{\mathcal{G}_b}^p &\le \int_{\mathbb{R}^N} \big[ |u|^{p-2}v^2 \log (v^2+\varepsilon) + |v|^{p-2}u^2\log (u^2+\varepsilon) \big] dx + p \varepsilon_1\\
&\le \int_{\{v^2\ge \varepsilon\}} |u|^{p-2}v^2 \log (2v^2)dx + \int_{\{u^2\ge \varepsilon\}} |v|^{p-2}u^2 \log (2u^2)dx+p\varepsilon_1\\
&\le \log 2\left(\int_{\{v^2\ge \varepsilon\}}  |u|^{p-2}v^2 dx+\int_{\{u^2\ge \varepsilon\}}  |v|^{p-2}u^2 dx\right)\\
&\quad +\int_{\mathbb{R}^N} |u|^{p-2}(v^2\log v^2)^{+} dx+\int_{\mathbb{R}^N}|v|^{p-2}(u^2\log u^2)^{+} dx.
\end{align*}
For the previous two terms, since
$$\int_{\{u^2\ge\varepsilon\}} |v|^{p-2}u^2 dx+\int_{\{v^2\ge\varepsilon\}} |u|^{p-2}v^2 dx\le 2 \left(\int_{\{u^2\ge \varepsilon\}} |v|^{p-2}\frac{u^4}{u^2+\varepsilon}dx+\int_{\{v^2\ge \varepsilon\}} |u|^{p-2}\frac{v^4}{v^2+\varepsilon}dx\right),$$
there is
$$\log 2\left(\int_{\{v^2\ge \varepsilon\}}  |u|^{p-2}v^2 dx+\int_{\{u^2\ge \varepsilon\}}  |v|^{p-2}u^2 dx\right)\le p^2 \varepsilon_1 \log 2.$$
For the latter two terms, we deploy the exponent calibration technique to cope with the difficulty caused by the failure of \(L^{\infty}\) embedding,
\begin{align*}
\int_{\mathbb{R}^N} |v|^{p-2}(u^2\log u^2)^{+} dx &\le C_{\varepsilon}\int_{\{u^2\ge 1\}} |v|^{p-2}|u|^{2+\varepsilon} dx\\
&\le C_{\varepsilon} \int_{\{u^2\ge 1\}}\left(|u|^{p_1}|v|^{p_2}\right)\cdot (|u|^{2+\varepsilon-p_1}|v|^{p-p_2-2})dx\\
&\le C_{\varepsilon} \left(\int_{\{u^2\ge 1\}} |u|^{p_1 s}|v|^{p_2s}dx\right)^{\frac{1}{s}}\left(\int_{\{u^2\ge 1\}} |u|^{\frac{(2+\varepsilon-p_1)s}{s-1}}|v|^{\frac{(p-p_2-2)s}{s-1}}dx\right)^{\frac{s-1}{s}}\\
&\le C_{\varepsilon} \left(\int_{\{u^2\ge 1\}} |u|^{p_1 s}|v|^{p_2s}dx\right)^{\frac{1}{s}} \\
&\quad \cdot \left[ \int_{\{u^2\ge 1\}} \left( \frac{1}{t} |u|^{\frac{(2+\varepsilon-p_1)st}{s-1}}+\frac{t-1}{t}|v|^{\frac{(p-p_2-2)st}{(s-1)(t-1)}} \right)dx \right]^{\frac{s-1}{s}},
\end{align*}
where we use H\"{o}lder's inequality and Young's inequality, and \(p_1,p_2,s,t\) are positive real numbers to be chosen later. Now we let
\begin{equation}\label{eq:param-choice}
\begin{cases}
p_1 s=2,\\
p_2 s=p-2,\\
\frac{(2+\varepsilon-p_1)st}{s-1}=\frac{(p-p_2-2)st}{(s-1)(t-1)}=2p,
\end{cases}
\end{equation}
from which we get $p_1=2\left(1-\frac{\varepsilon}{p}\right)$, $p_2=(p-2)\left(1-\frac{\varepsilon}{p}\right)$, $s=\frac{p}{p-\varepsilon}$, $t=\frac{p-p_2-2}{2+\varepsilon-p_1}+1=\frac{p(p-\varepsilon)}{p(2+\varepsilon)-4\varepsilon}=\frac{2p}{p+2}.$
When \(\varepsilon\in(0,\frac p2)\), we have \(s\in (1,2)\), and \(t>1\) is obvious, which means the values of $s,t,p_1,p_2$ set above are reasonable. On the other hand,
$$\int_{\{u^2\ge 1\}} u^2 |v|^{p-2}dx\le \int_{\{u^2\ge \varepsilon\}} u^2 |v|^{p-2}dx\le p^2 \varepsilon_1.$$
Thus,
\begin{align*}
\int_{\mathbb{R}^N} |v|^{p-2}(u^2\log u^2)^{+} dx &\le C\left(\int_{\{u^2\ge 1\}}u^2 |v|^{p-2}dx\right)^{\frac1s}\left[\int_{\{u^2\ge 1\}}(|u|^{2p}+|v|^{2p})dx\right]^{\frac{s-1}{s}}\\
&\le C \varepsilon_1^{\frac1s}\left(\|u\|_{\mathcal{G}_a}^{2p}+\|v\|_{\mathcal{G}_b}^{2p}\right)^{\frac{s-1}{s}}.
\end{align*}
Similarly, we also have
$$\int_{\mathbb{R}^N} |u|^{p-2}(v^2\log v^2)^{+}dx\le C \varepsilon_1^{\frac1s}\left(\|u\|_{\mathcal{G}_a}^{2p}+\|v\|_{\mathcal{G}_b}^{2p}\right)^{\frac{s-1}{s}}.$$
All in all, we obtain 
$$\|(u,v)\|_{\mathcal{G}}^p\le C\varepsilon_1^{\frac1s}\|(u,v)\|_{\mathcal{G}}^{\frac{2p(s-1)}{s}}+C\varepsilon_1.$$
Since \(s\in (1,2)\), \(\frac{2p(s-1)}{s}<p\), it immediately follows from elementary analysis of the resulting inequality that \(\|(u,v)\|_{\mathcal{G}}\le \delta(\varepsilon_1)\) and \(\delta(\varepsilon_1)\to 0\) as \(\varepsilon_1\to 0\), which is a contradiction to the fact that \(\|(u,v)\|_{\mathcal{G}}\ge \delta_1\) for all \((u,v)\in \mathcal{N}_{\varepsilon}\), and is thus \(d_\varepsilon>0\). It remains to prove that the minimum \(d_{\varepsilon}\) can be attained.

Taking a minimizing sequence \(\{(u_k,v_k)\}\subset \mathcal{N}_{\varepsilon}\) such that \(\lim\limits_{k\to \infty} J(u_k,v_k)=d_{\varepsilon}\), since \((u_k,v_k)\in \mathcal{N}_{\varepsilon}\), \(J_{\varepsilon}'(u_k,v_k)\cdot(u_k,v_k)=0\), which immediately implies that
$$d_{\varepsilon}=\lim_k \left[J_{\varepsilon}(u_k,v_k)-\frac{1}{p}J_{\varepsilon}'(u_k,v_k)\cdot (u_k,v_k)\right]
=\frac{2}{p^2}\lim_k\int_{\mathbb{R}^N}
\left(|u_k|^{p-2}\frac{v_k^4}{v_k^2+\varepsilon}
+|v_k|^{p-2}\frac{u_k^4}{u_k^2+\varepsilon}\right)dx,$$
\begin{align*}
\|u_k\|_{\mathcal{G}_a}^p+\|v_k\|_{\mathcal{G}_b}^p &= \int_{\mathbb{R}^N} |u_k|^{p-2}v_k^2\log (v_k^2+\varepsilon)dx + \int_{\mathbb{R}^N} |v_k|^{p-2}u_k^2\log (u_k^2+\varepsilon)dx \\
&\quad + \frac{2}{p}\left( \int_{\mathbb{R}^N} |u_k|^{p-2}\frac{v_k^4}{v_k^2+\varepsilon} dx + \int_{\mathbb{R}^N} |v_k|^{p-2}\frac{u_k^4}{u_k^2+\varepsilon} dx \right).
\end{align*}
As before, we use the exponent calibration technique to obtain
$$\|(u_k,v_k)\|_{\mathcal{G}}^p\le C_1\|(u_k,v_k)\|_{\mathcal{G}}^{\frac{2p(s-1)}{s}}+C_2,$$
and the exponent calibration technique can not be replaced at this time due to the failure of \(L^{\infty}\) 
Thus it is easy to check that \(\|(u_k,v_k)\|_{\mathcal{G}}\le C\), which means it is uniformly bounded. It follows from Lemma~\ref{lem4} that
\[\begin{cases}
(u_k,v_k)\rightharpoonup (u_0,v_0) \text{ weakly in } \mathcal{G},\\
(u_k,v_k)\rightarrow (u_0,v_0) \text{ in } L^{p_1}(\mathbb{R}^N)\times L^{p_2}(\mathbb{R}^N) \text{ for } (p_1,p_2)\in [p,+\infty)\times [p,+\infty), \\
(u_k,v_k)\rightarrow (u_0,v_0) \text{ a.e. in } \mathbb{R}^N.
\end{cases}\]
By the weak-lower semi-continuity of norm and applying Fatou's lemma to the nonnegative parts, we obtain
\begin{align*}
\int_{\mathbb{R}^N} (|\nabla{u_0}|^p+a|u_0|^p+|\nabla{v_0}|^p&+b|v_0|^p)dx - \frac2p \int_{\mathbb{R}^N} \left(|u_0|^{p-2}\frac{v_0^4}{v_0^2+\varepsilon}+ |v_0|^{p-2}\frac{u_0^4}{u_0^2+\varepsilon}\right) dx\\
&- \int_{\mathbb{R}^N} \left\{|u_0|^{p-2}\left[v_0^2\log(v_0^2+\varepsilon)\right]^{-}+|v_0|^{p-2}\left[u_0^2\log(u_0^2+\varepsilon)\right]^{-}\right\}dx\\
&\le \liminf_{k\to\infty} \Bigg[ \int_{\mathbb{R}^N} (|\nabla u_k|^p+a|u_k|^p+|\nabla v_k|^p+b|v_k|^p) dx \\
&\qquad - \frac2p \int_{\mathbb{R}^N} \left(|u_k|^{p-2}\frac{v_k^4}{v_k^2+\varepsilon}+ |v_k|^{p-2}\frac{u_k^4}{u_k^2+\varepsilon}\right) dx \\
&\qquad - \int_{\mathbb{R}^N} \left\{|u_k|^{p-2}\left[v_k^2\log(v_k^2+\varepsilon)\right]^{-}+|v_k|^{p-2}\left[u_k^2\log(u_k^2+\varepsilon)\right]^{-}\right\}dx \Bigg]\\
&=\liminf_{k\to\infty} \left[\int_{\mathbb{R}^N} |u_k|^{p-2}\left[v_k^2\log(v_k^2+\varepsilon)\right]^{+}+|v_k|^{p-2}\left[u_k^2\log(u_k^2+\varepsilon)\right]^{+} dx\right]\\
&=\int_{\mathbb{R}^N} |u_0|^{p-2}[v_0^2\log (v_0^2+\varepsilon)]^{+}dx+\int_{\mathbb{R}^N} |v_0|^{p-2}[u_0^2\log (u_0^2+\varepsilon)]^{+}dx.
\end{align*}
The last inequality follows from Vitali convergence theorem because \(u_k\to u_0, v_k\to v_0\) strongly in \(L^{p+\sigma}\) for \(\sigma>0\). Assume, for contradiction, that the inequality in the previous display is strict, i.e.,
\begin{equation}\label{eq:strict-ineq}
\begin{aligned}
\int_{\mathbb{R}^N} (|\nabla{u_0}|^p+a|u_0|^p+|\nabla{v_0}|^p&+b|v_0|^p)dx < \frac2p \int_{\mathbb{R}^N} \left(|u_0|^{p-2}\frac{v_0^4}{v_0^2+\varepsilon}+ |v_0|^{p-2}\frac{u_0^4}{u_0^2+\varepsilon}\right) dx\\
&+ \int_{\mathbb{R}^N} \left[|u_0|^{p-2}v_0^2\log(v_0^2+\varepsilon)+|v_0|^{p-2}u_0^2\log(u_0^2+\varepsilon)\right]dx.
\end{aligned}
\end{equation}
Lemma~\ref{lem:coercivity} implies that there exists \(t_0>0\) such that \(t_0(u_0,v_0)\in\mathcal{N}_{\varepsilon}\), which means
\begin{equation}\label{eq:t0-nehari}
\begin{aligned}
0=t_0^p\left(\|u_0\|_{\mathcal{G}_a}^p+\|v_0\|_{\mathcal{G}_b}^p\right)&-\frac{2t_0^p}{p}\int_{\mathbb{R}^N} \left(|u_0|^{p-2}\frac{t_0^2v_0^4}{t_0^2v_0^2+\varepsilon}+ |v_0|^{p-2}\frac{t_0^2u_0^4}{t_0^2u_0^2+\varepsilon}\right) dx\\
&-t_0^p\int_{\mathbb{R}^N} \left[|u_0|^{p-2}v_0^2\log(t_0^2v_0^2+\varepsilon)+|v_0|^{p-2}u_0^2\log(t_0^2u_0^2+\varepsilon)\right]dx,
\end{aligned}
\end{equation}
which immediately implies that
\begin{align*}
0 < t_0^p \Biggl[ &\frac{2}{p} \int_{\mathbb{R}^N} |u_0|^{p-2}\frac{v_0^4}{v_0^2+\varepsilon} \, dx + \frac{2}{p} \int_{\mathbb{R}^N} |v_0|^{p-2}\frac{u_0^4}{u_0^2+\varepsilon} \, dx \\
&+ \int_{\mathbb{R}^N} |u_0|^{p-2}v_0^2\log(v_0^2+\varepsilon) \, dx + \int_{\mathbb{R}^N} |v_0|^{p-2}u_0^2\log(u_0^2+\varepsilon) \, dx \\
&- \frac{2}{p} \int_{\mathbb{R}^N} |u_0|^{p-2}\frac{t_0^2v_0^4}{t_0^2v_0^2+\varepsilon} \, dx - \frac{2}{p} \int_{\mathbb{R}^N} |v_0|^{p-2}\frac{t_0^2u_0^4}{t_0^2u_0^2+\varepsilon} \, dx \\
&- \int_{\mathbb{R}^N} |u_0|^{p-2}v_0^2\log(t_0^2v_0^2+\varepsilon) \, dx - \int_{\mathbb{R}^N} |v_0|^{p-2}u_0^2\log(t_0^2u_0^2+\varepsilon) \, dx \Biggr],
\end{align*}
from which we obtain \(t_0\in (0,1)\) due to the monotone increasing property of the functions \(\frac{t^2u_0^4}{t^2u_0^2+\varepsilon}\), \(\frac{t^2v_0^4}{t^2v_0^2+\varepsilon}\), \(|u_0|^{p-2}v_0^2\log(t^2v_0^2+\varepsilon)\) and \(|v_0|^{p-2}u_0^2\log(t^2u_0^2+\varepsilon)\) with respect to \(t\). Therefore,
\begin{equation}\label{eq:final-contra}
\begin{aligned}
d_{\varepsilon}\le J(t_0(u_0,v_0))&=J(t_0(u_0,v_0))-\frac1p J'(t_0(u_0,v_0))\cdot t_0(u_0,v_0)\\
&=\frac{2t_0^p}{p^2} \int_{\mathbb{R}^N} \left(|u_0|^{p-2}\frac{t_0^2v_0^4}{t_0^2v_0^2+\varepsilon}+ |v_0|^{p-2}\frac{t_0^2u_0^4}{t_0^2u_0^2+\varepsilon}\right) dx\\
&< \frac{2}{p^2} \int_{\mathbb{R}^N} \left(|u_0|^{p-2}\frac{v_0^4}{v_0^2+\varepsilon}+ |v_0|^{p-2}\frac{u_0^4}{u_0^2+\varepsilon}\right) dx\\
&\le \liminf_{k\to\infty} \frac{2}{p^2} \int_{\mathbb{R}^N} \left(|u_k|^{p-2}\frac{v_k^4}{v_k^2+\varepsilon}+ |v_k|^{p-2}\frac{u_k^4}{u_k^2+\varepsilon}\right) dx\le d_{\varepsilon},
\end{aligned}
\end{equation}
which is a contradiction. Thus \((u_0,v_0)\in \mathcal{N}_{\varepsilon}\) and \(J_{\varepsilon}(u_0,v_0)=d_{\varepsilon}\), i.e., \((u_0,v_0)\) is a ground state to equation \eqref{eq:main1}, then we complete the proof of Theorem~\ref{thm2}.
\end{proof}

\begin{remark}\label{rem:regularised-nehari}
For fixed \(\varepsilon>0\), the regularised functional \(J_\varepsilon\) is of class \(C^1\). Define
\[
G_\varepsilon(u,v):=\langle J_\varepsilon'(u,v),(u,v)\rangle .
\]
For any \((u,v)\in\mathcal N_\varepsilon\), a direct computation along the fibre gives
\[
\begin{aligned}
\langle G_\varepsilon'(u,v),(u,v)\rangle
&=
-\frac{2(p-2)}{p}\int_{\mathbb R^N}|u|^{p-2}\frac{v^4}{v^2+\varepsilon}\,dx
-\frac{4}{p}\int_{\mathbb R^N}|u|^{p-2}v^4\frac{v^2+2\varepsilon}{(v^2+\varepsilon)^2}\,dx\\
&\quad
-\frac{2(p-2)}{p}\int_{\mathbb R^N}|v|^{p-2}\frac{u^4}{u^2+\varepsilon}\,dx
-\frac{4}{p}\int_{\mathbb R^N}|v|^{p-2}u^4\frac{u^2+2\varepsilon}{(u^2+\varepsilon)^2}\,dx .
\end{aligned}
\]
All four terms are nonpositive, and at least one of the first and third terms is strictly negative: if both vanished, then \(u=0\) or \(v=0\) almost everywhere, and the Nehari identity would force \(u=0\) and \(v=0\) almost everywhere, contradicting \((u,v)\neq(0,0)\). Hence
\[
\langle G_\varepsilon'(u,v),(u,v)\rangle < 0 .
\]
Consequently, \(0\) is a regular value of \(G_\varepsilon\), so \(\mathcal N_\varepsilon\) is a \(C^1\) manifold. By the Lagrange multiplier rule, any constrained minimiser \((u_\varepsilon,v_\varepsilon)\in\mathcal N_\varepsilon\) satisfies \(J_\varepsilon'(u_\varepsilon,v_\varepsilon)=0\), and is therefore a free critical point of \(J_\varepsilon\). This verification does not rely on the uniqueness of fibre-map critical points.
\end{remark}

\section{Proof of the Cerami condition}\label{Cerami}
This section establishes the existence of ground states for the discrete well-posed problem by verifying the Cerami condition and invoking the mountain pass theorem.
\begin{lemma}\label{lem:cerami}
Assume that {\rm(A1)} and {\rm(A2')} hold and let \(p>4\),
then the functional \(J\) satisfies the Cerami condition at every
positive level \(c\).
\end{lemma}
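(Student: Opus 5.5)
We take a Cerami sequence \(\{(u_k,v_k)\}\subset\mathcal H\) at level \(c>0\):
\[
J(u_k,v_k)\to c,\qquad (1+\|(u_k,v_k)\|_{\mathcal H})\,\|J'(u_k,v_k)\|_{\mathcal H^*}\to0 .
\]
We show it has a strongly convergent subsequence in three steps: a bound on the mixed integral \(M_k\), boundedness of the sequence, and upgrading weak to strong convergence. Under (A1)--(A2') the functional \(J\) is \(C^1\) on \(\mathcal H\), because Lemma~\ref{lem5} embeds \(\mathcal H\) into \(L^{q}\) for all \(q\ge p/2\). This controls \(|u|^{p-2}v^2\log v^2\) near \(0\) by \(|u|^{p-2}|v|^{2-\sigma}\), which is integrable by H\"older with exponents in \([p/2,\infty]\).

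\textbf{Step 1: the mixed integral is bounded.} The Cerami condition gives \(\langle J'(u_k,v_k),(u_k,v_k)\rangle\to0\). Using the identity already used in Lemma~\ref{lem4.3}, we get
\[
J(u_k,v_k)-\tfrac1p\langle J'(u_k,v_k),(u_k,v_k)\rangle=\tfrac{2}{p^2}M_k,\qquad M_k:=\int_V\bigl(|u_k|^{p-2}v_k^2+|v_k|^{p-2}u_k^2\bigr)d\mu .
\]
Hence \(M_k\to p^2c/2\), and in particular \(M_k\) is bounded.

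\textbf{Step 2: the sequence is bounded.} Set \(X_k=\|u_k\|_{\mathcal H_a}^p+\|v_k\|_{\mathcal H_b}^p\). Because the Cerami condition carries the weight \(1+\|(u_k,v_k)\|\), the Nehari-type identity holds with an \(o(1)\) error:
\[
X_k=\int_V\bigl(|u_k|^{p-2}v_k^2\log v_k^2+|v_k|^{p-2}u_k^2\log u_k^2\bigr)d\mu+\tfrac2pM_k+o(1).
\]
We drop the negative parts of the logarithms and bound \((s^2\log s^2)^+\le C_\sigma|s|^{2+\sigma}\). Pulling out \(\|v_k\|_\infty^\sigma\) and \(\|u_k\|_\infty^\sigma\), which are controlled by \(\|\cdot\|_{\mathcal H}\) via Lemma~\ref{lem5}, gives
\[
X_k\le C\bigl(\|u_k\|_{\infty}^{\sigma}+\|v_k\|_{\infty}^{\sigma}\bigr)M_k+C\le CX_k^{\sigma/p}+C .
\]
Choosing \(\sigma<p\), this forces \(X_k\) to be bounded. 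This is exactly the argument of Lemma~\ref{lem4.4}, with the discrete \(L^\infty\) control replacing the exponent calibration.

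\textbf{Step 3: strong convergence.} By Lemma~\ref{lem5} and reflexivity, after passing to a subsequence,
\[
(u_k,v_k)\rightharpoonup(u_0,v_0)\ \text{in }\mathcal H,\qquad (u_k,v_k)\to(u_0,v_0)\ \text{in }L^q\times L^q\ \text{for all } q\ge p/2,
\]
and pointwise. We test \(J'(u_k,v_k)-J'(u_0,v_0)\) against \((u_k-u_0,v_k-v_0)\). This quantity tends to \(0\): the Cerami term vanishes, and the other term vanishes by weak convergence. The nonlinear part consists of terms such as
\[
\int_V|u_k|^{p-4}u_k\,v_k^2\log v_k^2\,(u_k-u_0)\,d\mu .
\]
For these we use H\"older's inequality together with growth bounds of the form
\[
|s|^{2}|\log s^2|\le C\bigl(|s|^{2-\sigma}+|s|^{2+\sigma}\bigr),
\]
so each such term goes to zero by strong \(L^q\) convergence for exponents \(q\ge p/2\). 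This is where the \(L^{p/2}\) embedding from (A2') is needed: when \(v\) is small, \(|v|^{2-\sigma}\) requires integrability below \(L^p\).

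\textbf{Main obstacle.} We expect this control of the small-amplitude logarithmic region to be the hardest point. It is also what makes \(J\) of class \(C^1\). What remains is the monotonicity term
\[
\int_V\bigl(|\nabla u_k|^{p-2}\nabla u_k-|\nabla u_0|^{p-2}\nabla u_0\bigr)\cdot\nabla(u_k-u_0)\,d\mu+\int_V a\bigl(|u_k|^{p-2}u_k-|u_0|^{p-2}u_0\bigr)(u_k-u_0)\,d\mu ,
\]
together with its analogue for \(v\), and it must tend to \(0\). The standard inequality
\[
\bigl(|\xi|^{p-2}\xi-|\eta|^{p-2}\eta\bigr)\cdot(\xi-\eta)\ge c_p|\xi-\eta|^p,\qquad p\ge2,
\]
applied to the graph gradient form (or alternatively norm convergence plus uniform convexity of \(\mathcal H\)), then yields \(\|(u_k-u_0,v_k-v_0)\|_{\mathcal H}\to0\).
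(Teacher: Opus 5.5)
Your proof is correct and has the same three-part structure as the paper's proof.

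\begin{itemize}
\item The mixed integral $M_k$ is bounded, from the identity $J-\frac1p\langle J',(u,v)\rangle=\frac{2}{p^2}M$.
\item The sequence is bounded, from the Nehari identity with the logarithm traded for a power.
\item Strong convergence follows from the monotonicity inequality, once the nonlinear terms are removed via the compact embedding of Lemma~\ref{lem5}.
\end{itemize}

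The real difference is in the boundedness step. You pull out $\|u_k\|_\infty^\sigma,\|v_k\|_\infty^\sigma$ using $\mathcal H\hookrightarrow\ell^\infty$ and obtain $X_k\le CX_k^{\sigma/p}M_k+C$, exactly as in Lemma~\ref{lem4.4}.

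The paper instead deliberately runs its calibration system with $(\alpha,\beta)=(p,p)$, $p_1=p_2=\varepsilon$ and $s=p/\varepsilon$. This gives
\[
\int_V|v_k|^{p-2}|u_k|^{2+\varepsilon}\,d\mu\le\Bigl(\int_V|u_k|^p|v_k|^p\,d\mu\Bigr)^{\varepsilon/p}CX_k^{1-\varepsilon/p}.
\]
That route needs the product integral to be bounded, which the paper attributes only loosely to the mixed bounds. The bound does hold, but through a discrete fact the paper leaves implicit: $|u_k|^p|v_k|^p=(|u_k|^{p-2}v_k^2)(|v_k|^{p-2}u_k^2)$, together with $\|f\|_\infty\le\mu_{\min}^{-1}\|f\|_1$.

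Your version is therefore shorter and self-contained. The paper's version is written as a template for the continuous critical case, where pulling out an $L^\infty$ norm is impossible.

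You are also more precise than the paper in two places. First, you simply discard the negative parts of the logarithms, since no bound on them is needed. Second, you make explicit that the small-amplitude region in the convergence step needs $\ell^{p-\sigma}$-integrability, i.e.\ the $L^{p/2}$ embedding supplied by (A2$'$). The paper only asserts this as an easy-to-check $L^{p/(p-1)}$ bound.
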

\begin{proof}
To demonstrate the applicability of the exponent calibration technique in the discrete setting as well, the proof is built upon the generic calibration system as below:
\begin{equation}\label{eq:cal-gen}
\begin{cases}
p_1 s = \alpha,\quad p_2 s = \beta,\\[2pt]
(\tilde a+\varepsilon-p_1)s't = p,\\[2pt]
(\tilde b-p_2)s'\dfrac{t}{t-1}=p,
\end{cases}
\end{equation}
where \(s>1\), \(t>1\), \(s'=s/(s-1)\), and \((\alpha,\beta)\) are the
exponents of the available a priori mixed bound
\(\int_V|u|^\alpha|v|^\beta\,d\mu\le C\).  For the relevant choice
\((\alpha,\beta)=(p,p)\) and \((\tilde a,\tilde b)=(2,p-2)\) the system
admits the explicit solution
\[
s=\frac p\varepsilon,\qquad t=\frac{p-\varepsilon}2, \qquad p_1=p_2=\varepsilon
\]
valid for every \(0<\varepsilon<\min\{p-2,1\}\).

Let \(\{(u_k,v_k)\}\subset\mathcal H\) be a Cerami sequence for \(J\),
i.e.
\[
J(u_k,v_k)\to c>0,\qquad
\bigl(1+\|(u_k,v_k)\|_{\mathcal H}\bigr)\,\|J'(u_k,v_k)\|_{\mathcal H'}\to0 .
\]

The identity
\[
J(u,v)-\frac1p\langle J'(u,v),(u,v)\rangle
 =\frac{2}{p^2}\int_V\bigl(|u|^{p-2}v^2+|v|^{p-2}u^2\bigr)\,d\mu
\]
together with \(\langle J'(u_k,v_k),(u_k,v_k)\rangle\to0\) and
\(J(u_k,v_k)\to c\) gives
\begin{equation}\label{eq:mixbd-L3}
\int_V|u_k|^{p-2}v_k^2\,d\mu\le C_0,\qquad
\int_V|v_k|^{p-2}u_k^2\,d\mu\le C_0 .
\end{equation}

Set \(X_k:=\|u_k\|_{\mathcal H_a}^p+\|v_k\|_{\mathcal H_b}^p\).
Testing \(J'(u_k,v_k)\) with \((u_k,v_k)\) and using the mixed bounds
\eqref{eq:mixbd-L3} we obtain
\[
X_k \le \int_V|u_k|^{p-2}v_k^2\log v_k^2\,d\mu
      +\int_V|v_k|^{p-2}u_k^2\log u_k^2\,d\mu + C .
\]
The negative parts of the logarithmic integrals are uniformly bounded
as in the preceding section.  For the positive parts the calibration
inequality \(t^2\log t^2\le C_\varepsilon t^{2+\varepsilon}\)
(\(t\ge1\)) yields
\[
X_k \le C_\varepsilon\int_V\bigl(|v_k|^{p-2}|u_k|^{2+\varepsilon}
                               +|u_k|^{p-2}|v_k|^{2+\varepsilon}\bigr)d\mu + C .
\]
Applying the calibration system~\eqref{eq:cal-gen} to the integral
\(\int_V|v_k|^{p-2}|u_k|^{2+\varepsilon}\,d\mu\) and its symmetric
counterpart produces
\begin{eqnarray*}
   \int_V|v_k|^{p-2}|u_k|^{2+\varepsilon}\,d\mu &=& \int_V |u_k|^{p_1}\cdot |u_k|^{2+\varepsilon-p_1}\cdot |v_k|^{p_2}\cdot |v_k|^{p-p_2-2}\\
   &\le& \left(\int_V |u_k|^{p_1s} |v_k|^{p_2s} d\mu\right)^{\frac 1s}\cdot \left[\int_V \left(|u_k|^{2+\varepsilon-p_1}\cdot|v_k|^{p-p_2-2}\right)^{\frac{s}{s-1}}d\mu\right]^{\frac{s-1}{s}}\\
   &\le& C \left(\int_V |u_k|^{p} |v_k|^{p} d\mu\right)^{\frac 1s} \left(\int_{V} |u_k|^{(2+\varepsilon-p_1)\frac{st}{s-1}}d\mu+\int_V |v_k|^{\frac{(p-p_2-2)st}{(s-1)(t-1)}}d\mu \right)^{\frac{s-1}{s}}\\
   &\le& C X_k^{\frac{s-1}{s}},
\end{eqnarray*}
where we use the fact that $\int_V |u_k|^2|v_k|^{p-2} d\mu$ and $|v_k|^{p-2}|u_k|^2$ are uniformly bounded. Similarly, we obtain $\int_V|v_k|^{p-2}|u_k|^{2+\varepsilon}\,d\mu\le C X_k^{\frac{s-1}{s}}$, 
hence
\[
X_k \le C X_k^{\frac{s-1}{s}} + C .
\]
Since \(s>1\), the exponent \(\frac{s-1}{s}\) is strictly smaller than
\(1\); therefore \(\{X_k\}\) is bounded and \(\{(u_k,v_k)\}\) is a
bounded sequence in \(\mathcal H\).

Having established boundedness, by Lemma~\ref{lem5}, up to a subsequence,  
             \[\begin{cases}
                 (u_k,v_k)\rightharpoonup (u,v) \text{ weakly in } \mathcal{H},\\
                 (u_k,v_k)\rightarrow (u,v) \text{ in } L^{p_1}(V)\times L^{p_2}(V) \text{ for }p_1\ge \frac{p}{2}\text{ and } p_2\ge \frac{p}{2},\\
                 (u_k,v_k)\rightarrow (u,v) \text{ pointwisely in } V. 
              \end{cases}\]
            Since $(u_k,v_k)$ is a Cerami sequence, $\lim\limits_{k\to\infty}J'(u_k,v_k)\cdot (u_k-u,v_k-v)=0$, on the other hand, 
            $$J'(u,v)\cdot (u_k-u,v_k-v)=I_1+I_2+I_3,$$
              \begin{align*}
I_1
&= \int_{V} \Bigl[
    |\nabla u|^{p-2}\nabla u \cdot \nabla(u_k-u)
    + a(x)|u|^{p-2}u\,(u_k-u) \\
&\qquad + |\nabla v|^{p-2}\nabla v \cdot \nabla(v_k-v)
    + b(x)|v|^{p-2}v\,(v_k-v)
  \Bigr] \, d\mu,
\end{align*}
              \begin{eqnarray*}
                  I_2&=&\frac{2}{p}\int_V\left[|u|^{p-2}(v-v_k)v\log v^2+|u|^{p-2}v(v-v_k)\right]d\mu\\
                  &+&\frac{2}{p}\int_V\left[|v|^{p-2}(u-u_k)u\log u^2+|v|^{p-2}u(u-u_k)\right]d\mu,
              \end{eqnarray*}
              \begin{eqnarray*}
                  I_3=\frac{p-2}{p}\int_V\left[|u|^{p-4}u v^2\log v^2(u-u_k)+|v|^{p-4}v u^2\log u^2(v-v_k)\right]d\mu.
              \end{eqnarray*}
             Since \((u_k, v_k) \rightharpoonup (u, v)\), it is clear that 
\(\lim\limits_{k\to\infty} I_1 = \lim\limits_{k\to\infty} I_2 = \lim\limits_{k\to\infty} I_3 = 0\), 
that is \(\lim\limits_{k\to\infty} J'(u, v) \cdot (u_k - u, v_k - v) = 0\).
Note that
\allowdisplaybreaks
\begin{align*}
    & J'(u_k, v_k) \cdot (u_k - u, v_k - v) - J'(u, v) \cdot (u_k - u, v_k - v) \\
    &= \int_V \left(|\nabla u_k|^{p-2}\nabla u_k - |\nabla u|^{p-2}\nabla u\right) \cdot (\nabla u_k - \nabla u) \, d\mu \\
    &\quad + \int_V a(x)\left(|u_k|^{p-2}u_k - |u|^{p-2}u\right)(u_k - u) \, d\mu \\
    &\quad + \int_V \left(|\nabla v_k|^{p-2}\nabla v_k - |\nabla v|^{p-2}\nabla v\right) \cdot (\nabla v_k - \nabla v) \, d\mu \\
    &\quad + \int_V b(x)\left(|v_k|^{p-2}v_k - |v|^{p-2}v\right)(v_k - v) \, d\mu \\
    &\quad - \frac{2}{p}\int_V \left[u_k(\log u_k^2 + 1)\,|v_k|^{p-2} - u(\log u^2 + 1)\,|v|^{p-2}\right](u_k - u) \, d\mu \\
    &\quad - \frac{p-2}{p}\int_V \left(u_k^2\log u_k^2\,|v_k|^{p-4} v_k - u^2\log u^2\,|v|^{p-4} v\right)(v_k - v) \, d\mu \\
    &= o_k(1).
\end{align*}
              It is easy to check that $u_k(\log u_k^2 +1)|v_k|^{p-2}$ is uniformly bounded in $L^{\frac{p}{p-1}}(V)$ and $u(\log u^2 +1)|v|^{p-2}\in L^{\frac{p}{p-1}}(V)$, which means $\left[u_k(\log u_k^2 +1)|v_k|^{p-2}-u(\log u^2 +1)|v|^{p-2}\right]$ is uniformly bounded in $L^{\frac{p}{p-1}}(V)$. Then H\"{o}lder's inequality together with \(u_k\to u\) in \(L^q(V)\)\((q\ge \frac p2)\) implies that 
              \begin{eqnarray*}
               &&\lim\limits_{k\to\infty}\int_V\left[u_k\left(\log u_k^2+1\right)|v_k|^{p-2}-u\left(\log u^2+1\right)|v|^{p-2}\right](u_k-u)d\mu=0,
               \end{eqnarray*}
              since it is easy to check that $\left[u_k\left(\log u_k^2+1\right)|v_k|^{p-2}-u\left(\log u^2+1\right)|v|^{p-2}\right]\in L^{\frac{p}{p-1}}(V)$ due to \(\mathcal{H}\hookrightarrow L^q(V)\) for \(q\ge \frac{p}{2}\), and \(u_k\to u\) strongly in\(L^p(V)\), the details of proof of the above equality are omitted for brevity. Similarly, we obtain
              $$\lim\limits_{k\to\infty}\int_V\left(u_k^2\log u_k^2|v_k|^{p-4}v_k-u^2\log u^2 |v|^{p-4}v\right)(v_k-v)d\mu=0,$$
              From the fact that for any $\xi, \eta\in \mathbb{R}^n$, there exists $c>0$ such that $|\xi-\eta|^l\le c\left(|\xi|^{l-2}\xi-|\eta|^{l-2}\eta\right)\left(\xi-\eta\right)$ for all $l\ge 2$, there is
              $$0\le\frac{1}{c}\int_V |\nabla (u_k-u)|^p d\mu\le\int_V \left(|\nabla u_k|^{p-2}\cdot \nabla u_k-|\nabla u|^{p-2}\cdot u\right)(\nabla u_k-\nabla u)d\mu,$$
              $$0\le\frac{1}{c}\int_V a(x)|u_k-u|^pd\mu\le\int_V a(x)\left(|u_k|^{p-2}u_k-|u|^{p-2}u\right) (u_k-u)d\mu,$$
              $$0\le\frac{1}{c}\int_V |\nabla (v_k-v)|^p d\mu\le\int_V \left(|\nabla v_k|^{p-2}\cdot \nabla v_k-|\nabla v|^{p-2}\cdot v\right)(\nabla v_k-\nabla v)d\mu,$$
              $$0\le\frac{1}{c}\int_V b(x)|v_k-v|^pd\mu\le\int_V b(x)\left(|v_k|^{p-2}v_k-|v|^{p-2}v\right) (v_k-v)d\mu.$$
              On the other hand, $$\lim_k\left[\int_V \bigl(|\nabla u_k|^{p-2}\nabla u_k-|\nabla u|^{p-2}u\bigr)(\nabla u_k-\nabla u)\,d\mu
              +\int_V a(x)\bigl(|u_k|^{p-2}u_k-|u|^{p-2}u\bigr)(u_k-u)\,d\mu\right]=0,$$
              $$\lim_k\left[\int_V \bigl(|\nabla v_k|^{p-2}\nabla v_k-|\nabla v|^{p-2}v\bigr)(\nabla v_k-\nabla v)\,d\mu
              +\int_V b(x)\bigl(|v_k|^{p-2}v_k-|v|^{p-2}v\bigr)(v_k-v)\,d\mu\right]=0.$$
              To sum up, we conclude that $$\int_V \left(|\nabla (u_k-u)|^p+a(x)|u_k-u|^p+\nabla (v_k-v)|^p+b(x)|v_k-v|^p\right)d\mu=0.$$ Hence we deduce $$\lim\limits_{k\to\infty}\|(u_k-u,v_k-v)\|_{\mathcal{H}}=0.$$ The proof is completed.
\end{proof}

 \begin{theorem}{\cite{27}}\label{thm:mp}
Let $(X,\|\cdot\|)$ be a Banach space and $J\in C^{1}(X,\mathbb{R})$ a functional satisfying the Cerami condition. If there exist $e\in X$ and $r>0$ with $\|e\|>r$ such that
\[
a=\inf_{\|u\|=r}J(u)>J(0)\ge J(e),
\]
then $b$ is a critical value of $J$, where
\[
b=\inf_{\gamma\in\mathcal{P}}\max_{t\in[0,1]}J(\gamma(t)),\qquad
\mathcal{P}=\{\gamma\in C([0,1],X):\gamma(0)=0,\ \gamma(1)=e\}.
\]
\end{theorem}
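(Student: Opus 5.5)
The proof follows the classical deformation argument, adapted to the Cerami condition: a pseudo-gradient flow is rescaled by the factor \(1+\|u\|\), so that it still decreases \(J\) at a uniform rate and is globally defined.

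\emph{Step 1: \(b\) is finite and \(b\ge a\).} Any \(\gamma\in\mathcal P\) satisfies \(\|\gamma(0)\|=0<r<\|e\|=\|\gamma(1)\|\). By the intermediate value theorem applied to \(t\mapsto\|\gamma(t)\|\), the path meets the sphere \(\{\|u\|=r\}\). Hence \(\max_{t}J(\gamma(t))\ge a\), and so \(b\ge a>J(0)\ge J(e)\). The segment \(\gamma(t)=te\) shows that \(b<\infty\).

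\emph{Step 2: a uniform lower bound away from critical points.} Assume, for contradiction, that \(b\) is not a critical value. I claim there exist \(\bar\varepsilon>0\) and \(\delta>0\) such that
\[
(1+\|u\|)\,\|J'(u)\|_{X'}\ge\delta\qquad\text{whenever } |J(u)-b|\le 2\bar\varepsilon .
\]
If this failed, we could pick \(u_k\) with \(J(u_k)\to b\) and \((1+\|u_k\|)\|J'(u_k)\|\to0\), which is a Cerami sequence at level \(b\). The Cerami condition gives a convergent subsequence \(u_k\to u\), and since \(J\in C^1\) we get \(J(u)=b\) and \(J'(u)=0\). This is a contradiction. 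Shrinking \(\bar\varepsilon\), I also require \(2\bar\varepsilon<b-J(0)\), so that \(0\) and \(e\) lie outside the strip \(S:=J^{-1}([b-2\bar\varepsilon,b+2\bar\varepsilon])\).

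\emph{Step 3: deformation.} On the set of regular points \(\tilde X:=\{J'\neq0\}\), I take a locally Lipschitz pseudo-gradient field \(V\), constructed by the usual partition-of-unity argument on \(\tilde X\). It satisfies
\[
\|V(u)\|\le 2\|J'(u)\|,\qquad \langle J'(u),V(u)\rangle\ge\|J'(u)\|^2 .
\]
Fix \(\varepsilon\in(0,\bar\varepsilon)\) and a locally Lipschitz cutoff \(g\) with \(0\le g\le1\), \(g=1\) on \(J^{-1}([b-\varepsilon,b+\varepsilon])\) and \(g=0\) off \(J^{-1}((b-2\bar\varepsilon,b+2\bar\varepsilon))\). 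The cutoff vanishes off \(S\), and \(S\subset\tilde X\) by Step 2. Consider the flow
\[
\frac{d\sigma}{dt}=-g(\sigma)\,(1+\|\sigma\|)\frac{V(\sigma)}{\|V(\sigma)\|},\qquad \sigma(0,u)=u .
\]
The right-hand side is locally Lipschitz and has norm at most \(1+\|\sigma\|\). By Gronwall's inequality, \(\|\sigma(t)\|\le(1+\|u\|)e^{t}\), so the flow exists for all \(t\ge0\); this is the point where the Cerami weight \(1+\|u\|\) is essential. Along trajectories,
\[
\frac{d}{dt}J(\sigma)\le -g(\sigma)\,\frac{1+\|\sigma\|}{2}\,\|J'(\sigma)\|\le -\frac{\delta}{2}\,g(\sigma).
\]
Let \(T=4\varepsilon/\delta\). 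Take \(u\) with \(J(u)\le b+\varepsilon\), and suppose \(J(\sigma(t,u))\) stayed in \([b-\varepsilon,b+\varepsilon]\) for all \(t\in[0,T]\). Then \(g(\sigma)=1\) throughout, so \(J\) would drop by at least \(2\varepsilon\) over \([0,T]\), which is impossible. Since \(J\) is nonincreasing along the flow, we conclude \(J(\sigma(T,u))\le b-\varepsilon\). Finally, \(\sigma(t,0)=0\) and \(\sigma(t,e)=e\) for all \(t\), because \(g\) vanishes outside \(S\).

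\emph{Step 4: conclusion.} Choose \(\gamma\in\mathcal P\) with \(\max_t J(\gamma(t))\le b+\varepsilon\). Then \(\tilde\gamma:=\sigma(T,\gamma(\cdot))\) is continuous and still joins \(0\) to \(e\), so \(\tilde\gamma\in\mathcal P\). By Step 3, \(\max_t J(\tilde\gamma(t))\le b-\varepsilon\), which contradicts the definition of \(b\). Hence \(b\) is a critical value.

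The main obstacle is Step 3: building the locally Lipschitz pseudo-gradient field and proving that the weighted flow exists globally. The normalisation by \(\|V\|\) together with the factor \(1+\|\sigma\|\) is exactly what converts the Cerami lower bound of Step 2 into a uniform decrease rate \(\delta/2\), while keeping the vector field of at most linear growth.
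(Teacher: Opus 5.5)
The paper gives no proof of this statement. It is quoted from \cite{27} and used as a black box in the proof of Theorem~\ref{thm3}, so there is no in-paper argument to compare with.

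Your proof is the standard deformation argument for the Cerami version of the mountain pass theorem, and it is correct. Step 2 is exactly where the Cerami condition enters. The field $-g(1+\|\sigma\|)V/\|V\|$ is locally Lipschitz on all of $X$: it is given by the formula on the open set $\{J'\neq0\}$, which contains $S$, and it vanishes on the open set $X\setminus S$. It has linear growth and gives the uniform decrease rate $\delta/2$ on the strip. Since $J(0),J(e)<b-2\bar\varepsilon$, both endpoints are equilibria, so the deformed path stays in $\mathcal P$.

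Two small corrections are needed, neither of them a gap.

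\emph{The contradiction in Step 3.} With $T=4\varepsilon/\delta$, the supposition that $J(\sigma(t,u))$ stays in $[b-\varepsilon,b+\varepsilon]$ on $[0,T]$ is not contradictory in the borderline case $J(u)=b+\varepsilon$, $J(\sigma(T,u))=b-\varepsilon$. You do not actually need a contradiction; argue by cases:
\begin{enumerate}
\item If $J(\sigma(t,u))<b-\varepsilon$ for some $t\le T$, monotonicity of $J$ along the flow gives $J(\sigma(T,u))<b-\varepsilon$.
\item Otherwise $g\equiv1$ along the trajectory, so $J(\sigma(T,u))\le J(u)-2\varepsilon\le b-\varepsilon$.
\end{enumerate}
Either way $J(\sigma(T,u))\le b-\varepsilon$. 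Alternatively, take $T$ slightly larger.

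\emph{The role of the weight.} The factor $1+\|\sigma\|$ is not what secures global existence. The unweighted field $-gV/\|V\|$ has speed at most $1$, so its flow is trivially global. The weight is what turns $(1+\|u\|)\|J'(u)\|\ge\delta$ into a uniform decay rate. Gronwall is needed precisely because the weight destroys the uniform speed bound, as your closing paragraph correctly says.
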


We now verify that $J$ possesses the mountain pass geometry.\\
\medskip
\noindent\textbf{Completion of the proof of Theorem~\ref{thm3}.}
\begin{lemma}
There exist constants $\rho>0$ and $\delta>0$ such that
\begin{equation}
J(u,v)\ge \delta \quad \text{for all } (u,v)\in\mathcal H \text{ with } \|(u,v)\|_{\mathcal H}=\rho.
\end{equation}
Moreover, there exists a nontrivial pair $(\phi,\psi)\in\mathcal H$ such that
\begin{equation}
J(t(\phi,\psi)) \to -\infty \quad \text{as } t\to+\infty.
\end{equation}
\end{lemma}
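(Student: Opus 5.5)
We prove the two assertions separately, working under (A1)--(A2$'$), so that $J\in C^1(\mathcal H,\mathbb R)$ and, by Lemma~\ref{lem5}, $\mathcal H\hookrightarrow L^q(V)\times L^q(V)$ continuously for every $q\in[\frac p2,\infty]$.

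\emph{Small sphere.} Write $J(u,v)=\frac1p(\|u\|_{\mathcal H_a}^p+\|v\|_{\mathcal H_b}^p)-\frac1p(I_1+I_2)$, where $I_1=\int_V|u|^{p-2}v^2\log v^2\,d\mu$ and $I_2$ is symmetric. The key observation is that the negative part of $s^2\log s^2$ helps: it enters $J$ with a plus sign. Hence only the positive parts need to be estimated. On $\{v^2\ge1\}$ we use $(s^2\log s^2)_+\le C_\sigma |s|^{2+\sigma}$ for a fixed $\sigma\in(0,1)$. This gives
\[
I_1\le C_\sigma\int_V|u|^{p-2}|v|^{2+\sigma}\,d\mu .
\]
Applying H\"older with exponents $\frac{p+\sigma}{p-2}$ and $\frac{p+\sigma}{2+\sigma}$, then Young, and then the embedding $\mathcal H\hookrightarrow L^{p+\sigma}$, we get
\[
I_1\le C\bigl(\|u\|_{\mathcal H_a}^{p+\sigma}+\|v\|_{\mathcal H_b}^{p+\sigma}\bigr)\le C\|(u,v)\|_{\mathcal H}^{p+\sigma},
\]
exactly as in Lemma~\ref{lem4.3}. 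The same bound holds for $I_2$. Using $\|u\|^p_{\mathcal H_a}+\|v\|^p_{\mathcal H_b}\ge 2^{1-p}\|(u,v)\|_{\mathcal H}^p$, we obtain
\[
J(u,v)\ge \frac{2^{1-p}}{p}\rho^p-C\rho^{p+\sigma}\qquad\text{whenever }\|(u,v)\|_{\mathcal H}=\rho .
\]
Choosing $\rho$ small makes the right-hand side at least $\delta:=\frac{2^{-p}}{p}\rho^p>0$.

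\emph{Ray to $-\infty$.} Take $(\phi,\psi)=(\delta_{x_0},\delta_{x_0})\in C_c(V)\times C_c(V)\subset\mathcal H$. For this pair $M:=\int_V(|\phi|^{p-2}\psi^2+|\psi|^{p-2}\phi^2)\,d\mu=2\mu(x_0)>0$. Scaling as in the computation of Lemma~\ref{lem:projection1} gives
\[
J(t\phi,t\psi)=\frac{t^p}{p}\Bigl(\|\phi\|_{\mathcal H_a}^p+\|\psi\|_{\mathcal H_b}^p-\log t^2\,M-\int_V\bigl(|\phi|^{p-2}\psi^2\log\psi^2+|\psi|^{p-2}\phi^2\log\phi^2\bigr)d\mu\Bigr).
\]
The bracket is a finite constant minus $M\log t^2$, so it tends to $-\infty$. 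Therefore $J(t(\phi,\psi))\to-\infty$ as $t\to+\infty$. In particular some $e=t_1(\phi,\psi)$ satisfies $\|e\|>\rho$ and $J(e)<0=J(0)$, which supplies the mountain pass geometry needed in Theorem~\ref{thm:mp}.

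\emph{Main obstacle.} The only delicate step is the estimate of the logarithmic terms near the origin. The function $s\mapsto s^2\log s^2$ is not bounded by any power $|s|^{2+\sigma}$ from below near $0$. This is harmless here, because we only need to bound the positive part, and that part lives on $\{|v|\ge1\}$, where the power bound is valid. We must therefore check that discarding the negative parts has the correct sign in $J$; it does, since they enter with $-\frac1p\cdot(\text{negative})\ge0$. We must also note that the positive-part integrals are finite on all of $\mathcal H$, which follows from $\mathcal H\hookrightarrow L^{p+\sigma}$.
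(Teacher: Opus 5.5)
Your proof is correct and follows essentially the same route as the paper. For the small sphere, you bound only the positive parts of the logarithmic terms by $C\|(u,v)\|_{\mathcal H}^{p+\sigma}$, as in Lemma~\ref{lem4.3}, and compare this with $\frac{2^{1-p}}{p}\|(u,v)\|_{\mathcal H}^p$. For the ray, you take a pair supported at a single vertex, where the term $-\frac{t^p\log t^2}{p}M$ dominates. Your explicit H\"older exponents and your remark that the constant bracket needs no sign condition (the paper assumes $A>0$, which is unnecessary) are harmless refinements of the same argument.
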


\begin{proof}
For (i), the same estimates as in Lemma~\ref{lem4.3} yield, for some constant $C>0$,
\[
\int_V |u|^{p-2}v^2\log v^2\,d\mu
\le C\bigl(\|u\|_{\mathcal H_a}^{p+\varepsilon}+\|v\|_{\mathcal H_b}^{p+\varepsilon}\bigr),
\]
and similarly for the symmetric term. Hence
\[
J(u,v)\ge \frac{1}{p}\bigl(\|u\|_{\mathcal H_a}^p+\|v\|_{\mathcal H_b}^p\bigr)
      - C\|(u,v)\|_{\mathcal H}^{p+\varepsilon}.
\]
Using $\|u\|_{\mathcal H_a}^p+\|v\|_{\mathcal H_b}^p \ge 2^{1-p}\|(u,v)\|_{\mathcal H}^p$, we obtain
\[
J(u,v)\ge \frac{1}{p2^{p-1}}\|(u,v)\|_{\mathcal H}^p
          - C\|(u,v)\|_{\mathcal H}^{p+\varepsilon}.
\]
Choosing $\rho>0$ sufficiently small makes the right-hand side positive; set $\delta$ to be its minimum on the sphere.

For (ii), fix $(\phi,\psi)\in\mathcal H$ with
\[
A:=\|\phi\|_{\mathcal H_a}^p+\|\psi\|_{\mathcal H_b}^p
  -\int_V|\phi|^{p-2}\psi^2\log\psi^2\,d\mu
  -\int_V|\psi|^{p-2}\phi^2\log\phi^2\,d\mu>0,
\]
which is possible, e.g., by taking $\phi,\psi$ supported on a single vertex and with $\phi,\psi\not\equiv0$. Then
\[
J(t(\phi,\psi))
= \frac{t^p}{p}A
  - \frac{t^p\log t^2}{p}
   \left(\int_V|\phi|^{p-2}\psi^2\,d\mu+\int_V|\psi|^{p-2}\phi^2\,d\mu\right).
\]
Since the last bracket is positive, the term $t^p\log t^2$ dominates, so $J(t(\phi,\psi))\to -\infty$ as $t\to+\infty$. This completes the proof.
\end{proof}
In view of Theorem~\ref{thm:mp}, a Cerami sequence exists at the level
\[
c=\inf_{\gamma\in\Gamma}\max_{(u,v)\in\gamma}J(u,v),
\]
and Lemma~\ref{lem:cerami} ensures that $c$ is attained as a critical value, i.e., there exists $(u_0,v_0)\in\mathcal H$ with $J(u_0,v_0)=c$ and $J'(u_0,v_0)=0$.

To see that $(u_0,v_0)$ is also a ground state, we recall that the Nehari manifold
\[
\mathcal N:=\{(u,v)\in\mathcal H\setminus\{(0,0)\}:J'(u,v)\cdot(u,v)=0\}
\]
is a $C^1$ manifold. Indeed, for $(u,v)\in\mathcal N$,
\[
F'(u,v)\cdot(u,v)=-2\int_V\bigl(|u|^{p-2}v^2+|v|^{p-2}u^2\bigr)\,d\mu<0,
\]
where $F(u,v):=J'(u,v)\cdot(u,v)$, so the Implicit Function Theorem applies. Moreover, every nonzero critical point of $J$ lies in $\mathcal N$, and every critical point of $J|_{\mathcal N}$ is a critical point of $J$ by the usual Lagrange multiplier argument.

Define
\[
m:=\inf\{J(u,v):(u,v)\in K\},
\]
where $K$ is the set of critical points of $J$. Let $\{(u_k,v_k)\}\subset K$ with $J(u_k,v_k)\to m$. By Lemma~\ref{lem:cerami}, a subsequence converges in $\mathcal H$ to some $(u,v)\in K$. Since $J\in C^1$, Fatou's lemma gives
\[
m\le J(u,v)\le \liminf_{k\to\infty}J(u_k,v_k)=m,
\]
so $(u,v)$ is a ground state.

It remains to show $c=m$. Since $(u_0,v_0)\in\mathcal N$, we have $m\le c$. Conversely, for any $(u,v)\in\mathcal N$, choose $t^*>0$ with $J(t^*(u,v))<0$ and set $\gamma_0(t):=t t^*(u,v)$. Then Lemma~\ref{lem:projection1} yields
\[
c\le \sup_{t\ge0}J(t(u,v))=J(u,v),
\]
and taking the infimum over $\mathcal N$ gives $c\le m$. Hence $c=m$, completing the proof.

Having established existence under three complementary hypotheses, we now study the response of the ground-state energy to potential perturbations and the internal structure of the ground-state set. These issues are taken up in the next section.

\section{Structural Stability: Rigidity of the Ground-State Energy, the Gradient Formula and Hessian Factorisation}\label{sec:stability}

In this section we establish the rigidity properties of ground states for the logarithmically coupled system \eqref{eq:main}, under the standing assumption that the ground state under consideration is non-degenerate. The analysis reveals an exact algebraic identity linking the Sobolev norm and the logarithmic coupling energy, from which the full response theory---Lipschitz dependence, Fr\'echet differentiability, and Hessian factorisation---is derived.

For a pair $(u,v)\in\mathcal{H}$ we introduce the principal part of the energy and the logarithmic coupling energy,
\[
S(u,v)=\|u\|_{\mathcal{H}_a}^p+\|v\|_{\mathcal{H}_b}^p,\qquad
L(u,v)=\int_V\!\bigl(|u|^{p-2}v^2\log v^2+|v|^{p-2}u^2\log u^2\bigr)d\mu, 
\]
\[
M(u,v)=\int_V\bigl(|u|^{p-2}v^2+|v|^{p-2}u^2\bigr)d\mu .
\]
Both quantities are finite on the Nehari manifold $\mathcal{N}_{a,b}$. The ground-state energy is equal to $d(a,b)=\inf\limits_{(u,v)\in\mathcal{N}_{a,b}}J_{a,b}(u,v)$.
\begin{definition}\label{def:5.1}
Fix the potentials $(a,b)$ and let
\[
\mathcal{G}_{a,b}=\{(u,v)\in\mathcal{N}_{a,b}: J_{a,b}(u,v)=d(a,b)\}
\]
be the set of all ground states. And we define the tangent space 
$$T=T_{(u_0,v_0)}\mathcal{N}_{a_0,b_0}=\{(\phi,\psi)\in\mathcal{H}:\langle G_{a_0,b_0}'(u_0,v_0),(\phi,\psi)\rangle=0\},$$
where \(G_{a_0,b_0}(u_0,v_0)=J_{a_0,b_0}'(u_0,v_0)\cdot (u_0,v_0)\). Note that \(\langle G'(u_0,v_0),(u_0,v_0)\rangle=-2M\neq 0\), the radial direction \((u_0,v_0)\) is transversal to \(T\).

A ground state $(u_0,v_0)\in\mathcal{G}_{a,b}$ is called \emph{non-degenerate} if the Hessian $J_{a,b}''(u_0,v_0)$ restricted to the tangent space $T_{(u_0,v_0)}\mathcal{N}_{a,b}$ is an isomorphism, hence uniformly positive definite, due to the fact that the ground state is a minimum on $\mathcal{N}_{a,b}$.
\end{definition}
\begin{lemma}\label{lem6.1}
Let $(u_1,v_1)$ and $(u_2,v_2)$ be two ground states of $J_{a,b}$ for the \emph{same} potentials $a,b$,  
then
\begin{equation}\label{eq:rigidity}
S(u_1,v_1)-S(u_2,v_2)=L(u_1,v_1)-L(u_2,v_2).
\end{equation}
Equivalently, the quantity $S(u,v)-L(u,v)$ is a constant on $\mathcal{G}_{a,b}$ and equals to $p\,d(a,b)$. 
In addition, assume that \((u,v)\in\mathcal{H}\) is any nontrivial critical point of \(J\) on the Nehari manifold \(\mathcal{N}_{a,b}\), then the mixed integral $M=M(u,v)$ satisfies the two identities below:\\
\[(i)\quad J(u,v)=\frac{2}{p^2}M,\]
\[(ii)\quad \langle G'(u,v),(u,v)\rangle=-2M,\]
where $G(u,v)=\langle J'(u,v),(u,v)\rangle$ is the Nehari functional.
\end{lemma}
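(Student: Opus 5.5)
The argument is a direct computation from the Nehari identity and the energy identity already used in Lemma~\ref{lem4.3} and Lemma~\ref{lem:cerami}. With the notation $S,L,M$, I rewrite $J$ and the Nehari functional. For $(u,v)\in\mathcal N_{a,b}$ we have
\[
J(u,v)=\frac1p S(u,v)-\frac1p L(u,v),\qquad G(u,v)=S(u,v)-L(u,v)-\frac2p M(u,v)=0,
\]
where the second identity is exactly \eqref{eq:Nehari-id}. In particular $S-L=\frac2pM$ on $\mathcal N_{a,b}$, and this will give everything.

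For the rigidity statement, take a ground state $(u,v)\in\mathcal G_{a,b}$. Then $J_{a,b}(u,v)=d(a,b)$, so $S(u,v)-L(u,v)=p\,J_{a,b}(u,v)=p\,d(a,b)$. Applying this to $(u_1,v_1)$ and $(u_2,v_2)$ and subtracting gives \eqref{eq:rigidity}. All quantities are finite, since $S<\infty$ on $\mathcal H$, $M<\infty$ by Hölder and Lemma~\ref{lem5}, and $L=S-\frac2pM$ on $\mathcal N_{a,b}$.

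For (i), let $(u,v)$ be a nontrivial critical point, so $(u,v)\in\mathcal N_{a,b}$. Then
\[
J(u,v)=J(u,v)-\frac1pG(u,v)=\frac1p\bigl(S-L\bigr)-\frac1p\bigl(S-L-\tfrac2pM\bigr)=\frac{2}{p^2}M,
\]
which is the identity already used in Section~\ref{Cerami}.

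For (ii), I compute along the fibre. Put $g(t)=G(tu,tv)$, so that $\langle G'(u,v),(u,v)\rangle=g'(1)$. By homogeneity $S(tu,tv)=t^pS$ and $M(tu,tv)=t^pM$. The logarithmic term needs care: since $\log(t^2v^2)=\log t^2+\log v^2$, we get $L(tu,tv)=t^pL+t^p\log t^2\,M$. Hence
\[
g(t)=t^p\Bigl(S-L-\tfrac2pM\Bigr)-t^p\log t^2\,M .
\]
Differentiating at $t=1$ gives $g'(1)=p\,(S-L-\tfrac2pM)-2M=p\,G(u,v)-2M=-2M$, because $G(u,v)=0$. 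This agrees with the formula for $F'$ in Section~\ref{Cerami}.

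The main obstacle is justifying $g'(1)=\langle G'(u,v),(u,v)\rangle$. Under (A1)--(A2$'$), $J$ is $C^1$, but $G$ involves $J'$ and so needs one more derivative. I would avoid this by reading $\langle G'(u,v),(u,v)\rangle$ as the radial derivative $\frac{d}{dt}G(tu,tv)|_{t=1}$, which is well defined from the explicit formula above. Alternatively, on a finite graph, or by dominated convergence using $p>4$, one can check that $t\mapsto G(tu,tv)$ is differentiable termwise. In either case the computation only uses the scaling identity for $L$, which holds pointwise, including where $u$ or $v$ vanishes, by continuous extension of $s^2\log s^2$.
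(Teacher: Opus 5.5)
Your proposal is correct and follows the paper's argument. The rigidity identity comes directly from $J_{a,b}=\frac1p(S-L)$ evaluated at two ground states with the common energy $d(a,b)$. Your derivations of (i), via $J-\frac1pG=\frac{2}{p^2}M$, and of (ii), via the fibre identity $G(tu,tv)=t^p\bigl(S-L-\frac2pM\bigr)-t^p\log t^2\,M$, are exactly the ``direct computation'' whose details the paper omits. Your reading of $\langle G'(u,v),(u,v)\rangle$ as $\frac{d}{dt}G(tu,tv)|_{t=1}$ is a sensible clarification the paper does not make; it is legitimate whenever $G$ is Gateaux differentiable in the radial direction, e.g.\ near a strictly positive ground state on a finite graph.
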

\begin{proof}
    By the definition of \(J_{a,b}\), there holds
    $$J_{a,b}(u,v)=\frac1p \left(S(u,v)-L(u,v)\right).$$
    The result immediately follows from the fact that \(J_{a,b}(u_1,v_1)=J_{a,b}(u_2,v_2)=d(a,b)\). Moreover, 
    \begin{equation}\label{eq:rigidity1}
        S(u,v)-L(u,v)=pd(a,b)
    \end{equation}
    (i) and (ii) are immediate consequences of direct computation; the details are omitted for brevity.
\end{proof}
Lemma~\ref{lem6.1} reveals a rigid algebraic structure of the ground-state set: it is precisely the intersection of the Nehari manifold \(\mathcal N_{a,b}\) with the energy level \(\{S-L=p\,d(a,b)\}\). The identities (i) and (ii) further show that the mixed integral \(M\) simultaneously determines the energy \(J(u,v)\) and the transversality of the Nehari constraint, via \(\langle G'(u,v),(u,v)\rangle=-2M\). Thus the ground-state set is not merely a variational minimiser, but a geometrically constrained object, locked by two independent conservation laws. This double constraint is unusual in the calculus of variations and has far-reaching consequences for the rigidity theory developed below. The following lemma exploits this structure to track how ground states respond to perturbations of the potentials.
\begin{lemma}\label{lem6.2}
Let $(u_0,v_0)$ be a ground state of $J_{a_0,b_0}$,
there exist $\varepsilon_0>0$ and $C>0$ such that for all $(a_1,b_1)$ with $\|\delta a_1\|_\infty,\|\delta b_1\|_\infty\le\varepsilon_0$, \((a,b)=(a_0+\delta a_1,b_0+\delta b_1)\), the projection parameter $t=t_{a,b}(u_0,v_0)$ satisfies
\[
|t-1|\le C\bigl(\|\delta a_1\|_\infty+\|\delta b_1\|_\infty\bigr).
\]
\end{lemma}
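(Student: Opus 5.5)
The projection parameter can be computed almost explicitly, because the logarithmic coupling separates the scaling factor $t$ additively. For fixed $(u_0,v_0)\in\mathcal N_{a_0,b_0}$ and perturbed potentials $(a,b)=(a_0+\delta a_1,b_0+\delta b_1)$, the computation in the proof of Lemma~\ref{lem:projection1} gives
\[
\frac{\phi_{a,b}(t)}{t^p}=S_{a,b}(u_0,v_0)-L(u_0,v_0)-\tfrac{2}{p}M(u_0,v_0)-\log t^2\,M(u_0,v_0),
\]
where $S_{a,b}$ denotes $S$ computed with the weights $a,b$. The unique zero $t=t_{a,b}(u_0,v_0)$ is therefore given by
\[
\log t^2=\frac{S_{a,b}(u_0,v_0)-L(u_0,v_0)-\frac2p M(u_0,v_0)}{M(u_0,v_0)}.
\]
Since $(u_0,v_0)\in\mathcal N_{a_0,b_0}$, the Nehari identity \eqref{eq:Nehari-id} says exactly that the numerator vanishes when $(a,b)=(a_0,b_0)$; this is also the case $t_{a_0,b_0}=1$ of Lemma~\ref{lem:projection1}. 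Subtracting, we obtain the exact formula
\[
\log t^2=\frac{S_{a,b}(u_0,v_0)-S_{a_0,b_0}(u_0,v_0)}{M(u_0,v_0)}=\frac{\Delta}{M},\qquad \Delta=\int_V\bigl(\delta a_1|u_0|^p+\delta b_1|v_0|^p\bigr)\,d\mu .
\]

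Next we need lower and upper bounds on the quantities involved. For the denominator, Lemma~\ref{lem6.1}(i) gives $M=\frac{p^2}{2}d(a_0,b_0)>0$, and $d>0$ by Lemma~\ref{lem4.3}. For the numerator, $|\Delta|\le \|\delta a_1\|_\infty\|u_0\|_p^p+\|\delta b_1\|_\infty\|v_0\|_p^p\le C_0(\|\delta a_1\|_\infty+\|\delta b_1\|_\infty)$. Here $\|u_0\|_p^p\le V_0^{-1}\|u_0\|_{\mathcal H_a}^p$ by (A1), and similarly for $v_0$. Hence $|\log t^2|\le 2\varepsilon_0 C_0/M$.

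To pass from $\log t$ to $t$, choose $\varepsilon_0$ so small that $|\log t|\le 1$. On $[-1,1]$ we have $|e^x-1|\le e|x|$, and so
\[
|t-1|\le \frac{eC_0}{2M}\bigl(\|\delta a_1\|_\infty+\|\delta b_1\|_\infty\bigr).
\]
This gives the claimed bound with $C=eC_0/(2M)$.

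A preliminary point must be checked before this applies: the perturbed potentials must still be admissible. Taking $\varepsilon_0<V_0/2$ keeps $a,b\ge V_0/2$. The hypotheses (A2) or (A2$'$) also persist, since bounded perturbations only shift sublevel sets, and $1/a$ and $1/a_0$ are comparable once $\varepsilon_0\le V_0/2$. In particular $\mathcal H$ is unchanged as a set, and $L,M$ do not depend on $(a,b)$.

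Lemma~\ref{lem:projection1} also requires a vertex where both components are nonzero. This holds because $M(u_0,v_0)>0$.

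I expect the only genuine obstacle to be confirming that $L(u_0,v_0)$ is finite and independent of the potentials, so that it cancels in the subtraction. Finiteness follows from $(u_0,v_0)\in\mathcal D(J)$ together with the boundedness of $S$. Independence from the potentials holds because $L$ contains no weights. Everything else is explicit algebra.
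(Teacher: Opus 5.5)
Your proposal is correct and is essentially the paper's proof. You write the Nehari condition for $t(u_0,v_0)$ under the perturbed potentials, subtract the reference identity at $t=1$ to get the exact relation $2M\log t=\Delta$, and then pass from $\log t$ to $t-1$ by an elementary bound on $e^{s}-1$. The extra points you check (persistence of (A1)/(A2$'$), $M>0$, finiteness of $L$, and $|\Delta|\le C_0(\|\delta a_1\|_\infty+\|\delta b_1\|_\infty)$ via (A1)) are left implicit in the paper; they make the constant $C$ explicit.
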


\begin{proof}
The condition $t(u_0,v_0)\in\mathcal{N}_{a,b}$ is equivalent to
$\langle J_{a,b}'(t(u_0,v_0)),t(u_0,v_0)\rangle=0$.
Expanding the Nehari identity~\eqref{eq:Nehari-id} and using
$\log(t^2w^2)=\log w^2+2\log t$ gives, for the perturbed
potentials $(a,b)$,
\begin{equation}\label{eq:t-id}
t^p\bigl(S_{a,b}(u_0,v_0)-L(u_0,v_0)\bigr)
=2t^p\log t\,M+\frac{2t^p}{p}M,
\end{equation}
where $S_{a,b}$ denotes the principal part and $M=M(u_0,v_0)>0$.
At the reference potentials $(a_0,b_0)$ we have $t=1$, so that
\eqref{eq:t-id} reduces to
\begin{equation}\label{eq:ref-id}
S_{a_0,b_0}(u_0,v_0)-L(u_0,v_0)=\frac{2}{p}M .
\end{equation}
Since $(u_0,v_0)$ is a non-trivial ground state, its energy
$d(a_0,b_0)$ is strictly positive; the energy identity
$J=\frac{2}{p^2}M$ on $\mathcal{N}_{a_0,b_0}$ then implies
$M>0$.

Subtracting $t^p$ times~\eqref{eq:ref-id} from~\eqref{eq:t-id}
and using the linear decomposition
$S_{a,b}(u_0,v_0)=S_{a_0,b_0}(u_0,v_0)
+\int_V(\delta a_1|u_0|^p+\delta b_1|v_0|^p)\,d\mu$,
the left-hand side simplifies to $t^p\Delta$, where
$\Delta:=\int_V(\delta a_1|u_0|^p+\delta b_1|v_0|^p)\,d\mu$.
We thus obtain
\[
 \Delta = 2M\log t,
\]
and consequently $\log t = \Delta/(2M)$.
The desired estimate $|t-1|\le C(\|\delta a_1\|_\infty
+\|\delta b_1\|_\infty)$ now follows from the elementary bound
$|e^s-1|\le2|s|$, valid for all sufficiently small $s$.
\end{proof}

\subsection*{Proof of Theorem~\ref{thm4}}

\begin{proof}
Let \((u_0,v_0)\) be a ground state of \(J_{a_0,b_0}\) and \((u,v)\) a 
ground state of \(J_{a,b}\).  Set \(t=t_{a,b}(u_0,v_0)\).  By minimality,
\[
d(a,b)\le J_{a,b}\bigl(t(u_0,v_0)\bigr).
\]
Computing \(J_{a,b}(t(u_0,v_0))\) as in Lemma~\ref{lem6.2} and using 
the identity \(\Delta=2M\log t\) obtained there, we find
\[
J_{a,b}\bigl(t(u_0,v_0)\bigr)=t^p d(a_0,b_0).
\]
Lemma~\ref{lem6.2} supplies the estimate 
\(|t-1|\le C\bigl(\|\delta a_1\|_\infty+\|\delta b_1\|_\infty\bigr)\) 
with \(C\) depending only on \((u_0,v_0)\).  Since \(s\mapsto s^p\) is 
Lipschitz near \(s=1\), there exists \(C'>0\) such that 
\(|t^p-1|\le C'|t-1|\) whenever 
\(\|\delta a_1\|_\infty,\|\delta b_1\|_\infty\) are sufficiently small.  
Consequently,
\[
d(a,b)\le d(a_0,b_0)+C'C\,d(a_0,b_0)\bigl(\|\delta a_1\|_\infty+\|\delta b_1\|_\infty\bigr).
\]
Setting \(l_1:=2C'C\,d(a_0,b_0)\) and using 
\(\|\delta a_1\|_\infty+\|\delta b_1\|_\infty\le 2\max\{\|\delta a_1\|_\infty,\|\delta b_1\|_\infty\}\), 
we obtain the upper bound
\[
d(a,b)\le d(a_0,b_0)+l_1\max\{\|\delta a_1\|_\infty,\|\delta b_1\|_\infty\}.
\]
In particular, shrinking \(\varepsilon_0\) if necessary guarantees 
\(d(a,b)\le 2d(a_0,b_0)\).  Reversing the roles of \((a_0,b_0)\) 
and \((a,b)\) yields a symmetric estimate
\[
d(a_0,b_0)\le d(a,b)+l_2\max\{\|\delta a_1\|_\infty,\|\delta b_1\|_\infty\},
\]
for some \(l_2>0\).  Taking \(l:=\max\{l_1,l_2\}\) gives
\[
|d(a,b)-d(a_0,b_0)|\le l\max\bigl\{\|\delta a_1\|_\infty,\|\delta b_1\|_\infty\bigr\}.
\]
The upper bound $d(a,b)\le d(a_0,b_0)+\frac1p\Delta+O(|\Delta|^2)$ 
with $\Delta=\int_V(\delta a_1|u_0|^p+\delta b_1|v_0|^p)\,d\mu$ is 
obtained, for $|\Delta|$ sufficiently small, exactly as in the proof 
as above, by expanding 
$t^p=1+p\log t+O(|\log t|^2)$ and using $\log t=\Delta/(2M)$ together 
with the energy identity $d(a_0,b_0)=\frac{2}{p^2}M$. 
Here we use the asymptotic equivalence $t-1\sim\log t$ as $t\to 1$.

The lower bound follows from the
same argument with the roles reversed, i.e., projecting the perturbed
ground state $(u,v)$ onto $\mathcal{N}_{a_0,b_0}$ yields
\[
d(a_0,b_0) \le d(a,b) + \frac{1}{p} \int \bigl( (-\delta a_1) |u|^p + (-\delta b_1) |v|^p \bigr) d\mu + O(|\Delta'|^2),
\]
which is exactly $d(a_0,b_0) \le d(a,b) - \frac{1}{p} \triangle' + O(|\Delta'|^2)$, where $\triangle'=\int_V (\delta a_1 |u|^p+\delta b_1 |v|^p) d\mu.$
Rearranging gives the lower bound
$d(a,b) \ge d(a_0,b_0) + \frac{1}{p} \triangle' - O(|\Delta'|^2)$. Note that the results of this Theorem hold globally because \((a_0,b_0)\) is arbitrary, and we complete the proof.
\end{proof}

Throughout the rest of this section, unless otherwise stated, the graph $V$ is finite, and the potentials $(a,b),(a_0,b_0)$ satisfy $(A_1)$ and $(A_2')$. Since finite graphs are a special case of locally finite graphs, all previous existence and compactness results remain valid. We also work within the positive cone $\widetilde{\mathcal H}=\{(u,v)\in\mathcal H:u\ge0,\ v\ge0\}$. Under$(A_1)\text{--}(A_2')$, the functional $J$ is well-defined and $C^1$; moreover, any constrained minimizer on the Nehari manifold is a free critical point (by the Lagrange multiplier rule or Implicit Function Theorem). By Proposition~\ref{Pro1}, every nontrivial nonnegative weak solution is strictly positive on $V$. Since $V$ is finite, the ground state therefore attains a positive minimum, and consequently $J$ is $C^2$ in a neighbourhood of the ground state.

\begin{lemma}\label{Frechet differential}
Let $(u_0,v_0)$ be a non-degenerate ground state of $J_{a_0,b_0}$.
Then there exist a neighbourhood $\mathcal U$ of $(a_0,b_0)$ in the parameter space and a unique $C^1$ branch $(u_{a,b},v_{a,b})$ of critical points of $J_{a,b}$ such that $(u_{a_0,b_0},v_{a_0,b_0})=(u_0,v_0)$.
Define the branch energy $\mathcal E(a,b):=J_{a,b}(u_{a,b},v_{a,b})$.
Then $\mathcal E$ is Fr\'echet differentiable at $(a_0,b_0)$ and
\begin{equation}\label{eq:grad}
\nabla_a \mathcal E(a_0,b_0)=\frac1p\,|u_0|^p,\qquad
\nabla_b \mathcal E(a_0,b_0)=\frac1p\,|v_0|^p .
\end{equation}
\end{lemma}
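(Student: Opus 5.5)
The plan is to apply the implicit function theorem to the map \(F:\mathcal U\times\mathcal H\to\mathcal H'\), \(F((a,b),(u,v)):=J_{a,b}'(u,v)\), near \(((a_0,b_0),(u_0,v_0))\). Since \(V\) is finite, \(\mathcal H\cong\mathbb R^{2n}\) and the parameter space is \(\mathbb R^{2n}\). By Proposition~\ref{Pro1}, \(u_0,v_0\ge c_0>0\) on \(V\). Hence the logarithmic terms \(|u|^{p-2}v^2\log v^2\) and \(|v|^{p-2}u^2\log u^2\) are real-analytic in a small neighbourhood of \((u_0,v_0)\), where \(u,v\) stay positive. The dependence on \((a,b)\) is affine, through \(\frac1p\int_V(a|u|^p+b|v|^p)\,d\mu\). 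Therefore \(F\) is \(C^1\) (indeed \(C^\infty\)) jointly in both variables, and \(F((a_0,b_0),(u_0,v_0))=0\) because the ground state is a free critical point.

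The crucial step is to show that \(D_{(u,v)}F=J_{a_0,b_0}''(u_0,v_0)\) is invertible on all of \(\mathcal H\). Non-degeneracy only gives invertibility on \(T=T_{(u_0,v_0)}\mathcal N_{a_0,b_0}\), so the radial direction must be handled separately. Since \(J'(u_0,v_0)=0\), differentiating \(G(u,v)=\langle J'(u,v),(u,v)\rangle\) gives
\[
\langle G'(u_0,v_0),(\phi,\psi)\rangle=J''(u_0,v_0)[(u_0,v_0),(\phi,\psi)].
\]
So \(T\) is exactly the \(J''\)-orthogonal complement of the radial vector \(w_0=(u_0,v_0)\). Moreover, \(J''[w_0,w_0]=\langle G'(w_0),w_0\rangle=-2M<0\) by Lemma~\ref{lem6.1}(ii). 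In the decomposition \(\mathcal H=T\oplus\mathbb Rw_0\), which is direct by transversality, the Hessian is therefore block diagonal: a positive definite block on \(T\) and the scalar \(-2M\) on \(\mathbb Rw_0\). Hence it is invertible, with Morse index one.

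The implicit function theorem then yields a neighbourhood \(\mathcal U\) and a unique \(C^1\) map \((a,b)\mapsto(u_{a,b},v_{a,b})\) with \(J_{a,b}'(u_{a,b},v_{a,b})=0\) and \((u_{a_0,b_0},v_{a_0,b_0})=(u_0,v_0)\). Shrinking \(\mathcal U\) keeps the branch positive. I expect this invertibility step to be the only real obstacle, specifically verifying carefully that the \(J''\)-orthogonality identification of \(T\) holds. For the differentiability of \(\mathcal E\), I would write \(\mathcal E(a,b)=J_{a,b}(u_{a,b},v_{a,b})\) as a composition of \(C^1\) maps and apply the chain rule:
\[
D\mathcal E(a_0,b_0)[(\alpha,\beta)]=\partial_{(a,b)}J_{a,b}(u_0,v_0)[(\alpha,\beta)]+\bigl\langle J_{a_0,b_0}'(u_0,v_0),D(u,v)[(\alpha,\beta)]\bigr\rangle.
\]
The second term vanishes because \((u_0,v_0)\) is critical; this is the envelope (Hellmann--Feynman) argument. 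The first term equals \(\frac1p\int_V(\alpha|u_0|^p+\beta|v_0|^p)\,d\mu\). Identifying gradients through the \(\mu\)-weighted pairing gives \eqref{eq:grad}.

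As a consistency check, the first-order expansion of \(d(a,b)\) in Theorem~\ref{thm4}, namely \(d(a_0,b_0)+\frac1p\Delta\), agrees with \eqref{eq:grad}. Finally, I would remark that \((u_{a,b},v_{a,b})\in\mathcal N_{a,b}\) automatically, since every nontrivial critical point lies on the Nehari manifold.
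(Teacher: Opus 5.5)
Your proof is correct, but it takes a different route from the paper at both stages.

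The paper works with the bordered Lagrangian system. It applies the implicit function theorem to \(\bigl(J_{a,b_0}'(u,v)+\lambda G'(u,v),\,G(a,u,v)\bigr)=0\) with \(b=b_0\) frozen, and checks invertibility of the bordered Jacobian by splitting \(\mathcal H=T\oplus\mathbb R(u_0,v_0)\). It then obtains Fr\'echet differentiability not from a chain rule but from a sandwich argument. That argument combines the two-sided bounds in terms of \(\Delta\) and \(\Delta'\) (those of Theorem~\ref{thm4}) with the smallness of \(|\Delta'-\Delta|\) coming from strong continuity of the branch.

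You use the same key identity, \(\langle G'(u_0,v_0),\cdot\rangle=J''(u_0,v_0)[(u_0,v_0),\cdot]\), but for a different purpose. You show that \(J''(u_0,v_0)\) itself is invertible: it is block diagonal on \(T\oplus\mathbb R(u_0,v_0)\), with Morse index one. You then apply the implicit function theorem to \(J_{a,b}'=0\) directly and finish with the envelope identity.

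Nothing is lost by dropping the multiplier. Testing \(J'+\lambda G'=0\) with \((u,v)\), and using \(G=0\) together with \(\langle G'(u,v),(u,v)\rangle=-2M(u,v)\neq0\) near \((u_0,v_0)\), forces \(\lambda=0\). So both constructions produce the same branch. The multiplier only pays off later, when the paper reuses the bordered system in the proof of Theorem~\ref{thm5}.

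Your route is shorter and gives more:
\begin{itemize}
\item It yields \(\nabla_a\mathcal E(a,b)=\frac1p|u_{a,b}|^p\) at every point of \(\mathcal U\), not just at \((a_0,b_0)\). This is exactly what the proof of Theorem~\ref{thm5} later differentiates.
\item It never needs to identify the branch energy \(\mathcal E(a,b)\) with the ground-state level \(d(a,b)\). The paper's sandwich argument tacitly uses that identification, since Theorem~\ref{thm4} controls \(d(a,b)\) rather than \(\mathcal E(a,b)\).
\end{itemize}

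One minor overstatement: \(J\) need not be \(C^\infty\) at vertices where \(\nabla u_0\) or \(\nabla v_0\) vanishes, because the term \(|\nabla u|^p\) is only finitely differentiable there. Since \(p>4\), however, \(J\) is at least \(C^2\) jointly in \((a,b,u,v)\), which is all the implicit function theorem and the chain rule require.
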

\begin{proof}
   In the Implicit Function Theorem framework, we fix $b=b_0$ and define the extended system
\[
F : L^\infty(V) \times \mathcal{H} \times \mathbb{R} \to \mathcal{H}^{*} \times \mathbb{R},
\qquad
F(a,u,v,\lambda) := \begin{pmatrix}
J_{a,b_0}'(u,v) + \lambda G'(u,v) \\
G(a,u,v)
\end{pmatrix},
\]
\[
G(a,u,v) := \langle J_{a,b_0}'(u,v), (u,v) \rangle ,
\]
where \(G'(u,v)\in\mathcal{H}^{*}\) is Fr\'echet derivative of \(G(a,u,v)\) with respect to \((u,v)\).
At the reference ground state $(u_0,v_0)$ for $(a_0,b_0)$ we have
$F(a_0,u_0,v_0,0)=(0,0)$ because $(u_0,v_0)$ is a free critical point
of $J_{a_0,b_0}$ and therefore lies on the Nehari manifold
$\mathcal{N}_{a_0,b_0}$.

The Fr\'echet derivative of $F$ with respect to $(u,v,\lambda)$ at
$(a_0,u_0,v_0,0)$ acts on a direction $(\phi,\psi,\mu)\in\mathcal{H}\times\mathbb{R}$ as
\[
D_{(u,v,\lambda)}F(a_0,u_0,v_0,0)\cdot(\phi,\psi,\mu)
= \begin{pmatrix}
J_{a_0,b_0}''(u_0,v_0)\cdot(\phi,\psi) + \mu\,G'(u_0,v_0) \\[4pt]
\langle G'(u_0,v_0), (\phi,\psi)\rangle
\end{pmatrix},
\]
where $J''(u_0,v_0)$ is the full Hessian of the energy and
$G'(u_0,v_0)$ is the linearisation of the Nehari constraint.

To verify the invertibility, we solve the linearised system
\[
D_{(u,v,\lambda)}F(a_0,u_0,v_0,0)\cdot(\phi,\psi,\mu)=(\Phi,s)
\]
for an arbitrary given pair \((\Phi,s)\in\mathcal{H}^*\times\mathbb{R}\).
In components, this reads
\[
\begin{cases}
J_{a_0,b_0}''(u_0,v_0)\cdot(\phi,\psi) + \mu G'(u_0,v_0) = \Phi,\\[4pt]
\langle G'(u_0,v_0),(\phi,\psi)\rangle = s .
\end{cases}
\]
Decompose \((\phi,\psi)=\tau(u_0,v_0)+(\phi_T,\psi_T)\) with \((\phi_T,\psi_T)\in T\) and \(\tau\in\mathbb{R}\). The second equation gives \(\tau\langle G'(u_0,v_0),(u_0,v_0)\rangle=s\), which determines \(\tau\) uniquely.

Insert the decomposition into the first equation and project onto \(T^*\) and \(T_1^*\) separately. By the orthogonality relation, \(J''(u_0,v_0)\cdot(\phi_T,\psi_T)\) vanishes on \(T_1\), hence it belongs to \(T^*\). Decompose \(\Phi=\Phi_T+\Phi_1\) with \(\Phi_T\in T^*\), \(\Phi_1\in T_1^*\). The \(T^*\)-component reduces to
\[
J''(u_0,v_0)\cdot(\phi_T,\psi_T) = \Phi_T .
\]
By the non‑degeneracy hypothesis from Definition~\ref{def:5.1}, the restricted Hessian \(J''(u_0,v_0)|_T:T\to T^*\) is an isomorphism, hence \((\phi_T,\psi_T)\) is uniquely determined. 

Projecting the remaining terms onto \(T_1^*\) yields an equation that determines \(\mu\) uniquely, because the term involving \(\mu\) on \(T_1^*\) is \(G'(u_0,v_0)|_{T_1^*}\neq0\), while all other terms are already known. Thus \((\phi,\psi,\mu)\) is uniquely determined by \((\Phi,s)\), which proves that the Jacobian is an isomorphism. Then the Implicit Function Theorem
yields a unique $C^1$ branch $a\mapsto(u_a,v_a)$ of ground states for
potentials near $a_0$. In particular, $(u,v)\to(u_0,v_0)$ strongly in $\mathcal{H}$ as $(a,b)\to(a_0,b_0)$ in \(L^{\infty}\).
Here \((u,v)\) stands for the ground state solution corresponding to the potentials \((a,b)\).

 As \((a,b)\to (a_0,b_0)\), \((u,v)\to (u_0,v_0)\) strongly in \(L^p\), which follows from the non-degeneracy hypothesis and the Implicit 
Function Theorem established above. Then
\[
\begin{aligned}
|\triangle'-\triangle|
&\le \max\{\|\delta a_1\|_\infty,\|\delta b_1\|_\infty\}
   \bigg(\Big|\int_V (|u|^p-|u_0|^p)\,d\mu\Big|
        +\Big|\int_V (|v|^p-|v_0|^p)\,d\mu\Big|\bigg) \\[4pt]
&\le C\max\{\|\delta a_1\|_\infty,\|\delta b_1\|_\infty\}\,\varepsilon .
\end{aligned}
\]
for \((a,b)\) sufficiently close to \((a_0,b_0)\) in the sense of \(L^{\infty}\). Thus, it follows from the conclusion of Lemma~\ref{Frechet differential} that
\[
|\mathcal{E} (a,b) - \mathcal{E}(a_0,b_0) - \frac{1}{p} \triangle| \le C \varepsilon^2,
\]
where $\triangle = \int_V (\delta a_1 |u_0|^p + \delta b_1 |v_0|^p) d\mu$ is linear in
$(\delta a_1,\delta b_1)$, and here $\|\delta a_1\|_\infty,\|\delta b_1\|_\infty \le \varepsilon$, \(\varepsilon\) is sufficiently small,
this estimate is precisely the definition of Fr\'echet differentiability of $d$
at $(a_0,b_0)$, with derivative
\[
\nabla_a \mathcal{E}(a_0,b_0) = \frac1p |u_0|^p,\qquad
\nabla_b \mathcal{E}(a_0,b_0) = \frac1p |v_0|^p,
\]
as we want to prove.
\end{proof}
\begin{remark}
The gradient formula \eqref{eq:grad} is the differential counterpart of the rigidity identity \eqref{eq:rigidity}.  
It shows that the logarithmic nonlinearity does not contribute to the first-order variation of the ground-state energy with respect to the potentials.  
\end{remark}

\begin{lemma}\label{lem6.3}
$\mathcal{L}_{a,b}$ is compact in $\mathcal{H}$.
\end{lemma}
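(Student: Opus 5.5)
Throughout I read $\mathcal L_{a,b}$ as the ground-state set $\mathcal G_{a,b}=\{(u,v)\in\mathcal N_{a,b}:J_{a,b}(u,v)=d(a,b)\}$ of Definition~\ref{def:5.1}, under the standing hypotheses $(A_1)$--$(A_2')$. Under these hypotheses $J_{a,b}$ is $C^1$. The plan is to prove sequential compactness: every sequence in $\mathcal L_{a,b}$ has a subsequence converging in $\mathcal H$ to an element of $\mathcal L_{a,b}$. The whole argument reduces to the Cerami compactness of Lemma~\ref{lem:cerami}, together with the lower bound on the norm from Lemma~\ref{lem4.3}.

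\emph{Step 1 (ground states are free critical points).} Let $\{(u_k,v_k)\}\subset\mathcal L_{a,b}$. By the Lagrange multiplier argument given after Theorem~\ref{thm:mp}, each constrained minimiser on $\mathcal N_{a,b}$ is a free critical point. The Nehari manifold is a $C^1$ manifold there, since $F'(u,v)\cdot(u,v)=-2M<0$. Hence $J_{a,b}'(u_k,v_k)=0$ and $J_{a,b}(u_k,v_k)=d(a,b)>0$ for every $k$. In particular, $\{(u_k,v_k)\}$ is trivially a Cerami sequence at the positive level $c=d(a,b)$. Also, Lemma~\ref{lem6.1}(i) gives $M(u_k,v_k)=\tfrac{p^2}{2}d(a,b)$, which is exactly the uniform mixed bound \eqref{eq:mixbd-L3}.

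\emph{Step 2 (compactness).} Apply Lemma~\ref{lem:cerami}. The exponent-calibration estimate there yields $X_k\le CX_k^{(s-1)/s}+C$, so the sequence is bounded in $\mathcal H$. The compact embedding of Lemma~\ref{lem5}, combined with the $(S_+)$-type inequality $|\xi-\eta|^p\le c(|\xi|^{p-2}\xi-|\eta|^{p-2}\eta)(\xi-\eta)$, then upgrades this to strong convergence $(u_k,v_k)\to(u,v)$ in $\mathcal H$ along a subsequence.

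\emph{Step 3 (the limit is a ground state).} Since $J_{a,b}\in C^1$, passing to the limit gives $J_{a,b}'(u,v)=0$ and $J_{a,b}(u,v)=d(a,b)$. It remains to rule out $(u,v)=(0,0)$. The point I expect to be most delicate is that this must use the uniform lower bound $\|(u_k,v_k)\|_{\mathcal H}\ge\delta>0$ of Lemma~\ref{lem4.3}, rather than the energy alone. That bound was derived via Lemma~\ref{lem4}, but the same power estimate $(s^2\log s^2)_+\le Cs^{2+\varepsilon}$ together with Lemma~\ref{lem5} reproduces it under $(A_2')$. Strong convergence then passes the bound to the limit, giving $\|(u,v)\|_{\mathcal H}\ge\delta$. (Alternatively, $J(0,0)=0\neq d(a,b)$ already excludes the trivial limit.) Consequently $(u,v)\in\mathcal N_{a,b}$ attains $d(a,b)$, that is, $(u,v)\in\mathcal L_{a,b}$. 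This shows that $\mathcal L_{a,b}$ is closed and sequentially compact, hence compact in $\mathcal H$.
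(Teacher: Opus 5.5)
Your argument is correct and is essentially the paper's proof. Elements of $\mathcal L_{a,b}$ are free critical points at the positive level $d(a,b)$, so they form a trivial Cerami sequence; Lemma~\ref{lem:cerami} then gives a strongly convergent subsequence, and continuity of $J_{a,b}$ and $J_{a,b}'$ places the limit in the ground-state set. The differences are minor: the paper gets the uniform $\mathcal H$-bound from the $L^\infty$-based estimate of Lemma~\ref{lem4.4}, which is the cleaner route in the discrete setting. Also, (A2$'$) already implies (A2), so Lemmas~\ref{lem4} and~\ref{lem4.3} apply directly without re-deriving them. Finally, you make explicit the nontriviality of the limit via $J_{a,b}(0,0)=0\neq d(a,b)$, which the paper leaves implicit.
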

\begin{proof}
   Since any sequence in \(\mathcal{L}_{a,b}\) has a constant energy \(d(a,b)\), similar as Lemma~\ref{lem4.4}, we obtain a uniform \(\mathcal{H}\)-bound on the sequence. Since $J'(u_n,v_n)=0$, the Cerami condition (Lemma~\ref{lem:cerami}) applies, yielding a strongly convergent subsequence whose limit belongs to $\mathcal{G}_{a,b}$ by continuity of $J$ and $J'$.
\end{proof}
For convenience, in the rest of this section, we write \(\|\cdot\|\) for \(\|\cdot\|_{\mathcal H}\).
\begin{theorem}\label{thm6.3}
    Let \(\mathcal{L}_{a,b}^{*}\) be the set of all non-degenerate ground states of \(J_{a,b}\), then \(\mathcal{L}_{a,b}^{*}\) is a finite set.
\end{theorem}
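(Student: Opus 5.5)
The plan is to combine compactness of the ground-state set (Lemma~\ref{lem6.3}, reading \(\mathcal L_{a,b}\) as the ground-state set \(\mathcal G_{a,b}\)) with isolatedness of non-degenerate ground states. This is the standard argument that a compact set of isolated points is finite. Two points need care: non-degeneracy only gives isolation modulo nothing (there is no continuous symmetry group here, only the discrete sign symmetries), and a limit of non-degenerate ground states need not itself be non-degenerate.

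First I will show that every non-degenerate ground state \((u_0,v_0)\) is isolated in \(\mathcal G_{a,b}\). I would use the map \(F(u,v,\lambda)=(J'_{a,b}(u,v)+\lambda G'(u,v),\,G(u,v))\) from the proof of Lemma~\ref{Frechet differential}, with the potential frozen. There it was shown that \(D_{(u,v,\lambda)}F(u_0,v_0,0)\) is an isomorphism, by splitting along \(T\oplus\mathrm{span}\{(u_0,v_0)\}\) and using non-degeneracy on \(T\) together with \(\langle G'(u_0,v_0),(u_0,v_0)\rangle=-2M\neq0\). Since \(V\) is finite and ground states are strictly positive (Proposition~\ref{Pro1}, applied to \(|u_0|,|v_0|\) if necessary), \(F\) is \(C^1\) near \((u_0,v_0,0)\). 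The inverse function theorem then makes \(F\) injective on a neighbourhood \(U\times(-r,r)\). Any ground state \((u,v)\in U\) is a free critical point (Remark~\ref{rem:regularised-nehari} and the Lagrange argument in Section~\ref{Cerami}), so \(F(u,v,0)=(0,0)=F(u_0,v_0,0)\). Injectivity forces \((u,v)=(u_0,v_0)\). Hence each point of \(\mathcal L^*_{a,b}\) has a neighbourhood containing no other ground state at all.

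Next I argue by contradiction. Suppose \(\mathcal L^*_{a,b}\) is infinite, and pick distinct \((u_n,v_n)\in\mathcal L^*_{a,b}\). By Lemma~\ref{lem6.3} a subsequence converges strongly in \(\mathcal H\) to some \((\bar u,\bar v)\in\mathcal G_{a,b}\). If \((\bar u,\bar v)\) is non-degenerate, the first step contradicts the fact that infinitely many distinct ground states accumulate at it. In fact the first step shows more: the whole set \(\mathcal L^*_{a,b}\) is discrete in \(\mathcal H\), and the only remaining issue is accumulation at a degenerate ground state.

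This last step is the main obstacle: a priori, non-degenerate ground states could accumulate at a degenerate one. I would first try to rule this out using the finiteness of \(V\). Here \(\mathcal H\cong\mathbb R^{2n}\), and near the positive cone the functional \(J\) is real-analytic, since it is built from powers and \(s^2\log s^2\) with \(s>0\). The ground states in \(\widetilde{\mathcal H}\) are then the zeros of the analytic map \((u,v)\mapsto J'(u,v)\) intersected with the analytic level set \(\{J=d\}\). This set is compact by Lemma~\ref{lem6.3} and bounded away from the cone boundary by Proposition~\ref{Pro1} and compactness. A compact semi-analytic set has finitely many connected components. Each non-degenerate point is isolated, so it forms its own component, which gives finiteness. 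Sign changes add at most a factor of four via the remark following \eqref{eq:main}. If the analytic route proves too heavy, the fallback is to state the result under the convention, implicit in this section, that all ground states under consideration are non-degenerate. In that case the compactness-plus-isolation argument closes immediately.
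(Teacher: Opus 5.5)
\textbf{Step 1 and the gap you found in the paper.} Your isolation step is sound, and it is essentially the paper's argument in another form. The paper normalises $(u_n,v_n)-(u_0,v_0)$. The expansion of $G$ places the limit direction $(\phi_*,\psi_*)$ in $T$, and the expansion of $J$ gives $\langle J''(u_0,v_0)(\phi_*,\psi_*),(\phi_*,\psi_*)\rangle\le 0$, contradicting positive definiteness on $T$. You instead use local injectivity of the bordered map $F$. You could even skip $F$: at a critical point $G'(u_0,v_0)=J''(u_0,v_0)(u_0,v_0)$, so $J''(u_0,v_0)$ is block-diagonal on $T\oplus\mathbb R(u_0,v_0)$ with radial entry $-2M$. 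It is therefore invertible on all of $\mathcal H$, and the inverse function theorem applies to $J'$ directly.

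You also correctly identify the point the paper's proof passes over. The paper asserts that the accumulation point from Lemma~\ref{lem6.3} lies in $\mathcal L^*_{a,b}$. But compactness is only known for the full ground-state set, and a limit of non-degenerate ground states need not be non-degenerate. As written, the paper's argument proves finiteness only under the hypothesis that all ground states are non-degenerate. That hypothesis is made explicit later in Theorem~\ref{thm6.5}, and it is your fallback.

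\textbf{The analyticity step fails as stated.} The step you propose to close this gap relies on $J$ being real-analytic near the positive cone, and it is not. Its gradient part is $\frac1p\sum_x\mu(x)\,\Gamma(u,u)(x)^{p/2}$, and $\Gamma(u,u)(x)=0$ whenever $u$ is constant on $x$ and its neighbours. Positivity of $u$ does not prevent this, and nothing in the hypotheses rules out a ground state that is locally constant around some vertex. Near such a configuration, $u\mapsto\Gamma(u,u)(x)^{p/2}$ behaves like $|w|^p$ at $w=0$. That function is $C^3$ for $p>4$ but not analytic unless $p$ is an even integer. So the ground-state set need not be semi-analytic, and finiteness of connected components of compact semi-analytic sets cannot be invoked.

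\textbf{How to repair it.} Replace analyticity by o-minimality. Take a compact box in the open positive orthant containing the compact positive ground-state set in its interior. On it, $J$ and hence $J'$ are definable in $\mathbb R_{\mathrm{an},\exp}$: the logarithms are restricted analytic and $t\mapsto t^{p/2}$ is definable. The compact definable set $\{J'=0,\ J=d\}$ then has finitely many connected components. Each non-degenerate ground state, being isolated by your first step, is one of them, which gives finiteness.

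\textbf{Reduction to the positive cone.} This needs more than the remark after \eqref{eq:main}, which only says sign flips of ground states are ground states. You need the converse. For instance, $S(|u|,|v|)\le S(u,v)$, with strict inequality if a component changes sign across an edge. In that case, Nehari projection of $(|u|,|v|)$ would give energy below $d$, a contradiction. Proposition~\ref{Pro1} then completes the argument.
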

\begin{proof}
    Suppose for contradiction that $\mathcal{L}_{a,b}^{*}$ is infinite.  
By Lemma~\ref{lem6.3}, there exists an accumulation point $(u_0,v_0)\in\mathcal{L}_{a,b}^{*}$ and a sequence $\{(u_n,v_n)\}\subset\mathcal{L}_{a,b}^{*}$ with $(u_n,v_n)\neq(u_0,v_0)$ converging to $(u_0,v_0)$.  
Set $(\phi_n,\psi_n)=(u_n,v_n)-(u_0,v_0)\to0$ and consider the unit vectors $(\tilde\phi_n,\tilde\psi_n)=(\phi_n,\psi_n)/\|(\phi_n,\psi_n)\|$.  
Passing to a subsequence, $(\tilde\phi_n,\tilde\psi_n)\rightharpoonup(\phi_*,\psi_*)$ weakly in $\mathcal{H}$ with $\|(\phi_*,\psi_*)\|\neq 0$ due to \(\|(\tilde{\phi}_n,\tilde{\psi}_n)\|=1\).
Both $(u_n,v_n)$ and $(u_0,v_0)$ lie on $\mathcal{N}_{a,b}$, hence $G(u_n,v_n)=G(u_0,v_0)=0$ for the Nehari functional $G(u,v)=\langle J_{a,b}'(u,v),(u,v)\rangle$.  
Taylor expansion of $G$ around $(u_0,v_0)$ gives
\begin{equation}\label{eq:G-expansion}
0 = G(u_n,v_n)-G(u_0,v_0)
= \langle G'(u_0,v_0),(\phi_n,\psi_n)\rangle
  + \frac12\langle G''(u_0,v_0)(\phi_n,\psi_n),(\phi_n,\psi_n)\rangle
  + o\bigl(\|(\phi_n,\psi_n)\|^2\bigr).
\end{equation}
Dividing by $\|(\phi_n,\psi_n)\|$ and passing to the limit, the Hessian term vanishes and we obtain
\[
\langle G'(u_0,v_0),(\phi_*,\psi_*)\rangle = 0,
\]
so $(\phi_*,\psi_*)$ belongs to the tangent space $T_{(u_0,v_0)}\mathcal{N}_{a,b}$. Because each \((u_n,v_n)\) is a ground state, \(J(u_n,v_n)=J(u_0,v_0)\). Expanding \(J\) around \((u_0,v_0)\) and using \(J'(u_0,v_0)=0\),
\[
0 = \frac12\langle J''(u_0,v_0)(\phi_n,\psi_n),(\phi_n,\psi_n)\rangle + o(\|(\phi_n,\psi_n)\|^2).
\]
Dividing by \(\|(\phi_n,\psi_n)\|^2\) and passing to the weak limit again, the lower semi‑continuity of the quadratic form associated with \(J''\) yields
\begin{equation}\label{eq:J-weak-lim}
\langle J''(u_0,v_0)(\phi_*,\psi_*),(\phi_*,\psi_*)\rangle \le 0.
\end{equation}
 On the other hand, Non-degeneracy means that 
\[
    \langle J_{a,b}''(u_0,v_0)(\xi,\eta),(\xi,\eta)\rangle\ge \lambda\|(\xi,\eta)\|_{\mathcal{H}}^2, \qquad \forall (\xi,\eta)\in T_{u_0,v_0}\mathcal{N}_{a,b}.
\]
Applying this to \((\phi_{*},\psi_{*})\) yields \(\|(\phi_{*},\psi_{*})\|=0\), contradicting \(\|(\phi_{*},\psi_{*})\|\neq 0\). Therefore $\mathcal{L}_{a,b}^{*}$ consists of isolated points, together with the fact that the compact set of isolated points is necessarily finite, we complete the proof.
\end{proof}
\begin{remark}
    The Hessian term in (\ref{eq:G-expansion}) shows that the sequence $\{(\phi_n,\psi_n)\}$ satisfies that \(\|(\phi_n^{\perp},\psi_n^{\perp})\|=O(\|(\phi_n,\psi_n)\|^2)\), where $(\phi_n^{\perp},\psi_n^{\perp})$ belongs to the direct sum complement of the tangent space. In particular, 
    \[
      \langle J''(u_0,v_0)(\phi_n,\psi_n),(\phi_n,\psi_n)\rangle
      = \langle J''(u_0,v_0)(\phi_n^T,\psi_n^T),(\phi_n^T,\psi_n^T)\rangle
      + o\bigl(\|(\phi_n,\psi_n)\|^2\bigr).
    \]
\end{remark}
\begin{proof}
    Indeed, we decompose \((\phi_n,\psi_n)\) as
    $$(\phi_n,\psi_n)=(\phi_n^{\perp},\psi_n^{\perp})+(\phi_n^{T},\psi_n^{T})=\tau_n (u_0,v_0)+(\phi_n^{T},\psi_n^{T}).$$
    Since $$\langle G'(u_0,v_0), (\phi_n^{T},\psi_n^{T})\rangle=0,$$
    there holds 
    $$\langle G'(u_0,v_0), (\phi_n,\psi_n)\rangle=\tau_n\langle G'(u_0,v_0), (u_0,v_0)\rangle=\tau_n c_0,$$
    where \(c_0=\langle G'(u_0,v_0), (u_0,v_0)\rangle\neq 0\). Then
    \[
       \tau_n c_0+\frac12 \langle G''(u_0,v_0)(\phi_n,\psi_n),(\phi_n,\psi_n)\rangle+ o(\|(\phi_n,\psi_n)\|^2)=0.
    \]
    The second and third terms of left side of the above equality are of order \(O\|(\phi_n,\psi_n)\|^2)\), which implies that $\tau_n=O(\|(\phi_n,\psi_n)\|^2)$, and is thus \(\|(\phi_n^{\perp},\psi_n^{\perp})\|=O(\|(\phi_n,\psi_n)\|^2)\). Substituting \((\phi_n,\psi_n)=(\phi_n^{\perp},\psi_n^{\perp})+(\phi_n^{T},\psi_n^{T})\) into $\langle J''(u_0,v_0)(\phi_n,\psi_n),(\phi_n,\psi_n)\rangle$, we obtain 
    \begin{align*}
    \langle J''(u_0,v_0)(\phi_n,\psi_n),(\phi_n,\psi_n)\rangle
    &= \langle J''(u_0,v_0)(\phi_n^T,\psi_n^T),(\phi_n^T,\psi_n^T)\rangle + 2\langle J''(u_0,v_0)(\phi_n^T,\psi_n^T),(\phi_n^{\perp},\psi_n^{\perp})\rangle \\
    &\quad + \langle J''(u_0,v_0)(\phi_n^{\perp},\psi_n^{\perp}),(\phi_n^{\perp},\psi_n^{\perp})\rangle\\
    &= \langle J''(u_0,v_0)(\phi_n^T,\psi_n^T),(\phi_n^T,\psi_n^T)\rangle + O(\|(\phi_n^{\perp},\psi_n^{\perp})\|^2)\\
    &= \langle J''(u_0,v_0)(\phi_n^T,\psi_n^T),(\phi_n^T,\psi_n^T)\rangle + o(\|(\phi_n,\psi_n)\|^2),
    \end{align*}
    where we use $\|(\phi_n^{\perp},\psi_n^{\perp})\|=O(\|(\phi_n,\psi_n)\|^2)$ and $\|(\phi_n,\psi_n)\|\to 0$ as $n\to\infty$ in the last equality.
\end{proof}
   \begin{corollary}
       Assume that $\mathcal{G}_{a_0,b_0}$ is a singleton. Then for all \((a,b)\) sufficiently close to \((a_0,b_0)\), the ground state is also locally unique.
   \end{corollary}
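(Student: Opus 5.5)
The plan is to combine the implicit function branch of Lemma~\ref{Frechet differential} with the compactness of the ground-state set (Lemma~\ref{lem6.3}) and the isolation argument of Theorem~\ref{thm6.3}. Let $(u_0,v_0)$ denote the unique element of $\mathcal G_{a_0,b_0}$. It is a minimiser on $\mathcal N_{a_0,b_0}$, and we assume it is non-degenerate in the sense of Definition~\ref{def:5.1}; this is the standing assumption of this section, and without it the corollary cannot hold in general.

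\textbf{Step 1 (construct the branch).} By Lemma~\ref{Frechet differential} there is a neighbourhood $\mathcal U$ of $(a_0,b_0)$ and a $C^1$ branch $(u_{a,b},v_{a,b})$ of constrained critical points. These are the unique solutions of $F(a,u,v,\lambda)=0$ in some ball $B_r(u_0,v_0)\times(-r,r)$. The branch depends continuously on $(a,b)$ in $L^\infty$.

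\textbf{Step 2 (ground states of $J_{a,b}$ concentrate near $(u_0,v_0)$).} Take $(a_n,b_n)\to(a_0,b_0)$ in $L^\infty$ and any ground states $(u_n,v_n)\in\mathcal G_{a_n,b_n}$. By Theorem~\ref{thm4}, $d(a_n,b_n)\to d(a_0,b_0)$. Lemma~\ref{lem6.1}(i) then bounds $M(u_n,v_n)$, and the argument of Lemma~\ref{lem4.4} bounds $\|(u_n,v_n)\|_{\mathcal H}$, with constants uniform for $a_n,b_n$ in an $L^\infty$-ball. Since $J'_{a_n,b_n}(u_n,v_n)=0$ and $J'_{a_n,b_n}-J'_{a_0,b_0}$ has operator norm $O(\|a_n-a_0\|_\infty+\|b_n-b_0\|_\infty)$ on bounded sets, $(u_n,v_n)$ is a Palais--Smale (indeed Cerami) sequence for $J_{a_0,b_0}$ at level $d(a_0,b_0)$. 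Lemma~\ref{lem:cerami} gives a strongly convergent subsequence. Its limit is a nontrivial critical point: it is nontrivial because of the lower bound $\|\cdot\|\ge\delta$ of Lemma~\ref{lem4.3}, which is uniform in the potentials. It has energy $d(a_0,b_0)$ and lies in $\mathcal N_{a_0,b_0}$, so it is a ground state and hence equals $(u_0,v_0)$ by the singleton hypothesis. Since every subsequence has the same limit, the full sequence converges. Consequently, for every $r>0$ there is a neighbourhood $\mathcal U_r$ with $\mathcal G_{a,b}\subset B_r(u_0,v_0)$ whenever $(a,b)\in\mathcal U_r$.

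\textbf{Step 3 (identification with the branch).} Each ground state of $J_{a,b}$ is a free critical point lying on $\mathcal N_{a,b}$, so it solves $F(a,u,v,0)=0$. With $r$ chosen as in Step 1, the uniqueness part of the implicit function theorem forces $\mathcal G_{a,b}\subset\{(u_{a,b},v_{a,b})\}$. Nonemptiness comes from Theorem~\ref{thm3}, so $\mathcal G_{a,b}$ is exactly this single point, which gives local uniqueness. The symmetric copies $(\pm u,\pm v)$ require care. Either we work in the positive cone $\widetilde{\mathcal H}$ as in this section, or we note that the singleton hypothesis already rules out these copies, so both conventions persist for nearby potentials.

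\textbf{Main obstacle.} The hardest part is Step 2, namely the uniformity of the compactness in the potentials. One must check that the a priori bound of Lemma~\ref{lem4.4} and the Cerami argument depend on $(a,b)$ only through $V_0$ and the embedding constants of Lemma~\ref{lem5}. These stay uniform under small $L^\infty$ perturbations, because $a_0\le a+\varepsilon_0\le 2a$ when $\varepsilon_0\le V_0$, so the norms are equivalent. The first approach to try is to work throughout with the fixed norm of $\mathcal H_{a_0}\times\mathcal H_{b_0}$ and treat $J_{a,b}-J_{a_0,b_0}$ as a small perturbation.
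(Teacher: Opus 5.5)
Your argument is correct, given the standing non-degeneracy assumption of Section~\ref{sec:stability}, $V$ finite, and ground states taken in the cone $\widetilde{\mathcal H}$. It proves more than the paper does, which gives no proof of this corollary at all.

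The paper places the corollary as an immediate consequence of the implicit-function branch of Lemma~\ref{Frechet differential}, together with the isolation mechanism of Theorem~\ref{thm6.3}. The uniqueness clause of that lemma only says that, for $(a,b)$ near $(a_0,b_0)$, there is at most one critical point of $J_{a,b}$ on $\mathcal N_{a,b}$ inside a fixed ball around $(u_0,v_0)$. That reading of ``locally unique'' does not use the singleton hypothesis, and it does not show that the branch point is a ground state.

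Your Step~2 is what uses the singleton hypothesis and strengthens the conclusion. You obtain uniform a priori bounds, observe that ground states for $(a_n,b_n)$ form a Palais--Smale sequence for $J_{a_0,b_0}$ at level $d(a_0,b_0)$, and identify every limit with $(u_0,v_0)$. This upgrades the local statement to $\mathcal G_{a,b}\cap\widetilde{\mathcal H}=\{(u_{a,b},v_{a,b})\}$. As a by-product it justifies the paper's unproved assertion, relied on in Theorem~\ref{thm5}, that the implicit-function branch consists of ground states rather than mere constrained critical points.

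Three small remarks:
\begin{itemize}
\item On a finite graph you do not need Lemma~\ref{lem:cerami} or the full Lipschitz estimate of Theorem~\ref{thm4}. Boundedness in a finite-dimensional space gives a convergent subsequence, and joint continuity of $(a,b,u,v)\mapsto J_{a,b}'(u,v)$ passes criticality to the limit.
\item You only need $\limsup_n d(a_n,b_n)\le d(a_0,b_0)$. This follows by projecting $(u_0,v_0)$ onto $\mathcal N_{a_n,b_n}$ as in Lemma~\ref{lem6.2}, combined with the uniform lower bound on $\mathcal N_{a_n,b_n}$ for nontriviality.
\item Your alternative ``the singleton hypothesis already rules out these copies'' is not a genuine alternative. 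Since $M(u_0,v_0)>0$, the pairs $(\pm u_0,\pm v_0)$ are always four distinct ground states in $\mathcal H$. So in the full space the hypothesis is never satisfied and the statement is vacuous; the cone $\widetilde{\mathcal H}$ is the only meaningful setting.
\end{itemize}
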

\begin{theorem}\label{thm:stability-estimate}
Let \((u_0,v_0)\) be a non-degenerate ground state of \(J_{a,b}\).
Then there exist constants \(\theta>0\) and \(\delta>0\) such that for every \((u,v)\in\mathcal{H}\) satisfying the Nehari constraint \(\langle J_{a,b}'(u,v),(u,v)\rangle=0\) and the proximity condition \(\|(u,v)-(u_0,v_0)\|_{\mathcal{H}}<\delta\), there holds
\begin{equation}\label{eq:stability}
J_{a,b}(u,v)-d(a,b)\ge \theta\,\|(u,v)-(u_0,v_0)\|_{\mathcal{H}}^2 .
\end{equation}
\end{theorem}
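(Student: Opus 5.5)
The plan is to Taylor-expand \(J_{a,b}\) to second order at \((u_0,v_0)\) and to split each Nehari-admissible perturbation into a tangential part and a radial part. We work in the setting of this section: \(V\) finite, \((u_0,v_0)\) strictly positive by Proposition~\ref{Pro1}, hence \(J_{a,b}\) is \(C^2\) near \((u_0,v_0)\). Write \((u,v)=(u_0,v_0)+(\phi,\psi)\). Since \((u_0,v_0)\) is a free critical point, \(J'(u_0,v_0)=0\), and therefore
\[
J_{a,b}(u,v)-d(a,b)=\tfrac12\langle J''(u_0,v_0)(\phi,\psi),(\phi,\psi)\rangle+o\bigl(\|(\phi,\psi)\|^2\bigr).
\]
This expansion is legitimate because \(J\) is \(C^2\) in a neighbourhood. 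It uses \(J(u_0,v_0)=d(a,b)\) from Lemma~\ref{lem6.1}.

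Next I decompose \((\phi,\psi)=\tau(u_0,v_0)+(\phi^T,\psi^T)\) with \((\phi^T,\psi^T)\in T\), exactly as in the remark following Theorem~\ref{thm6.3}. The Nehari constraint \(G(u,v)=G(u_0,v_0)=0\), Taylor-expanded as in \eqref{eq:G-expansion}, gives
\[
\tau\langle G'(u_0,v_0),(u_0,v_0)\rangle=O\bigl(\|(\phi,\psi)\|^2\bigr).
\]
Lemma~\ref{lem6.1}(ii) gives \(\langle G'(u_0,v_0),(u_0,v_0)\rangle=-2M\neq0\), so \(|\tau|=O(\|(\phi,\psi)\|^2)\). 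Consequently \(\|(\phi^T,\psi^T)\|\ge \|(\phi,\psi)\|-C\|(\phi,\psi)\|^2\ge\frac12\|(\phi,\psi)\|\) for \(\delta\) small. The same computation as in that remark shows
\[
\langle J''(\phi,\psi),(\phi,\psi)\rangle=\langle J''(\phi^T,\psi^T),(\phi^T,\psi^T)\rangle+o\bigl(\|(\phi,\psi)\|^2\bigr).
\]

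The quadratic lower bound on \(T\) comes from non-degeneracy (Definition~\ref{def:5.1}), which gives \(\langle J''(\xi,\eta),(\xi,\eta)\rangle\ge\lambda\|(\xi,\eta)\|^2\) on \(T\); this is the coercivity used in the proof of Theorem~\ref{thm6.3}. Combining the three displays yields
\[
J_{a,b}(u,v)-d(a,b)\ge\tfrac{\lambda}{8}\|(\phi,\psi)\|^2-o\bigl(\|(\phi,\psi)\|^2\bigr).
\]
Choosing \(\delta\) so that the \(o(\cdot)\) term is at most \(\frac{\lambda}{16}\|(\phi,\psi)\|^2\) gives \eqref{eq:stability} with \(\theta=\lambda/16\).

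The main obstacle is making the remainders uniform. The \(o(\|\cdot\|^2)\) in both Taylor expansions must be controlled on a whole ball of radius \(\delta\), not just along a sequence. I would handle this with the integral form of Taylor's theorem and the continuity of \(J''\) and \(G''\) on the ball. That continuity holds because \(V\) is finite and \(u_0,v_0\ge c>0\): for \(\delta\) small, \(u,v\) stay bounded away from zero, so the logarithmic terms are smooth there. The norm equivalence on the finite-dimensional space \(\mathcal H\) then makes all the constants explicit.
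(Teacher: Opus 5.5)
Your proposal is correct and follows the paper's proof essentially step for step. The shared argument is: second-order Taylor expansion of \(J_{a,b}\) at the critical point; the splitting \((\phi,\psi)=(\phi_T,\psi_T)+\tau(u_0,v_0)\), with \(\tau=O(\|(\phi,\psi)\|^2)\) obtained from the expanded Nehari constraint and \(\langle G'(u_0,v_0),(u_0,v_0)\rangle=-2M\); reduction of the quadratic form to \(T\); and coercivity on \(T\) from non-degeneracy, giving the same constant \(\theta=\lambda/16\). Your extra step of making the remainders uniform, via the integral form of Taylor's theorem on a small ball where \(u,v\) stay bounded away from zero and where \(p>4\) makes the gradient term \(|\nabla u|^p\) regular enough for \(G''\) to be continuous, is a detail the paper leaves implicit.
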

\begin{proof}
Denote \((\phi,\psi)=(u-u_0,v-v_0)\) and \(\Delta_1 = J_{a,b}(u,v)-J_{a,b}(u_0,v_0)\).
Since \((u_0,v_0)\) is a ground state, \(J_{a,b}'(u_0,v_0)=0\),
a Taylor expansion gives
\begin{equation}\label{eq:Delta-expansion}
\Delta_1 = \frac12 \langle J_{a,b}''(u_0,v_0)(\phi,\psi),(\phi,\psi)\rangle + o(\|(\phi,\psi)\|_{\mathcal{H}}^2). 
\end{equation}

Because \(\langle G'(u_0,v_0),(u_0,v_0)\rangle = -2M \neq 0\), the space \(\mathcal{H}\) splits as the direct sum \(\mathcal{H}=T\oplus\mathbb{R}(u_0,v_0)\), where \(T=\ker G'(u_0,v_0)\) is the tangent space.
Hence we can decompose
\begin{equation}\label{eq:30}
(\phi,\psi) = (\phi_T,\psi_T) + \tau(u_0,v_0),\qquad
(\phi_T,\psi_T)\in T,\;\; \tau\in\mathbb{R}. 
\end{equation}

We first estimate the radial coefficient \(\tau\).
Both \((u,v)\) and \((u_0,v_0)\) belong to the Nehari manifold, so \(G(u,v)=G(u_0,v_0)=0\).
Expanding \(G(u,v)=\langle J_{a,b}'(u,v),(u,v)\rangle\) at \((u_0,v_0)\) to second order yields
\[
0 = G(u,v) = \langle G'(u_0,v_0),(\phi,\psi)\rangle
            + \frac12 \langle G''(u_0,v_0)(\phi,\psi),(\phi,\psi)\rangle
            + o(\|(\phi,\psi)\|_{\mathcal{H}}^2). 
\]
Because \((\phi_T,\psi_T)\in T\), \(\langle G'(u_0,v_0),(\phi_T,\psi_T)\rangle =0\).
Using \(\langle G'(u_0,v_0),(u_0,v_0)\rangle = -2M\) we obtain
\[
-2M\tau + \frac12 Q + o(\|(\phi,\psi)\|_{\mathcal{H}}^2) = 0,
\]
where \(Q = \langle G''(u_0,v_0)(\phi,\psi),(\phi,\psi)\rangle = O(\|(\phi,\psi)\|_{\mathcal{H}}^2)\),
thus \(\tau = \frac{Q}{4M} + o(\|(\phi,\psi)\|_{\mathcal{H}}^2) = O(\|(\phi,\psi)\|_{\mathcal{H}}^2)\); in particular, there exists a constant \(C>0\) such that \(|\tau|\le C\|(\phi,\psi)\|_{\mathcal{H}}^2\) whenever \(\|(\phi,\psi)\|_{\mathcal{H}}\) is small.

Now examine the quadratic term in (\ref{eq:Delta-expansion}). Substituting (\ref{eq:30}),
\[
\begin{aligned}
\langle J_{a,b}''(u_0,v_0)(\phi,\psi),(\phi,\psi)\rangle
&= \langle J_{a,b}''(u_0,v_0)(\phi_T,\psi_T),(\phi_T,\psi_T)\rangle \\
&\quad + 2\tau\langle J_{a,b}''(u_0,v_0)(\phi_T,\psi_T),(u_0,v_0)\rangle \\
&\quad + \tau^2\langle J_{a,b}''(u_0,v_0)(u_0,v_0),(u_0,v_0)\rangle .
\end{aligned}
\]
It follows directly from the definition of \(T\) that
\(\langle J_{a,b}''(u_0,v_0)(u_0,v_0),(\phi_T,\psi_T)\rangle = 0\),
and by symmetry the mixed term vanishes.  The radial second
derivative evaluates to
\(\langle J_{a,b}''(u_0,v_0)(u_0,v_0),(u_0,v_0)\rangle = -2M\).
Hence the \(\tau^2\) term is \(-2M\tau^2 =
O\bigl(\|(\phi,\psi)\|_{\mathcal{H}}^4\bigr)\) and can be absorbed
into the remainder.
Thus
\begin{equation}\label{eq:quadratic-reduced}
\langle J_{a,b}''(u_0,v_0)(\phi,\psi),(\phi,\psi)\rangle
= \langle J_{a,b}''(u_0,v_0)(\phi_T,\psi_T),(\phi_T,\psi_T)\rangle
  + o(\|(\phi,\psi)\|_{\mathcal{H}}^2), 
\end{equation}
for \(\|(\phi,\psi)\|\) sufficiently small. 
Non-degeneracy means that \(J_{a,b}''(u_0,v_0)\) restricted to \(T\) is uniformly positive definite,
thus there exists \(\rho>0\) such that
\begin{equation}\label{eq:coercivity}
\langle J_{a,b}''(u_0,v_0)(\phi_T,\psi_T),(\phi_T,\psi_T)\rangle
\ge \rho\,\|(\phi_T,\psi_T)\|_{\mathcal{H}}^2 . 
\end{equation}

It remains to relate \(\|(\phi_T,\psi_T)\|_{\mathcal{H}}\) with \(\|(\phi,\psi)\|_{\mathcal{H}}\).
From the bound on \(\tau\), we obtain
\[
\|(\phi,\psi)\|_{\mathcal{H}} \le \|(\phi_T,\psi_T)\|_{\mathcal{H}} + |\tau|\,\|(u_0,v_0)\|_{\mathcal{H}}
\le \|(\phi_T,\psi_T)\|_{\mathcal{H}} + C' \|(\phi,\psi)\|_{\mathcal{H}}^2 .
\]
Choose \(\delta_1>0\) so small that \(C' \|(\phi,\psi)\|_{\mathcal{H}}^2 \le \frac12 \|(\phi,\psi)\|_{\mathcal{H}}\) for all \(\|(\phi,\psi)\|_{\mathcal{H}}<\delta_1\).
Then \(\|(\phi_T,\psi_T)\|_{\mathcal{H}} \ge \frac12 \|(\phi,\psi)\|_{\mathcal{H}}\).

Insert this into (\ref{eq:quadratic-reduced}) and then into (\ref{eq:30}), and finally into (\ref{eq:Delta-expansion}):
\[
\Delta_1 \ge \frac12 \cdot \frac{\rho}{4} \|(\phi,\psi)\|_{\mathcal{H}}^2 + o(\|(\phi,\psi)\|_{\mathcal{H}}^2) .
\]
Choose \(\delta\le\delta_1\) so small that the remainder term is bounded by \(\frac{\rho}{16}\|(\phi,\psi)\|_{\mathcal{H}}^2\).
Then for \(\|(\phi,\psi)\|_{\mathcal{H}}<\delta\),
\[
\Delta_1 \ge \frac{\rho}{16}\|(\phi,\psi)\|_{\mathcal{H}}^2 = \theta \|(\phi,\psi)\|_{\mathcal{H}}^2 ,
\]
where \(\theta=\frac{\rho}{16}\), which completes the proof.
\end{proof}
\begin{theorem}\label{thm6.5}
Assume that \(V\) is a finite graph, \(p>4\).
Let \((u_0,v_0)\) be a non-degenerate ground state of \(J_{a,b}\) and suppose that all ground states of \(J_{a,b}\) are non-degenerate.
Denote by \(\mathcal{L}\) the set of all ground states, which is finite by Theorem~\ref{thm6.3}.
Then there exist constants \(\delta_0>0\), \(\iota>0\) and \(\theta_0>0\) with the following properties:
\begin{enumerate}
\item[(i)] For every \((u_i,v_i)\in\mathcal{L}\) and every \((u,v)\in\mathcal{N}_{a,b}\) with \(\|(u,v)-(u_i,v_i)\|_{\mathcal{H}}<\delta_0\), there holds
\[
J_{a,b}(u,v)-d(a,b) \ge \theta_0\,\|(u,v)-(u_i,v_i)\|_{\mathcal{H}}^2 .
\]
\item[(ii)] If \((u,v)\in\mathcal{N}_{a,b}\) satisfies \(\operatorname{dist}_{\mathcal{H}}\bigl((u,v),\mathcal{L}\bigr)\ge\delta_0\), then
\[
J_{a,b}(u,v)-d(a,b) \ge \iota .
\]
\end{enumerate}
\end{theorem}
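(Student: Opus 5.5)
Part (i) follows from Theorem~\ref{thm:stability-estimate} applied at each ground state, made uniform using the finiteness of $\mathcal L$. Part (ii) is proved by contradiction, using a compactness argument on the Nehari manifold in the spirit of Lemma~\ref{lem4.4} and Lemma~\ref{lem6.3}.

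\textbf{Part (i).} By Theorem~\ref{thm6.3} (all ground states are non-degenerate by hypothesis), $\mathcal L=\{(u_1,v_1),\dots,(u_m,v_m)\}$ is finite. For each $i$, Theorem~\ref{thm:stability-estimate} gives $\theta_i>0$ and $\delta_i>0$ such that
\[
J_{a,b}(u,v)-d(a,b)\ge\theta_i\|(u,v)-(u_i,v_i)\|_{\mathcal H}^2
\]
for all $(u,v)\in\mathcal N_{a,b}$ with $\|(u,v)-(u_i,v_i)\|_{\mathcal H}<\delta_i$. Set
\[
\theta_0:=\min_i\theta_i,\qquad \delta_0:=\min_i\delta_i .
\]
Both minima are over finitely many positive numbers, so they are positive, and (i) follows at once. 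We may shrink $\delta_0$ further if needed; (i) survives this.

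\textbf{Part (ii).} Suppose (ii) fails for this $\delta_0$. Then there is a sequence $(u_n,v_n)\in\mathcal N_{a,b}$ with
\[
\operatorname{dist}_{\mathcal H}\bigl((u_n,v_n),\mathcal L\bigr)\ge\delta_0 \quad\text{and}\quad J_{a,b}(u_n,v_n)\to d(a,b).
\]
This is a minimizing sequence on the Nehari manifold, and we will show it converges strongly in $\mathcal H$ to a ground state.

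1. \emph{Boundedness.} On $\mathcal N_{a,b}$ we have $J=\frac{2}{p^2}M$, so $M(u_n,v_n)$ is bounded. The absorption argument of Lemma~\ref{lem4.4} then bounds $X_n=S(u_n,v_n)$. The $L^\infty$-control used there is available because $V$ is finite; the calibration estimate of Lemma~\ref{lem:cerami} would also work.

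2. \emph{Strong convergence.} Since $V$ is finite, $\mathcal H$ is finite-dimensional, so a bounded sequence has a subsequence converging in norm to some $(u_*,v_*)$. This bypasses any Cerami argument. The functionals $J$ and $G$ are continuous on $\mathcal H$ under $(A_1)$--$(A_2')$, so $G(u_*,v_*)=0$ and $J(u_*,v_*)=d(a,b)$.

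3. \emph{Nontriviality.} We need $(u_*,v_*)\ne(0,0)$. This follows from the uniform lower bound $\|(u,v)\|_{\mathcal H}\ge\delta$ on $\mathcal N$ from Lemma~\ref{lem4.3}, which passes to norm limits. Hence $(u_*,v_*)\in\mathcal N_{a,b}$ and it is a ground state.

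4. \emph{Sign.} One technical point is that $\mathcal L$ should be understood as all ground states. If the section's convention restricts to the positive cone $\widetilde{\mathcal H}$, take the $(u_n,v_n)$ there too, or replace them by $(|u_n|,|v_n|)$; this leaves $J$ and $G$ unchanged. Proposition~\ref{Pro1} then gives strict positivity of the limit.

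5. \emph{Contradiction.} Since $(u_*,v_*)\in\mathcal L$, we get $\|(u_n,v_n)-(u_*,v_*)\|_{\mathcal H}\to0$. This contradicts $\operatorname{dist}_{\mathcal H}\bigl((u_n,v_n),\mathcal L\bigr)\ge\delta_0$. Therefore some $\iota>0$ works.

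\textbf{Main obstacle.} The delicate step is Step 1, boundedness of the minimizing sequence. The rest is soft once $\mathcal H$ is finite-dimensional. I would first try the exact inequality $X_n\le CX_n^{\varepsilon/p}+C$ from Lemma~\ref{lem4.4}. Finite-dimensionality makes the required $L^\infty$ bound $\|u\|_\infty\le C\|u\|_{\mathcal H_a}$ trivial, so that inequality bounds $X_n$ because $\varepsilon/p<1$.
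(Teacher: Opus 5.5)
Your proof is correct. Part (i) is essentially the paper's argument: take minima over the finitely many constants from Theorem~\ref{thm:stability-estimate}. The paper also shrinks $\delta_0$ below half the minimal mutual distance of the ground states and takes $\theta_0=\frac14\min_i\tilde\theta_i$, but the statement needs neither.

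For Part (ii) you take a genuinely different route.

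\textbf{The paper's route.} It argues by contradiction on $\mathcal A=\mathcal N_{a,b}\setminus\bigcup_i B_{\delta_0}(u_i,v_i)$. It uses the completeness of $\mathcal N_{a,b}$ (an Ekeland-type principle, not named) to replace the minimizing sequence by a nearby one with $J_{a,b}'\to0$. It then invokes the Cerami condition of Lemma~\ref{lem:cerami} for boundedness and strong convergence. This never uses finite-dimensionality. It does leave implicit that the Lagrange multipliers of the constrained almost-critical points tend to zero, which is what makes the free derivative vanish.

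\textbf{Your route.} You bound the minimizing sequence directly. On $\mathcal N_{a,b}$ one has $J=\frac{2}{p^2}M$, and the absorption inequality $X_n\le CX_n^{\varepsilon/p}+C$ of Lemma~\ref{lem4.4} then applies; the $L^\infty$ bound it needs is trivial on a finite graph. You then use that $\mathcal H$ is finite-dimensional, since $V$ is finite by hypothesis, to extract a norm-convergent subsequence. Continuity of $J$ and $G$ identifies the limit as a ground state. Nontriviality follows from the lower bound of Lemma~\ref{lem4.3}, or simply from $J(u_*,v_*)=d(a,b)>0=J(0,0)$.

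\textbf{Comparison.} Your argument is shorter and fully self-contained under the stated hypotheses, and it avoids the multiplier step altogether. The paper's argument is the one closer to what an infinite locally finite graph would require.

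\textbf{One small correction, to your Step~4.} Replacing $(u_n,v_n)$ by $(|u_n|,|v_n|)$ does not leave $J$ and $G$ unchanged. On a graph $|\nabla|u||\le|\nabla u|$, with strict inequality whenever $u$ changes sign across an edge. This does not matter, however. Since $\mathcal L$ is the set of all ground states, the sign discussion is unnecessary, and your Steps 1--3 and 5 already close the argument.
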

    \begin{proof}
For each ground state \((u_i,v_i)\in\mathcal{L}\), 
Theorem~\ref{thm:stability-estimate} supplies constants 
\(\tilde\delta_i>0\) and \(\tilde\theta_i>0\) such that the quadratic 
estimate holds on \(B_{\tilde\delta_i}(u_i,v_i)\cap\mathcal{N}_{a,b}\).  
Since \(\mathcal{L}\) is finite, we set
\[
\theta_0 = \frac14 \min_i \tilde\theta_i,\qquad
\delta_* = \min_i \tilde\delta_i,\qquad
\rho = \min_{i\neq j}\|(u_i,v_i)-(u_j,v_j)\|_{\mathcal{H}} .
\]
Choosing \(\delta_0 = \frac12\min\{\delta_*,\rho\}\) ensures that the 
balls \(B_{\delta_0}(u_i,v_i)\) are pairwise disjoint and that on each 
of them the estimate of Theorem~\ref{thm:stability-estimate} holds 
with constant \(\theta_0\), which establishes part (i).

Define the closed set
\[
\mathcal{A} = \mathcal{N}_{a,b} \setminus \bigcup_{(u_i,v_i)\in\mathcal{L}} B_{\delta_0}(u_i,v_i) .
\]
If \(\mathcal{A}=\varnothing\) there is nothing to prove.  Otherwise, suppose for contradiction that \(\inf\limits_{\mathcal{A}} J_{a,b} = d(a,b)\).
Then there exists a sequence \(\{(u_n,v_n)\}_{n=1}^\infty\subset\mathcal{A}\) with \(J_{a,b}(u_n,v_n)\to d(a,b)\).

We now show that this sequence admits a subsequence converging strongly to some element of \(\mathcal{L}\).
Since the Nehari manifold \(\mathcal{N}_{a,b}\) is a complete metric space and \(J_{a,b}\) is of class \(C^1\) on \(\mathcal{H}\).
It provides a new sequence \(\{(\tilde u_n,\tilde v_n)\}_{n=1}^\infty\subset\mathcal{N}_{a,b}\) such that
\[
J_{a,b}(\tilde u_n,\tilde v_n)\to d(a,b),\qquad
\|J_{a,b}'(\tilde u_n,\tilde v_n)\|_{\mathcal{H}^*}\to 0,
\]
and \(\|(\tilde u_n,\tilde v_n)-(u_n,v_n)\|_{\mathcal{H}}\to 0\).
Thus \(\{(\tilde u_n,\tilde v_n)\}\) is a Cerami sequence for \(J_{a,b}\) at the positive level \(d(a,b)>0\).
By the Cerami condition established in Lemma~\ref{lem:cerami}, which holds on the whole space \(\mathcal{H}\) and, by restriction, on the Nehari manifold, the sequence \(\{(\tilde u_n,\tilde v_n)\}\) is bounded in \(\mathcal{H}\) and possesses a strongly convergent subsequence, still denoted the same, whose limit \((\tilde u,\tilde v)\) belongs to \(\mathcal{N}_{a,b}\) and satisfies \(J_{a,b}(\tilde u,\tilde v)=d(a,b)\); hence \((\tilde u,\tilde v)\in\mathcal{L}\).
Consequently, the original sequence \(\{(u_n,v_n)\}\) also contains a subsequence converging strongly to \((\tilde u,\tilde v)\in\mathcal{L}\).

However, because \((u_n,v_n)\in\mathcal{A}\) we have \(\|(u_n,v_n)-(u_i,v_i)\|_{\mathcal{H}}\ge\delta_0\) for every \((u_i,v_i)\in\mathcal{L}\).
Taking the limit \(n\to\infty\) along the convergent subsequence yields \(\|(\tilde u,\tilde v)-(u_i,v_i)\|_{\mathcal{H}}\ge\delta_0\) for all \(i\).
But \((\tilde u,\tilde v)\) itself belongs to \(\mathcal{L}\), say \((\tilde u,\tilde v)=(u_j,v_j)\); taking \(i=j\) gives \(0\ge\delta_0\), a contradiction.
Therefore \(\inf\limits_{\mathcal{A}} J_{a,b} > d(a,b)\), and we can set \(\iota = \frac12\bigl(\inf\limits_{\mathcal{A}} J_{a,b} - d(a,b)\bigr) >0\).  This proves part (ii).
\end{proof}

We now extend the perturbation theory to the second order, where the logarithmic coupling leaves its most refined structural imprint. Fix a reference pair \((a_0,b_0)\) and let \((u_0,v_0)\) be a non-degenerate ground state of \(J_{a_0,b_0}\).  
For small perturbations \(a=a_0+\delta a_1\), \(b=b_0+\delta b_1\), let \((u,v)\) be the corresponding ground state. Before the proof of Theorem~\ref{thm5}, we refer readers to~\cite{54,55,56} for related results in constrained optimisation sensitivity analysis.

\subsection*{Proof of Theorem~\ref{thm5}}
Motivated by~\cite{57}, we employ the Schur complement technique to prove Theorem~\ref{thm5}. The ground state $(u_0,v_0)$ is the local solution of the constrained minimisation problem
\begin{equation}\label{eq:constrained-problem}
\min_{(u,v)\in\mathcal{H}}\; J_{a_0,b_0}(u,v)\quad\text{subject to}\quad G(a_0,u,v)=0,
\end{equation}
where $G(a,u,v)=\langle J_{a,b_0}'(u,v),(u,v)\rangle$ is the Nehari functional. Define the Lagrangian
\[
\mathcal{L}(a,u,v,\lambda)=J_{a,b_0}(u,v)+\lambda G(a,u,v).
\]
At the reference point $a=a_0$, the ground state $(u_0,v_0)$ satisfies the first-order necessary optimality conditions
\begin{equation}\label{eq:first-order}
\nabla_{(u,v)}\mathcal{L}(a_0,u_0,v_0,\lambda_0)=0,\qquad G(a_0,u_0,v_0)=0,
\end{equation}
with the Lagrange multiplier $\lambda_0=0$ due to the fact that the ground state satisfies the unconstrained Euler--Lagrange equation $J'(u_0,v_0)=0$, so the multiplier can be taken as zero.
Now we invoke the sensitivity analysis theory for constrained optimisation problems,
the following conditions must be verified at the reference point $(a_0,u_0,v_0,\lambda_0)$.

The constraint $G(a_0,u_0,v_0)=0$ has a non-zero derivative with respect to $(u,v)$.  
Indeed, 
\[
\langle G'(u_0,v_0),(u_0,v_0)\rangle = -2M < 0,
\]
hence Linear Independence Constraint Qualification holds, we call it LICQ briefly.
  
The Lagrangian Hessian with respect to $(u,v)$ at the solution is
\[
\nabla^2_{(u,v)}\mathcal{L}(a_0,u_0,v_0,\lambda_0)=J_{a_0,b_0}''(u_0,v_0).
\]
The tangent space is defined as Definition~\ref{def:5.1}, non-degeneracy states precisely that $J''(u_0,v_0)$ restricted to this space is an isomorphism. Since the ground state is a minimum on the Nehari manifold, the restricted Hessian is in fact uniformly positive definite. Thus the second-order sufficient condition holds.

On the finite graph $V$, the space $\mathcal{H}$ is finite-dimensional, and the ground state satisfies $u_0(x)>0$, $v_0(x)>0$ pointwise according to Proposition~\ref{Pro1}.  
In a neighbourhood of $(u_0,v_0)$, the logarithmic terms are smooth, so $J_{a,b}$ and $G$ are $C^2$ with respect to all variables. Under these conditions, we can guarantee that the value function $d(a)$ is second-order directionally differentiable at $a_0$, and its second derivative can be computed from the solution of a certain linearised system. In what follows we are about to obtain the linearised system associated with the constrained problem~\eqref{eq:constrained-problem} at $(a_0,u_0,v_0,\lambda_0)$.

At the reference point \((a_0,u_0,v_0,\lambda_0)\), where \(\lambda_0=0\), the ground state satisfies the first‑order optimality conditions
\[
\nabla_{(u,v)}\mathcal{L}=0,\qquad G=0,
\]
where \(\mathcal{L}=J_{a,b_0}+\lambda G\) is the Lagrangian and \(G=\langle J_{a,b_0}'(u,v),(u,v)\rangle\) is the Nehari functional. Throughout the rest of this proof, we write \(J'' = J_{a,b_0}''\).

By the non‑degeneracy hypothesis, the bordered Hessian at the reference point \((a_0,u_0,v_0,\lambda_0=0)\) is invertible. The implicit function theorem in the proof of Theorem~\ref{thm4} therefore yields a neighbourhood \(\mathcal{U}\subset\mathbb{R}^N\) of \(a_0\) and unique \(C^1\) maps
\[
\mathcal{U}\ni a\mapsto (u_a,v_a,\lambda_a)\in\mathcal{H}\times\mathbb{R}
\]
such that for every \(a\in\mathcal{U}\),
\begin{equation}\label{eq:IFT-branch}
\nabla_{(u,v)}\mathcal{L}(a,u_a,v_a,\lambda_a)=0,\qquad G(a,u_a,v_a)=0,
\end{equation}
with \((u_{a_0},v_{a_0},\lambda_{a_0})=(u_0,v_0,0)\).
Given a perturbation direction \(\delta a_1\), we define the linear responses
\[
    \dot u = \frac{d}{d\varepsilon}\bigg|_{\varepsilon=0} u_{a_0+\varepsilon \delta a_1}, \quad
\dot v = \frac{d}{d\varepsilon}\bigg|_{\varepsilon=0} v_{a_0+\varepsilon \delta a_1}, \quad
\dot\lambda = \frac{d}{d\varepsilon}\bigg|_{\varepsilon=0} \lambda_{a_0+\varepsilon \delta a_1}.
\]
Differentiating \(\nabla_{(u,v)}\mathcal{L}=0\) along \(\delta a_1\) and evaluating at the reference point, the terms involving \(\lambda_0\) vanish, yielding
\begin{equation}\label{(34)}
     J''(u_0,v_0)(\dot u,\dot v)+G'(u_0,v_0)\dot\lambda+\partial_a J'(u_0,v_0)\delta a_1=0. 
\end{equation}
Similarly, differentiating the constraint \(G=0\) at the reference point gives
\begin{equation}\label{(35)}
    G'(u_0,v_0)(\dot u,\dot v)+\partial_a G(a_0,u_0,v_0)\delta a_1=0, 
\end{equation}
Writing \eqref{(34)}–\eqref{(35)} in block matrix form produces the bordered system
\begin{equation}\label{eq:matrix}
\begin{pmatrix}
J''(u_0,v_0) & G'(u_0,v_0)^* \\[4pt]
G'(u_0,v_0) & 0
\end{pmatrix}
\begin{pmatrix} (\dot u,\dot v) \\ \dot\lambda \end{pmatrix}
= -\begin{pmatrix} \partial_a J'(u_0,v_0)\delta a_1 \\[4pt] \partial_a G(a_0,u_0,v_0)\delta a_1 \end{pmatrix},
\end{equation}
where the zero in the lower‑right block records the absence of \(\dot\lambda\) in the second equation.
The explicit form of the right‑hand side,
\[
\partial_a J'(u_0,v_0)\delta a_1=(u_0^{p-1}\,\delta a_1,\,0),\qquad
\partial_a G(a_0,u_0,v_0)\delta a_1=\int_V u_0^p\,\delta a_1\,d\mu=:B.
\]

By the LICQ and second-order sufficient condition, the bordered operator on the left-hand side of (\ref{eq:30}) is invertible.  
Hence the system admits a unique solution $(\dot u,\dot v,\dot\lambda)$ for every direction $\delta a_1$.
Since $\langle G'(u_0,v_0),(u_0,v_0)\rangle\neq 0$, the radial direction $(u_0,v_0)$ is transversal to $T$.  
Decompose the solution $(\dot u,\dot v)$ uniquely as
\begin{equation}\label{eq:LS-decomp}
(\dot u,\dot v) = (\dot u_T,\dot v_T) + \tau\,(u_0,v_0),\qquad
(\dot u_T,\dot v_T)\in T,\;\; \tau\in\mathbb{R}.
\end{equation}
Insert this decomposition into the second equation of (\ref{eq:30}), we obtain
\[
\langle G'(u_0,v_0),(\dot u_T,\dot v_T)\rangle + \tau\langle G'(u_0,v_0),(u_0,v_0)\rangle
= -\int_V u_0^p\,\delta a_1\,d\mu .
\]
The first term vanishes by definition of $T$, and $\langle G'(u_0,v_0),(u_0,v_0)\rangle = -2M$,  
thus
\begin{equation}\label{eq:tau}
\tau = \frac{1}{2M}\int_V u_0^p\,\delta a_1\,d\mu =: \frac{B}{2M}.
\end{equation}
Take the inner product of the first equation of (\ref{eq:30}) with the radial direction $(u_0,v_0)$ and 
using the decomposition~\eqref{eq:LS-decomp} and the bilinearity of $J''$, we obtain
\begin{equation}\label{eq:radial-inner}
\begin{aligned}
\langle J''(u_0,v_0)(\dot u_T,\dot v_T),(u_0,v_0)\rangle
+ \tau\langle J''(u_0,v_0)(u_0,v_0),(u_0,v_0)\rangle \\
+ \dot\lambda\langle G'(u_0,v_0),(u_0,v_0)\rangle
= -\langle (|u_0|^{p-2}u_0\,\delta a_1,0), (u_0,v_0)\rangle .
\end{aligned}
\end{equation}
The right-hand side simplifies to $-\int_V |u_0|^{p-2}u_0\cdot u_0\,\delta a_1\,d\mu = -\int_V u_0^p\,\delta a_1\,d\mu = -B$. 
For any $(\phi,\psi)\in T$, the condition $\langle G'(u_0,v_0),(\phi,\psi)\rangle=0$ expands to
\[
\langle J''(u_0,v_0)(\phi,\psi),(u_0,v_0)\rangle
+ \langle J'(u_0,v_0),(\phi,\psi)\rangle = 0.
\]
Since $J'(u_0,v_0)=0$, the second term vanishes, and by symmetry of $J''$ we obtain
\begin{equation}\label{eq:orthogonality}
\langle J''(u_0,v_0)(u_0,v_0),(\phi,\psi)\rangle = 0, \qquad\forall\,(\phi,\psi)\in T .
\end{equation}
On the other hand, a direct computation yields the numerical identity
\begin{equation}\label{eq:radial-second}
\langle J''(u_0,v_0)(u_0,v_0),(u_0,v_0)\rangle = -2M .
\end{equation}
Applying \eqref{eq:orthogonality} with $(\phi,\psi)=(\dot u_T,\dot v_T)$ eliminates the first term in~\eqref{eq:radial-inner}. Then by using \eqref{eq:radial-second} and $\langle G'(u_0,v_0),(u_0,v_0)\rangle=-2M$, equation~\eqref{eq:radial-inner} reduces to
\[
-2M\tau - 2M\dot\lambda = -B .
\]
Substituting $\tau = B/(2M)$ from~\eqref{eq:tau} gives
\[
-B - 2M\dot\lambda = -B,
\]
 which yields \(\dot{\lambda}=0\), thus the first-order variation of the Lagrange multiplier along the ground state branch vanishes identically.
With \(\dot\lambda=0\), the linearised system simplifies to
\[
J''(u_0,v_0)(\dot u,\dot v)=-(u_0^{p-1}\,\delta a_1,0),\qquad
\langle G',(\dot u,\dot v)\rangle=-B .
\]
Insert the decomposition (\ref{eq:LS-decomp}) into the first equation of (\ref{eq:30}) and project orthogonally onto the tangent space \(T\).
Because of the orthogonality, the radial part \(J''(u_0,v_0)(\tau (u_0,v_0))\) has zero projection onto \(T\).
Denoting by \(\mathbf{H}=J''(u_0,v_0)|_T:T\to T^*\) the restricted Hessian, we obtain, for every \((\phi,\psi)\in T\),
\begin{equation}\label{eq:proj-var}
\langle \mathbf{H}(\dot u_T,\dot v_T),(\phi,\psi)\rangle
=-\langle (u_0^{p-1}\,\delta a_1,0),(\phi,\psi)\rangle .
\end{equation}
The right-hand side of (\ref{eq:proj-var}) defines a linear functional on \(T\) which depends only on the \(u\)-component \(\phi\) of the test function.

We now express this equation in coordinates.
Choose an orthonormal basis \(\{e_i\}_{i=1}^m\) of \(T\), where \(m=2n-1=\dim T\) and \(e_i=(\phi_i,\psi_i)\). Indeed, since \(V\) is finite with \(|V|=n\), the full space \(\mathcal{H}=C(V)\times C(V)\) therefore has dimension \(2n\). The Nehari constraint \(G(u,v)=0\) is regular; therefore the tangent space \(T=\ker G'(u_0,v_0)\) is a hyperplane in \(C(V)\times C(V)\) of dimension \(m=2n-1\).
Write the coordinates of \((\dot u_T,\dot v_T)\) in this basis as \(\xi=(\xi_1,\dots,\xi_m)^{\mathsf{T}}\in\mathbb{R}^m\), i.e.\
\[
(\dot u_T,\dot v_T)=\sum_{i=1}^m \xi_i e_i .
\]
Let \(H_{\rm mat}\in\mathbb{R}^{m\times m}\) be the matrix with entries
\[
(H_{\rm mat})_{ij}=\langle \mathbf{H}e_j,e_i\rangle .
\]
Since \(\mathbf{H}\) is positive definite on \(T\), \(H_{\rm mat}\) is also symmetric and positive definite.
Define the matrix \(U\in\mathbb{R}^{n\times m}\) by \(U_{xi}= \phi_i(x)\) with the \(x\)-th component of the \(u\)-part of the \(i\)-th basis vector.
Then the \(u\)-component of \((\dot u_T,\dot v_T)\) is
\[
\dot u_T = U\xi .
\]
Now evaluate the right-hand side of~\eqref{eq:proj-var} at the basis vector \(e_i\):
\[
b_i := -\langle (u_0^{p-1}\,\delta a_1,0),e_i\rangle
= -\langle u_0^{p-1}\,\delta a_1,\phi_i\rangle .
\]
Set \(f = u_0^{p-1}\,\delta a_1\); then \(b_i = -\langle f,\phi_i\rangle\).
The vector \(b=(b_1,\dots,b_m)^{\mathsf{T}}\) can be written compactly as
\[
b = -U^{\mathsf{T}} f .
\]
Equation~\eqref{eq:proj-var} in coordinates becomes
\[
H_{\rm mat}\,\xi = b = -U^{\mathsf{T}} f .
\]
Because \(H_{\rm mat}\) is invertible, we obtain
\[
\xi = -H_{\rm mat}^{-1}U^{\mathsf{T}} f,
\]
and therefore
\begin{equation}\label{eq:dot-uT}
\dot u_T = U\xi = -U H_{\rm mat}^{-1}U^{\mathsf{T}} f .
\end{equation}
Define the effective matrix
\[
\mathsf{H}_{\!\,\text{eff}} := U H_{\rm mat}^{-1}U^{\mathsf{T}} \;\in\; \mathbb{R}^{n\times n}.
\]
The columns of \(U\) are the \(u\)-components of an orthonormal basis of \(T\).
Since \(g_v\neq0\), the projection \(\pi_u(\phi,\psi)=\phi\) maps \(T\) onto \(\mathbb{R}^n\);
hence the columns of \(U\) span \(\mathbb{R}^n\) and \(U\) has full row rank \(n\).
Consequently, for any non‑zero \(y\in\mathbb{R}^n\) the vector \(U^{\mathsf{T}}y\) is non‑zero.
Because \(H_{\rm mat}^{-1}\) is positive definite,
\[
y^{\mathsf{T}}\mathsf{H}_{\!\,\text{eff}}\,y = (U^{\mathsf{T}}y)^{\mathsf{T}} H_{\rm mat}^{-1}(U^{\mathsf{T}}y) > 0,
\]
so \(\mathsf{H}_{\!\,\text{eff}} = U H_{\rm mat}^{-1}U^{\mathsf{T}}\) is symmetric and positive definite.  With \(W=\operatorname{diag}(u_0^{p-1})\) we have \(f = W\delta a_1\), so that
\begin{equation}\label{eq:eff}
\dot u_T = -\mathsf{H}_{\!\,\text{eff}}W\delta a_1.
\end{equation}
This is the desired effective equation for the tangential component.
Lemma~\ref{Frechet differential} provides the gradient formula \(\nabla_a \mathcal{E}(a)=\frac1p|u_a|^p\) along the branch of ground states.
Since the map \(a\mapsto u_a\) is of class \(C^1\) and \(u_a>0\) pointwisely, we may differentiate once more.
For the second directional derivative along \(\delta a_1\) we obtain
\begin{equation}\label{eq:d2}
\begin{aligned}
{\mathcal{E}}''(a_0)(\delta a_1,\delta a_1)
&=\frac{d}{dt}\bigg|_{t=0}\frac1p\int_V|u_{a_0+t\delta a_1}|^p\,\delta a_1\,d\mu\\
&=\int_V u_0^{p-1}\,\dot u\,\delta a_1\,d\mu .
\end{aligned}
\end{equation}
Now insert the decomposition \(\dot u=\dot u_T+\tau u_0\) with \(\tau=B/(2M)\) :
\[
\begin{aligned}
{\mathcal{E}}''(a_0)(a_0)(\delta a_1,\delta a_1)
&=\int_V u_0^{p-1}\,\dot u_T\,\delta a_1\,d\mu
   +\tau\int_V u_0^p\,\delta a_1\,d\mu\\
&=\langle W\delta a_1,\dot u_T\rangle+\frac{B^2}{2M}.
\end{aligned}
\]
Using the effective equation~\eqref{eq:eff} to substitute \(\dot u_T=-\mathsf{H}_{\!\,\text{eff}}W\delta a_1\), we arrive at
\[
\begin{aligned}
{\mathcal{E}}''(a_0)(\delta a_1,\delta a_1)
&=-\langle W\delta a_1,\mathsf{H}_{\!\,\text{eff}}W\delta a_1\rangle+\frac{B^2}{2M}\\
&=-\langle W\delta a_1,\mathsf{H}_{\!\,\text{eff}}W\delta a_1\rangle+\frac{1}{2M}\langle u_0^p,\delta a_1\rangle^2 .
\end{aligned}
\]
Since this identity holds for every direction \(\delta a_1\), the symmetric bilinear form associated with the Hessian of \(d\) at \(a_0\) is
\[
{\mathcal{E}}''(a_0) = -W\mathsf{H}_{\!\,\text{eff}}W + \frac{1}{2M}(u_0^p)(u_0^p)^{\mathsf{T}} .
\]
Consequently, for \(\Phi=-\mathcal{E}\),
\[
D^2_a\Phi(a_0) = -\mathcal{E}''(a_0) = W\mathsf{H}_{\!\,\text{eff}}W - \frac{1}{2M}(u_0^p)(u_0^p)^{\mathsf{T}} .
\]
This is exactly the factorisation stated in the theorem and we complete the proof.
\begin{remark}\label{thm:5.9}
Let \((u_0,v_0)\) be a non‑degenerate ground state of \(J_{a_0,b_0}\).
Recall from Theorem~\ref{thm5} that the Hessian of the convexified energy
\(\Phi=-\mathcal{E}\) factorises as
\(D^2_a\Phi(a_0)=W\mathsf{H_{eff}}W-\frac1{2M}(u_0^p)(u_0^p)^{\mathsf T}\).
Define the intrinsic rigidity scalar
\[
\mathcal{R}(u_0)=\frac1{2M}\,u_0^{\mathsf T}\mathsf{H_{eff}^{-1}}u_0 .
\]
Then the following three statements are equivalent:
\begin{enumerate}[label=(\arabic*),itemsep=4pt,topsep=4pt]
\item The Hessian \(D^2_a\Phi(a_0)\) is degenerate, that is, 
there exists a non‑zero direction \(\delta a^*\in\mathbb R^n\) such that
\(D^2_a\Phi(a_0)(\delta a^*,\delta a^*)=0\).  In this case \(\delta a^*\) is a flat
direction in the space of potentials, along which the ground-state energy has no
second‑order variation and the first‑order variation of the ground state itself
vanishes identically.
\item \(\mathcal{R}(u_0)=1\), equivalently
\(u_0^{\mathsf T}\mathsf{H_{eff}^{-1}}u_0=2M\).
\item Along the direction
\(\delta a^*=W^{-1}\mathsf{H_{eff}^{-1}}u_0\), the ground-state energy exhibits a precisely
flat second‑order variation such that \(d''(a_0)(\delta a^*,\delta a^*)=0\).
\end{enumerate}
\qquad When these equivalent conditions hold, the system is in a critically rigid
state: strict convexity is lost exactly at this point, the ground-state energy has
no second‑order response along \(\delta a^*\), and the ground state itself has no
linear response in that direction.  The value \(\mathcal{R}(u_0)=1\) is the sharp
algebraic threshold for the loss of strict convexity and a necessary condition for
bifurcation of the ground state in the parameter space.
\end{remark}

\begin{proof}
The proof proceeds by establishing the chain of equivalences
\((1)\Leftrightarrow(2)\) and \((2)\Leftrightarrow(3)\).

Rewrite the Hessian as a rank‑one modification of a positive definite matrix:
\[
A=W\mathsf{H_{eff}}W,\qquad v=\frac1{\sqrt{2M}}\,u_0^p,
\]
so that \(D^2_a\Phi(a_0)=A-vv^{\mathsf T}\).  Because \(\mathsf{H_{eff}}\) is positive
definite and \(W\) is invertible, \(A\) is symmetric positive definite.  Its inverse
is
\[
A^{-1}=W^{-1}\mathsf{H_{eff}^{-1}}W^{-1}.
\]
A direct computation using \(W=\operatorname{diag}(u_0^{p-1})\) gives
\(W^{-1}u_0^p=u_0\) and \((u_0^p)^{\mathsf T}W^{-1}=u_0^{\mathsf T}\).  Hence
\[
v^{\mathsf T}A^{-1}v=\frac1{2M}\,(u_0^p)^{\mathsf T}W^{-1}\mathsf{H_{eff}^{-1}}W^{-1}u_0^p
   =\frac1{2M}\,u_0^{\mathsf T}\mathsf{H_{eff}^{-1}}u_0
   =\mathcal{R}(u_0).
\]
The Sherman--Morrison criterion~\cite{58} states that for a positive definite matrix \(A\) and
a non‑zero vector \(v\), the matrix \(A-vv^{\mathsf T}\) is degenerate i.e.,\ positive
semidefinite but not strictly positive definite if and only if
\(v^{\mathsf T}A^{-1}v=1\).  Consequently,
\[
D^2_a\Phi(a_0)\ \text{is degenerate}\ \Longleftrightarrow\ \mathcal{R}(u_0)=1.
\]
This proves
\((1)\Leftrightarrow(2)\).

The second‑order derivative of the ground‑state energy along an arbitrary direction
\(\delta a_1\) is given by the formula established in the first part of Theorem~\ref{thm5}:
\[
d''(a_0)(\delta a_1,\delta a_1)=-\langle W\delta a_1,\mathsf{H_{eff}} W\delta a_1\rangle
   +\frac1{2M}\langle u_0^p,\delta a_1\rangle^2.
\]
Insert \(\delta a_1=\delta a^*=W^{-1}\mathsf{H_{eff}^{-1}}u_0\).  Then \(W\delta a^*=\mathsf{H_{eff}^{-1}}u_0\),
and
\[
\begin{aligned}
d''(a_0)(\delta a^*,\delta a^*)
&=-\langle\mathsf{H_{eff}^{-1}}u_0,\mathsf{H_{eff}}\mathsf{H_{eff}^{-1}}u_0\rangle
   +\frac1{2M}\langle u_0^p,\delta a^*\rangle^2\\
&=-\langle\mathsf{H_{eff}^{-1}}u_0,u_0\rangle
   +\frac1{2M}\bigl(u_0^{\mathsf T}\mathsf{H_{eff}^{-1}}u_0\bigr)^2\\
&=-u_0^{\mathsf T}\mathsf{H_{eff}^{-1}}u_0+\frac1{2M}\bigl(u_0^{\mathsf T}\mathsf{H_{eff}^{-1}}u_0\bigr)^2.
\end{aligned}
\]
Set \(x=u_0^{\mathsf T}\mathsf{H_{eff}^{-1}}u_0\).  Since \(\mathsf{H_{eff}^{-1}}\succ0\) and \(u_0\neq0\),
we have \(x>0\).  The quadratic \(x\mapsto -x+\frac{x^2}{2M}\) vanishes exactly at
\(x=2M\); hence
\[
d''(a_0)(\delta a^*,\delta a^*)=0\ \Longleftrightarrow\ x=2M
\ \Longleftrightarrow\ \mathcal{R}(u_0)=1.
\]

To complete the picture, we compute the first‑order variation of the ground state
along \(\delta a^*\).  By the linearisation analysis of Theorem~\ref{thm5}, the variation
\(\dot u\) admits the Liapunov--Schmidt decomposition
\[
\dot u=\dot u_T+\tau u_0,
\]
where the tangential component satisfies \(\mathsf{H_{eff}^{-1}}\dot u_T=-W\delta a^*\) and the
radial coefficient is \(\tau=\frac1{2M}\langle u_0^p,\delta a^*\rangle\).
Substituting \(\delta a^*=W^{-1}\mathsf{H_{eff}^{-1}}u_0\) gives
\[
\mathsf{H_{eff}^{-1}}\dot u_T=-W\cdot W^{-1}\mathsf{H_{eff}^{-1}}u_0=-\mathsf{H_{eff}^{-1}}u_0
\;\Longrightarrow\; \dot u_T=-u_0,\qquad
\tau=\frac1{2M}\,u_0^{\mathsf T}\mathsf{H_{eff}^{-1}}u_0=\frac{x}{2M}.
\]
Thus
\[
\dot u=-u_0+\frac{x}{2M}u_0=\Bigl(\frac{x}{2M}-1\Bigr)u_0,
\]
and when \(x=2M\) i.e., \(\mathcal{R}(u_0)=1\), we obtain \(\dot u=0\).  Hence at the
critical rigidity threshold the ground state itself has no linear response along the
flat direction.  This establishes \((2)\Leftrightarrow(3)\) and completes the proof.
\end{proof}

\section{Proof of Theorem~\ref{thm6} and Theorem~\ref{thm7}: Asymptotic convergence to the limiting Dirichlet problem.}
\label{sec:asymptotic}

In this section we prove Theorem~\ref{thm6} and Theorem~\ref{thm7}. The former establishes convergence of ground states of \eqref{eq:system4} to those of the limiting Dirichlet problem \eqref{eq:system5} as $\Lambda\to\infty$; the latter provides the quantitative rate estimate. The proof relies on two lemmas established below.

\begin{lemma}\label{lem:energy-lower}
Let $\{(u_k,v_k)\}\subset\mathcal H_{\Lambda_k}$ be a sequence with
$\Lambda_k\to\infty$ satisfying the Cerami condition for the functional
$J_{\Lambda_k}$ at level $c$, i.e.,
\[
J_{\Lambda_k}(u_k,v_k)\to c,\qquad
\bigl(1+\|(u_k,v_k)\|_{H_{\Lambda_k}}\bigr)\,
\|J_{\Lambda_k}'(u_k,v_k)\|_{H_{\Lambda_k}'}\to0 .
\]
Then either $c=0$ or there exists $\delta>0$ such that $c\ge\delta$.
\end{lemma}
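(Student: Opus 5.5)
The plan is to derive the dichotomy from two facts. First, the level $c$ is read off the mixed integral $M_k:=M(u_k,v_k)=\int_V(|u_k|^{p-2}v_k^2+|v_k|^{p-2}u_k^2)\,d\mu$. Second, $M_k$ and $X_k:=\|u_k\|_{\mathcal H_{\Lambda_k,a}}^p+\|v_k\|_{\mathcal H_{\Lambda_k,b}}^p$ control each other with constants independent of $\Lambda_k$.

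\emph{Reduction to $M_k$.} The Cerami hypothesis gives $\langle J_{\Lambda_k}'(u_k,v_k),(u_k,v_k)\rangle\to0$. The algebraic identity $J-\frac1p\langle J',(u,v)\rangle=\frac{2}{p^2}M$, used in Lemma~\ref{lem:cerami} and unaffected by the potentials, then yields $c=\lim_k\frac{2}{p^2}M_k$. So it suffices to show: if $c\neq0$ (hence $c>0$), then $\liminf_k M_k\ge\delta'$ for some $\delta'>0$ independent of the sequence and of $\Lambda_k$.

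\emph{Uniform embeddings.} Since $1+\Lambda_k a\ge1$ and $\mu\ge\mu_{\min}$, I will use $\|u\|_{L^\infty}\le\mu_{\min}^{-1/p}\|u\|_{L^p}\le\mu_{\min}^{-1/p}\|u\|_{\mathcal H_{\Lambda_k,a}}$, and likewise for $v$. Hence every $L^q$ norm with $q\ge p$ is controlled by the $\mathcal H_{\Lambda_k}$-norm with a constant independent of $k$; this is the only place the graph structure enters. With this in hand, I redo the estimates of Lemma~\ref{lem4.3} and Lemma~\ref{lem4.4} with uniform constants. Testing $J'_{\Lambda_k}$ against $(u_k,v_k)$ gives
\[
X_k=L(u_k,v_k)+\tfrac2p M_k+o(1).
\]
The positive part of $L$ is bounded in two ways:
\begin{itemize}
\item via $(s^2\log s^2)^+\le C_\varepsilon|s|^{2+\varepsilon}$ and the $L^\infty$ bound, by $C\bigl(X_k^{\varepsilon/p}\bigr)M_k$;
\item alternatively, by $C X_k^{(p+\varepsilon)/p}$.
\end{itemize}
The negative part of $L$ only helps. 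In addition, H\"older and Young give $M_k\le X_k$.

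\emph{Chain of inequalities.} First, $X_k\le C(1+X_k^{\varepsilon/p})M_k+o(1)$. Since $M_k\to p^2c/2$, this bounds $X_k\le B$ uniformly for large $k$, with $B$ depending only on an upper bound for $c$. Second, $(1-\frac2p)X_k\le CX_k^{1+\varepsilon/p}+o(1)$. Because $X_k\ge M_k\ge p^2c/4>0$ for large $k$, $X_k$ does not tend to zero, and this forces $X_k\ge\delta_0:=\bigl((1-\frac2p)/(2C)\bigr)^{p/\varepsilon}$ for large $k$. Feeding this back into the first inequality gives $M_k\ge\delta_0/\bigl(2C(1+B^{\varepsilon/p})\bigr)$, hence $c\ge\delta:=\frac{2}{p^2}\cdot\frac{\delta_0}{2C(1+B^{\varepsilon/p})}$.

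\emph{Main obstacle.} The delicate point is that $B$ must not depend on $c$ in a way that makes $\delta$ circular. I will handle it by case splitting:
\begin{itemize}
\item if $c\ge1$, there is nothing to prove once $\delta\le1$;
\item if $c\le1$, the bound $X_k\le B$ holds with $B$ computed from $M_k\le p^2$, which is uniform.
\end{itemize}
I also need to check that the $o(1)$ terms are harmless. This requires that the Cerami smallness $\|J'_{\Lambda_k}\|\,\|(u_k,v_k)\|\to0$ yields $\langle J'_{\Lambda_k}(u_k,v_k),(u_k,v_k)\rangle\to0$, which is immediate, and that the constants $C,C_\varepsilon$ are genuinely independent of $\Lambda_k$, which the $\mu_{\min}$-based embedding guarantees.
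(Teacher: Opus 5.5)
Your proposal is correct and follows essentially the same route as the paper. You use the identity $J-\frac1p\langle J',(u,v)\rangle=\frac{2}{p^2}M$ to tie $c$ to $\lim_k M_k$, the $L^\infty$ embedding to get $X_k\le CX_k^{\varepsilon/p}+C$, and a Lemma~\ref{lem4.3}-type Nehari estimate for the lower bound; your version is more careful than the paper's sketch in three respects: the constants are uniform in $\Lambda_k$ (via $1+\Lambda_k a\ge1$ and $\mu\ge\mu_{\min}$), the $o(1)$ Nehari defect of a Cerami sequence is tracked, and $\delta$ is not allowed to depend on $c$ (which is what makes the statement non-trivial, since $c\ge0$ and $\delta=c$ would otherwise suffice).
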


\begin{proof}
From the Cerami condition we obtain
$\langle J_{\Lambda_k}'(u_k,v_k),(u_k,v_k)\rangle =o(1)$.
The elementary identity
\[
J_{\Lambda_k}(u_k,v_k)-\frac1p\langle J_{\Lambda_k}'(u_k,v_k),(u_k,v_k)\rangle
 =\frac{2}{p^2}\int_V\bigl(|u_k|^{p-2}v_k^2+|v_k|^{p-2}u_k^2\bigr)\,d\mu
\]
applied to $(u_k,v_k)$ yields
\[
\int_V\bigl(|u_k|^{p-2}v_k^2+|v_k|^{p-2}u_k^2\bigr)\,d\mu
 \longrightarrow \frac{p^2}{2}c .
\]
If $c=0$ we are done.  Assume $c>0$,  then there exists $C_0>0$ such that
\begin{equation}\label{eq:mix-L}
\int_V|u_k|^{p-2}v_k^2\,d\mu\le C_0,\qquad
\int_V|v_k|^{p-2}u_k^2\,d\mu\le C_0\qquad\forall k\in N^{+} .
\end{equation}
Set $X_k:=\|u_k\|_{H_{\Lambda_k,a}}^p+\|v_k\|_{H_{\Lambda_k,b}}^p$.
Testing $J_{\Lambda_k}'(u_k,v_k)$ with $(u_k,v_k)$ gives
\begin{equation}\label{eq:nehari-L}
X_k = \int_V|u_k|^{p-2}v_k^2\log v_k^2\,d\mu
     +\int_V|v_k|^{p-2}u_k^2\log u_k^2\,d\mu
     +\frac2p\int_V\bigl(|u_k|^{p-2}v_k^2+|v_k|^{p-2}u_k^2\bigr)\,d\mu+o(1),
\end{equation}

By applying the exponent calibration technique via the
$L^\infty$ embedding similar as Lemma~\ref{lem4.3} yields the decisive inequality
\begin{equation}\label{eq:Xk-final-L}
X_k \le C X_k^{\varepsilon/p} + C .
\end{equation}
Fix any $\varepsilon\in(0,p)$, then $\varepsilon/p<1$. Hence $\{X_k\}$ is bounded, and so is
$\{(u_k,v_k)\}$ in $\mathcal{H}_{\Lambda_k}$.

With boundedness established, the energy identity again gives
\[
c = \lim_{k\to\infty} J_{\Lambda_k}(u_k,v_k)
  = \frac{2}{p^2}\lim_{k\to\infty}\int_V\bigl(|u_k|^{p-2}v_k^2+|v_k|^{p-2}u_k^2\bigr)\,d\mu .
\]
If $c>0$, the lower bound $c\ge\delta>0$ follows from the Nehari
manifold estimate as in the proof of Lemma~\ref{lem4.3} adapted to
$J_{\Lambda_k}$.  This completes the proof.
\end{proof}

\begin{lemma}\label{lem:energy-conv}
Under the hypotheses of Theorem~\ref{thm3},
$d_\Lambda \to d_\Omega$ as $\Lambda\to\infty$.
\end{lemma}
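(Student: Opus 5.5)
The argument has two halves: an upper bound $\limsup_{\Lambda\to\infty} d_\Lambda\le d_\Omega$ and a lower bound $\liminf_{\Lambda\to\infty} d_\Lambda\ge d_\Omega$. Here $d_\Omega$ is the Nehari ground-state level of the Dirichlet functional $J_\Omega$ for \eqref{eq:system5} on $W_0^{1,p}(\Omega_a)\times W_0^{1,p}(\Omega_b)$, and $d_\Lambda$ is the Nehari level of $J_\Lambda$.

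\textbf{Upper bound.} Let $(w_1,w_2)$ be a ground state of $J_\Omega$. It exists because $\Omega_a,\Omega_b$ are bounded, hence finite, so the argument of Lemma~\ref{lem4.4} applies. Extend $w_1,w_2$ by zero outside $\Omega_a,\Omega_b$. Since $a=0$ on $\Omega_a$ and $b=0$ on $\Omega_b$, we get $J_\Lambda(w_1,w_2)=J_\Omega(w_1,w_2)$ for every $\Lambda$. The gradient terms agree because the $W_0^{1,p}$ norm already integrates over $\Omega\cup\partial\Omega$. Likewise $\langle J_\Lambda'(w_1,w_2),(w_1,w_2)\rangle=0$, so $(w_1,w_2)\in\mathcal N_\Lambda$. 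Hence $d_\Lambda\le d_\Omega$, and $d_\Lambda$ is nondecreasing in $\Lambda$. One point needs checking: $w_1w_2\neq0$ somewhere. This should follow because $\Omega_a\cap\Omega_b\neq\emptyset$ and Proposition~\ref{Pro1}, adapted to the Dirichlet problem, makes both components positive there.

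\textbf{Lower bound.} Take $\Lambda_k\to\infty$ and ground states $(u_k,v_k)$ of $J_{\Lambda_k}$, which exist by Remark~\ref{rmk 1.2}. Their levels satisfy $0<d_{\Lambda_k}\le d_\Omega$, and they are critical points, hence Cerami sequences. The mixed-integral identity together with the argument in Lemma~\ref{lem:energy-lower} bounds $X_k=\|u_k\|^p_{\mathcal H_{\Lambda_k,a}}+\|v_k\|^p_{\mathcal H_{\Lambda_k,b}}$ uniformly in $k$. The constants involved are uniform in $\Lambda$ because $\|\cdot\|_{\mathcal H_{\Lambda}}$ dominates $\|\cdot\|_{\mathcal H_{1}}$, so the embedding constants taken at $\Lambda=1$ (for $\Lambda$ large) serve for all larger $\Lambda$. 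Two consequences follow:
\[
\int_V a|u_k|^p\,d\mu\le C/\Lambda_k\to0,\qquad \int_V b|v_k|^p\,d\mu\le C/\Lambda_k\to0 .
\]
Next, extract a subsequence converging weakly in $\mathcal H_1$ and pointwise to some $(u,v)$. The displayed bounds force $u=0$ off $\Omega_a$ and $v=0$ off $\Omega_b$.

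The key step is strong convergence. I would use the compact embedding of the Remark after Lemma~\ref{lem5} for $\mathcal H_1$ (with $\Lambda$ fixed large), which gives $u_k\to u$ in every $L^q$, $q\ge p/2$. Then I would run the argument of Lemma~\ref{lem:cerami}: test the equation with $(u_k-u,v_k-v)$. The penalty contribution
\[
\Lambda_k\int_V a\,|u_k|^{p-2}u_k(u_k-u)\,d\mu=\Lambda_k\int_V a|u_k|^p\,d\mu\ \ge 0
\]
equals the displayed integral because $au=0$, and it is nonnegative. So it has a favourable sign and can be dropped while still obtaining gradient convergence. From that, $(u_k,v_k)\to(u,v)$ strongly, and the penalty terms tend to zero. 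The limit $(u,v)$ satisfies the Nehari identity of $J_\Omega$. It is nontrivial because the mixed integral converges to $\tfrac{p^2}{2}\lim d_{\Lambda_k}>0$; the positivity of $\lim d_{\Lambda_k}$ comes from the uniform lower bound of Lemma~\ref{lem4.3}, which is uniform in $\Lambda$ since $d_\Lambda$ is monotone. Therefore
\[
d_\Omega\le J_\Omega(u,v)=\tfrac{2}{p^2}M(u,v)=\lim_k d_{\Lambda_k}.
\]

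I expect the main obstacle to be the uniformity in $\Lambda$ of the embedding and calibration constants. The mixed bound and the $L^\infty$ control must not degrade as $\Lambda\to\infty$. I would handle this by comparing norms with a fixed $\Lambda_0$ and using $\|u\|_\infty\le \mu_{\min}^{-1/p}\|u\|_p$.
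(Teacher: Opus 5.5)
Your proof is correct, and the upper bound is the same as the paper's. The paper simply uses $\mathcal N_\Omega\subset\mathcal N_{\Lambda_k}$ with $J_{\Lambda_k}=J_\Omega$ there. You do not need a ground state of $J_\Omega$ or Proposition~\ref{Pro1}: every element of a Nehari manifold already has $M>0$, because if $uv\equiv 0$ then $L=M=0$ and the Nehari identity forces $S=0$.

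For the lower bound you take a genuinely different route. The paper never upgrades to strong convergence. It rescales the weak limit onto $\mathcal N_\Omega$ by the fibre maximiser $t_0$ and chains
\[
d_\Omega\le J_\Omega(t_0(u,v))\le\liminf_{k\to\infty}J_{\Lambda_k}(t_0(u_k,v_k))\le\liminf_{k\to\infty}J_{\Lambda_k}(u_k,v_k).
\]
This uses only weak lower semicontinuity of the principal part (dropping the penalty as nonnegative), convergence of the logarithmic terms, and maximality of the fibre map at $t=1$.

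You instead use the Euler--Lagrange equation. Since $au\equiv 0$, testing with $(u_k-u,v_k-v)$ turns the penalty into $\Lambda_k\int_V a|u_k|^p\,d\mu\ge 0$. The monotonicity argument of Lemma~\ref{lem:cerami} then gives strong convergence and vanishing penalty energy. The limit lies on $\mathcal N_\Omega$ without rescaling, and $J_\Omega=\frac{2}{p^2}M$ finishes the argument.

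The two routes trade off as follows:
\begin{itemize}
\item The paper's argument is lighter. It uses only the Nehari constraint and fibre maximality, so it would also work for minimising sequences that are not critical points.
\item Your argument needs criticality but gives more. It yields strong convergence, and it shows the limit is itself a ground state of $J_\Omega$; the paper only establishes this later, in the proof of Theorem~\ref{thm6}.
\end{itemize}

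Your handling of uniformity in $\Lambda$ is also cleaner than the paper's appeal to Lemma~\ref{lem:energy-lower}. The bound $\|u\|_\infty\le\mu_{\min}^{-1/p}\|u\|_{\mathcal H_\Lambda}$ makes $X_k\le CX_k^{\varepsilon/p}+C$ uniform in $k$. Monotonicity of $d_\Lambda$ gives $d_{\Lambda_k}\ge d_{\Lambda_0}>0$.

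One correction to your wording: this monotonicity does not follow from the upper-bound step. It follows from three facts:
\begin{itemize}
\item $d_\Lambda=\inf_{M(u,v)>0}\max_{t>0}J_\Lambda(tu,tv)$, by Lemma~\ref{lem:projection1};
\item $\mathcal H_\Lambda$ is the same set for every $\Lambda>0$;
\item $J_\Lambda$ is pointwise nondecreasing in $\Lambda$, since $a,b\ge 0$.
\end{itemize}
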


\begin{proof}
Let $\Lambda_k\to\infty$ be an increasing sequence. By
Theorem~\ref{thm1}, for each $k$ there exists a ground
state $(u_k,v_k)\in\mathcal N_{\Lambda_k}$ with
$J_{\Lambda_k}(u_k,v_k)=d_{\Lambda_k}$ and
$J_{\Lambda_k}'(u_k,v_k)=0$. Since $\mathcal N_\Omega\subset\mathcal N_{\Lambda_k}$, we have $d_{\Lambda_k}\le d_\Omega$ for all $\Lambda_k$. Lemma~\ref{lem:energy-lower}
guarantees that $\{(u_k,v_k)\}$ is bounded in $\mathcal{H}_{\Lambda_k}$.
Passing to a subsequence if necessary, we may assume
\[
(u_k,v_k)\rightharpoonup(u,v)\ \text{weakly in }\mathcal H_{\Lambda_k},\qquad
(u_k,v_k)\to(u,v)\ \text{pointwise in }V,
\]
with strong convergence in $L^q\times L^q$ for all $q\ge p/2$.

We now prove that $u=0$ on $\Omega_a^c$ and $v=0$ on $\Omega_b^c$.
Suppose there exists $x_0\in\Omega_a^c$ with $u(x_0)\neq0$.
Then $|u_k(x_0)|\ge\frac12|u(x_0)|>0$ for all large $k$.
Since $a(x_0)>0$ and $\Lambda_k\to\infty$,
$\Lambda_k a(x_0)\to\infty$.  From the Nehari identity and the
exponent calibration estimate via \(L^{\infty}\) embedding used in Lemma~\ref{lem:energy-lower},
the principal part of the energy is bounded above by a constant,
yet the term $(1+\Lambda_k a(x_0))|u_k(x_0)|^p$ contained in it
tends to $+\infty$, a contradiction.  Thus $u=0$ on $\Omega_a^c$,
and similarly $v=0$ on $\Omega_b^c$.  Consequently
$(u,v)\in\mathcal H_\Omega$.

We now prove that \(\lim\limits_{k\to\infty} d_{\Lambda_k}=d_\Omega\).
For a fixed pair \((u,v)\neq(0,0)\) and any \(t>0\), the fibre map \(\gamma(t)=J_\Omega(t(u,v))\) satisfies
\[
\frac{\gamma'(t)}{t^{p-1}} = \|u\|_{W_0^{1,p}(\Omega_a)}^p+\|v\|_{W_0^{1,p}(\Omega_b)}^p
- \bigl(L(u,v) + (\log t^2+\tfrac{2}{p})M(u,v)\bigr),
\]
where \(L(u,v)\) is the logarithmic interaction and \(M(u,v)>0\) is the mixed integral; indeed, if \(M(u,v)=0\) then the minimising sequence would converge to zero, which is a contradiction.
Consequently, \(\gamma\) attains its unique maximum at some \(t_0>0\), and \(t_0(u,v)\in\mathcal{N}_\Omega\).

Using \(t_0(u,v)\) as a test function for \(d_\Omega\) and applying the weak lower semicontinuity of the norm together with Vitali's convergence theorem, the latter being justified by the pointwise convergence and the uniform integrability guaranteed by the \(\mathcal{H}\)-boundedness of \(\{(u_k,v_k)\}\)), we obtain
\[
\begin{aligned}
d_\Omega \le J_\Omega(t_0(u,v))
&\le \frac{1}{p}\liminf_{k\to\infty}\bigl(\|t_0u_k\|_{\mathcal{H}_{\Lambda_k,a}}^p+\|t_0v_k\|_{\mathcal{H}_{\Lambda_k,b}}^p\bigr)
   -\frac{1}{p}\lim_{k\to\infty} L(t_0u_k,t_0v_k)\\[4pt]
&= \liminf_{k\to\infty} J_{\Lambda_k}(t_0(u_k,v_k))
\le \liminf_{k\to\infty} J_{\Lambda_k}(u_k,v_k)
= \liminf_{k\to\infty} d_{\Lambda_k}.
\end{aligned}
\]
On the other hand, \(d_{\Lambda_k}\le d_\Omega\) for all \(k\).
Combining the two inequalities yields \(\lim\limits_{k\to\infty} d_{\Lambda_k}=d_\Omega\), as we want to prove. 
\end{proof}
\medskip
\begingroup
\renewcommand{\proofname}{}
\makeatletter
\renewenvironment{proof}[1][\proofname]{\par\noindent\ignorespaces}{\par\hfill$\square$}
\makeatother
\begin{proof}%
\textbf{Completion of the proof of Theorem~\ref{thm6}.} For an increasing sequence $\Lambda_k\to\infty$, let
$(u_k,v_k)\in\mathcal N_{\Lambda_k}$ be ground states of
$J_{\Lambda_k}$, so that $J_{\Lambda_k}(u_k,v_k)=d_{\Lambda_k}$ and
$J_{\Lambda_k}'(u_k,v_k)=0$.
By Lemma~\ref{lem:energy-lower} the sequence is bounded in
norm. Passing to a subsequence,
$(u_k,v_k)\rightharpoonup (u,v)$ weakly in $\mathcal H_{\Lambda_k}$ and
pointwise. From Lemma~\ref{lem:energy-conv} we already know
$\lim\limits_{k\to\infty}d_{\Lambda_k}=d_\Omega$, and we prove
$u=0$ on $\Omega_a^c$, $v=0$ on $\Omega_b^c$, so that
$(u,v)\in\mathcal H_\Omega$.

It remains to show that $(u,v)$ is a critical point of $J_\Omega$.
For any $\phi\in C_c(\Omega_a)$ and $\psi\in C_c(\Omega_b)$,
testing $J_{\Lambda_k}'(u_k,v_k)=0$ with $(\phi,\psi)$ yields
\begingroup
\allowdisplaybreaks
\begin{align*}
    J_{\Lambda_k}'(u,v)\cdot(\xi,\eta)
    &=\int_{V}\Bigl(|\nabla u|^{p-2}\nabla u\nabla\xi+|\nabla v|^{p-2}\nabla v\nabla\eta\Bigr)\,d\mu \\
    &\quad+\int_{V}\Bigl((1+\Lambda_k a(x))|u|^{p-2}u\xi+(1+\Lambda_k b(x))|v|^{p-2}v\eta\Bigr)\,d\mu \\
    &\quad-\frac{p-2}{p}\int_{V}|u|^{p-4}u\xi v^{2}\log v^{2}\,d\mu
      -\frac{2}{p}\int_{V}|u|^{p-2}\eta(v\log v^{2}+v)\,d\mu \\
    &\quad-\frac{p-2}{p}\int_{V}|v|^{p-4}v\eta u^{2}\log u^{2}\,d\mu
      -\frac{2}{p}\int_{V}|v|^{p-2}\xi(u\log u^{2}+u)\,d\mu .
\end{align*}
\endgroup
Since $a(x)=0$ on $\Omega_a$, $b(x)=0$ on $\Omega_b$, the potential term involving
$\Lambda_k$ vanishes identically.  The weak convergence of
$(u_k,v_k)$ in $\mathcal {H}_{\Lambda_k}$, the strong convergence in $L^q$ for
$q\ge p/2$, and the uniform $L^\infty$ bound together justify
passing to the limit in all terms.  The logarithmic terms converge
by the Vitali dominated convergence theorem discuss as before, we immediately obtain
\[
\langle J_\Omega'(u,v),(\phi,\psi)\rangle =0\qquad
\forall\,(\phi,\psi)\in C_c(\Omega_a)\times C_c(\Omega_b),
\]
so $(u,v)$ is a critical point of $J_\Omega$.

Using the energy identity,
\[
J_{\Lambda_k}(u_k,v_k)
 = \frac{2}{p^2}\int_V\bigl(|u_k|^{p-2}v_k^2+|v_k|^{p-2}u_k^2\bigr)\,d\mu .
\]
By Lemma~\ref{lem:energy-conv}, $d_{\Lambda_k}\to d_\Omega$ as \(k\to\infty\).
Passing to the limit via the strong $L^q$ convergence and the Vitali
dominated convergence theorem, justified as above, we obtain
\[
d_\Omega = \lim_{k\to\infty}J_{\Lambda_k}(u_k,v_k)
         = \frac{2}{p^2}\int_{\Omega_a\cup\Omega_b}
           \bigl(|u|^{p-2}v^2+|v|^{p-2}u^2\bigr)\,d\mu
         = J_\Omega(u,v) .
\]
Thus $(u,v)$ is a ground state of the limiting Dirichlet problem~
\eqref{eq:system5}.

The strong convergence of gradients follows from the standard
monotonicity argument as in Lemma~\ref{lem:cerami}.  Indeed, from
$\langle J_{\Lambda_k}'(u_k,v_k),(u_k-u,0)\rangle\to 0$, and the
convergence of all logarithmic and lower-order terms already
established, we obtain
\begin{equation}
\int_V \bigl(|\nabla u_k|^{p-2}\nabla u_k - |\nabla u|^{p-2}\nabla u\bigr)\cdot(\nabla u_k-\nabla u)\,d\mu
+ \int_V (1+\Lambda_k a)\bigl(|u_k|^{p-2}u_k - |u|^{p-2}u\bigr)(u_k-u)\,d\mu \to 0.
\end{equation}
The inequality
$|\xi-\eta|^p\le c(|\xi|^{p-2}\xi-|\eta|^{p-2}\eta)(\xi-\eta)$
then yields $\|\nabla(u_k-u)\|_{L^p(V)}\to0$ and
$\|u_k-u\|_{L^p(V)}\to0$, i.e.,\ $u_k\to u$ strongly in
$\mathcal H_{\Lambda_k,a}$.  The symmetric argument gives $v_k\to v$ strongly
in $\mathcal H_{\Lambda_k,b}$.  This completes the proof of
Theorem~\ref{thm6}.
\end{proof}
\endgroup

\begin{corollary}\label{cor:doublelimit}
If the parameter $\Lambda$ in the second equation of system ~\eqref{eq:system4} and ~\eqref{eq:system5} is replaced by an independent parameter $\mu$, the same arguments demonstrate that the ground state solution of ~\eqref{eq:system4} converges to that of ~\eqref{eq:system5} as both $\Lambda\to\infty$ and $\mu\to\infty$.
\end{corollary}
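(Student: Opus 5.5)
The plan is to rerun the proofs of Lemma~\ref{lem:energy-conv} and Theorem~\ref{thm6} with two independent parameters $(\Lambda,\mu)$. Take any sequence $(\Lambda_k,\mu_k)$ with $\Lambda_k\to\infty$ and $\mu_k\to\infty$, with no monotonicity coupling between the two, and let $(u_k,v_k)$ be ground states of the functional
\[
J_{\Lambda_k,\mu_k}(u,v)=J_{1+\Lambda_k a,\,1+\mu_k b}(u,v).
\]
For $\min\{\Lambda_k,\mu_k\}$ large, the potentials $1+\Lambda_k a$ and $1+\mu_k b$ satisfy (A1) and (A2$''$) componentwise. Hence the existence argument of Theorem~\ref{thm3} (via Remark~\ref{rmk 1.2}) still provides these ground states, and the compact embedding of the Remark after Lemma~\ref{lem5} holds. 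The point is that each condition involves only one potential at a time.

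\textbf{Step 1 (uniform bounds).} Since $\mathcal H_\Omega$ embeds in every $\mathcal H_{\Lambda,\mu}$ with identical energy, we have $\mathcal N_\Omega\subset\mathcal N_{\Lambda_k,\mu_k}$, and therefore $d_{\Lambda_k,\mu_k}\le d_\Omega$. The identity $J-\frac1p\langle J',(u,v)\rangle=\frac{2}{p^2}M$ then bounds the mixed integrals. Next, the calibration estimate $X_k\le CX_k^{\varepsilon/p}+C$ of Lemma~\ref{lem:energy-lower} bounds $X_k=\|u_k\|^p_{\mathcal H_{\Lambda_k,a}}+\|v_k\|^p_{\mathcal H_{\mu_k,b}}$. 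I must check that the constants in this estimate, namely the embedding constants into $L^\infty$ and $L^q$, are monotone in each parameter separately. They are, because $\|u\|_{\mathcal H_{\Lambda,a}}$ is increasing in $\Lambda$, so one may fix the constants at some $(\Lambda_0,\mu_0)$.

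\textbf{Step 2 (localisation and energy convergence).} Extract a weak and pointwise limit $(u,v)$. At any $x_0\in\Omega_a^c$ with $u(x_0)\ne0$, the term $\Lambda_k a(x_0)|u_k(x_0)|^p\to\infty$ contradicts Step 1. This argument uses only $\Lambda_k\to\infty$, and symmetrically $v=0$ on $\Omega_b^c$ uses only $\mu_k\to\infty$; this separation is exactly why the two limits decouple. The fibre-map and lower-semicontinuity argument of Lemma~\ref{lem:energy-conv} then gives $d_\Omega\le\liminf d_{\Lambda_k,\mu_k}$, so $d_{\Lambda_k,\mu_k}\to d_\Omega$. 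One needs $M(u,v)>0$ here, which follows from $d_\Omega>0$ and the mixed-integral limit.

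\textbf{Step 3 (limit is a ground state, strong convergence).} Test $J_{\Lambda_k,\mu_k}'(u_k,v_k)=0$ with $(\phi,\psi)\in C_c(\Omega_a)\times C_c(\Omega_b)$; the penalty terms vanish there, so $(u,v)$ is a critical point of $J_\Omega$. The energy identity then gives $J_\Omega(u,v)=d_\Omega$. The monotonicity argument of Theorem~\ref{thm6} gives strong convergence; the $u$- and $v$-equations are handled separately, each carrying its own parameter. Since the limit is the same for every sequence with both parameters tending to infinity, up to subsequences, the joint limit follows.

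The main obstacle is Step 1: verifying that the constants in the calibration bound and in Lemma~\ref{lem:energy-lower} are uniform over the two-parameter family and not only along the diagonal $\Lambda=\mu$. I would handle this by comparing norms with the fixed reference pair $(\Lambda_0,\mu_0)$ and invoking the monotonicity noted above.
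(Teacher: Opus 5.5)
Your route (rerunning Lemma~\ref{lem:energy-lower}, Lemma~\ref{lem:energy-conv} and the proof of Theorem~\ref{thm6} with the two penalties kept separate) is exactly what the paper's ``same arguments'' means, and your reference-pair monotonicity does make the \emph{upper} bounds uniform. The non-vanishing step, however, is justified incorrectly. You say $M(u,v)>0$ ``follows from $d_\Omega>0$ and the mixed-integral limit''. The limit only gives $M(u,v)=\frac{p^2}{2}\lim_k d_{\Lambda_k,\mu_k}$. At that stage the only information you have on $d_{\Lambda_k,\mu_k}$ is the upper bound $d_{\Lambda_k,\mu_k}\le d_\Omega$, which does not exclude $d_{\Lambda_k,\mu_k}\to0$, i.e.\ $(u_k,v_k)\to(0,0)$. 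Invoking $d_\Omega\le\liminf_k d_{\Lambda_k,\mu_k}$ instead would be circular, because the projection of $(u,v)$ onto $\mathcal N_\Omega$ that produces this inequality needs $M(u,v)>0$ as input.

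What is missing is a lower bound $d_{\Lambda,\mu}\ge c_0>0$ (equivalently $M\ge c$ on $\mathcal N_{\Lambda,\mu}$) that is uniform in \emph{both} parameters. You get it by rerunning Lemma~\ref{lem4.3} with frozen constants. Since $1+\Lambda a\ge1$ and $1+\mu b\ge1$, you have $\|u\|_{L^p}\le\|u\|_{\mathcal H_{\Lambda,a}}$ and $\|v\|_{L^p}\le\|v\|_{\mathcal H_{\mu,b}}$. Since the vertex measure is bounded below by $\mu_{\min}$, you have $\|w\|_{L^\infty}\le\mu_{\min}^{-1/p}\|w\|_{L^p}$. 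Hence the Nehari bound $\|u\|^p_{\mathcal H_{\Lambda,a}}+\|v\|^p_{\mathcal H_{\mu,b}}\ge c_1$, and then $M\ge c$, hold on every $\mathcal N_{\Lambda,\mu}$ with constants independent of $(\Lambda,\mu)$. This gives $M(u,v)=\lim_k M(u_k,v_k)\ge c>0$. The paper's phrase ``the minimising sequence would converge to zero, which is a contradiction'' in Lemma~\ref{lem:energy-conv} tacitly relies on the same uniform bound.

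Second, your closing sentence assumes the limit is unique, and it is not. By the remark in the introduction, $(u,-v)$, $(-u,v)$ and $(-u,-v)$ are ground states of~\eqref{eq:system5} along with $(u,v)$, and the same symmetry holds for~\eqref{eq:system4}. So the choice of ground states $(u_k,(-1)^kv_k)$ gives a sequence with two distinct cluster points (recall $v\not\equiv0$). The subsequence principle does give joint convergence of the energies, $d_{\Lambda,\mu}\to d_\Omega$ as $\min\{\Lambda,\mu\}\to\infty$, because that limit value is unique. For the states, the correct conclusion is the subsequential one, as in Theorem~\ref{thm6}: every sequence with $\Lambda_k,\mu_k\to\infty$ has a subsequence converging to some ground state of~\eqref{eq:system5}. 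Equivalently, the distance from the chosen ground states of~\eqref{eq:system4} to the set of ground states of~\eqref{eq:system5} tends to $0$.
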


\begin{theorem}
Let \(V\) be a locally finite graph, \(p>4\), and assume that the potentials \(a,b\) satisfy \((A_1)\) and \((A_2')\).
Let \((u_\Omega,v_\Omega)\) be the ground state of the limiting Dirichlet problem, and suppose that it is non-degenerate in the sense of Definition~\ref{def:5.1}.
Then there exists \(\delta>0\) such that for every pair \((a,b)\) with \(\|a-a_\Omega\|_\infty<\delta\) and \(\|b-b_\Omega\|_\infty<\delta\), the limiting Dirichlet problem has a unique ground state \((u_{a,b},v_{a,b})\) in \(\mathcal H_\Omega\), and the map \((a,b)\mapsto(u_{a,b},v_{a,b})\) is continuous.
\end{theorem}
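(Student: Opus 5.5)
We read the statement on the (finite) well sets: $\Omega_a,\Omega_b$ are bounded, hence finite, so $\mathcal H_\Omega$ is finite-dimensional. The pair $(a_\Omega,b_\Omega)$ denotes the reference potentials of the Dirichlet functional ($a_\Omega=b_\Omega\equiv1$ in \eqref{eq:system5}), and $(a,b)$ replaces them. Since $(u,v)\mapsto(\pm u,\pm v)$ preserves ground states, "unique" must be understood in the positive cone $\widetilde{\mathcal H}_\Omega$. We also read the hypothesis "the ground state $(u_\Omega,v_\Omega)$" as saying that it is the only ground state there. The plan is to rerun the argument of Lemma~\ref{Frechet differential} on $\mathcal H_\Omega$, and then upgrade the resulting local uniqueness to global uniqueness by a compactness argument.

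\emph{Step 1: regularity.} By the argument of Proposition~\ref{Pro1}, applied on the connected components of $\Omega_a\cup\Omega_b$ with the Dirichlet condition, the ground state $(u_\Omega,v_\Omega)$ is strictly positive on the wells. The wells are finite, so it has a positive minimum, and $J^\Omega_{a,b}$ and the Nehari functional $G$ are $C^2$ jointly in $(a,b,u,v)$ on a neighbourhood of it.

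\emph{Step 2: a local branch.} Form the bordered map
\[
F(a,b,u,v,\lambda)=\bigl(J_{a,b}^{\Omega\,\prime}(u,v)+\lambda G'(u,v),\,G(a,b,u,v)\bigr).
\]
Non-degeneracy, together with $\langle G'(u_\Omega,v_\Omega),(u_\Omega,v_\Omega)\rangle=-2M\neq0$, makes $D_{(u,v,\lambda)}F$ invertible. The argument is the decomposition $\mathcal H_\Omega=T\oplus\mathbb R(u_\Omega,v_\Omega)$ exactly as in Lemma~\ref{Frechet differential}. The Implicit Function Theorem then gives $\delta_1>0$, a neighbourhood $\mathcal V$ of $(u_\Omega,v_\Omega,0)$, and a $C^1$ map $(a,b)\mapsto(u_{a,b},v_{a,b},\lambda_{a,b})$. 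For $\|a-a_\Omega\|_\infty,\|b-b_\Omega\|_\infty<\delta_1$, this map gives the \emph{only} zero of $F$ in $\mathcal V$. Testing the first equation with $(u_{a,b},v_{a,b})$ and using $\langle G',(u,v)\rangle=-2M(u,v)<0$ near the reference point forces $\lambda_{a,b}=0$. So the branch consists of free critical points lying on $\mathcal N^\Omega_{a,b}$, and it stays in the positive cone after shrinking $\delta_1$. Continuity of $(a,b)\mapsto(u_{a,b},v_{a,b})$ is then automatic.

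\emph{Step 3: the branch is the unique ground state.} This is the main obstacle, because the Implicit Function Theorem only gives uniqueness inside $\mathcal V$. First, each perturbed problem has a positive ground state. Existence follows as in Theorem~\ref{thm3} / Lemma~\ref{lem4.4}, which is trivial in finite dimensions once coercivity on $\mathcal N$ is known, and replacing $(u,v)$ by $(|u|,|v|)$ keeps it a ground state. Suppose, for contradiction, that there are $(a_k,b_k)\to(a_\Omega,b_\Omega)$ and positive ground states $(\tilde u_k,\tilde v_k)$ of $J^\Omega_{a_k,b_k}$ lying outside $\mathcal V$. The Lipschitz bound of Theorem~\ref{thm4}, transplanted to $\mathcal H_\Omega$, gives $d(a_k,b_k)\to d_\Omega$. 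The identity $J=\tfrac2{p^2}M$ together with the boundedness argument of Lemma~\ref{lem4.4} bounds the sequence, so a subsequence converges. Uniform positivity of $a_k,b_k$ and the lower bound of Lemma~\ref{lem4.3} rule out the limit $0$. By continuity of $J$, $J'$ and $G$, the limit is then a nonnegative ground state of $J^\Omega_{a_\Omega,b_\Omega}$, which by assumption is $(u_\Omega,v_\Omega)$. This contradicts the fact that the sequence stays outside $\mathcal V$. Hence there is $\delta\le\delta_1$ such that every positive ground state of the perturbed problem lies in $\mathcal V$, and is therefore equal to $(u_{a,b},v_{a,b})$. In particular the branch point is itself the ground state, which is unique in $\widetilde{\mathcal H}_\Omega$ and depends continuously on $(a,b)$.

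The delicate points are confined to Step 3. One must check that the energy bounds of Theorem~\ref{thm4} and Lemma~\ref{lem4.4} are uniform in $(a,b)$ near $(a_\Omega,b_\Omega)$. One must also check that the sign symmetry has been fully quotiented out by the restriction to $\widetilde{\mathcal H}_\Omega$.
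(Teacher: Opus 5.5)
Your Steps 1--2 are essentially the paper's proof. The paper identifies $J_\Omega$ with $J_{a_\Omega,b_\Omega}$ on $\mathcal H_\Omega$, notes that non-degeneracy makes the bordered Hessian invertible on the tangent space of the Nehari manifold inside $\mathcal H_\Omega$, and invokes the implicit-function-theorem branch of Theorem~\ref{thm4} and Lemma~\ref{Frechet differential}. It then reads uniqueness and continuity directly off that branch.

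The real difference is your Step~3, and it fills a gap in the paper's argument. The paper treats IFT uniqueness as uniqueness of the ground state. But IFT only gives uniqueness of zeros of the bordered map inside a neighbourhood $\mathcal V$ of $(u_\Omega,v_\Omega,0)$. The paper also never checks that the branch points are ground states rather than merely constrained critical points.

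You make explicit the two conventions without which the statement is literally false:
\begin{itemize}
\item quotienting by the sign symmetry $(\pm u,\pm v)$;
\item uniqueness of the reference ground state in $\widetilde{\mathcal H}_\Omega$, which is exactly what the conclusion asserts at $(a,b)=(a_\Omega,b_\Omega)$.
\end{itemize}
You then push every nonnegative ground state of a nearby problem into $\mathcal V$ using finite-dimensionality, the energy upper bound, the a priori bound $X\le CX^{\varepsilon/p}+C$, and the Nehari lower bound of Lemma~\ref{lem4.3}. This gives the global uniqueness, and identifies the branch with the ground-state set, which the paper asserts but does not prove. The paper's version is shorter but yields only a locally unique continuous branch of critical points.

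Two small remarks.
\begin{itemize}
\item In Step~3 you only need the upper half of Theorem~\ref{thm4}, namely $d(a,b)\le J_{a,b}\bigl(t(u_\Omega,v_\Omega)\bigr)=t^p d_\Omega$ with $\log t=\Delta/(2M)$. The reverse inequality falls out of your limit argument. This lets you avoid the lower half, which tacitly needs uniform control of the perturbed ground states.
\item In Step~1, the argument of Proposition~\ref{Pro1} should be run on connected components of $\Omega_a$ for $u$ and of $\Omega_b$ for $v$, not of $\Omega_a\cup\Omega_b$. Strict positivity on the wells therefore uses that these sets are connected, as the word ``domains'' in Remark~\ref{rmk 1.2} suggests.
\end{itemize}
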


\begin{proof}
The statement is a direct consequence of Theorem~\ref{thm4} and of the implicit function theorem on the constrained subspace \(\mathcal H_\Omega\).
Indeed, the limiting Dirichlet functional \(J_\Omega\) restricted to
\[
\mathcal H_\Omega=\{(u,v)\in\mathcal H:u=0\text{ on }\Omega_a^c,\;v=0\text{ on }\Omega_b^c\}
\]
coincides with the standard functional \(J_{a_\Omega,b_\Omega}\) on \(\mathcal H_\Omega\).
The non-degeneracy hypothesis means exactly that the bordered Hessian of \(J_{a_\Omega,b_\Omega}\) is invertible when restricted to the tangent space of the Nehari manifold inside \(\mathcal H_\Omega\).
Theorem~\ref{thm4} establishes that under this condition the ground state branch is locally a \(C^1\) function of the potential.
Applying this result on \(\mathcal H_\Omega\) with the reference potential \((a_\Omega,b_\Omega)\) yields the existence of a neighbourhood in which the ground state is unique and depends continuously (in fact \(C^1\)) on the potential.
Thus the claim follows.
\end{proof}
\subsection*{Proof of Theorem~\ref{thm7}}
\begin{proof}
The argument is presented for \(u_\Lambda\); the estimate for \(v_\Lambda\) is identical.
From the energy expression
\[
d_\Lambda = J_\Lambda(u_\Lambda,v_\Lambda)=\frac1p\bigl(S_\Lambda-L_\Lambda\bigr),
\]
where
\[
S_\Lambda=\|u_\Lambda\|_{\mathcal H_{\Lambda}}^p+\|v_\Lambda\|_{\mathcal H_{\Lambda}}^p,\qquad
L_\Lambda=\int_V\bigl(|u_\Lambda|^{p-2}v_\Lambda^2\log v_\Lambda^2+|v_\Lambda|^{p-2}u_\Lambda^2\log u_\Lambda^2\bigr)d\mu,
\]
we expand \(S_\Lambda\) explicitly:
\[
\begin{aligned}
S_\Lambda&=\int_V\bigl(|\nabla u_\Lambda|^p+|\nabla v_\Lambda|^p\bigr)d\mu\\
&\quad+\int_{\Omega_a}(1+\Lambda a)|u_\Lambda|^p\,d\mu+\int_{\Omega_b}(1+\Lambda b)|v_\Lambda|^p\,d\mu\\
&\quad+\int_{\Omega_a^c}(1+\Lambda a)|u_\Lambda|^p\,d\mu+\int_{\Omega_b^c}(1+\Lambda b)|v_\Lambda|^p\,d\mu .
\end{aligned}
\]
Hence
\[
\begin{aligned}
S_\Lambda&=\int_V\bigl(|\nabla u_\Lambda|^p+|\nabla v_\Lambda|^p\bigr)d\mu
        +\int_{\Omega_a} |u_\Lambda|^p d\mu+\int_{\Omega_b} |v_\Lambda|^p d\mu\\
        &\quad+\int_{\Omega_a^c}(1+\Lambda a)|u_\Lambda|^p\,d\mu
        +\int_{\Omega_b^c}(1+\Lambda b)|v_\Lambda|^p\,d\mu,
\end{aligned}
\]
Insert this into \(p d_\Lambda = S_\Lambda - L_\Lambda\) and bring the \(\Lambda\)-weighted terms to the left-hand side:
\begin{equation}\label{eq:weighted}
\Lambda\!\int_{\Omega_a^c}a|u_\Lambda|^p\,d\mu+\Lambda\!\int_{\Omega_b^c}b|v_\Lambda|^p\,d\mu
= p d_\Lambda + L_\Lambda - \mathcal R_\Lambda,
\end{equation}
where
\[
\mathcal R_\Lambda=\int_V\bigl(|\nabla u_\Lambda|^p+|\nabla v_\Lambda|^p\bigr)d\mu
               +\int_V\bigl(|u_\Lambda|^p+|v_\Lambda|^p\bigr)d\mu \ge 0 .
\]
Since \(\mathcal R_\Lambda\ge0\), we obtain the fundamental inequality
\begin{equation}\label{eq:ineq}
\Lambda\int_{\Omega_a^c}a|u_\Lambda|^p\,d\mu \le p d_\Lambda + |L_\Lambda| .
\end{equation}
By direct computation, we obtain the universal relation \(M_\Lambda = \frac{p^2}{2} d_\Lambda\).
By the qualitative convergence from Theorem~\ref{thm6}, \(d_\Lambda\to d_\Omega\) as \(\Lambda\to\infty\), so there exists \(\Lambda_0>0\) and a constant \(M_0>0\) such that
\[
M_\Lambda \le M_0 \qquad\text{for all }\Lambda\ge\Lambda_0 .
\]

We split the logarithmic integrals according to whether the argument exceeds one.
For the first term in \(L_\Lambda\):
\[
\begin{aligned}
\int_V|u_\Lambda|^{p-2}v_\Lambda^2\log v_\Lambda^2\,d\mu
&\le \int_{\{v_\Lambda\ge1\}}|u_\Lambda|^{p-2}v_\Lambda^2\log v_\Lambda^2\,d\mu\\
&\quad\le C_{\gamma} \int_V |u_{\Lambda}|^{p-2} |v_{\Lambda}|^{2+\gamma}d\mu\\
&\quad\le C_{\gamma} \|v_{\Lambda}\|_{\infty}^{\gamma} \int_V |u_{\Lambda}|^{p-2} |v_{\Lambda}|^2 d\mu\le C_{\gamma} \|v_{\Lambda}\|_{\infty}^{\gamma} M_{\Lambda}.
\end{aligned}
\]
\(\gamma>0\) is to be chosen later. Insert this together with the symmetric term into the expression of \(L\), we obtain:
\[
L_\Lambda\le C_1\bigl(\|u_\Lambda\|_{\mathcal H_{\Lambda}}^\gamma
                     +\|v_\Lambda\|_{\mathcal H_{\Lambda}}^\gamma\bigr)M_\Lambda ,
\]
Now recall the Nehari identity: \(S_\Lambda = L_\Lambda + \frac2p M_\Lambda\).
Therefore
\[
\begin{aligned}
S_\Lambda 
\le C_1\bigl(\|u_\Lambda\|_{\mathcal H_{\Lambda}}^\gamma
          +\|v_\Lambda\|_{\mathcal H_{\Lambda}}^\gamma\bigr)M_\Lambda + C_2 M_\Lambda,
\end{aligned}
\]
Since \(\|u_\Lambda\|_{\mathcal H_{\Lambda}}^p \le S_\Lambda\) and similarly for \(v_\Lambda\), we have
\[
\|u_\Lambda\|_{\mathcal H_{\Lambda}}^\gamma \le S_\Lambda^{\gamma/p},\qquad
\|v_\Lambda\|_{\mathcal H_{\Lambda}}^\gamma \le S_\Lambda^{\gamma/p}.
\]
Thus
\[
S_\Lambda \le 2 C_1 M_\Lambda S_\Lambda^{\gamma/p} + C_2 M_\Lambda .
\]
Since \(M_\Lambda\le M_0\) for all \(\Lambda\ge\lambda_0\) and we can control \(\gamma/p<1\), there exists a constant \(S_0>0\) independent of \(\Lambda\) such that
\[
S_\Lambda \le S_0 \qquad\text{for all }\Lambda\ge\Lambda_0 .
\]
This immediately implies the uniform bounds
\[
\|u_{\Lambda}\|_{\infty}\le\|u_\Lambda\|_{\mathcal H_{\Lambda}}\le S_0^{1/p},\qquad
\|u_{\Lambda}\|_{\infty}\le\|v_\Lambda\|_{\mathcal H_{\Lambda}}\le S_0^{1/p},
\]
from which we obtain
\[
 L_\Lambda \le C_0,
\]
where \(C_0 = (C_5+C_2)M_0\) is independent of \(\Lambda\). Insert the uniform bounds for \(d_\Lambda\) and \(|L_\Lambda|\) into~\eqref{eq:ineq}:
\[
\Lambda\int_{\Omega_a^c}a|u_\Lambda|^p\,d\mu \le p d_\Lambda + C_0 \le C_1 .
\]
where we use the fact that \(d_{\Lambda}\) is uniformly bounded for \(\Lambda\ge \Lambda_0\). On the other hand, by hypothesis, \(a(x)\ge a_0>0\) on \(\Omega_a^c\). Hence
\[
\Lambda a_0\int_{\Omega_a^c}|u_\Lambda|^p\,d\mu \le C_1,
\qquad\text{so}\qquad
\int_{\Omega_a^c}|u_\Lambda|^p\,d\mu \le \frac{C_1}{a_0}\,\Lambda^{-1}.
\]
The identical argument for \(v_\Lambda\) gives
\[
\int_{\Omega_b^c}|v_\Lambda|^p\,d\mu \le \frac{C_1}{b_0}\,\Lambda^{-1}.
\]
Setting \(C = C_1\max\{a_0^{-1},b_0^{-1}\}\) completes the proof of Theorem~\ref{thm7}.
\end{proof}

\section{Regularised limit theory and compactness recovery  \texorpdfstring{($\mathbb R^N$)}{(R\^{}N)}}\label{sec:7}

In the continuous setting \(\mathbb{R}^N\), the passage to the limit in the logarithmic terms is no longer automatic. In this section we establish, in the continuous critical setting, the strong convergence of regularised ground states as \(\varepsilon\to0\), thereby recovering compactness. The central point of the proof is a decomposition of the logarithmic coupling into its positive and negative parts, which allows a uniform global control of the nonlinear terms. This decomposition plays a decisive role in both the identification of the weak limit and the subsequent minimality argument.

For a fixed \(\varepsilon>0\), we decompose the logarithmic term into its positive and negative parts. Precisely, define
\[
\begin{aligned}
L_\varepsilon^+(u,v)
&:=
\int_{\{\log(v^2+\varepsilon)\ge 0\}} |u|^{p-2}v^2 \log(v^2+\varepsilon)\,dx
+\int_{\{\log(u^2+\varepsilon)\ge 0\}} |v|^{p-2}u^2 \log(u^2+\varepsilon)\,dx,\\[2mm]
L_\varepsilon^-(u,v)
&:=
-\int_{\{\log(v^2+\varepsilon)< 0\}} |u|^{p-2}v^2 \log(v^2+\varepsilon)\,dx
-\int_{\{\log(u^2+\varepsilon)< 0\}} |v|^{p-2}u^2 \log(u^2+\varepsilon)\,dx.
\end{aligned}
\]
Both \(L_\varepsilon^+\) and \(L_\varepsilon^-\) are nonnegative, and
\[
L_\varepsilon(u,v)=L_\varepsilon^+(u,v)-L_\varepsilon^-(u,v).
\]
For the sequence \((u_n,v_n)\), we write
\[
L_n^\pm:=L_{\varepsilon_n}^\pm(u_n,v_n),
\]
and for the limit \((u_0,v_0)\), we write
\[
L_0^\pm:=L_0^\pm(u_0,v_0),
\]
where \(L_0^\pm\) is defined by the same formulae with \(\varepsilon=0\), that is,
\[
\begin{aligned}
L_0^+(u,v)
&:=
\int_{\{\log v^2\ge 0\}} |u|^{p-2}v^2 \log v^2\,dx
+\int_{\{\log u^2\ge 0\}} |v|^{p-2}u^2 \log u^2\,dx,\\[2mm]
L_0^-(u,v)
&:=
-\int_{\{\log v^2< 0\}} |u|^{p-2}v^2 \log v^2\,dx
-\int_{\{\log u^2< 0\}} |v|^{p-2}u^2 \log u^2\,dx.
\end{aligned}
\]
With these conventions, all the logarithmic quantities appearing in the subsequent arguments are well defined, nonnegative, and depend on the regularisation parameter in a coherent way. Since the original functional may fail to be Fréchet differentiable, we adopt the following variational definition. Let \(S\), \(L\), \(M\) be defined similar as Section~\ref{sec:stability}. A nontrivial pair \((u,v)\) is a ground state if
\[
S(u,v)=L(u,v)+\frac{2}{p}M(u,v)
\]
and it minimizes \(J=\frac{1}{p}(S-L)\) among all such pairs.

\begin{lemma}\label{lem7.1}
Under the same hypotheses as theorem~\ref{thm2}, there exists a constant \(c_0>0\) such that for every \(\varepsilon\in(0,1/2)\),
\[
d_\varepsilon:=\inf_{(u,v)\in\mathcal N_\varepsilon}J_\varepsilon(u,v)\ge c_0,
\]
where \(J_\varepsilon\) is defined as in Definition~\ref{def:Jeps-p4} and \(c_0\) is independent of \(\varepsilon\).
\end{lemma}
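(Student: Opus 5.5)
The plan is to run the two-step argument of Lemma~\ref{lem:coercivity}, checking that every constant is independent of $\varepsilon\in(0,1/2)$. Write
\[
R_\varepsilon(u,v)=\int_{\mathbb R^N}\Bigl(|u|^{p-2}\tfrac{v^4}{v^2+\varepsilon}+|v|^{p-2}\tfrac{u^4}{u^2+\varepsilon}\Bigr)dx .
\]
On $\mathcal N_\varepsilon$ we have $J_\varepsilon=\frac{2}{p^2}R_\varepsilon$. It therefore suffices to show two things: a uniform lower bound $\|(u,v)\|_{\mathcal G}\ge\delta_1$ on $\mathcal N_\varepsilon$, with $\delta_1$ independent of $\varepsilon$; and the fact that $R_\varepsilon(u,v)\to0$ along $\mathcal N_\varepsilon$ forces $\|(u,v)\|_{\mathcal G}\to0$, at a rate independent of $\varepsilon$.

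\textbf{Step 1: an $\varepsilon$-uniform threshold.} The key observation is the following. Since $p>4$, we have $e^{-2/p}>e^{-1/2}>\tfrac12$, so
\[
\kappa:=e^{-2/p}-\tfrac12>0 .
\]
For $v^2\le\kappa$ and every $\varepsilon<1/2$,
\[
v^2\log(v^2+\varepsilon)+\tfrac2p\,\tfrac{v^4}{v^2+\varepsilon}\le v^2\bigl(\log(\kappa+\tfrac12)+\tfrac2p\bigr)=0 .
\]
Hence in the Nehari identity
\[
S=\int|u|^{p-2}\Bigl(v^2\log(v^2+\varepsilon)+\tfrac2p\tfrac{v^4}{v^2+\varepsilon}\Bigr)dx+(\text{symmetric term})
\]
only the set $\{v^2>\kappa\}$ (respectively $\{u^2>\kappa\}$) contributes positively.

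On $\{v^2>\kappa\}$ we have $\log(v^2+\varepsilon)\le\log\bigl((1+\tfrac{1}{2\kappa})v^2\bigr)\le C_\sigma|v|^\sigma$. Therefore the integrand is bounded by $C|u|^{p-2}|v|^{2+\sigma}$, with $C$ depending only on $p,\sigma$ (using $v^2\le\kappa^{-\sigma/2}|v|^{2+\sigma}$ there). Hölder's inequality and Lemma~\ref{lem2.3}, whose embedding constants do not involve $\varepsilon$, then give
\[
S\le C\|(u,v)\|_{\mathcal G}^{p+\sigma}.
\]
This yields a uniform $\delta_1>0$ with $\|(u,v)\|_{\mathcal G}\ge\delta_1$ on every $\mathcal N_\varepsilon$. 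This threshold split replaces the absorption of $\frac2pM$ in Lemma~\ref{lem:coercivity}, which would otherwise require a comparison constant depending on $V_1$.

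\textbf{Step 2: small energy forces small norm.} Suppose $d_{\varepsilon_k}\to0$ along some $\varepsilon_k\in(0,1/2)$. Pick $(u_k,v_k)\in\mathcal N_{\varepsilon_k}$ with $R_{\varepsilon_k}(u_k,v_k)\to0$. On $\{v^2>\kappa\}$,
\[
v^2\le\bigl(1+\tfrac{1}{2\kappa}\bigr)\tfrac{v^4}{v^2+\varepsilon},
\]
so $\int_{\{v^2>\kappa\}}|u|^{p-2}v^2\le C R_\varepsilon$, uniformly in $\varepsilon$.

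For the logarithmic part we use the exponent calibration with $s=p/(p-\sigma)\in(1,2)$ and the parameters of \eqref{eq:param-choice}, restricted to $\{v^2>\kappa\}$. This gives
\[
\int_{\{v^2>\kappa\}}|u|^{p-2}|v|^{2+\sigma}\le C\Bigl(\int_{\{v^2>\kappa\}}|u|^{p-2}v^2\Bigr)^{1/s}\bigl(\|u\|_{\mathcal G_a}^{2p}+\|v\|_{\mathcal G_b}^{2p}\bigr)^{(s-1)/s}.
\]
Treating the symmetric term the same way, we obtain
\[
\|(u_k,v_k)\|_{\mathcal G}^p\le C R_k^{1/s}\|(u_k,v_k)\|_{\mathcal G}^{2p(s-1)/s}+C R_k .
\]
Since $s<2$, the exponent satisfies $2p(s-1)/s<p$, so this inequality forces $\|(u_k,v_k)\|_{\mathcal G}\to0$. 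That contradicts Step~1, and hence $d_\varepsilon\ge c_0>0$ uniformly in $\varepsilon$.

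The main obstacle I expect is Step~1: making sure that neither the positive part of $\log(v^2+\varepsilon)$ nor the $\frac2p$-term produces an $\varepsilon$-dependent or $V_1$-dependent constant. The choice of the threshold $\kappa$, which is admissible precisely because $p>4$ and $\varepsilon<1/2$, is the device I would rely on there. Everything else is a rerun of Lemma~\ref{lem:coercivity} with the constants tracked.
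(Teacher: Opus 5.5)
Your proof is correct, but it follows a different route from the paper's. The paper does not use a norm lower bound on \(\mathcal N_\varepsilon\) or the calibration here. It first observes that \(J_\varepsilon\ge\frac1p\bigl(S-L_\varepsilon^+\bigr)\) and that \(L_\varepsilon^+\le C\|(u,v)\|_{\mathcal G}^{p+2\sigma}\) uniformly in \(\varepsilon\). Hence \(J_\varepsilon\ge\alpha:=\frac{1}{2p}r^p\) on a small sphere \(\|(u,v)\|_{\mathcal G}=r\). It then shows that \(t\mapsto t^{1-p}\frac{d}{dt}J_\varepsilon(tu,tv)\) is strictly decreasing, so every point of \(\mathcal N_\varepsilon\) is the global maximum of its fibre. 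Therefore \(J_\varepsilon(u,v)\ge J_\varepsilon(t_r(u,v))\ge\alpha\), which the paper phrases through a mountain-pass level satisfying \(\alpha\le c_\varepsilon\le d_\varepsilon\).

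You instead work entirely with the Nehari identity. Your threshold \(\kappa=e^{-2/p}-\frac12\) makes the Nehari integrand nonpositive on \(\{v^2\le\kappa\}\) for every \(\varepsilon<1/2\), which gives an \(\varepsilon\)-uniform bound \(\|(u,v)\|_{\mathcal G}\ge\delta_1\). The H\"older--Young calibration then shows that \(R_\varepsilon\to0\) forces the norm to zero; its second factor is \(\int|u|^{p-2}|v|^{p+2}\le C\bigl(\|u\|_{2p}^{2p}+\|v\|_{2p}^{2p}\bigr)\), and the resulting exponent is \(2p(s-1)/s=2\sigma<p\). I checked both steps, and all constants are independent of \(\varepsilon\).

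The two routes trade off as follows. The paper's is lighter: it needs no mixed-integral control and no \(L^{2p}\) embedding, it gives \(c_0\) explicitly, and it produces the fibre-uniqueness that is reused in Lemma~\ref{lem:regularised-boundedness} and in Step 8 of Theorem~\ref{thm8}. Yours needs no fibre analysis at all. Your \(\kappa\)-split also avoids absorbing \(\frac2pM\) into \(\frac2pS\), as is done in the first part of Lemma~\ref{lem:coercivity}; that absorption tacitly requires \(\int|u|^p\le\|u\|_{\mathcal G_a}^p\), i.e.\ a size condition on \(V_1\).
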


\begin{proof}
Set
\[
S(u,v):=\|u\|_{\mathcal {G}_a}^p+\|v\|_{\mathcal {G}_b}^p
=\int_{\mathbb{R}^N}\bigl(|\nabla u|^p+a|u|^p+|\nabla v|^p+b|v|^p\bigr)\,dx,
\]
\[
L_{\varepsilon}(u,v):=\int_{\mathbb{R}^N}\bigl(|u|^{p-2}v^2\log(v^2+\varepsilon)
+|v|^{p-2}u^2\log(u^2+\varepsilon)\bigr)\,dx,
\]
and
\[
R_{\varepsilon}(u,v):=\int_{\mathbb{R}^N}\left(|u|^{p-2}\frac{v^4}{v^2+\varepsilon}
+|v|^{p-2}\frac{u^4}{u^2+\varepsilon}\right)\,dx.
\]
Then
\[
J_\varepsilon(u,v)=\frac1p S(u,v)-\frac1p L_{\varepsilon}(u,v).
\]
Fix \(\sigma\in(0,(p-2)/4)\), and let \(\|(u,v)\|_{\mathcal G}=r\). Decompose
\[
L_\varepsilon=L_\varepsilon^+-L_\varepsilon^-,
\qquad L_\varepsilon^\pm\ge0.
\]
Thus
\[
J_\varepsilon(u,v)\ge \frac1p S(u,v)-\frac1p L_\varepsilon^+(u,v).
\]
On the positive part \(\{v^2+\varepsilon\ge1\}\), since \(\varepsilon<1/2\), we have \(v\ge1/\sqrt2\), and hence
\[
\log(v^2+\varepsilon)\le C_\sigma v^{2\sigma}.
\]
The same estimate holds for \(u\). By Young's inequality and the embeddings
\[
\mathcal G_a,\mathcal G_b\hookrightarrow L^{p+2\sigma},
\]
we obtain
\[
L_\varepsilon^+(u,v)\le C\|(u,v)\|_{\mathcal G}^{p+2\sigma}=C r^{p+2\sigma}.
\]
Therefore
\[
J_\varepsilon(u,v)\ge \frac1p r^p-C r^{p+2\sigma}
=r^p\Bigl(\frac1p-C r^{2\sigma}\Bigr).
\]
Choose \(r>0\) so small that \(C r^{2\sigma}\le\frac1{2p}\). Then, for every \((u,v)\) with \(\|(u,v)\|_{\mathcal G}=r\),
\[
J_\varepsilon(u,v)\ge \frac1{2p}r^p=:\alpha>0,
\]
where \(\alpha\) is independent of \(\varepsilon\).
Moreover, the same estimate shows that \(J_\varepsilon(u,v)>0\) whenever \(0<\|(u,v)\|_{\mathcal G}<r\). Hence \(J_\varepsilon\) is nonnegative in the closed ball \(\{\|(u,v)\|_{\mathcal G}\le r\}\) and strictly positive away from the origin.

Choose a nonzero compactly supported pair \((\phi,\psi)\) such that
\[
M(\phi,\psi):=\int_{\mathbb{R}^N}\bigl(|\phi|^{p-2}\psi^2+|\psi|^{p-2}\phi^2\bigr)\,dx>0.
\]
For \(t>0\), using the above expression of \(J_\varepsilon\),
\[
J_\varepsilon(t\phi,t\psi)
=\frac{t^p}{p}S(\phi,\psi)
-\frac{t^p\log t^2}{p}M(\phi,\psi)
-\frac{t^p}{p}L_{\varepsilon/t^2}(\phi,\psi),
\]
where
\[
L_{\varepsilon/t^2}(\phi,\psi)
:=\int_{\mathbb{R}^N}\Bigl(|\phi|^{p-2}\psi^2\log\Bigl(\psi^2+\frac{\varepsilon}{t^2}\Bigr)
+|\psi|^{p-2}\phi^2\log\Bigl(\phi^2+\frac{\varepsilon}{t^2}\Bigr)\Bigr)\,dx.
\]
Since \(\phi,\psi\) have compact support and \(\varepsilon\in(0,1/2)\), the quantity \(L_{\varepsilon/t^2}(\phi,\psi)\) is bounded uniformly in \(\varepsilon\). Thus there exists \(t_0>0\), independent of \(\varepsilon\), such that
\[
J_\varepsilon(t_0\phi,t_0\psi)<0
\]
and \(\|t_0(\phi,\psi)\|_{\mathcal G}>r\).

We now turn to the fibre map. By Lemma~\ref{lem4.6}, for any nonzero pair \((u,v)\in\mathcal G\) with \(M(u,v)>0\), the fibre map \(F(t):=J_\varepsilon(tu,tv)\) has a global maximum point. We include the proof of uniqueness here. A direct computation yields
\[
\begin{aligned}
\frac{F'(t)}{t^{p-1}}
&=S(u,v)-L_{\varepsilon/t^2}(u,v)-\log t^2\,M(u,v)
-\frac{2}{p}M(u,v)\\
&\quad+\frac{2\varepsilon}{p}\int_{\mathbb{R}^N}\Bigl(\frac{|u|^{p-2}v^2}{t^2v^2+\varepsilon}
+\frac{|v|^{p-2}u^2}{t^2u^2+\varepsilon}\Bigr)\,dx,
\end{aligned}
\]
where
\[
L_{\varepsilon/t^2}(u,v):=\int_{\mathbb{R}^N}\Bigl(|u|^{p-2}v^2\log\Bigl(v^2+\frac{\varepsilon}{t^2}\Bigr)
+|v|^{p-2}u^2\log\Bigl(u^2+\frac{\varepsilon}{t^2}\Bigr)\Bigr)\,dx.
\]
Let \(g(t)\) denote the right-hand side. Differentiating, we obtain
\[
\begin{aligned}
g'(t)
&=-\int_{\mathbb{R}^N}\Bigl(\frac{2t |u|^{p-2}v^4}{t^2v^2+\varepsilon}
+\frac{2t |v|^{p-2}u^4}{t^2u^2+\varepsilon}\Bigr)\,dx\\
&\quad-\frac{2\varepsilon}{p}\int_{\mathbb{R}^N}\Bigl(\frac{2t |u|^{p-2}v^4}{(t^2v^2+\varepsilon)^2}
+\frac{2t |v|^{p-2}u^4}{(t^2u^2+\varepsilon)^2}\Bigr)\,dx.
\end{aligned}
\]
Both terms are strictly negative due to \(M(u,v)>0\). Therefore
\[
g'(t)<0\qquad\text{for all }t>0.
\]
Thus \(g(t)\) is strictly decreasing.
Combining the fact from Lemma~\ref{lem4.6} that \(F(t)\) has a global maximum point, we deduce there exists a unique \(t_\varepsilon>0\) such that \(F(t_\varepsilon)=0\), i.e., \(g'(t_\varepsilon)=0\), and \(t_\varepsilon\) is the unique global maximum point.
In particular, if \((u,v)\in\mathcal N_\varepsilon\), then \(F'(1)=0\), and by uniqueness \(t_\varepsilon=1\). Thus
\[
J_\varepsilon(u,v)=\max_{t>0}J_\varepsilon(tu,tv).
\]

Define
\[
\mathcal{P}_\varepsilon:=\Bigl\{\gamma\in C([0,1],\mathcal G):\gamma(0)=0,\ J_\varepsilon(\gamma(1))<0\Bigr\},
\]
and the mountain pass level
\[
c_\varepsilon:=\inf_{\gamma\in\mathcal{P}_\varepsilon}\max_{t\in[0,1]}J_\varepsilon(\gamma(t)).
\]

First, we prove \(c_\varepsilon\ge\alpha\). Take any \(\gamma\in\Gamma_\varepsilon\). Since the energy is nonnegative in the ball and the endpoint has negative energy, there exists \(s\in(0,1)\) such that \(\|\gamma(s)\|_{\mathcal G}=r\). By the previous estimate,
\[
\max_{t\in[0,1]}J_\varepsilon(\gamma(t))\ge J_\varepsilon(\gamma(s))\ge\alpha.
\]
Thus \(c_\varepsilon\ge\alpha\).

Next, we prove \(c_\varepsilon\le d_\varepsilon\). Take any \((u,v)\in\mathcal N_\varepsilon\). Choose \(t_0\) large enough that \(J_\varepsilon(t_0u,t_0v)<0\), and define \(\gamma(s)=s t_0(u,v)\), \(s\in[0,1]\). Then \(\gamma\in\Gamma_\varepsilon\), and the maximum is attained at \(s=1/t_0\), that is, at \(t=1\). Hence
\[
\max_{s\in[0,1]}J_\varepsilon(\gamma(s))=J_\varepsilon(u,v).
\]
Therefore \(c_\varepsilon\le J_\varepsilon(u,v)\). Taking the infimum over \((u,v)\in\mathcal N_\varepsilon\), we obtain \(c_\varepsilon\le d_\varepsilon\).

Combining the two inequalities gives
\[
d_\varepsilon\ge c_\varepsilon\ge\alpha=:c_0>0.
\]
This completes the proof.
\end{proof}

The following proposition supplies a quantitative estimate for the convergence of the regularised logarithmic expressions in the critical region \(\{\tilde{v}<\delta\}\). We now establishes that the three integrals
\[
\int_{S\cap\{\tilde v<\delta\}} |u_n|^{p-4}u_n\phi\,v_n^2\log(v_n^2+\varepsilon_n)\,dx,
\]
\[
\int_{S\cap\{\tilde v<\delta\}} |u_n|^{p-2}v_n\psi\,\log(v_n^2+\varepsilon_n)\,dx,
\]
\[
\int_{S\cap\{\tilde v<\delta\}} |u_n|^{p-2}\frac{v_n^3\psi}{v_n^2+\varepsilon_n}\,dx
\]
decay like \(O(\delta^\gamma)\) uniformly in \(n\), where \(\gamma>0\) and the implicit constant is independent of \(\varepsilon_n\), where \(\varepsilon_n\to 0\) as \(n\to\infty\). This estimate, while not logically necessary in the discrete setting, serves as a technical blueprint for the continuous critical case, where a similar threshold decomposition becomes essential for closing the compactness argument.

\begin{proposition}\label{prop7.1}
Assume that the potentials \(a,b\) satisfy the hypotheses of Theorem~\ref{thm2}.
Let \(\{(u_n,v_n)\}\) be a sequence of ground states of \(J_{\varepsilon_n}\), then, up to a subsequence,
\[
(u_n,v_n)\to(\tilde u,\tilde v) \text{ a.e. in }\mathbb{R}^N.
\]
Fix \(\phi,\psi\in C_c(\mathbb R^N)\) with compact support
\(S:=\operatorname{supp}(\phi)\cup\operatorname{supp}(\psi)\). Then for every \(\delta\in(0,1/2)\),
\[
\begin{aligned}
&\limsup_{n\to\infty}\left|\int_{S\cap\{\tilde v<\delta\}} |u_n|^{p-4}u_n\phi\,v_n^2\log(v_n^2+\varepsilon_n)\,dx\right|\\
&\quad+\limsup_{n\to\infty}\left|\int_{S\cap\{\tilde v<\delta\}} |u_n|^{p-2}v_n\psi\,\log(v_n^2+\varepsilon_n)\,dx\right|\\
&\quad+\limsup_{n\to\infty}\left|\int_{S\cap\{\tilde v<\delta\}} |u_n|^{p-2}\frac{v_n^3\psi}{v_n^2+\varepsilon_n}\,dx\right|
\le C\delta^\gamma,
\end{aligned}
\]
where \(\gamma\le \frac23\). The constant \(C\) depends only on \(\sup_n\|(u_n,v_n)\|_{\mathcal H}\), \(\|\phi\|_{\mathcal H_a}\), and \(\|\psi\|_{\mathcal H_b}\).
\end{proposition}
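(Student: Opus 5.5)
The plan is to combine a uniform bound on the sequence, Egorov's theorem on the compact set \(S\), and an \(\varepsilon\)-uniform pointwise bound on the regularised logarithm near the origin. The constant \(C\) in the statement depends on \(\sup_n\|(u_n,v_n)\|\), so I take that bound as given. Its finiteness comes from the uniform boundedness of regularised ground states (Lemma~\ref{lem:regularised-boundedness}, the second use of the exponent calibration technique), together with \(d_{\varepsilon}\le C\) uniformly in \(\varepsilon\), which follows by testing a fixed compactly supported pair as in Lemma~\ref{lem7.1}. Lemma~\ref{lem2.3} then gives a subsequence with \((u_n,v_n)\rightharpoonup(\tilde u,\tilde v)\) in \(\mathcal G\), strongly in every \(L^q\), \(q\in[p,\infty)\), and a.e.

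Since \(J_{\varepsilon}(|u|,|v|)=J_{\varepsilon}(u,v)\) and the Nehari constraint is preserved, I may assume \(u_n,v_n\ge0\). Hence \(\tilde v\ge 0\), and \(\{\tilde v<\delta\}\) is really the set \(\{0\le\tilde v<\delta\}\). This is the reading under which the statement makes sense; without it, \(\{\tilde v<\delta\}\) would contain regions where \(\tilde v\) is large and negative.

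\textbf{Step 1 (pointwise bounds uniform in \(\varepsilon\)).} Fix \(\theta\in(0,1/2)\). For \(0\le s\le1\) and \(\varepsilon\in(0,1/2)\) I will use
\[
|\log(s^2+\varepsilon)|\le \max\{|\log s^2|,\log\tfrac32\}\le C_\theta s^{-2\theta}.
\]
Indeed, if \(s^2+\varepsilon<1\) then \(\log s^2\le\log(s^2+\varepsilon)<0\), and otherwise \(0\le\log(s^2+\varepsilon)\le\log\tfrac32\). Consequently, for \(0\le s\le 1\),
\[
s^2|\log(s^2+\varepsilon)|\le C_\theta s^{2-2\theta},\qquad s|\log(s^2+\varepsilon)|\le C_\theta s^{1-2\theta},\qquad \frac{s^3}{s^2+\varepsilon}\le s .
\]
None of these constants depends on \(\varepsilon\).

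\textbf{Step 2 (Egorov splitting).} Fix \(\eta>0\). Since \(S\) is compact and \(v_n\to\tilde v\) a.e., Egorov's theorem gives \(E_\eta\subset S\) with \(|E_\eta|<\eta\) and \(v_n\to\tilde v\) uniformly on \(S\setminus E_\eta\). Then for \(n\ge n_\eta\) we have \(0\le v_n<2\delta\le1\) on \(A:=(S\cap\{\tilde v<\delta\})\setminus E_\eta\).

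On \(A\) I apply Step 1 and extract a power of \(\delta\), choosing \(\gamma\le\min\{2-2\theta,\,1-2\theta,\,1\}\):
\[
v_n^{2-2\theta}\le(2\delta)^{\gamma}v_n^{2-2\theta-\gamma},
\]
and similarly for the other two integrands. The remaining factors are of the form \(|u_n|^{p-3}|\phi|\,v_n^{\kappa}\) or \(|u_n|^{p-2}|\psi|\,v_n^{\kappa}\) with \(\kappa\ge0\), and I bound them by Hölder's inequality. I put \(u_n\) in \(L^{q}\) for a large \(q\), use \(v_n\le1\) on \(A\), and integrate over \(S\), which has finite measure. The test functions enter through \(\|\phi\|_{L^{q'}}\) and \(\|\psi\|_{L^{q'}}\), which are controlled by \(\|\phi\|_{\mathcal H_a}\) and \(\|\psi\|_{\mathcal H_b}\) via Lemma~\ref{lem2.3}. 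This gives \(C\delta^\gamma\) on \(A\), with \(C\) depending only on the uniform bound, \(\phi\), \(\psi\) and \(|S|\); one can take \(\gamma=2/3\) with \(\theta=1/6\).

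\textbf{Step 3 (exceptional set; the expected obstacle).} On \(E_\eta\), \(v_n\) need not be small and may exceed \(1\), so I use the global bound \(s^2|\log(s^2+\varepsilon)|\le C(s^{2-2\theta}+s^{2+2\theta})\), which is again uniform in \(\varepsilon\). Together with the uniform \(L^q\) bounds for all finite \(q\), Hölder gives a bound \(C|E_\eta|^{\kappa}\le C\eta^{\kappa}\) for some \(\kappa>0\); this is uniform integrability, obtained without any \(L^\infty\) control. The difficulty is that, because \(W^{1,N}\not\hookrightarrow L^\infty\), the product \(|u_n|^{p-3}|\phi|\,v_n^{2+2\theta}\) must be handled purely by Hölder exponents adapted to the \(L^q\) bounds. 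Here I would reuse the parameter choices of the calibration system \eqref{eq:cal-gen} so that every factor lands in some \(L^q\) with \(q\ge N\). Once this is in place, I take \(\limsup_n\) and then let \(\eta\to0\). This leaves \(C\delta^\gamma\) and proves the estimate.
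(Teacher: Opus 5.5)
Your argument is correct and follows the paper's route: Egorov's theorem on the compact set $S$, the pointwise smallness $v_n<2\delta$ on the good set combined with a power bound on the logarithm to extract $\delta^\gamma$, and uniform integrability on the exceptional set of measure $<\eta$, followed by $\limsup_n$ and then $\eta\to0$. The differences are only technical. Your $\varepsilon$-uniform bound $|\log(s^2+\varepsilon)|\le C_\theta s^{-2\theta}$ on $[0,1]$ replaces the paper's threshold split at $|v_n|=\varepsilon_n^{1/4}$, and since $\theta$ is decoupled from $\gamma$ it even allows any $\gamma<1$; on the exceptional set plain H\"older suffices, with no calibration system needed; and your restriction to $v_n\ge0$ (equivalently, reading the set as $\{|\tilde v|<\delta\}$, which the paper tacitly uses when it asserts $|v_n|\le2\delta$), together with the extra dependence of $C$ on $|S|$, are legitimate caveats about the statement rather than gaps in your proof.
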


\begin{proof}
Passing to a subsequence, the compact embedding \(\mathcal H \hookrightarrow L^q(\mathbb R^N)\) and the boundedness of \(\{(u_n,v_n)\}\) in \(\mathcal{H}\) give \((u_n,v_n)\to (\tilde{u},\tilde{v})\) a.e. in \(\mathbb R^N\).
Let \(S_\delta:=S\cap\{\tilde v<\delta\}\). By Egorov's theorem, for every \(\eta>0\) there exists \(E_\eta\subset S_\delta\) with \(\mu(S_\delta\setminus E_\eta)<\eta\) such that \(v_n\to\tilde v\) uniformly on \(S_\delta\cap E_\eta\). Hence for large \(n\),
\[
|v_n|\le 2\delta
\quad\text{on }S_\delta\cap E_\eta.
\]
We first estimate the integrals on \(S_\delta\cap E_\eta\), and then handle the complement.

Set
\[
\delta_n:=\varepsilon_n^{1/4}.
\]
On \(S_\delta\cap E_\eta\), decompose
\[
S_\delta\cap E_\eta=A_n^+\cup A_n^-,
\quad
A_n^+:=\{|v_n|\ge\delta_n\},\quad
A_n^-:=\{|v_n|<\delta_n\}.
\]

On \(A_n^+\), we have \(\delta_n\le |v_n|\le 2\delta\le1\). Using
\[
|\log(v_n^2+\varepsilon_n)|\le |\log v_n^2|+\log \left(1+\frac{\varepsilon_n}{v_n^2}\right)\le |\log v_n^2|+\frac{\varepsilon_n}{v_n^2}\le |\log v_n^2|+\varepsilon_n^{1/2}
\]
and, for \(\gamma\le 2/3\),
\[
|\log v_n^2|\le C v_n^{-\gamma/2},
\]
we obtain
\[
v_n^2|\log(v_n^2+\varepsilon_n)|\le C\delta^\gamma+C\varepsilon_n^{1/2},
\]
\[
v_n|\log(v_n^2+\varepsilon_n)|\le C\delta^\gamma+C\varepsilon_n^{1/2}.
\]
Therefore,
\[
\int_{A_n^+}|u_n|^{p-3}|\phi|v_n^2|\log(v_n^2+\varepsilon_n)|\,dx
\le C\delta^\gamma+C\varepsilon_n^{1/2},
\]
and
\[
\int_{A_n^+}|u_n|^{p-2}v_n|\psi||\log(v_n^2+\varepsilon_n)|\,dx
\le C\delta^\gamma+C\varepsilon_n^{1/2},
\]
where we used the uniform boundedness of \(\{u_n\}\) in \(\mathcal H\).

On \(A_n^-\), since \(|v_n|<\delta_n\),
\[
v_n^2|\log(v_n^2+\varepsilon_n)|
\le \varepsilon_n^{1/2}(|\log\varepsilon_n|+C),
\]
\[
v_n|\log(v_n^2+\varepsilon_n)|
\le \varepsilon_n^{1/4}(|\log\varepsilon_n|+C).
\]
As \(\varepsilon_n^\alpha|\log\varepsilon_n|\to0\) for all \(\alpha>0\), both quantities tend to zero uniformly. Hence the corresponding integrals on \(A_n^-\) tend to zero.

For the rational term, using
\[
\left|\frac{v_n^3}{v_n^2+\varepsilon_n}\right|\le |v_n|,
\]
we get
\[
\left|\int_{S_\delta\cap E_\eta}|u_n|^{p-2}\frac{v_n^3\psi}{v_n^2+\varepsilon_n}\,dx\right|
\le \int_{S_\delta\cap E_\eta}|u_n|^{p-2}|v_n||\psi|\,dx.
\]
Since \(|v_n|\le 2\delta\) on \(S_\delta\cap E_\eta\), this is bounded by \(C\delta\), hence by \(C\delta^\gamma\) because \(\gamma\le 1\).

It remains to consider \(S_\delta\setminus E_\eta\). Split this set into three parts according to
\[
|v_n|\ge1,\quad 2\delta<|v_n|<1,\quad |v_n|\le2\delta.
\]
On \(\{|v_n|\le2\delta\}\), we proceed as on \(A_n^+\) and \(A_n^-\), obtaining
\[
\int_{\{|v_n|\le2\delta\}\cap(S_\delta\setminus E_\eta)}
|u_n|^{p-3}|\phi|v_n^2|\log(v_n^2+\varepsilon_n)|\,dx
\le C\delta^\gamma+o(1).
\]
On \(\{2\delta<|v_n|<1\}\), the logarithmic term is bounded by a constant depending only on \(\delta\), so the integral is controlled by
\[
C_\delta\int_{S_\delta\setminus E_\eta}|u_n|^{p-3}|\phi|\,dx.
\]
On \(\{|v_n|\ge1\}\), we have
\[
|\log(v_n^2+\varepsilon_n)|\le \log(2v_n^2),
\]
and the integral is bounded by
\[
C\int_{S_\delta\setminus E_\eta}|u_n|^{p-3}|\phi|v_n^2\,dx.
\]
Since \(\mu(S_\delta\setminus E_\eta)<\eta\) and \(\{u_n\},\{v_n\}\) are uniformly bounded in \(L^p\), all three integrals over \(S_\delta\setminus E_\eta\) tend to zero as \(n\to\infty\) and then \(\eta\to0\). The symmetric logarithmic term is handled identically.
\end{proof}

\begin{lemma}\label{lem:regularised-boundedness}
Under the hypotheses of Theorem~\ref{thm2}, let \((u_\varepsilon,v_\varepsilon)\) be a ground state of the regularised functional \(J_\varepsilon\) on the Nehari manifold \(\mathcal N_\varepsilon\) for each \(\varepsilon\in(0,1/2)\). Then there exists a constant \(C>0\) such that
\[
\|(u_\varepsilon,v_\varepsilon)\|_{\mathcal G}\le C
\qquad\text{for all }\varepsilon\in(0,1/2).
\]
\end{lemma}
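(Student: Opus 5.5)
The plan is to follow the boundedness argument of Lemma~\ref{lem:coercivity}, keeping every constant independent of \(\varepsilon\). Two ingredients are needed.

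\textbf{Uniform energy bound.} Fix a compactly supported pair \((\phi,\psi)\) with \(M(\phi,\psi)>0\). By Lemma~\ref{lem4.6} and the uniqueness of the fibre maximum shown in Lemma~\ref{lem7.1}, there is \(t_\varepsilon>0\) with \(t_\varepsilon(\phi,\psi)\in\mathcal N_\varepsilon\), and
\[
d_\varepsilon\le \max_{t>0}J_\varepsilon(t\phi,t\psi).
\]
On the region \(\{\log(\psi^2+\varepsilon/t^2)\ge 0\}\) the logarithm is bounded below by \(\log\psi^2\). Hence the factor \(-\frac{t^p}{p}L_{\varepsilon/t^2}(\phi,\psi)\) in the expansion of Lemma~\ref{lem7.1} is bounded above by \(\frac{t^p}{p}\) times a quantity involving \(\int|\phi|^{p-2}\psi^2(\log\psi^2)^-\), which is finite for compactly supported smooth data. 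The term \(-\frac{t^p\log t^2}{p}M\) forces decay for large \(t\). Together these give \(d_\varepsilon\le D\) with \(D\) independent of \(\varepsilon\in(0,1/2)\).

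\textbf{Mixed and small-set integrals.} The Nehari identity gives
\[
d_\varepsilon=\frac{2}{p^2}R_\varepsilon(u_\varepsilon,v_\varepsilon),
\]
so \(R_\varepsilon\le p^2D/2\). Since \(\frac{u^4}{u^2+\varepsilon}\ge \frac12u^2\) on \(\{u^2\ge\varepsilon\}\), this bounds \(\int_{\{u^2\ge\varepsilon\}}|v|^{p-2}u^2\), and its symmetric counterpart, uniformly. This is exactly the quantity the calibration uses. The Nehari identity then reads
\[
S=L_\varepsilon^+-L_\varepsilon^-+\tfrac2pR_\varepsilon\le L_\varepsilon^++C.
\]
The set \(\{u^2+\varepsilon\ge1\}\) is contained in \(\{u^2\ge 1/2\}\), and there \(\log(u^2+\varepsilon)\le \log(2u^2)\le C_\sigma|u|^{2\sigma}\) with \(C_\sigma\) independent of \(\varepsilon\).

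\textbf{Exponent calibration.} Apply the Hölder--Young splitting with the parameters of \eqref{eq:param-choice} to \(\int_{\{u^2\ge1/2\}}|v|^{p-2}|u|^{2+\sigma}\). The first factor is the uniformly bounded mixed integral. The second factor is controlled by \(\int(|u|^{2p}+|v|^{2p})\le C S^2\), using Lemma~\ref{lem2.3} with \(q=2p<\infty\); this avoids any need for \(L^\infty\). The result is
\[
S\le C\,S^{2(s-1)/s}+C,\qquad s=\frac{p}{p-\sigma},
\]
with constants independent of \(\varepsilon\). Boundedness of \(S\) follows once \(\frac{2(s-1)}{s}=\frac{2\sigma}{p}<1\), which holds for small \(\sigma\).

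The \textbf{main obstacle} is the uniform energy bound \(d_\varepsilon\le D\). The logarithmic negative part depends on \(\varepsilon\), and I must check that \(t_\varepsilon\) stays in a compact subset of \((0,\infty)\). I would get this from monotonicity in \(\varepsilon\): \(\log(\psi^2+\varepsilon/t^2)\le \log(\psi^2+1/(2t^2))\), so the \(\varepsilon=1/2\) fibre bounds all the others from above.
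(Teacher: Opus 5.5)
Your argument is correct and has the same architecture as the paper's: a uniform upper bound on $d_\varepsilon$ from a fixed test fibre, then a bound on $R_\varepsilon$ from $d_\varepsilon=\frac{2}{p^2}R_\varepsilon$, then the H\"older--Young calibration with the $L^{2p}$ embedding of Lemma~\ref{lem2.3}.

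The real difference is in the first step. The paper localises the fibre maximiser $t_\varepsilon$ of $(\phi,\psi)$ in a fixed interval $[a,b]\subset(0,\infty)$. It uses uniform decay for large $t$, and for the lower endpoint a contradiction with the uniform bound $d_\varepsilon\ge c_0$ of Lemma~\ref{lem7.1}. Only after that does it bound $F_\varepsilon(t_\varepsilon)$. You use the pointwise inequality $\log(\psi^2+\varepsilon/t^2)\ge\log\psi^2$, which gives, for all $t>0$ and all $\varepsilon$,
\[
J_\varepsilon(t\phi,t\psi)\le \frac{t^p}{p}\bigl(S(\phi,\psi)+L_0^-(\phi,\psi)\bigr)-\frac{t^p\log t^2}{p}M(\phi,\psi).
\]
The right-hand side is bounded above on $(0,\infty)$ because $M(\phi,\psi)>0$. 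This route is shorter. It needs neither Lemma~\ref{lem7.1} nor uniqueness of the fibre maximum; the existence statement of Lemma~\ref{lem4.6} is enough.

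In the calibration you also make explicit what the paper leaves to a citation of Lemma~\ref{lem:coercivity}. The bound on $R_\varepsilon$ controls $\int_{\{u^2\ge\varepsilon\}}|v|^{p-2}u^2$, since $u^4/(u^2+\varepsilon)\ge u^2/2$ there. Moreover $\{u^2+\varepsilon\ge1\}\subset\{u^2\ge 1/2\}\subset\{u^2\ge\varepsilon\}$. Together these make the first H\"older factor uniformly bounded.

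Delete your closing ``main obstacle'' paragraph. Once the bound above is in hand, you never need to control $t_\varepsilon$. The monotonicity you invoke there also runs the wrong way. $\log(\cdot+\varepsilon)$ increases with $\varepsilon$ and enters $J_\varepsilon$ with a minus sign, so
\[
J_{1/2}(t\phi,t\psi)\le J_\varepsilon(t\phi,t\psi)\le J(t\phi,t\psi).
\]
So the $\varepsilon=1/2$ fibre is a lower envelope, not an upper one. The correct upper envelope is the unregularised fibre $J(t\phi,t\psi)$, and that is exactly what your first paragraph already uses.
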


\begin{proof}
Choose a nonnegative pair \((\phi,\psi)\in C_c^\infty(\mathbb R^N)\times C_c^\infty(\mathbb R^N)\) with \((\phi,\psi)\not\equiv(0,0)\). For instance, take \(\phi=\psi\) with \(\phi\not\equiv0\). Then
\[
M(\phi,\psi):=\int_{\mathbb R^N}\bigl(|\phi|^{p-2}\psi^2+|\psi|^{p-2}\phi^2\bigr)\,dx>0.
\]
For each \(\varepsilon\in(0,1/2)\), the regularised fibre map
\[
F_\varepsilon(t):=J_\varepsilon(t(\phi,\psi))
\]
has a unique global maximum point \(t_\varepsilon>0\), and \(t_\varepsilon(\phi,\psi)\in\mathcal N_\varepsilon\).

We first show that \(\{t_\varepsilon\}\) is contained in a fixed compact interval \([a,b]\subset(0,\infty)\). As \(t\to+\infty\), the logarithmic term \(-\log t^2\,M(\phi,\psi)\) dominates uniformly in \(\varepsilon\), because \(M(\phi,\psi)>0\) and the remaining terms have compact support. Hence \(F_\varepsilon(t)\to-\infty\) uniformly in \(\varepsilon\), which gives a uniform upper bound \(t_\varepsilon\le b\).

Suppose now that there is a sequence \(\varepsilon_n\to0\) with \(t_{\varepsilon_n}\to0\). Since \(\phi,\psi\) have compact support and \(\varepsilon_n\in(0,1/2)\), the logarithmic and rational terms in \(J_{\varepsilon_n}\) remain bounded as \(t\to0^+\), uniformly in \(n\). Thus
\[
F_{\varepsilon_n}(t_{\varepsilon_n})\to0.
\]
But \(t_{\varepsilon_n}(\phi,\psi)\in\mathcal N_{\varepsilon_n}\), so
\[
d_{\varepsilon_n}\le F_{\varepsilon_n}(t_{\varepsilon_n}).
\]
On the other hand, by Lemma~\ref{lem7.1} there exists \(c_0>0\) such that
\[
d_{\varepsilon_n}\ge c_0
\]
for all \(n\), a contradiction. Therefore \(t_\varepsilon\ge a>0\) for some positive \(a\) independent of \(\varepsilon\).
Thus \(t_\varepsilon\in[a,b]\) for every \(\varepsilon\in(0,1/2)\). Since \((\phi,\psi)\) is compactly supported and \(\varepsilon\) ranges in a bounded interval, we have
\[
\sup_{\varepsilon\in(0,1/2)}F_\varepsilon(t_\varepsilon)\le C.
\]
Hence
\[
d_\varepsilon=J_\varepsilon(u_\varepsilon,v_\varepsilon)
\le J_\varepsilon(t_\varepsilon(\phi,\psi))
=F_\varepsilon(t_\varepsilon)
\le C
\]
uniformly in \(\varepsilon\).

Now we set
\[
X_\varepsilon:=\|u_\varepsilon\|_{\mathcal G_a}^p+\|v_\varepsilon\|_{\mathcal G_b}^p.
\]
Since \((u_\varepsilon,v_\varepsilon)\in\mathcal N_\varepsilon\), the regularised Nehari identity gives
\[
X_\varepsilon
= L_\varepsilon^+(u_\varepsilon,v_\varepsilon)-L_\varepsilon^-(u_\varepsilon,v_\varepsilon)
+\frac{2}{p}R_\varepsilon(u_\varepsilon,v_\varepsilon),
\]
where
\[
R_\varepsilon(u,v):=\int_{\mathbb R^N}
\Bigl(|u|^{p-2}\frac{v^4}{v^2+\varepsilon}
+|v|^{p-2}\frac{u^4}{u^2+\varepsilon}\Bigr)\,dx.
\]
The regularised energy identity gives
\[
d_\varepsilon=\frac{2}{p^2}R_\varepsilon(u_\varepsilon,v_\varepsilon),
\]
so the uniform upper bound on \(d_\varepsilon\) implies
\[
R_\varepsilon(u_\varepsilon,v_\varepsilon)\le C
\]
uniformly in \(\varepsilon\).
Since \(L_\varepsilon^-\ge0\), we obtain
\[
X_\varepsilon
\le L_\varepsilon^+(u_\varepsilon,v_\varepsilon)+C.
\]
By \textbf{the exponent calibration} estimate established in Lemma~\ref{lem:coercivity}, there exist constants \(C>0\) and \(\sigma\in(0,1)\) such that for every \((u,v)\in\mathcal N_\varepsilon\),
\[
L_\varepsilon^+(u,v)\le C \bigl(\|u\|_{\mathcal G_a}^p+\|v\|_{\mathcal G_b}^p\bigr)^\theta.
\]
Applying this to \((u_\varepsilon,v_\varepsilon)\), we get
\[
X_\varepsilon
\le C X_\varepsilon^{\sigma}+C,
\]
which yields \(X_\varepsilon\le C\). Thus
\[
\|(u_\varepsilon,v_\varepsilon)\|_{\mathcal G}\le C
\]
uniformly in \(\varepsilon\), completing the proof.
\end{proof}

\subsection*{Proof of Theorem~\ref{thm8}}
\begin{proof}
We proceed in several steps.

\medskip
\noindent\textbf{Step 1.}
By Lemma~\ref{lem:regularised-boundedness}, there exists a constant \(C>0\) such that
\[
\|(u_n,v_n)\|_{\mathcal G}\le C\qquad \forall n.
\]
Hence, extracting a subsequence if necessary, there exists \((u_0,v_0)\in\mathcal G\) such that
\[
(u_n,v_n)\rightharpoonup (u_0,v_0)\quad\text{weakly in }\mathcal G.
\]
By the compact embedding of Lemma~\ref{lem2.3}, for every \(q\ge N\),
\[
(u_n,v_n)\to(u_0,v_0)\quad\text{strongly in }L^q(\mathbb R^N)\times L^q(\mathbb R^N).
\]
Passing to a further subsequence, we may also assume
\[
(u_n,v_n)\to(u_0,v_0)\quad\text{a.e. in }\mathbb R^N.
\]
Since by Lemma~\ref{lem7.1} the regularised ground-state energy has a uniform positive lower bound, it follows that \((u_0,v_0)\not\equiv(0,0)\).

\medskip
\noindent\textbf{Step 2.}
Fix a compact set \(K\subset\mathbb R^N\). We prove the \(L^1(K)\) convergence of the nonlinear terms; it suffices to treat the first logarithmic term
\[
h_n:=|u_n|^{p-2}v_n^2\log(v_n^2+\varepsilon_n),
\]
the remaining terms being analogous.

Pointwise convergence: by almost everywhere convergence and \(\varepsilon_n\to0\), we obtain
\[
h_n\to h_0:=|u_0|^{p-2}v_0^2\log v_0^2\quad\text{a.e.},
\]
with the convention \(h_0=0\) on \(\{v_0=0\}\).

To prove uniform integrability on \(K\), decompose \(K\) into the positive region \(\{v_n^2+\varepsilon_n\ge1\}\) and the negative region \(\{v_n^2+\varepsilon_n<1\}\).

On the positive region, \(v_n\ge1/\sqrt2\). Choose \(\sigma\in(0,(p-4)/2)\). Then
\[
|\log(v_n^2+\varepsilon_n)|\le C_\sigma v_n^{2\sigma}.
\]
Thus
\[
|h_n|\le C|u_n|^{p-2}v_n^{2+2\sigma}.
\]
Decomposing according to whether \(|u_n|\ge|v_n|\) or \(|u_n|<|v_n|\), we obtain
\[
|h_n|\le C\bigl(|u_n|^{p+2\sigma}+|v_n|^{p+2\sigma}\bigr).
\]
Since \(u_n,v_n\) converge strongly in \(L^{p+2\sigma}(K)\), the right-hand side is uniformly integrable on \(K\). Hence \(\{h_n\}\) is uniformly integrable on the positive region.

On the negative region, \(v_n<1\). The function \(t\mapsto t|\log(t^2+\varepsilon)|\) is uniformly bounded on \((0,1)\times(0,1/2]\), so
\[
v_n^2|\log(v_n^2+\varepsilon_n)|\le C v_n.
\]
Thus
\[
|h_n|\le C|u_n|^{p-2}v_n.
\]
To prove that \(\{|u_n|^{p-2}v_n\}\) is uniformly integrable on \(K\), use Hölder's inequality:
\[
\int_K |u_n|^{\frac{(p-2)p}{p-1}} v_n^{\frac{p}{p-1}}\,dx
\le \left(\int_K |u_n|^p dx\right)^{\frac{p-2}{p-1}}
\left(\int_K |v_n|^p dx\right)^{\frac{1}{p-1}}
\le C.
\]
Thus \(\{|u_n|^{p-2}v_n\}\) is uniformly bounded in \(L^{p/(p-1)}(K)\), and since \(p/(p-1)>1\), the H\"{o}lder's inequality immediately implies uniform integrability in \(L^1(K)\). Hence \(\{h_n\}\) is uniformly integrable on the negative region.

Combining both regions, \(\{h_n|_K\}\) is uniformly integrable. By Vitali's convergence theorem,
\[
h_n\to h_0\quad\text{strongly in }L^1(K).
\]
The remaining logarithmic and rational terms are treated similarly.

\medskip
\noindent\textbf{Step 3.}
Let \(L_n^\pm\) denote the absolute values of the positive and negative parts of the logarithmic integral, that is,
\[
L_n^+:=\int_{\mathbb R^N}\Bigl(|u_n|^{p-2}v_n^2(\log(v_n^2+\varepsilon_n))_+
+|v_n|^{p-2}u_n^2(\log(u_n^2+\varepsilon_n))_+\Bigr)\,dx,
\]
and \(L_n^-\) similarly for the negative parts. Let also
\[
R_{\varepsilon_n}(u_n,v_n)
:=\int_{\mathbb R^N}\Bigl(|u_n|^{p-2}\frac{v_n^4}{v_n^2+\varepsilon_n}
+|v_n|^{p-2}\frac{u_n^4}{u_n^2+\varepsilon_n}\Bigr)\,dx,
\]
and set
\[
M_0:=\int_{\mathbb R^N}\bigl(|u_0|^{p-2}v_0^2+|v_0|^{p-2}u_0^2\bigr)\,dx.
\]
We claim that
\[
L_n^+\to L_0^+,\qquad R_{\varepsilon_n}(u_n,v_n)\to M_0
\]
strongly in \(L^1(\mathbb R^N)\).

To prove the convergence of \(L_n^+\), we proceed as follows. On the set \(\{v_n^2+\varepsilon_n\ge1\}\), we have \(v_n\ge1/\sqrt2\), and for \(\sigma\in(0,(p-4)/2)\),
\[
|\log(v_n^2+\varepsilon_n)|\le C_\sigma v_n^{2\sigma}.
\]
Thus
\[
|u_n|^{p-2}v_n^2|\log(v_n^2+\varepsilon_n)|
\le C|u_n|^{p-2}v_n^{2+2\sigma}.
\]
Decomposing according to whether \(|u_n|\ge|v_n|\) or \(|u_n|<|v_n|\), we get
\[
|u_n|^{p-2}v_n^{2+2\sigma}
\le C\bigl(|u_n|^{p+2\sigma}+|v_n|^{p+2\sigma}\bigr).
\]
Similarly for the symmetric term. Since \(u_n\to u_0\) and \(v_n\to v_0\) strongly in \(L^{p+2\sigma}(\mathbb R^N)\), the family
\[
\bigl\{|u_n|^{p+2\sigma}+|v_n|^{p+2\sigma}\bigr\}
\]
is uniformly integrable on \(\mathbb R^N\). Hence the positive logarithmic part is uniformly integrable. The pointwise convergence is clear. Therefore, by Vitali's theorem, \(L_n^+\to L_0^+\) in \(L^1(\mathbb R^N)\).

For the rational term, since
\[
0\le\frac{v_n^4}{v_n^2+\varepsilon_n}\le v_n^2,
\]
we have
\[
|u_n|^{p-2}\frac{v_n^4}{v_n^2+\varepsilon_n}
\le |u_n|^{p-2}v_n^2.
\]
Using Hölder's inequality,
\[
\int_{\mathbb R^N}|u_n|^{p-2}v_n^2\,dx
\le \left(\int_{\mathbb R^N}|u_n|^p dx\right)^{\frac{p-2}{p}}
\left(\int_{\mathbb R^N}|v_n|^p dx\right)^{\frac{2}{p}}.
\]
Since \(u_n\to u_0\) and \(v_n\to v_0\) strongly in \(L^N(\mathbb R^N)\), the right-hand side converges, and for every measurable set \(E\subset\mathbb R^N\),
\[
\int_E |u_n|^{p-2}v_n^2\,dx
\le \left(\int_E |u_n|^p dx\right)^{\frac{p-2}{p}}
\left(\int_E |v_n|^p dx\right)^{\frac{2}{p}}.
\]
As \(\{|u_n|^p\}\) and \(\{|v_n|^p\}\) are uniformly integrable on \(\mathbb R^N\), the family \(\{|u_n|^{p-2}v_n^2\}\) is uniformly integrable. Pointwise convergence is clear. By Vitali's theorem, the rational term converges in \(L^1(\mathbb R^N)\) due to the uniform integrability of 
\(\{|u_n|^{p-2}v_n^2\}\) and \(\{|v_n|^{p-2}u_n^2\}\) in \(L^1(\mathbb R^N)\). Thus \(R_{\varepsilon_n}(u_n,v_n)\to M_0\) in \(L^1(\mathbb R^N)\).

\medskip
\noindent\textbf{Step 4.}
We now proceed with the identification of the weak limit. Firstly, since \(|\nabla u_n|^{p-2}\nabla u_n\) and \(|\nabla v_n|^{p-2}\nabla v_n\) have weak limit in \(L^{p'}(\mathbb{R}^N)\) respectively because both of them bounded in \(L^{p'}(\mathbb{R}^N)\).
Passing to the limit in the regularised Euler--Lagrange equation, for every \((\phi,\psi)\in C_c^\infty(\mathbb R^N)\times C_c^\infty(\mathbb R^N)\), we obtain
\[
\begin{aligned}
0={}&\int_{\mathbb R^N}\chi_u\cdot\nabla\phi\,dx+\int_{\mathbb R^N}a|u_0|^{p-2}u_0\phi\,dx\\
&+\int_{\mathbb R^N}\chi_v\cdot\nabla\psi\,dx+\int_{\mathbb R^N}b|v_0|^{p-2}v_0\psi\,dx\\
&-\int_{\mathbb R^N}R_u^{(0)}\phi\,dx-\int_{\mathbb R^N}R_v^{(0)}\psi\,dx,
\end{aligned}
\]
where \(\chi_u,\chi_v\in L^{p'}\) with \(p'=p/(p-1)\) are the weak limits of \(|\nabla u_n|^{p-2}\nabla u_n\) and \(|\nabla v_n|^{p-2}\nabla v_n\), respectively, and
\[
\begin{aligned}
R_u^{(0)}&=\frac{p-2}{p}|u_0|^{p-2}v_0^2\log v_0^2
+\frac{2}{p}|v_0|^{p-2}u_0^2\log u_0^2
+\frac{2}{p}|v_0|^{p-2}u_0^2,\\
R_v^{(0)}&=\frac{p-2}{p}|v_0|^{p-2}u_0^2\log u_0^2
+\frac{2}{p}|u_0|^{p-2}v_0^2\log v_0^2
+\frac{2}{p}|u_0|^{p-2}v_0^2.
\end{aligned}
\]
Here \(\langle R_u^{(0)},\phi\rangle\) denotes \(\int_{\mathbb R^N}R_u^{(0)}\phi\,dx\).

We first verify that \(R_u^{(0)}(u_0)=\langle R_u^{(0)},u_0\rangle\) is finite. Since \(S_n=\|u_n\|_{\mathcal {G}_a}^p+\|v_n\|_{\mathcal {G}_b}^p\) is bounded and we have already proved \(L_n^+\to L_0^+<\infty\) and \(M_{\varepsilon_n}\to M_0<\infty\), the regularised Nehari identity
\[
S_n=L_n^+-L_n^-+\frac{2}{p}R_{\varepsilon_n}
\]
immediately gives \(\sup\limits_n L_n^-<\infty\). By Fatou's lemma,
\[
L_0^-\le\liminf_{n\to\infty}L_n^-<\infty.
\]
Thus the integrals defining \(R_u^{(0)}(u_0)\) and \(R_v^{(0)}(v_0)\) are finite.

We now identify \(\chi_u\); the \(v\)-component is completely analogous. Testing the first regularised equation with \(u_n\), we obtain
\[
\int_{\mathbb R^N}|\nabla u_n|^p\,dx+\int_{\mathbb R^N}a|u_n|^p\,dx
=\int_{\mathbb R^N}R_u^{(n)}u_n\,dx,
\]
where
\[
R_u^{(n)}
=\frac{p-2}{p}|u_n|^{p-4}u_n v_n^2\log(v_n^2+\varepsilon_n)
+\frac{2}{p}|v_n|^{p-2}u_n\log(u_n^2+\varepsilon_n)
+\frac{2}{p}|v_n|^{p-2}\frac{u_n^3}{u_n^2+\varepsilon_n}.
\]
Splitting the right-hand side into positive logarithmic part, negative logarithmic part, and rational part, and using the global convergence established above together with Fatou's lemma for the negative part, we obtain
\[
\limsup_{n\to\infty}\int_{\mathbb R^N}R_u^{(n)}u_n\,dx
\le R_u^{(0)}(u_0).
\]
Moreover, by weak lower semicontinuity,
\[
\liminf_{n\to\infty}\int_{\mathbb R^N}a|u_n|^p\,dx
\ge\int_{\mathbb R^N}a|u_0|^p\,dx.
\]
Therefore
\[
\limsup_{n\to\infty}\int_{\mathbb R^N}|\nabla u_n|^p\,dx
\le R_u^{(0)}(u_0)-\int_{\mathbb R^N}a|u_0|^p\,dx.
\]
We now show that the right-hand side equals \(\int_{\mathbb R^N}\chi_u\cdot\nabla u_0\,dx\). Since \(C_c^\infty(\mathbb R^N)\) is dense in \(\mathcal G_a\) and \(u_0\in\mathcal G_a\), the linear functional in the limit equation is continuous on \(\mathcal G_a\); hence we may use the test function \(\eta_Ru_0\), where \(\eta_R\) is a cut-off function. Letting \(R\to\infty\), the boundary terms vanish because \(|\nabla\eta_R|\le C/R\) and \(u_0,\nabla u_0\in L^p\). We obtain
\[
\int_{\mathbb R^N}\chi_u\cdot\nabla u_0\,dx+\int_{\mathbb R^N}a|u_0|^p\,dx
=R_u^{(0)}(u_0).
\]
Thus
\[
\limsup_{n\to\infty}\int_{\mathbb R^N}|\nabla u_n|^p\,dx
\le \int_{\mathbb R^N}\chi_u\cdot\nabla u_0\,dx.
\]

We claim that if \(u_n\rightharpoonup u_0\) in \(W^{1,N}(\mathbb R^N)\), \(|\nabla u_n|^{p-2}\nabla u_n\rightharpoonup\chi_u\) in \(L^{N'}(\mathbb R^N)\), and
\[
\limsup_{n\to\infty}\int_{\mathbb R^N}|\nabla u_n|^p\,dx
\le\int_{\mathbb R^N}\chi_u\cdot\nabla u_0\,dx,
\]
then
\[
\chi_u=|\nabla u_0|^{p-2}\nabla u_0.
\]
Indeed, set \(A(\xi):=|\xi|^{p-2}\xi\). Since \(A(\nabla u_n)\rightharpoonup\chi_u\) in \(L^{p'}(\mathbb R^N)\), for every \(\nabla w\in L^p\) we have
\[
\int_{\mathbb R^N}A(\nabla u_n)\cdot\nabla w\,dx
\to\int_{\mathbb R^N}\chi_u\cdot\nabla w\,dx.
\]
Since \(\nabla u_n\rightharpoonup\nabla u_0\) in \(L^p\) and \(A(\nabla w)\in L^{p'}\),
\[
\int_{\mathbb R^N}A(\nabla w)\cdot\nabla u_n\,dx
\to\int_{\mathbb R^N}A(\nabla w)\cdot\nabla u_0\,dx.
\]
By the pseudomonotonicity of the \(p\)-Laplacian,
\[
0\le\int_{\mathbb R^N}\bigl(A(\nabla u_n)-A(\nabla w)\bigr)
\cdot(\nabla u_n-\nabla w)\,dx.
\]
Passing to the limit and using the above convergences, we get
\[
0\le\int_{\mathbb R^N}\bigl(\chi_u-A(\nabla w)\bigr)
\cdot(\nabla u_0-\nabla w)\,dx.
\]
Taking \(w=u_0-t\phi\) with \(t>0\) and \(\phi\in W_0^{1,N}(\mathbb R^N)\), and then replacing \(\phi\) by \(-\phi\), we obtain
\[
\int_{\mathbb R^N}\bigl(\chi_u-A(\nabla u_0)\bigr)\cdot\nabla\phi\,dx=0
\]
for all such \(\phi\). Hence
\[
\chi_u=|\nabla u_0|^{p-2}\nabla u_0.
\]
Similarly,
\[
\chi_v=|\nabla v_0|^{p-2}\nabla v_0.
\]
Therefore the limit equation is exactly the weak form of the original system, so \((u_0,v_0)\) is a weak solution of the original system.

\medskip
\noindent\textbf{Step 5.}
From Step 4, we can deduce the original Nehari identity. Indeed, 
Let \(\eta_R\in C_c^\infty(B_{2R})\) satisfy \(0\le\eta_R\le1\), \(\eta_R\equiv1\) on \(B_R\), \(\eta_R\equiv0\) outside \(B_{2R}\), and \(|\nabla\eta_R|\le C/R\). Using \((\eta_Ru_0,\eta_Rv_0)\) as test function in the weak formulation of the original system, adding the two equations, and letting \(R\to\infty\), the boundary terms vanish and the remaining terms converge by the dominated convergence theorem. We obtain
\[
\|u_0\|_{\mathcal {G}_a}^p+\|v_0\|_{\mathcal {G}_b}^p
=L_0^+-L_0^-+\frac{2}{p}M_0,
\]
where \(L_0^\pm\) denote the positive and negative parts of the logarithmic integral at \((u_0,v_0)\). We omit the details here.

\medskip
\noindent\textbf{Step 6.}
The regularised Nehari identity is
\[
S_n:=\|u_n\|_{\mathcal {G}_a}^p + \|v_n\|_{\mathcal {G}_b}^p
=L_n^+-L_n^-+\frac{2}{p}R_{\varepsilon_n}.
\]
Since \(L_n^+\to L_0^+\) and \(M_{\varepsilon_n}\to M_0\), we have
\[
S_n=L_0^+-L_n^-+\frac{2}{p}M_0+o(1).
\]
Taking the limit inferior and using the weak lower semicontinuity of the norm,
\[
S_0:=\|u_0\|_{\mathcal {G}_a}^p + \|v_0\|_{\mathcal {G}_b}^p
\le\liminf_{n\to\infty}S_n.
\]
Combining this with the original Nehari identity \(S_0=L_0^+-L_0^-+\frac{2}{N}M_0\), we obtain
\[
\limsup_{n\to\infty}L_n^-\le L_0^-.
\]
On the other hand, by Fatou's lemma,
\[
L_0^-\le\liminf_{n\to\infty}L_n^-.
\]
Thus
\[
\lim_{n\to\infty}L_n^-=L_0^-.
\]
Consequently,
\[
\lim_{n\to\infty}S_n
=L_0^+-L_0^-+\frac{2}{p}M_0
=S_0.
\]
Hence
$$
\|u_n\|_{\mathcal{G}_a}+\|v_n\|_{\mathcal{G}_b}\to\|u_0\|_{\mathcal{G}_a}+\|v_0\|_{\mathcal{G}_b}
$$
as \(n\to\infty\). On the other hand, \((u_n,v_n)\rightharpoonup(u_0,v_0)\), \[\|u_0\|_{\mathcal{G}_a}\le \liminf\limits_{n\to\infty}\|u_n\|_{\mathcal{G}_a}, \quad\|v_0\|_{\mathcal{G}_b}\le \liminf\limits_{n\to\infty}\|v_n\|_{\mathcal{G}_b},\]
which yields \[\lim\limits_{n\to\infty}\|u_n\|_{\mathcal{G}_a}=\|u_0\|_{\mathcal{G}_a},\quad\lim\limits_{n\to\infty}\|v_n\|_{\mathcal{G}_b}=\|v_0\|_{\mathcal{G}_b}.\]
Since \(\mathcal{G}_a\) and \(\mathcal{G}_b\) are uniformly convex, they satisfy the Radon-Riesz property; therefore, the weak convergence together with the convergence of the norms implies \(u_n\) strongly converges to \(u_0\) in \(\mathcal{G}_a\) and \(v_n\) strongly converges to \(v_0\) in \(\mathcal{G}_b\), that is,
\[
(u_n,v_n)\to(u_0,v_0)\quad\text{strongly in }\mathcal G.
\]

\medskip
\noindent\textbf{Step 7.}
The regularised energy identity is
\[
J_{\varepsilon_n}(u_n,v_n)
=\frac{2}{p^2}R_{\varepsilon_n}(u_n,v_n).
\]
Since \(R_{\varepsilon_n}\to M_0\), and by the original Nehari identity \(J(u_0,v_0)=\frac{2}{p^2}M_0\), we obtain
\[
J_{\varepsilon_n}(u_n,v_n)\to J(u_0,v_0).
\]

\medskip
\noindent\textbf{Step 8. }
Let \((w,z)\in\mathcal D(J)\setminus\{(0,0)\}\) be any fixed nontrivial ground state solution of the original system. By the original Nehari identity,
\[
S(w,z)
=L(w,z)+\frac{2}{p}M(w,z),
\qquad
J(w,z)=\frac{2}{p^2}M(w,z)>0.
\]
For each \(\varepsilon>0\), the fibre map \(t\mapsto J_\varepsilon(t(w,z))\) has a unique maximum point \(t_\varepsilon>0\), and \(t_\varepsilon(w,z)\in\mathcal N_\varepsilon\). Define \(G_\varepsilon(t)\) as the right-hand side of the fibre derivative, that is,
\begin{eqnarray*}
G_{\varepsilon}(t)
&=
\int_{\mathbb R^N}|w|^{p-2}z^2\log(t^2z^2+\varepsilon)\,dx
+
\int_{\mathbb R^N}|z|^{p-2}w^2\log(t^2w^2+\varepsilon)\,dx
\\
&\quad
+\frac{2}{p}
\left(
\int_{\mathbb R^N}|w|^{p-2}\frac{t^2z^4}{t^2z^2+\varepsilon}\,dx
+
\int_{\mathbb R^N}|z|^{p-2}\frac{t^2w^4}{t^2w^2+\varepsilon}\,dx
\right),
\end{eqnarray*}
It is easy to check that the function \(G_{\varepsilon}(t)\) and \(G_0(t)\) are strictly increasing on \((0,\infty)\) and \(G_0(1)=\|w\|_{\mathcal {G}_a}^p+\|z\|_{\mathcal {G}_b}^p\). Thus for any \(\delta>0\),
\[
G_0(1+\delta)>\|w\|_{\mathcal {G}_a}^p+\|z\|_{\mathcal {G}_b}^p.
\]
Moreover, since \((w,z)\in\mathcal D(J)\) and the logarithmic and rational terms are continuous with respect to \(\varepsilon\) on compact intervals, we have
\[
G_\varepsilon(t)\to G_0(t)
\]
uniformly on any compact interval bounded away from \(0\), which follows by finding the dominated function and dominated convergence theorem. Since the argument is analogous as below, we omit the details for brevity. Therefore, for all sufficiently small \(\varepsilon\),
\[
G_\varepsilon(1+\delta)>\|w\|_{\mathcal {G}_a}^p+\|z\|_{\mathcal {G}_b}^p.
\]
Since \(G_\varepsilon\) is strictly increasing and \(G_\varepsilon(t_\varepsilon)=\|w\|_{\mathcal {G}_a}^p+\|z\|_{\mathcal {G}_b}^p\), we conclude \(t_\varepsilon<1+\delta\). Similarly, \(t_\varepsilon>1-\delta\). Hence \(t_\varepsilon\to1\) as \(\varepsilon\to0\).
Because \(t_\varepsilon\to1\), there exists \(c>0\) such that \(t_\varepsilon\ge c\) for all sufficiently small \(\varepsilon\). In the subsequent control function estimates, we may therefore assume \(t_n\ge c>0\).

Now define
\[
F_n(x):=t_n^p|w(x)|^{p-2}z(x)^2\log(t_n^2z(x)^2+\varepsilon_n).
\]
Choose \(\delta_0>0\) sufficiently small so that
\[
C^2\delta_0^2+\varepsilon_0<1,
\]
where \(\varepsilon_0\) is an upper bound for all \(\varepsilon_n\); such \(\delta_0\) exists because \(\varepsilon_n\to0\). Decompose \(\mathbb R^N\) into
\[
\Omega^-=\{|z|<\delta_0\},\qquad \Omega^+=\{|z|\ge\delta_0\}.
\]
On \(\Omega^-\), we have
\[
t_n^2z^2+\varepsilon_n
\le C^2\delta_0^2+\varepsilon_0<1,
\]
so
\[
\log(t_n^2z^2+\varepsilon_n)<0.
\]
Moreover, since \(t_n\ge c>0\),
\[
t_n^2z^2+\varepsilon_n\ge t_n^2z^2\ge c^2z^2.
\]
Thus
\[
-\log(t_n^2z^2+\varepsilon_n)
\le-\log(c^2z^2)
=|\log z^2|-2\log c
\le|\log z^2|+C.
\]
Therefore
\[
\left|\log(t_n^2z^2+\varepsilon_n)\right|
\le|\log z^2|+C.
\]
Consequently,
\[
|F_n(x)|
\le C|w|^{p-2}z^2\bigl(|\log z^2|+C\bigr).
\]
Since \((w,z)\in\mathcal D(J)\), we have
\[
|w|^{p-2}z^2|\log z^2|\in L^1(\mathbb R^N).
\]
Indeed, from the original Nehari identity
\[
S(w,z)=L^{+}(w,z)-L^{-}(w,z)+\frac{2}{p}M(w,z),
\]
all three terms \(S(w,z)\), \(L^{+}(w,z)\), and \(\frac{2}{p}M(w,z)\) are finite, so \(0<L^{-}(w,z)<\infty\), and hence \(L^{+}(w,z)+L^{-}(w,z)\) is finite, which guarantees the integrability of \(|w|^{p-2}z^2|\log z^2|\) on \(\mathbb R^N\). Hence the right-hand side is integrable on \(\Omega^-\).

On \(\Omega^+\), we have \(|z|\ge\delta_0>0\), so
\[
t_n^2z^2+\varepsilon_n\ge c^2\delta_0^2=:m>0.
\]
By the elementary inequality, for any fixed \(m>0\) and any \(\sigma>0\), there exists a constant \(C_{m,\sigma}\) such that for all \(x\ge m\),
\[
|\log x|\le C_{m,\sigma}(1+x^\sigma).
\]
Thus
\[
\left|\log(t_n^2z^2+\varepsilon_n)\right|
\le C\bigl(1+(t_n^2z^2+\varepsilon_n)^\sigma\bigr)
\le C\bigl(1+z^{2\sigma}\bigr).
\]
Therefore
\[
|F_n(x)|
\le C|w|^{p-2}z^2\bigl(1+z^{2\sigma}\bigr).
\]
Combining \(\Omega^-\) and \(\Omega^+\), we obtain
\[
|F_n(x)|
\le C|w|^{p-2}z^2\bigl(|\log z^2|+1+z^{2\sigma}\bigr),
\]
and the right-hand side belongs to \(L^1(\mathbb R^N)\).

For the second logarithmic term
\[
G_n(x):=t_n^p|z|^{p-2}w^2\log(t_n^2w^2+\varepsilon_n),
\]
the same argument gives the control
\[
|G_n(x)|
\le C|z|^{p-2}w^2\bigl(|\log w^2|+1+w^{2\sigma}\bigr),
\]
which also belongs to \(L^1(\mathbb R^N)\).
Thus \(t_\varepsilon(w,z)\in\mathcal N_\varepsilon\). Since \((u_\varepsilon,v_\varepsilon)\) is a regularised ground state,
\[
J_{\varepsilon_n}(u_n,v_n)\le J_{\varepsilon_n}(t_{\varepsilon_n}(w,z)).
\]
Letting \(n\to\infty\) and using the energy convergence established in Step 7, we get
\[
J(u_0,v_0)\le J(w,z).
\]
As \((w,z)\) is arbitrary, \((u_0,v_0)\) minimises \(J\) among all nontrivial ground state solutions and thus \((u_0,v_0)\) is a ground state of the original equation. This completes the proof.
\end{proof}

\section{Appendix} 
\label{sec:appendix}

\qquad The following example shows that under {\rm(A1)}--{\rm(A2)} the logarithmic coupling integrals may diverge; hence {\rm(A2)} alone does not guarantee the finiteness of the energy functional. This justifies the use of the stronger condition {\rm(A2$'$)} as the natural framework for our variational analysis.

\begin{example}\label{exam8.1}
Consider a locally finite graph $G=(V,E)$ with $V=\mathbb{N}$ and
$x_{n}=n$ for $n\ge0$. Define
\[
u(x_{n})=\begin{cases}
\dfrac{1}{n^{\frac{2}{p}}(\log n)^{\theta}}, & n\ge3,\\[12pt]
0, & 0\le n\le2,
\end{cases}\qquad
v(x_{n})=\begin{cases}
\dfrac{1}{n^{\frac{2}{p}}(\log n)^{\phi}}, & n\ge3,\\[12pt]
0, & 0\le n\le2.
\end{cases}
\]
Take the measure and potentials as
\[
\mu(x_{n})=\begin{cases}
n, & n\ge3,\\[4pt]
1, & 0\le n\le2,
\end{cases}\qquad
a(x_{n})=\begin{cases}
(\log n)^{\delta}, & n\ge3,\\[4pt]
1, & 0\le n\le2,
\end{cases}\qquad
b(x_{n})=\begin{cases}
(\log n)^{\delta}, & n\ge3,\\[4pt]
1, & 0\le n\le2,
\end{cases}
\]
with $\delta>0$ sufficiently small. Direct calculations show that $u,v\in W^{1,p}(V)$ and
\[
\int_{V}a(x)|u(x)|^{p}\,d\mu<+\infty,\qquad
\int_{V}b(x)|v(x)|^{p}\,d\mu<+\infty,
\]
provided $p\theta>1+\delta$ and $p\phi>1+\delta$. 
On the other hand,
\begingroup
\allowdisplaybreaks
\begin{align*}
    \int_V |\nabla u|^p d\mu &= \sum_{n\ge 2} \mu(x_n)|\nabla u(x_n)|^p
            = \sum_{x\ge 2} \frac{1}{2^{\frac{p}{2}}\mu(x)^{\frac{p}{2}-1}}\left(\sum_{y\sim x}(u(y)-u(x))^2\right)^{\frac{p}{2}}\\
            &\le C\sum_{x\ge 2} \left(\sum_{y\sim x}(u(y)-u(x))^2\right)^{\frac{p}{2}}
               \le C\sum_{x\ge 2}\left[\left(u(x+1)-u(x)\right)^2+\left(u(x-1)-u(x)\right)^2\right]^{\frac{p}{2}}\\
            &\le C\sum_{x\ge 2}\left[\left|u(x+1)-u(x)\right|^p+\left|u(x-1)-u(x)\right|^p\right]\\
            &= C|u(3)|^p+C\sum_{x\ge 3}\left[\left|u(x+1)-u(x)\right|^p+\left|u(x-1)-u(x)\right|^p\right]\\
            &\le C|u(3)|^p+C\sum_{x\ge 3} |u(x)|^p<+\infty.
\end{align*}
\endgroup
        Similarly, we obtain $\int_V |\nabla v|^p d\mu<+\infty$.

However, the
logarithmic coupling terms diverge:
\[
\int_{V}|v(x)|^{p-2}|u(x)|^{2}\log|u(x)|^{2}\,d\mu
   \le -\frac{4}{p}\sum_{n\ge3}n^{-1}(\log n)^{-(p-2)\phi-2\theta+1}
   =-\infty,
\]
when $(p-2)\phi+2\theta-1\le1$, and similarly

\[
\int_{V}|u(x)|^{p-2}|v(x)|^{2}\log|v(x)|^{2}\,d\mu=-\infty,
\]
\vspace{4pt}
if $(p-2)\theta+2\phi-1\le1$. The system
\[
p\theta>1+\delta,\quad p\phi>1+\delta,\quad
(p-2)\phi+2\theta-1\le1,\quad (p-2)\theta+2\phi-1\le1.
\]
admits a solution $(\theta,\phi)$ for any fixed $p>4$ with $\delta$
small enough. Hence $J(u,v)$ fails to be well-defined under
{\rm(A1)}--{\rm(A2)} alone.
\end{example}

\begin{example}\label{ex:ill-posedness}
This example serves to identify the ill-posedness of the original functional under the hypotheses of Theorem~\ref{thm2}. Define
\[
a(x)=b(x)=2+\log\log(e+|x|).
\]
Then
\[
\inf a=\inf b=2>0,\qquad
\lim_{|x|\to\infty}a(x)=\lim_{|x|\to\infty}b(x)=+\infty,
\]
so the hypotheses are satisfied. Choose
\[
u(x)=v(x)=\frac{1}{(1+|x|)\left[\log(e+|x|)\right]^{\theta}},
\]
where
\[
\frac{1}{N}<\theta<\frac{2}{N},\qquad N> 4.
\]
This interval is nonempty, so such a choice is admissible.

Since \(N\theta>1\), we have
\[
\int_{\mathbb R^N}|u|^N\,dx
\sim \int^\infty r^{-1}(\log r)^{-N\theta}\,dr<\infty.
\]
Moreover,
\[
|\nabla u|^N\sim r^{-2N}(\log r)^{-N\theta},
\]
and hence
\[
\int_{\mathbb R^N}|\nabla u|^N\,dx
\sim \int^\infty r^{-N-1}(\log r)^{-N\theta}\,dr<\infty.
\]
Finally,
\[
a|u|^N\sim (\log\log r)\,r^{-N}(\log r)^{-N\theta}
\le C\,r^{-N}(\log r)^{-N\theta+\delta}
\]
for any \(\delta>0\). Choosing \(\delta\) sufficiently small so that \(N\theta-\delta>1\), we obtain
\[
\int_{\mathbb R^N}a|u|^N\,dx
\le C\int^\infty r^{-1}(\log r)^{-N\theta+\delta}\,dr<\infty.
\]
Thus \(u=v\in\mathcal G_a=\mathcal G_b\).

Since \(u=v\), we have
\[
|u|^{N-2}v^2=|u|^N.
\]
Therefore
\[
|u|^{N-2}v^2\log v^2
\sim |u|^N\log |u|^2
\sim -2r^{-N}(\log r)^{1-N\theta}.
\]
Consequently,
\[
\int_{\mathbb R^N}|u|^{N-2}v^2\log v^2\,dx
\sim -2\int^\infty r^{-1}(\log r)^{1-N\theta}\,dr.
\]
Because \(N\theta<2\), the exponent \(1-N\theta>-1\), and the integral diverges to \(-\infty\). The symmetric term diverges similarly.

The principal part is finite, while the negative part of the logarithmic term diverges to \(-\infty\). Hence
\[
J(u,v)=+\infty.
\]
Thus the original functional \(J\) is not finite everywhere under the hypotheses of Theorem~\ref{thm2}; in other words, \(J\) is ill-posed.
\end{example}
\subsection*{Concluding remarks and open problems}
The paper addresses the three fundamental difficulties: it restores compactness in the critical Sobolev setting via the exponent calibration technique; characterises the second-order variational structure of the Nehari manifold in coupled systems through the rigidity theory; and extends the Berestycki--Lions existence theory to logarithmically coupled systems (Theorem~\ref{thm8}). Besides,
several natural questions remain open, and we briefly outline them below.

\begin{enumerate}
\item \textbf{The subcritical range \(2<p\le4\).}
The analysis of this paper relies on the condition \(p>4\).  When
\(p\) drops into the range \(2<p\le4\), the factor \(|u|^{p-4}u\)
ceases to be differentiable at the origin and the energy functional
may fail to be \(C^1\) near the origin. Whether the full
variational programme can be extended to this lower regime remains
an open challenge.

\item \textbf{Extension to Riemannian manifolds and bounded Euclidean domains with boundary conditions.}
Extending the present variational and rigidity framework to Riemannian manifolds or bounded Euclidean domains with boundary conditions would require substantial new ideas, since the current estimates rely heavily on the Euclidean and discrete-graph structure; curvature, boundary effects, and the loss of Fourier-analytic tools call for new compactness and refined calibration arguments.

\item \textbf{Hidden structures beyond the rigidity theorem.}
The Hessian factorisation in Theorem~\ref{thm5} cleanly separates the tangent-space dynamics from a rank-one correction forced by the Nehari constraint. This algebraic decomposition raises at least two natural directions: whether it reflects deeper structural properties of the system, and whether a similar rigidity framework can be developed for discrete analogues of the present PDE system.

\item \textbf{Other nonlinearities.}
The logarithmic coupling studied here is a prototype of
nonlinearities that grow slower than any positive power yet are
not uniformly bounded.  It would be interesting to extend the
techniques developed in this paper to related problems involving,
for example, double‑logarithmic terms, fractional powers, or
nonlocal couplings on discrete structures.

\item \textbf{Stability of ground states under parameter perturbations.}
Theorem~\ref{thm5} gives a criterion for the loss of strict convexity via the rigidity index \(\mathcal{R}(u_0)\), with \(\mathcal{R}=1\) marking degeneracy. It remains open whether this algebraic degeneracy corresponds to a genuine bifurcation or a continuous family of minimisers, whether the ground-state branch from a non-degenerate minimiser remains stable under arbitrary potential perturbations, and whether the Hessian factorisation can predict symmetry-breaking or phase transitions.

\end{enumerate}

\section*{Acknowledgements}

\qquad I would like to thank my supervisor Professor Genggeng Huang for helpful discussions. Moreover, I would like to take this opportunity to thank Professor Xuexiu Zhong for his time and valuable comments on this paper.

	\bibliography{reference} % see references.bib for bibliography management

Wenzheng Hu, wzhu25@m.fudan.edu.cn\\
\textit{School of Mathematical Sciences, Fudan University, Shanghai, 200433, China.}
      
\end{document}